\renewcommand{\Re}{\mathop{\rm Re}\nolimits}
\renewcommand{\Im}{\mathop{\rm Im}\nolimits}
\newcommand{\norm}[1]{{\left \lVert #1 \right \rVert}}
\newcommand{\gbr}[1]{\lceil #1 \rfloor}
\theoremstyle{plain}
\newtheorem{theorem}{Theorem}[section]
\newtheorem{lemma}[theorem]{Lemma}
\newtheorem{proposition}[theorem]{Proposition}
\newtheorem{corollary}[theorem]{Corollary}
\theoremstyle{definition}
\theoremstyle{remark}
\newtheorem{remark}[theorem]{Remark}
\newcommand{\R}{{\mathbb R}}
\newcommand{\Z}{{\mathbb Z}}
\newcommand{\N}{{\mathbb N}}
\def\im{{\rm i}}
\newcommand{\C}{\mathbb{C}}
\def\({\left(}
\def\){\right)}
\def\<{\left\langle}
\def\>{\right\rangle}
\numberwithin{equation}{section}
\begin{document}

\title{Center stable manifold for ground states of nonlinear Schr\"odinger equations with internal modes}

\author{Masaya Maeda \footnote{{\it E-mail addresses:} maeda@math.s.chiba-u.ac.jp} \\
\footnotesize{Graduate School of Science, Chiba University,  1-33,  Yayoi-chou, Inage-ku, Chiba 2638522, JAPAN}\\
Yohei Yamazaki\footnote{Corresponding author,  {\it E-mail addresses:} yamazaki.yohei.557@m.kyushu-u.ac.jp}\\
\footnotesize{Faculty of Mathematics, Kyushu University, 744, Motooka, Nishi-ku, Fukuoka 8190395, JAPAN}}

\maketitle

\begin{abstract}
We study the dynamics of solutions of nonlinear Schr\"odinger equation (NLS) near unstable ground states.
The existence of the local center stable manifold around ground states and the asymptotic stability for the solutions on the manifold is proved.
The novelty of our result is that we allow the existence of internal modes.
\end{abstract}
{\small Keywords: Nonlinear Schr\"odinger equation, Standing wave, Asymptotic stability}

\section{Introduction}
In this paper, we consider 
\begin{align}\label{NLS}
\im \partial_t u = -\Delta u + g(|u|^2)u,\ u:\R^{1+3}\to \C,
\end{align}
where we assume $g\in C^\infty(\R,\R)$, $g(0)=0$ and the growth condition
\begin{align}\label{fgrowth}
|g^{(n)}(s)|\leq C |s|^{2-n}\ \text{for}\ |s|>1 \mbox{ and } n=0,1,\cdots,4,
\end{align}
for some constant $C>0$.
We will also use the notation $f(u):=g(|u|^2)u$.

The local well posedness of the Cauchy problem of \eqref{NLS} in $H^1(\R^3)$ is well known, see, e.g.\ \cite{Ka87AIHPPT}.
We assume the existence of a family of ground states.
That is, we assume there exist $\mathcal{O}\subset (0,\infty)$ and a  map $\varphi_{\cdot}\in C^2(\mathcal{O},H^1_{\mathrm{rad}}\cap L^\infty)$ s.t. for all $x\in \R^3$, $\varphi_\omega(x)>0$ and
\begin{align}\label{sp}
0=-\Delta \varphi_\omega + \omega \varphi_\omega + f(\varphi_\omega).
\end{align}
Here, for a given function space $X$ (e.g.\ $L^2(\R^3,\C)$, $H^1(\R^3,\C)$), we set $X_{\mathrm{rad}}$ to be the space of radially symmetric functions in $X$.

Given a ground state, it is natural to ask the stability and asymptotic behavior of solutions with initial data given in the vicinity of the ground state.
First question in such direction will be the \textit{orbital stability}, which asks whether the solution stays near the bound state modulo symmetry.
In the following, we will always assume the standard nondegeneracy assumption:
\begin{itemize}
	\item[(H1)] $\mathrm{ker}\left. L_{\omega,+}\right|_{L^2_{\mathrm{rad}}}=\{0\}$ and $L_{\omega,+}$ has exactly one negative eigenvalue, where
	\begin{align}
		L_{\omega,+}=-\Delta +\omega + g(\varphi_{\omega}^2)+2g'(\varphi_{\omega}^2)\varphi_{\omega}^2.
	\end{align}
\end{itemize}
Under the assumption (H1), it is well known that the orbital stability is determined by Vakhitov-Kolokolov condition, i.e.\ if $\frac{d}{d\omega}\|\varphi_{\omega}\|_{L^2}^2>0$, then $\varphi_{\omega}$ is orbitally stable and if $\frac{d}{d\omega}\|\varphi_{\omega}\|_{L^2}^2<0$ it is orbitally unstable, see \cite{GSS, SS85CMP, We86CPAM}, for the case $\frac{d}{d\omega}\|\varphi_{\omega}\|_{L^2}^2=0$, see \cite{CP03CPAM,Maeda12JFA,Ohta11JFA}.

When the ground state is orbitally stable, the next question will be the asymptotic stability.
Due to the \textit{Soliton Resolution conjecture}, it is natural to expect that any solution $u(t)$ near stable ground state behave asymptotically as
\begin{align}\label{asymptotics}
	u(t)\sim e^{\im \theta(t)}e^{\frac{\im}{2}v_+ x}\varphi_{\omega_+}(\cdot-y(t)) + e^{\im t \Delta}\eta_+\ \text{as}\ \ t\to \infty,
\end{align} 
for some $\omega_+,v_+,\eta_+$, $\theta(t)$ and $y(t)$.
For such results, see \cite{BP95AMST, CM21DCDS, GNT04IMRN, SW90CMP} and reference therein.

On the other hand, when the ground state is orbitally unstable, we cannot expect \eqref{asymptotics} for all solutions near the ground state. However, we can still try to construct a \textit{center stable manifold} near the ground state and show that solutions on the center stable manifold behave as \eqref{asymptotics}.
Such result was first obtained by Schlag \cite{Schlag09AM} followed by many papers such as \cite{Beceanu12CPAM,Beceanu14JHDE,Cuccagna09JMAAInv,JLZ18CMP,JLZ19JDE,KMM22JEMS,KS06JAMS,MMNR16CMP,NS12CVPDE,Yamazaki21DCDS}, see, also \cite{NS11Book}.
The main result of this paper is also about the asymptotic stability of solutions on the center stable manifolds. Postponing the precise assumptions of the theorem and the explanation of the difference between the previous works, our main theorem is the following:
\begin{theorem}\label{thm:main}
Let $\omega_*\in \mathcal{O}$ and assume (H1), $\left.\frac{d}{d\omega}\right|_{\omega=\omega_*}\|\varphi_{\omega}\|_{L^2}^2<0$ and (H2)--(H4) given below.
Then, there exist codimension one $C^1$-manifold $\mathcal{M}\subset H^1_{\mathrm{rad}}$, $\delta_0>0$, $\delta_1>0$, $\epsilon_0>0$ and $C_0>0$ s.t.
\begin{enumerate}
\item[(i)] $\mathcal{M}$ is positivly invariant under the flow of \eqref{NLS} (i.e.\ if $u_0\in \mathcal{M}$, then $u(t)\in \mathcal{M}$ for all $t>0$), $e^{\im \theta}\mathcal{M}=\mathcal{M}$ for all $\theta\in\R$ and $\{\varphi_{\omega}\ |\ \omega\in (\omega_*-\delta_0,\omega_*+\delta_0)\}\subset \mathcal{M}$.
\item[(ii)] If $u_0\in H^1_{\mathrm{rad}}\setminus\mathcal{M}$, then there exists $T>0$ s.t.\ $\inf_{\theta\in\R}\|u(T)-e^{\im \theta}\varphi_{\omega_*}\|_{H^1}>\delta_1$.
\item[(iii)] If $u_0\in \mathcal{M}$ and $\|u_0-\varphi_{\omega_*}\|_{H^1}=:\epsilon<\epsilon_0$, then
there exist $\omega_+\in \mathcal{O}$, $\eta_+\in H^1_{\mathrm{rad}}$ and $\theta\in C([0,\infty),\R)$ s.t. $|\omega_+-\omega_*|+\|\eta_+\|_{H^1}\leq C_0\epsilon$ and
\begin{align}\label{asymptotics:main}
\lim_{t\to \infty}\| u(t)-e^{\im \theta(t)}\varphi_{\omega_+}-e^{\im t \Delta}\eta_+\|_{H^1}=0,
\end{align}
\end{enumerate}
Here, $u(t)$ is the solution of \eqref{NLS} with $u(0)=u_0$.
\end{theorem}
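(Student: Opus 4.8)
\emph{Proof proposal.} Since we work in the radial class $H^1_{\mathrm{rad}}$, translations play no role and only the phase symmetry and the associated direction $\partial_\omega\varphi_\omega$ must be modulated out; the plan is the standard Lyapunov--Perron/graph construction for center--stable manifolds, the new feature being the bookkeeping for the internal modes. First I would set $u(t)=e^{\im\theta(t)}(\varphi_{\omega(t)}+\eta(t))$ and fix $\theta,\omega$ by imposing symplectic orthogonality of $\eta$ to the generalized kernel of the linearized operator $\mathcal L_{\omega(t)}$ (spanned by $\im\varphi_\omega$ and $\partial_\omega\varphi_\omega$); by the implicit function theorem this makes $\theta,\omega\in C^1$ functions of $\eta$ while $\eta$ is small. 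Under (H1) and $\frac{d}{d\omega}\|\varphi_{\omega_*}\|_{L^2}^2<0$ the operator $\mathcal L_{\omega_*}$ has a single pair of real eigenvalues $\pm\lambda$ with $\lambda>0$, and (H2) should record that the rest of its discrete spectrum consists of finitely many simple purely imaginary pairs $\pm\im\mu_1,\dots,\pm\im\mu_N$ (the internal modes, $0<\mu_j<\omega_*$), with no eigenvalue at the thresholds $\pm\im\omega_*$ and no threshold resonance. Then I decompose $\eta=a_+\xi_+ + a_-\xi_- + \sum_{j}(z_j\zeta_j+\overline{z_j}\,\overline{\zeta_j}) + \eta_c$ with $\xi_\pm,\zeta_j$ the corresponding eigenfunctions and $\eta_c$ in the continuous spectral subspace of $\mathcal L_{\omega(t)}$.

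Projecting the equation for $\eta$ yields the modulation system $\dot a_\pm=\pm\lambda a_\pm+\mathcal N_\pm$, $\dot z_j=\im\mu_j z_j+\mathcal N_j$, $\dot\omega=\mathcal N_\omega$, $\dot\theta=\omega+\mathcal N_\theta$, together with $\im\partial_t\eta_c=\mathcal L_{\omega(t)}\eta_c+P_cF_c$, where all of $\mathcal N_\bullet,F_c$ are at least quadratic in $(a_\pm,z,\eta_c)$ and in $\dot\omega$. The only term threatening global boundedness is $a_+$: Duhamel gives $a_+(t)=e^{\lambda t}a_+(0)+\int_0^t e^{\lambda(t-s)}\mathcal N_+(s)\,ds$, so a solution that stays $O(\epsilon)$ for all $t\ge0$ is forced to satisfy $a_+(0)=-\int_0^\infty e^{-\lambda s}\mathcal N_+(s)\,ds$. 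I would run a single contraction on $[0,\infty)$ in a Banach space $\mathcal X$ of trajectories, using Strichartz and local-smoothing (Kato-type) norms for $\eta_c$ and weighted-in-time norms for the finite-dimensional coordinates --- $\sup_t|a_+(t)|$, $\sup_t e^{\lambda t/2}|a_-(t)|$, polynomial-in-$t$ decay of $|z_j(t)|$, $\dot\omega\in L^1_t$ --- with $a_+(0)$ everywhere replaced by the above integral. Letting the ``free data'' $(a_-(0),z(0),\omega(0),\theta(0),P_c\eta(0))$ range over a small ball, the unique fixed point exhibits $\mathcal M$ near $\varphi_{\omega_*}$ as the graph $a_+(0)=h(\text{free data})$; differentiating the fixed-point equation gives $h\in C^1$, whence $\mathcal M$ is a codimension-one $C^1$-manifold, invariant under $e^{\im\theta}$, containing the branch $\{\varphi_\omega\}$ (the trivial solution $\eta\equiv0$, for which $h=0$), and positively invariant because the scheme lives on the half-line. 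If instead $a_+(0)\ne h(\text{free data})$, the leading term of $a_+(t)$ is a nonzero multiple of $e^{\lambda t}$, so the trajectory must leave the $\delta_1$-tube in finite time --- this gives (ii), and also identifies $\mathcal M$ as exactly the set of data whose forward orbit stays in the tube.

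The genuine difficulty, and the role of (H3)--(H4), is that the internal-mode amplitudes $z_j$ do not decay at the linear level and must be dissipated by resonant coupling to the continuum. I would perform normal-form (Poincar\'e--Dulac) transformations eliminating all non-resonant monomials from the $z_j$- and $\eta_c$-equations up to the order at which some harmonic of each $\mu_j$ crosses the essential spectrum; here (H3) should be a non-resonance hypothesis --- no small integer combination $\sum_i n_i\mu_i$ re-enters the point spectrum and each $\mu_j$ stays a fixed distance below the threshold --- so that the reduced system becomes a dissipative one for $(|z_1|,\dots,|z_N|)$ whose diagonal part is $\frac{d}{dt}|z_j|^2=-\Gamma_j|z_j|^{2m_j}+(\text{integrable remainder})$, with $m_j\ge2$ the resonance order, and (H4) is the Fermi Golden Rule $\Gamma_j>0$. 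This yields $\int_0^\infty|z_j(t)|^{2m_j}\,dt\lesssim|z_j(0)|^2$ and polynomial decay $|z_j(t)|\lesssim\langle t\rangle^{-1/(2m_j-2)}$, which is exactly what is needed for the source $F_c$ of $\eta_c$ --- after its non-decaying resonant part has been removed by the normal form --- to be closable in the Strichartz norm; the dispersive and local-smoothing estimates for $e^{\im t\mathcal L_{\omega_*}}P_c$ are available under (H2), and the slow drift of $\mathcal L_{\omega(t)}$ is treated perturbatively since $\dot\omega\in L^1_t$. Closing this coupled estimate --- the interplay of the FGR dissipation of $z$, the local decay of $\eta_c$, and the convergence of $\omega$ --- is the main obstacle; the rest is soft once it holds. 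One further technical point to watch is propagating plain $H^1$ control alongside the weighted norms, so that $\mathcal M$ and the final asymptotics live in $H^1$ rather than a weighted space.

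Finally, for (iii): the uniform bounds give $\dot\omega\in L^1_t$, hence $\omega(t)\to\omega_+$ with $|\omega_+-\omega_*|\lesssim\epsilon$, while $a_\pm(t)\to0$ and $z_j(t)\to0$; for $\eta_c$, Cook's method together with the asymptotic completeness of the free Schr\"odinger flow produces $\eta_+\in H^1_{\mathrm{rad}}$ with $\|\eta_+\|_{H^1}\lesssim\epsilon$ and $\|\eta_c(t)-e^{\im t\Delta}\eta_+\|_{H^1}\to0$. Reassembling $u(t)=e^{\im\theta(t)}(\varphi_{\omega(t)}+\eta(t))$ and absorbing $\varphi_{\omega(t)}-\varphi_{\omega_+}\to0$ in $H^1$ (using $\varphi_\cdot\in C^2(\mathcal O,H^1_{\mathrm{rad}}\cap L^\infty)$) yields \eqref{asymptotics:main}, completing the proof.
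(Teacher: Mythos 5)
Your overall architecture differs from the paper's in two ways, one of which is harmless and one of which hides a genuine gap. The harmless one: you treat the internal modes by Poincar\'e--Dulac normal forms, whereas the paper uses the refined profile ansatz of Proposition \ref{prop:rpKAI}; the paper itself remarks that normal forms are a viable alternative, so this is fine in principle (your relabelling of the spectrum and of (H2)/(H3) is just a change of convention, except that the paper assumes a single internal mode pair with $(N-1)\lambda<\omega_*<N\lambda$, not finitely many). The problematic one is the single global Lyapunov--Perron contraction on $[0,\infty)$ in a trajectory space whose norms include pointwise polynomial decay of the internal amplitudes $z_j$ and $\dot\omega\in L^1_t$. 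At the linear level $z_j$ does not decay at all; its decay is a purely nonlinear effect produced by the Fermi Golden Rule, and in this paper it is obtained only as a time-averaged a priori bound $\|z^N\|_{L^2_t}\lesssim\epsilon$ via the sign of $\Gamma$ in the monotonicity identity for $E+\omega_*Q$ evaluated on the refined profile (Lemma \ref{lem:FGRterm}), inside a bootstrap (Proposition \ref{prop:boot}), not via a fixed-point map. A contraction argument needs Lipschitz estimates for \emph{differences} of trajectories in exactly those decaying norms, and the FGR mechanism does not linearize: differences of two solutions do not inherit the dissipative sign structure, and the decay rate $\langle t\rangle^{-1/(2m_j-2)}$ is independent of the data size, so the iteration map cannot be shown to contract (or even to map the space into itself at the linear stage). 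This is precisely why the paper decouples the two halves: the manifold (Theorem \ref{thm:orbmfd}) is built by the Hadamard graph-transform method on a cut-off equation, which uses only the exponential dichotomy of the $\pm\im\mu$ pair, Lipschitz estimates in the energy-type norm, and no decay whatsoever of $z$ or of the radiation; the asymptotics (Theorem \ref{thm:cond:asymp}) is then a conditional statement, proved by a priori estimates for any solution that merely stays in the tube.

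Two smaller points. First, your claim $\dot\omega\in L^1_t$ is stronger than what the scheme yields; the paper only controls $\dot\Theta-\widetilde\Theta$ in $L^2_t$ and deduces convergence of $\omega(t)$ from conservation of mass together with the scattering of $\eta$, which you could adopt to avoid this issue. Second, since you obtain $C^1$ regularity of the graph by differentiating the fixed-point equation, losing the contraction also loses your smoothness argument; in the paper the $C^1$ property requires its own separate analysis of the linearized flow along the manifold (Lemmas \ref{lem:mod-LNLS} and \ref{lem:unique-LNLS}). If you reorganize your proof as (a) cut-off equation plus graph construction using only the unstable dichotomy, and (b) a bootstrap, rather than a contraction, for the FGR decay on trajectories confined to the tube, your normal-form route should be able to replace the refined profile, at the cost of longer computations.
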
 

\begin{remark}
We expect similar results hold for nonradial solutions.
However, since in this paper we are focusing our attention to the \textit{internal modes}, which we explain below, we have choose to deal with radial solutions  for simplicity.
\end{remark}

We now explain the difference between the previous results and Theorem \ref{thm:main} by introducing  precise assumptions of Theorem \ref{thm:main} ((H2)--(H4)).

As usual for any asymptotic stability analysis, we make an assumption for the "linearized operator", which in this case given by
\begin{align}\label{linOp}
	\mathcal{H}_\omega=	\begin{pmatrix}
		-\Delta +\omega +g(\varphi_{\omega}^2)+g'(\varphi_{\omega}^2)\varphi_{\omega}^2 & g'(\varphi_{\omega}^2)\varphi_{\omega}^2 \\
		-g'(\varphi_{\omega}^2)\varphi_{\omega}^2 & \Delta -\omega -g(\varphi_{\omega}^2)-g'(\varphi_{\omega}^2)\varphi_{\omega}^2
	\end{pmatrix}.
\end{align}
\begin{remark}
	Here, following standard strategy dating back to \cite{BP95AMST}, we are "doubling" the solution $u$ to ${}^t(u\  \overline{u})$ to avoid the complex conjugate, which makes the linearized operator only $\R$-linear and not $\C$-linear.
\end{remark}


Since the potential part of $\mathcal{H}_{\omega}$ decays exponentially, we have
\begin{align}
\sigma_{\mathrm{ess}}(\mathcal{H}_{\omega})=(-\infty,-\omega]\cup[\omega,\infty),\  \sharp \sigma_{\mathrm{d}}(\mathcal{H}_{\omega})<\infty.
\end{align}
Also, from (H1) and Vakhitov-Kolokolov condition $\left.\frac{d}{d\omega}\right|_{\omega=\omega_*}\|\varphi_{\omega}\|_{L^2}^2<0$, for $\omega$ sufficiently near $\omega_*$, we have
\begin{align}
\mathcal{N}_{g}(\mathcal{H}_{\omega})=\mathrm{span}\{\im \sigma_3 \phi_{\omega},
\partial_{\omega} \phi_{\omega}\},
\end{align}
The first assumption we make for the linearized operator is
\begin{itemize}
\item[(H2)] $\mathcal{H}_{\omega_*}$ has no embedded eigenvalue in $(-\infty,-\omega_*]\cup [\omega_*,\infty)$ and $\pm \omega_*$ are not resonance.
\end{itemize}

Under the assumption (H1), (H2) and $\frac{d}{d\omega}\|\varphi_{\omega}\|_{L^2}^2<0$, there exists $\mu(\omega)>0$ s.t.\ $$\sigma(\mathcal{H}_{\omega})\setminus\R=\{\pm \im \mu(\omega)\},$$ see Corollary 2.12 of \cite{CPV05CPAM}.
The eigenvalues in the remaining part $(-\omega,\omega)\setminus\{0\}$, if exist, are called the internal modes.

Up to here, the assumption given are the same as the previous works.
Another assumption which was made by all previous works (except the recent work \cite{LL2203.11371}) was that $\sigma_{\mathrm{d}}(\mathcal{H}_{\omega})=\{0,\pm \im\mu(\omega)\}$, which in particular claims that there are no internal modes.
\begin{remark}
For the cubic case, absence of internal mode is proved in \cite{CHS12N}, (see also \cite{DS06N,MS11N}).
However, in the numerical computation in \cite{DS06N} it is shown that for the nonlinearity $g(s)=-s^{p}$ with $p$ just slightly below $1$, the ``gap property", which is a sufficient condition for the absence of internal modes, fails.
In general, we expect that there \text{are} internal modes, see e.g.\ for numerical results by \cite{PW02JNS}.
\end{remark}
In this paper, we assume
\begin{itemize}
\item[(H3)] 
For $\omega$ sufficiently close to $\omega_*$, $\sigma_{\mathrm{d}}(\mathcal{H}_{\omega_*})=\{0,\pm \lambda(\omega),\pm \im \mu(\omega)\}$ for some $\lambda(\omega)>0$.
Further, we assume $\mathrm{dim}\mathrm{ker}\(\mathcal{H}_{\omega_*}-\lambda(\omega_*)\)=1$ and there exists $N\in \N$ s.t.\ $(N-1)\lambda(\omega_*)<\omega_*<N\lambda(\omega_*)$.
\end{itemize}

\begin{remark}
$\sigma(\mathcal{H}_{\omega})$ and $\sigma_{\mathrm{d}}(\mathcal{H}_{\omega})$ are symmetric w.r.t.\ $\R$ and $\im \R$ axis.
Therefore, the internal modes, which are the non-zero real discrete spectrum of $\mathcal{H}_{\omega}$ always appear as a pair.
In (H3), we have assumed there are only one pair of internal modes.
It is possible to relax the assumptions so that there are arbitrary number of  pairs of internal modes.
More precisely, we can replace (H3) by (H5)--(H6) of \cite{CM22JEE}.
However, as we restricted our attention to the radially symmetric case, to make the paper simple, we have chosen to restrict ourselves to the above case.
\end{remark}

By replacing the flow of \eqref{NLS} by the linearized equation $\im \partial_t u = \mathcal{H}_{\omega_*}u$, we easily see 1.\ and 2.\ of Theorem \ref{thm:main} hold by taking $\mathcal{M}$ to be the (symplectically) orthogonal hyperplane w.r.t.\ the unstable direction and the generalized kernel (this has codimension 3, but the extra 2 dimension coming from the generalized kernel is due to the symmetry of the ground state so we can think it is essentially  codimension 1).
Further, with the aid of suitable linear estimates one can show that the solution on $\mathcal{M}$ stays small if it has a small initial data.
However, due to the presence of internal mode, there will be arbitrary small periodic solution which prevents \eqref{asymptotics:main} to be true in the linear level.
Therefore, if one wants to show \eqref{asymptotics:main}, one has to take in to account of the nonlinear interaction.

For such purpose, following \cite{CM20APDE,CM22JEE,CM2109.08108}, we  introduce the \textit{refined profile}.
Let $\sigma>0$ sufficiently large and we set
\begin{align}
	\Sigma=\{u \in H^2\ |\ \|u\|_{\Sigma}:=\|\<x\>^\sigma u\|_{H^2}<\infty\}.
\end{align}
We also set $\zeta[\omega]$ to be the eigenfunction of $\mathcal{H}_{\omega}$ associated to $\lambda(\omega)$.
Then, $\sigma_1\zeta[\omega]$ will also be the eigenfunction of $\mathcal{H}_{\omega}$ associated to $-\lambda(\omega)$.
We can take $\zeta$ to be $\R$-valued and normalize it to satisfy $(\sigma_3\zeta[\omega],\zeta[\omega])=1$.
Here, $\sigma_j$, $j=1,2,3$ are the Pauli matrices given by
\begin{align*}
	\sigma_1=\begin{pmatrix}
		0 & 1\\ 1 & 0
	\end{pmatrix},\quad
	\sigma_2=\begin{pmatrix}
		0 & -\im\\ \im & 0
	\end{pmatrix},\quad
	\sigma_3=
	\begin{pmatrix}
		1 & 0 \\ 0 & -1
	\end{pmatrix}.
\end{align*}

\begin{proposition}[Refined profile]\label{prop:rp}
Assume (H1) and (H3).
Let $M=M'=(N-1)/2$ if $N$ is odd and $M=(N-2)/2$, $M'=N/2$ if $N$ is even.
Then, there exist $\{\widetilde{\theta}_j(\omega)\}_{j=0}^{M'}$, $\{\lambda_j(\omega)\}_{j=0}^M$ and $\{\varphi_{j,k}[\omega]\}_{0\leq j,k,j+k\leq N}$ smoothly defined in the neighborhood of $\omega_*$ with $\widetilde{\theta}_0(\omega)=\omega-\omega_*$, $\lambda_0=\lambda(\omega)$, $\varphi_{0,0}[\omega]=\varphi_{\omega}$, $\varphi_{1,0}[\omega]=(\zeta[\omega])_{\uparrow}$, $\varphi_{0,1}[\omega]=(\sigma_1\zeta[\omega])_{\uparrow}$ and $\widetilde{G}_\pm \in \Sigma$ s.t.\ setting 
\begin{align}\label{def:rp}
\varphi[\theta,\omega,z]:=&e^{\im \theta}\sum_{0\leq j,k,j+k\leq N}z^j\overline{z}^k \varphi_{j,k}[\omega],
\end{align}
we have
\begin{align}
\|\widetilde{\mathcal{R}}_1[\omega,z]\|_{\Sigma}\lesssim \(|z| +|\omega-\omega_*|\)|z|^N,
\end{align}
where $({}^t(f_1\ f_2))_{\uparrow}=f_1$ and
\begin{align}
\widetilde{\mathcal{R}}_1[\theta,\omega,z]
:=&\(- \Delta+\omega_*\) \varphi[\theta,\omega,z] + f(\varphi[\theta,\omega,z])
\\&-\im D\varphi[\theta,\omega,z]
	\(\sum_{j=0}^{M'}|z|^{2j}\widetilde{\theta}_j(\omega),
		0,
		-\im\sum_{j=0}^{M}|z|^{2j}z \lambda_j(\omega)
	\)-e^{\im \theta}\(z^N\widetilde{G}_++\overline{z}^N \widetilde{G}_-\).
\end{align}
\end{proposition}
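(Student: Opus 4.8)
The plan is to construct the expansion $\varphi[\theta,\omega,z]$ order by order in the monomials $z^j\bar z^k$ and simultaneously determine the coefficients $\widetilde\theta_j$, $\lambda_j$, $\varphi_{j,k}$ and the source terms $\widetilde G_\pm$ by an iterative matching procedure on the nonlinear elliptic equation $(-\Delta+\omega_*)\varphi + f(\varphi) = \im D\varphi\,(\dots)$. The zeroth order ($j=k=0$) is already fixed by $\varphi_{0,0}=\varphi_\omega$ together with $\widetilde\theta_0(\omega)=\omega-\omega_*$, since \eqref{sp} gives $(-\Delta+\omega_*)\varphi_\omega + f(\varphi_\omega) = (\omega_*-\omega)\varphi_\omega = -\widetilde\theta_0(\omega)\varphi_\omega$, which is exactly the $\im D\varphi\cdot(\widetilde\theta_0,0,0)$ contribution at leading order. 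The first order in $z$ (and $\bar z$) is fixed by the choice $\varphi_{1,0}=(\zeta[\omega])_\uparrow$, $\varphi_{0,1}=(\sigma_1\zeta[\omega])_\uparrow$: because $\mathcal H_\omega\zeta[\omega]=\lambda(\omega)\zeta[\omega]$ and $\mathcal H_\omega(\sigma_1\zeta[\omega])=-\lambda(\omega)\sigma_1\zeta[\omega]$, the linearization of the left side applied to $z\zeta+\bar z\sigma_1\zeta$ matches the $\lambda_0 = \lambda(\omega)$ term in the modulation vector. So the content of the proposition is the induction step.

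The induction is on the total degree $n=j+k$, $2\le n\le N$. At each step $n$ one collects the coefficient of $z^j\bar z^k$ in $\widetilde{\mathcal R}_1$; this has the schematic form $\bigl(\mathcal H_\omega + (\,k-j\,)\lambda(\omega)\sigma_3\cdot(\text{scalar factor from }D\varphi\text{ on }e^{\im\theta})\bigr)$-type operator applied to the unknown $\varphi_{j,k}[\omega]$, plus a known forcing $F_{j,k}[\omega]$ built out of lower-order $\varphi_{j',k'}$ through the Taylor expansion of $f$ and through the lower-order modulation coefficients $\widetilde\theta_{j'}$, $\lambda_{j'}$ that still get to be chosen at this degree when $(j,k)=(j,j)$ or $(j,k)=(j+1,j)$. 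Concretely, the linear operator acting on $\varphi_{j,k}$ is $\mathcal H_\omega - \im(k-j)\,(-\im\lambda(\omega)) = \mathcal H_\omega - (k-j)\lambda(\omega)$ on the relevant Fourier-in-phase slot; by (H3), $(k-j)\lambda(\omega_*)$ lies in the resolvent set of $\mathcal H_{\omega_*}$ for $|k-j|\le N$ and $|k-j|\neq 0,1$ except for the single resonant slots. For the non-resonant slots (most $(j,k)$ with $j\neq k$ and $(j,k)\neq(j+1,j),(j,j+1)$) one simply inverts: $\varphi_{j,k}[\omega] := (\mathcal H_\omega-(k-j)\lambda(\omega))^{-1}$ applied (in $\Sigma$, using exponential decay of the potential and the spectral gap) to $-F_{j,k}[\omega]$ projected appropriately, and records exponential decay/$\Sigma$-bounds. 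For the resonant diagonal slots $(j,j)$ one uses the freedom in $\widetilde\theta_j(\omega)$ to remove the component of the forcing along $\sigma_3\phi_\omega$ (the generalized kernel direction $\im\sigma_3\phi_\omega$), then inverts on the complement; for the slots $(j+1,j)$ one uses $\lambda_j(\omega)$ to remove the component along $\zeta[\omega]$, and for $(j,j+1)$ along $\sigma_1\zeta[\omega]$. The Fredholm alternative together with the self-adjointness of $\mathcal H_\omega$ with respect to the $(\sigma_3\cdot,\cdot)$ pairing (and (H1) giving the kernel of $L_{\omega,+}$ trivial, hence $\mathcal N_g(\mathcal H_\omega)$ exactly two-dimensional) guarantees solvability after this one-dimensional correction. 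Smoothness in $\omega$ near $\omega_*$ follows because all the resolvents and spectral projections are smooth there (the eigenvalue $\lambda(\omega)$ is simple by (H3), $\phi_\omega\in C^2$, etc.).

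The procedure terminates at degree $N$: there we no longer invert but instead define $\widetilde G_\pm$ to absorb the degree-$N$ remainder. Precisely, the coefficient of $z^N$ (resp.\ $\bar z^N$) that cannot be killed — because $N\lambda(\omega_*)>\omega_*$ by (H3), so $\pm N\lambda(\omega_*)$ lies in the essential spectrum and $(\mathcal H_{\omega_*}\mp N\lambda(\omega_*))^{-1}$ no longer maps into $\Sigma$ — is simply declared to be $\widetilde G_\pm\in\Sigma$ (the forcing $F_{N,0},F_{0,N}$ is in $\Sigma$ because it is a polynomial combination of exponentially decaying functions). The main obstacle, and the place where (H3) is used essentially, is exactly this bookkeeping of \emph{which} slots are resonant: one must check that for $0\le j,k$, $j+k\le N$, the shift $(k-j)\lambda(\omega_*)$ hits $\pm\lambda(\omega_*)$ only in the slots handled by $\lambda_j$, hits $0$ only in the slots handled by $\widetilde\theta_j$, and never hits $\pm\im\mu(\omega_*)$ (which is automatic since $(k-j)\lambda$ is real) nor an embedded eigenvalue/threshold resonance for $|k-j|<N$ — this is where (H2) enters — so that for all intermediate degrees the relevant resolvent genuinely maps $\Sigma\to\Sigma$ with the needed bound. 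Once all coefficients are in hand, the estimate $\|\widetilde{\mathcal R}_1[\omega,z]\|_\Sigma\lesssim(|z|+|\omega-\omega_*|)|z|^N$ is a direct consequence of the construction: every monomial of degree $\le N$ in $\widetilde{\mathcal R}_1$ has been arranged to vanish or to equal $e^{\im\theta}(z^N\widetilde G_+ + \bar z^N\widetilde G_-)$, so $\widetilde{\mathcal R}_1$ consists of the truncation error of the Taylor expansion of $f$ (terms of degree $\ge N+1$ in $z$) plus terms carrying an explicit factor $(\omega-\omega_*)|z|^N$ coming from expanding $\varphi_{j,k}[\omega]$, $\widetilde\theta_j(\omega)$, $\lambda_j(\omega)$ around $\omega_*$ against the fixed operator $-\Delta+\omega_*$; the growth condition \eqref{fgrowth} on $g$ and the $\Sigma$-bounds on all $\varphi_{j,k}$ make each such term $O\bigl((|z|+|\omega-\omega_*|)|z|^N\bigr)$ in $\Sigma$.
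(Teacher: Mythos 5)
The paper itself gives no argument for this proposition: it simply declares it a simplified version of Proposition~1.12 of \cite{CM22JEE}. Your plan is essentially a reconstruction of the construction behind that citation — a power-series ansatz in $z,\bar z$, solving the resulting elliptic hierarchy by inverting $\mathcal{H}_\omega-(k-j)\lambda(\omega)$ on the non-resonant slots, spending the free coefficients $\widetilde{\theta}_j$ (diagonal slots) and $\lambda_j$ (slots $(j+1,j)$, $(j,j+1)$) on the resonant ones, and stopping at degree $N$, where $N\lambda(\omega_*)>\omega_*$ places the shift in the essential spectrum so the leftover is declared to be $e^{\im\theta}(z^N\widetilde{G}_++\bar z^N\widetilde{G}_-)$, with the $(\omega-\omega_*)|z|^N$ part of the error coming (more precisely than you state) from freezing $\widetilde{G}_\pm$ at $\omega=\omega_*$ while the degree-$N$ forcing depends smoothly on $\omega$. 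So the route is the right one and matches the cited source in spirit.

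Two points need tightening. First, your solvability step at the resonant slots does not close as a parameter count. At a diagonal slot $(j,j)$ the operator to invert is $\mathcal{H}_\omega$ itself, whose generalized kernel is two-dimensional ($\mathrm{span}\{\im\sigma_3\phi_\omega,\partial_\omega\phi_\omega\}$), so Fredholm solvability imposes two real orthogonality conditions while you only adjust the single real parameter $\widetilde{\theta}_j$; likewise at $(j+1,j)$ the obstruction is a priori a complex number while $\lambda_j$ is real. What rescues the construction is the $\sigma_1$-conjugation (reality/gauge) symmetry of the ansatz: the diagonal-slot forcing is invariant under $u\mapsto\sigma_1\bar u$, which kills the pairing with the adjoint-kernel direction that would otherwise require an $\dot\omega$-type correction (this is exactly why the second entry of the modulation vector can be taken to be $0$), and the $(j+1,j)$ obstruction is real because $\zeta[\omega]$ is normalized real-valued. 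Writing "the Fredholm alternative \dots guarantees solvability after this one-dimensional correction" without this symmetry bookkeeping is the genuine gap in your outline. Second, (H2) is not used in this proposition (its hypotheses are only (H1) and (H3)): for $|k-j|\le N-1$ one has $|k-j|\,\lambda(\omega_*)\le(N-1)\lambda(\omega_*)<\omega_*$ by (H3), so every intermediate shift lies in the spectral gap and the only spectral input needed is the list $\sigma_{\mathrm d}(\mathcal{H}_{\omega_*})=\{0,\pm\lambda,\pm\im\mu\}$; embedded eigenvalues and threshold resonances only become relevant later, for the limiting-absorption resolvents $(\mathcal{H}_{\omega_*}\mp N\lambda(\omega_*)-\im 0)^{-1}$ and the Fermi Golden Rule, not for the construction of the refined profile.
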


\begin{proof}
It is a simplified version of Proposition 1.12 of \cite{CM22JEE}.
\end{proof}

Using the notion given in Proposition \ref{prop:rp}, we can now state the last assumption, the Fermi Golden Rule (FGR).
\begin{itemize}
\item[(H4)] 
Let $W=\lim_{t\to \infty}e^{\im t \mathcal{H}_{\omega_*}}e^{-\im t \sigma_3(-\Delta+\omega_*)}$ be the wave operator, $W^*$ be the adjoint of $W$, $\mathcal{F}$ be the Fourier transform, $P_*^\perp$ be the Riesz projection of $\mathcal{H}_{\omega_*}$ on $\sigma_{\mathrm{d}}(\mathcal{H}_{\omega_*})$ and $P_*=1-P_*^\perp$.
Then, $(\mathcal{F}W^*\mathfrak{G})_{\uparrow}(\sqrt{N\lambda(\omega_*)-\omega_*})\neq 0$, where $\mathfrak{G}=\sigma_3P_*\sigma_3{}^t(\widetilde{G}_+\ \widetilde{G}_-)$. 
\end{itemize}

We divide the proof of Theorem \ref{thm:main} in two parts.
The first part is the ``orbital stability" type result, which only use Assumption (H1) and $\left.\frac{d}{d\omega}\right|_{\omega=\omega_*}\|\varphi_{\omega}\|_{L^2}^2<0$:

\begin{theorem}\label{thm:orbmfd}
Let $\omega_*\in \mathcal{O}$ and assume (H1) and $\left.\frac{d}{d\omega}\right|_{\omega=\omega_*}\|\varphi_{\omega}\|_{L^2}^2<0$.
Then, there exist codimension one $C^1$-manifold $\mathcal{M}\subset H^1_{\mathrm{rad}}$, $\delta_0$, $\delta_1$, $\epsilon_1>0$ and $C>0$ s.t.\ (i) and (ii) of Theorem \ref{thm:main} holds.
Further, if $\|u_0-\varphi_{\omega_*}\|_{H^1}:=\epsilon<\epsilon_1$ and $u_0\in \mathcal{M}$, then we have $\sup_{t>0}\inf_{\theta\in \R}\|u(t)-e^{\im \theta}\varphi_{\omega_*}\|_{H^1}<C\epsilon$, where $u(t)$ is the solution of \eqref{NLS} with $u(0)=u_0$.
\end{theorem}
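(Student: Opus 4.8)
The plan is to prove Theorem \ref{thm:orbmfd} by a modulation/Lyapunov-Schatten argument built on the spectral decomposition of $\mathcal{H}_{\omega_*}$, treating the internal mode (the $\pm\lambda(\omega)$ eigenvalues of (H3)) just like any other discrete non-unstable mode — since for this part we do not need (H2) or (H4), the internal mode contributes no subtlety beyond being a coordinate that must be tracked. The unstable pair $\pm\im\mu(\omega)$ gives exactly one genuinely unstable direction (and one stable direction) in the doubled formulation, and the generalized kernel $\mathcal{N}_g(\mathcal{H}_\omega)=\mathrm{span}\{\im\sigma_3\phi_\omega,\partial_\omega\phi_\omega\}$ accounts for the symmetry. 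The manifold $\mathcal{M}$ will be cut out near $\varphi_{\omega_*}$ by a single $C^1$ condition: after modulating out the phase $\theta$ and the parameter $\omega$, and projecting onto the spectral subspaces, one writes $u = e^{\im\theta}(\varphi_\omega + a_+\xi_+ + a_-\xi_- + z\,\zeta[\omega] + \bar z\,\sigma_1\zeta[\omega] + \eta)$, where $\xi_\pm$ are the eigenfunctions for $\pm\im\mu(\omega)$, $z$ the internal-mode coordinate, and $\eta$ lies in the continuous-spectrum subspace. The codimension-one condition defining $\mathcal{M}$ is a graph condition $a_+ = h(\omega,a_-,z,\eta)$ expressing the unstable coordinate as a $C^1$ function of the others, chosen so that $a_+(t)$ does not blow up.

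First I would set up the modulation equations. Using the implicit function theorem with the non-degeneracy coming from (H1) and $\frac{d}{d\omega}\|\varphi_\omega\|_{L^2}^2<0$, decompose any $u$ near the orbit of $\varphi_{\omega_*}$ uniquely into the above coordinates with appropriate orthogonality (symplectic orthogonality) conditions on $\eta$; derive the ODEs $\dot\theta = \omega + O(\text{quadratic})$, $\dot\omega = O(\text{quadratic})$, $\dot a_\pm = \pm\mu(\omega) a_\pm + O(\text{quadratic})$, $\dot z = -\im\lambda(\omega) z + O(\text{quadratic})$, and a PDE for $\eta$ of the form $\im\dot\eta = \mathcal{H}_{\omega_*}\eta + (\text{quadratic in all variables})$. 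Here the key structural point — and this is exactly where Theorem \ref{thm:orbmfd} is weaker than Theorem \ref{thm:main} — is that we do NOT try to show $z(t)\to 0$ or that $\eta$ scatters; we only need an a priori bound $\|(\omega-\omega_*,a_+,a_-,z,\eta)\|\lesssim\epsilon$ for all $t>0$. For that, the relevant conserved/almost-conserved quantities are the energy $E$, the charge $Q=\|u\|_{L^2}^2$, and the unstable-direction bilinear form. Since $\mathcal{H}_{\omega_*}$ restricted to the orthogonal complement of the generalized kernel and the $\pm\im\mu$ directions is, by (H1) and Vakhitov--Kolokolov, positive modulo a one-dimensional negative part controlled by the constraint $Q=$const, one builds a Lyapunov functional $\Lambda = E + \omega_* Q$ whose Hessian is coercive on the "stable" part of the configuration space; then $|a_+ a_-|$ (the hyperbolic invariant) plus $\Lambda$ controls everything, and conservation of $E$, $Q$ gives the bound as long as $a_+ a_-$ stays small.

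The heart of the proof is then the construction of the graph $a_+ = h(\cdot)$. I would follow the classical Bates--Jones / Schlag / Nakanishi--Schlag scheme adapted to this setting (as in \cite{NS12CVPDE,KS06JAMS}): for each choice of "stable data" $(\omega_0,a_-^0,z_0,\eta_0)$ small, consider the map sending a candidate initial unstable coordinate $a_+^0$ to the solution trajectory, and use a shooting/fixed-point argument — the set of $a_+^0$ for which the trajectory remains in the small ball for all $t>0$ is a single point, because the $\dot a_+ = \mu a_+ + (\text{small})$ equation is a contraction after inverting the growing semigroup via the integral equation $a_+(t) = -\int_t^\infty e^{\mu(t-s)}[\text{nonlinear terms}](s)\,ds$ (the only bounded-forward solution). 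This defines $h$; its $C^1$ regularity follows from differentiating the fixed-point equation and the smoothness of all coefficient functions (Proposition \ref{prop:rp} is not needed here, but the smooth dependence of $\zeta[\omega]$, $\xi_\pm[\omega]$ on $\omega$ is). Positive invariance of $\mathcal{M}$ is automatic from the construction (the defining property "stays in the ball for all forward time" is forward-invariant); $e^{\im\theta}\mathcal{M}=\mathcal{M}$ follows since all coordinates except $\theta$ are phase-invariant; and $\{\varphi_\omega\}\subset\mathcal{M}$ because for these $a_\pm = z = \eta = 0$ and the trajectory is constant modulo phase. Property (ii) — that data off $\mathcal{M}$ leave a $\delta_1$-neighborhood — follows because if $a_+(0)\neq h(\cdot)$ then $|a_+(t)|$ grows exponentially (as long as the solution stays small, by the same integral-equation analysis) until the solution exits the $\delta_1$-ball.

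The main obstacle I expect is the interaction between the internal-mode coordinate $z$ and the a priori bound: unlike in the no-internal-mode case, $z(t)$ need not decay, and the quadratic coupling terms in the $\eta$-equation contain resonant-looking contributions $z^N$, $\bar z^N$ (this is why the refined profile and (H4) are introduced for the full theorem). For Theorem \ref{thm:orbmfd} one sidesteps this by not demanding decay — but one must still check that the nonlinear terms coupling $z$ and $\eta$ into $\dot a_\pm$ and into $\Lambda$ are genuinely quadratic (no linear leakage), which is guaranteed by the spectral orthogonality and the Hamiltonian structure, and that the Lyapunov coercivity survives the inclusion of the extra $z$-coordinate. Concretely, the subtle point is verifying that the Hessian of $\Lambda$ remains non-negative on the subspace spanned by the internal-mode and continuous directions together; this is where one uses that $\mathcal{H}_{\omega_*}$ has no negative discrete spectrum other than the one accounted for by $L_{\omega,+}$'s single negative eigenvalue (absorbed by the charge constraint) — the internal mode sits at $\pm\lambda(\omega)$ which, while in the gap, corresponds to a positive direction of the energy Hessian on the constrained space. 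Making that quadratic-form bookkeeping precise, uniformly in the small ball, is the technical crux; everything else is the standard center-manifold machinery.
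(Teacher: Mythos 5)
Your outline is sound, but it is a genuinely different construction from the one in the paper. You build the manifold by the Lyapunov--Perron method: express the unstable coordinate through the backward integral equation $a_+(t)=-\int_t^\infty e^{\mu(t-s)}[\cdots]ds$, characterize the manifold as the set of data whose trajectory stays in a small ball for all forward time, and get the graph $a_+=h(\omega,a_-,z,\eta)$ as the unique fixed point. The paper instead follows the Hadamard (graph-transform) scheme of Bates--Jones and Nakanishi--Schlag: it first replaces the modulation system by a cut-off, globally well-posed ``localized equation'' (\eqref{SLNLS}), proves Lipschitz-in-time estimates for differences of solutions (Proposition \ref{prop-GWPw}), shows that the time-$t$ flow maps the class of Lipschitz graphs $\mathscr{G}_l$ over $P_{\leq 0}\widetilde{\mathcal{H}}_0\times\R_+$ into itself and is a contraction for $t<-\tfrac12$ (Lemmas \ref{lem-inv-est}--\ref{lem-cot-map}), and takes the fixed graph $G_+^\delta$. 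A second structural difference: you introduce the internal-mode coordinate $z$ explicitly, whereas in Section \ref{sec:mfd} the paper never isolates it --- the internal modes are left inside the ``center'' block $P_\gamma$ together with the continuous spectrum, and all that is used is the coercivity of $(\,\cdot\,,\sigma_3\mathcal{H}_{\omega_*}\,\cdot\,)$ on $\mathrm{Ran}\,P_\gamma$ (Lemma \ref{lem:coersive}); your observation that the internal mode is a positive direction of the constrained Hessian is exactly why this lumping is harmless, and your separate $z$-ODE is simply unnecessary for this theorem. The stability-on-the-manifold part is essentially identical in both: Taylor expansion of $S_{\omega_*}=E+\omega_* M$ around $\varphi_{\omega_u}$ with $\omega_u$ fixed by the charge, coercivity on $P_\gamma$, the Lipschitz graph bound controlling the unstable component, and the decaying ODE controlling the stable component (the paper's \eqref{eq-csm-1}--\eqref{eq-csm-2}). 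What each route buys: the paper's cutoff-plus-graph-transform gives global solvability, makes the ejection property (ii) a clean comparison with the shadow trajectory on the graph having the same $P_{\leq 0}$ data (Lemmas \ref{lem:ejection1}--\ref{lem:ejection2}), and organizes the $C^1$ regularity proof around a linearized comparison equation (subsection 2.4); your Lyapunov--Perron characterization makes forward invariance and the exit property nearly tautological, at the cost that the $C^1$ statement is where most of the real work hides.

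Two places where your sketch is thinner than it should be, though neither breaks the approach. First, the energy quadratic form is degenerate (purely off-diagonal) on the hyperbolic plane $\mathrm{span}\{\xi_+,\xi_-\}$, so ``$\Lambda$ plus $|a_+a_-|$ controls everything'' does not close by itself: you must bound $a_+$ through the graph's small Lipschitz constant and $a_-$ by integrating its exponentially decaying ODE with quadratic forcing, exactly as in \eqref{eq-csm-1}--\eqref{eq-csm-2}; also, for (ii) it is not $a_+(t)$ itself that grows but the discrepancy from the manifold trajectory (or equivalently failure of the bounded-solution characterization), so the ejection argument should be phrased as a comparison, as in Corollary \ref{cor:ejection}. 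Second, ``$C^1$ regularity follows from differentiating the fixed-point equation'' is optimistic: with the allowed quintic growth one works in Strichartz norms, and one needs the cutoff, smoothness of the Nemytskii maps in those spaces, and a fiber-contraction (or a uniqueness-of-unstable-mode lemma as in Lemmas \ref{lem:mod-LNLS}--\ref{lem:unique-LNLS}) to pass derivatives through the infinite-time fixed point; this step is comparable in length to the paper's subsection 2.4 and should not be waved off.
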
 

By Theorem \ref{thm:orbmfd} we see that all solutions on $\mathcal{M}$ sufficiently near $\varphi_{\omega_*}$ stays near $\varphi_{\omega_*}$.
So, what remains to show is the asymptotic behavior \eqref{asymptotics:main}, which is given by the following theorem.


\begin{theorem}\label{thm:cond:asymp}
Let $\omega_*\in \mathcal{O}$ and assume (H1), $\left.\frac{d}{d\omega}\right|_{\omega=\omega_*}\|\varphi_{\omega}\|_{L^2}^2<0$ and (H2)--(H4).
Then, for arbitrary $C_0\geq 1$, there exist $\epsilon_0>0$ and $C_1>0$ s.t.\ if $u_0\in H^1_{\mathrm{rad}}$ satisfies $\epsilon:=\|u_0-\varphi_{\omega_*}\|_{H^1}<\epsilon_0$ and $\sup_{t>0}\inf_{\theta\in \R}\|u(t)-e^{\im \theta}\varphi_{\omega_*}\|_{H^1}<C_0\epsilon$, then there exists $\eta_+\in H^1_{\mathrm{rad}}$, $\omega_+\in \mathcal{O}$ and $\theta\in C([0,\infty],\R)$ s.t.\ $|\omega_+-\omega_*|+\|\eta_+\|_{H^1}\leq C_1\epsilon$ and \eqref{asymptotics:main} holds.
\end{theorem}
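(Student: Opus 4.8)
The scheme is the modulation-plus-dispersive-decay analysis of \cite{CM20APDE,CM22JEE,CM2109.08108}, the internal mode being handled through the refined profile of Proposition \ref{prop:rp} and the Fermi Golden Rule (H4); the standing hypothesis $\sup_{t>0}\inf_\theta\|u(t)-e^{\im\theta}\varphi_{\omega_*}\|_{H^1}<C_0\epsilon$ is exactly what makes the decomposition below globally well posed. Using Proposition \ref{prop:rp} and the implicit function theorem I write, for all $t\ge0$,
\[
u(t)=e^{\im\theta(t)}\big(\varphi[0,\omega(t),z(t)]+a_+(t)\xi_+[\omega(t)]+a_-(t)\xi_-[\omega(t)]+\eta(t)\big),
\]
where $\varphi[0,\omega,z]=\sum_{j+k\le N}z^j\bar z^k\varphi_{j,k}[\omega]$ is the refined profile with the phase stripped, $\xi_\pm[\omega]$ are the eigenfunctions of $\mathcal H_\omega$ for $\pm\im\mu(\omega)$, and $\eta$ is the continuous-spectral component, singled out by the standard orthogonality conditions (to $\mathcal N_g(\mathcal H_{\omega})$, to $\zeta[\omega],\sigma_1\zeta[\omega]$, and to $\xi_\pm[\omega]$), which altogether fix the six real coordinates $\theta,\omega,\Re z,\Im z,a_+,a_-$. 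Substituting into \eqref{NLS} and using \eqref{sp}, \eqref{def:rp} and the identity defining $\widetilde{\mathcal R}_1$ produces the modulation system: $\dot\omega$, $\dot\theta-\sum_{j}|z|^{2j}\widetilde\theta_j(\omega)$ and $\im\dot z-\big(\sum_{j}|z|^{2j}\lambda_j(\omega)\big)z$ are all of quadratic-or-higher order in $(z,\eta,\dot\omega,\dot\theta,\dot z)$ once \eqref{NLS} is tested against the localized modes; $a_\pm$ obey $\dot a_\pm=\pm\mu(\omega)a_\pm+(\text{higher order})$, so the a priori boundedness forces $a_+(t)=-\int_t^\infty e^{\mu(t-s)}(\cdots)(s)\,ds$ and both $a_\pm$ are $O(\epsilon)$ and tend to $0$; and
\[
\im\partial_t\eta=\mathcal H_{\omega_*}\eta+z^N\widetilde G_++\bar z^N\widetilde G_-+\mathcal R,
\]
with $\mathcal R$ gathering all faster-decaying contributions (the $\Sigma$-small $\widetilde{\mathcal R}_1$, for which $\|\widetilde{\mathcal R}_1\|_\Sigma\lesssim(|z|+|\omega-\omega_*|)|z|^N$ by Proposition \ref{prop:rp}; the terms quadratic and higher in $\eta$; products of $\dot\omega,\dot\theta,\dot z,a_\pm$ with the profile and with $\eta$; and $(\mathcal H_{\omega_*}-\mathcal H_{\omega})\eta=O(|\omega-\omega_*|)\eta$). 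The point of the refined profile is that the only term of size $|z|^N$ surviving in the $\eta$-equation is the source $z^N\widetilde G_++\bar z^N\widetilde G_-$, oscillating at $\pm N\lambda(\omega_*)$; since $N\lambda(\omega_*)>\omega_*$ by (H3), this frequency lies in $\sigma_{\mathrm{ess}}(\mathcal H_{\omega_*})$ and genuinely radiates.

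By (H2) the thresholds $\pm\omega_*$ are regular points (not resonances, no embedded eigenvalue), so the wave operator $W$ intertwines $\sigma_3(-\Delta+\omega_*)$ with $\mathcal H_{\omega_*}$ on $\mathrm{Ran}\,P_*$, is bounded on $H^1$, and $e^{\im t\mathcal H_{\omega_*}}P_*$ obeys the Strichartz and Kato (local-in-$x$, $L^2$-in-$t$) smoothing estimates of the free Schr\"odinger group. I would then run a continuity argument on $[0,T]$ controlling
\[
\mathcal B(T):=\|z\|_{L^\infty([0,T])}+\Big(\int_0^T|z(t)|^{2N}\,dt\Big)^{1/2}+\|\eta\|_{\mathrm{Strichartz}([0,T])}+\big\|\langle x\rangle^{-\sigma}\eta\big\|_{L^2_{t,x}([0,T])},
\]
establishing $\mathcal B(T)\lesssim\epsilon+\mathcal B(T)^2$ by feeding the modulation equations and the Duhamel formula for $\eta$ into these linear estimates (the $H^1$-smallness of $u$ along the orbit is already given by hypothesis), whence $\mathcal B(T)\lesssim\epsilon$ uniformly in $T$ once the Fermi Golden Rule bound below is available.

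The Fermi Golden Rule is the heart of the proof and the step I expect to be the main obstacle. After a cascade of near-identity normal-form changes of variables one removes from the $z$-equation and from $\eta$ all non-resonant monomials — in particular the component of $\eta$ slaved to $z^N\widetilde G_\pm$, read from the stationary solution of $\im\partial_t\eta=\mathcal H_{\omega_*}\eta+z^N\widetilde G_\pm$ corrected for the slow drift of $z$ — so that what remains is the resonant self-interaction of the internal mode with the continuum at frequency $\pm N\lambda$, yielding to leading order
\[
\tfrac{d}{dt}\big(|z|^2+(\text{correction})\big)\le -c\,|z|^{2N}+(\text{integrable in }t),\qquad c\sim\big|(\mathcal F W^*\mathfrak G)_{\uparrow}\big(\sqrt{N\lambda(\omega_*)-\omega_*}\big)\big|^2,
\]
with $c>0$ precisely by (H4). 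Integrating in $t$ gives $\int_0^\infty|z(t)|^{2N}\,dt\lesssim\epsilon^2$, which closes the bootstrap of the previous paragraph. The genuine difficulties here are: (a) $\lambda(\omega(t))$, hence the resonant frequency $\sqrt{N\lambda-\omega}$, varies slowly, so the stationary-phase extraction of the FGR coefficient must be carried out with $\omega(t)$-dependent (adiabatic) spectral data and the errors absorbed using $\dot\omega\in L^1$; (b) the refined-profile coordinate must be carefully reconciled with the genuine $\mathrm{Ran}\,P_*$-component of $u$; (c) the many normal-form substitutions must respect the orthogonality/gauge conditions and not spoil the Strichartz bookkeeping.

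Finally, $\int_0^\infty|z|^{2N}<\infty$, $\dot\omega\in L^1$ and the monotonicity give $z(t)\to0$, $\omega(t)\to\omega_+$ with $|\omega_+-\omega_*|\lesssim\epsilon$, and $a_\pm(t)\to0$; the Strichartz and smoothing bounds on $\eta$ together with the $H^1$-boundedness of the wave operator show that $\eta$ scatters, i.e.\ there is $\eta_+\in H^1_{\mathrm{rad}}$ with $\|\eta(t)-e^{\im t\Delta}\eta_+\|_{H^1}\to0$ and $\|\eta_+\|_{H^1}\lesssim\epsilon$. Since $\varphi[0,\omega(t),z(t)]-\varphi_{\omega(t)}=O(|z(t)|)\to0$, $\varphi_{\omega(t)}\to\varphi_{\omega_+}$ in $H^1$, and $a_\pm\xi_\pm\to0$, assembling the pieces and absorbing the residual phases into $\theta(t)$ yields \eqref{asymptotics:main}, proving Theorem \ref{thm:cond:asymp}.
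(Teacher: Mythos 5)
Your overall architecture (modulation around a refined-profile ansatz carrying the internal and stable/unstable amplitudes, a Strichartz plus local-smoothing bootstrap, and a Fermi Golden Rule step resting on (H4)) is the same as the paper's, but at the two places where the actual work happens your proposal either leaves the step open or relies on claims that do not hold as stated. First, the stable/unstable amplitudes: you control $a_\pm$ only in sup norm (``$O(\epsilon)$ and tend to $0$'') via forward/backward integration, and you do not include them in your bootstrap quantity $\mathcal B(T)$. But terms linear and quadratic in $a_\pm$ (multiplied by localized profile factors) enter the $\eta$-equation and every error term in the FGR computation, so closing your own scheme requires $\|a_\pm\|_{L^2_t}\lesssim\epsilon$, i.e.\ square-integrability in time. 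This is exactly the point the paper singles out as its new ingredient relative to \cite{CM22JEE}: the ansatz of Proposition \ref{prop:rpKAI} includes $w_\pm\xi[\omega,z]$ (with the discrete-spectral part of $z^N\widetilde G_\pm$ transferred into the modulation vector field through $\Theta_\pm$), the bootstrap \eqref{bootass} contains $\|w_\pm\|_{L^2}$, and Lemma \ref{lem:wplusminusest} proves $\|w_\pm\|_{L^2}\lesssim (C_0\epsilon)^{1/2}C_1\epsilon+\|z^N\|_{L^2}$ by an energy-type identity for $\frac{d}{dt}w_\pm^2$. Your backward-integration device could in principle give the same $L^2_t$ bound (convolution with $e^{-\mu|t-s|}$ is bounded on $L^2_t$), but as written the assertion that boundedness alone forces decay is circular — decay of the sources is only available after the bootstrap closes — and the needed time-integrability is never stated.

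Second, the FGR step, which you yourself identify as the main obstacle, is not actually carried out: you propose a cascade of normal forms with adiabatic, $\omega(t)$-dependent spectral data and list difficulties (a)--(c) without resolving them. The paper's method is designed precisely to avoid this: since $\varphi[\Theta]$ already carries the resonant monomials, no normal form is performed; one differentiates $E(\varphi[\Theta])+\omega_*Q(\varphi[\Theta])$ (Lemma \ref{lem:FGRterm}), all spectral data are frozen at $\omega_*$ with the $\omega$-dependence absorbed into $\mathcal R_1$, and the positive coefficient $\Gamma$ is extracted by the Plemelj formula applied to $(\mathcal H_{\omega_*}-N\lambda(\omega_*)-\im 0)^{-1}$ together with the auxiliary bound on $\boldsymbol{g}$ (Lemma \ref{lem:g}); nothing like adiabatic spectral tracking or $\dot\omega\in L^1$ is needed. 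Relatedly, your last step invokes $\dot\omega\in L^1$ to conclude $\omega(t)\to\omega_+$, but the modulation estimates only yield bounds of the type $|\dot\omega|\lesssim\epsilon\|\eta(t)\|_{L^6}+\cdots$, i.e.\ $L^2_t$ control (cf.\ Lemma \ref{lem:dotThetatildeTheta}), so $\dot\omega\in L^1$ is unjustified; the paper instead obtains convergence of $\omega$ from conservation of the mass $Q$ combined with the scattering of $\eta$ and the vanishing of $z,w_\pm$. In short, the skeleton is right, but the two steps that constitute the proof — the $L^2_t$ control of the unstable/stable amplitudes and the extraction of the FGR coefficient — are acknowledged rather than proved, and the $\dot\omega\in L^1$ shortcut fails as stated.
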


Notice that the claim of Theorem \ref{thm:cond:asymp} (as well as the proof Theorem \ref{thm:cond:asymp}) does not refer the center stable manifold $\mathcal{M}$, and relies only on the assumption that $u$ stays near $\varphi_{\omega_*}$.
Such property was proved and used to show the instability for some  linearly stable bound states, see \cite{CM16JNS}.

It is obvious that Theorem \ref{thm:orbmfd} and Theorem \ref{thm:cond:asymp} imply Theorem \ref{thm:main}.
In the following, we explain the strategy of the proof of both Theorems.

To prove Theorem \ref{thm:orbmfd}, we apply the Hadamard method in \cite{BJ89DR, NS12SIAM} to the modulation equation of \eqref{NLS} in Section 2.
For the proof of the estimate $\sup_{t>0}\inf_{\theta \in \R} \norm{u(t)-e^{\im \theta}\varphi_{\omega_*}}_{H^1}\leq C \norm{u_0 -\varphi_{\omega_*}}_{H^1}$ which is an assumption of Theorem \ref{thm:cond:asymp}, we modify the proof of the stability of standing waves on an invariant manifold.

For the proof of Theorem \ref{thm:cond:asymp}, we follow the proof of asymptotic stability for stable ground states given in \cite{CM22JEE} (these result dates back to \cite{SW90CMP} and \cite{BP95AMST}, see the survey \cite{CM21DCDS} for earlier results).
As we have mentioned, the difficulty of this problem lies on the existence of the internal modes, which prevents Theorem \ref{thm:cond:asymp} to be true for the linearized equation.
To extract the nonlinear interaction and show a decay estimate of the form $\int_0^T|z^N|^2\,dt \lesssim \|u_0-\varphi_{\omega_*}\|_{L^2}^2$, where $z$ is the internal component of $u$ (see Lemma \ref{lem:3mod}), we needed the assumption (H4) as well as the notion of refined profile.

\begin{remark}
Instead of refined profile, one can also use normal form argument as in \cite{Bambusi13CMPas,Cuccagna11CMP,Cuccagna14TAMS}. However, the refined profile gives more simple and short proof.
\end{remark}

\begin{remark}
If (H4) fails, we expect that the estimate $\int_0^T|z^N|^2\,dt \lesssim \|u_0-\varphi_{\omega_*}\|_{L^2}^2$ does not hold and $N$ needs to be replaced by some larger number corresponding to the "degenerate FGR condition".
For the necessity of (H4), see also \cite{LJZ2201.06490}.
\end{remark}
\noindent
In this paper, to take into account the effect of the stable and unstable modes, we further "refine" the modulation parameters so that the finite dimensional part of the solution satisfies good orthogonal property.
This will be done in Proposition \ref{prop:rpKAI}.
By this refinement, the proof will be similar to \cite{CM22JEE} except the control of the stable and unstable mode components, which we give an energy type control (see, Lemma \ref{lem:wplusminusest}).

Finally, we mention the difference between the recent results by \cite{LL2203.11371}.
In \cite{LL2203.11371}, the asymptotic stability on the center-stable manifold for the ground state of one dimensional quadratic Klein-Gordon equation is studied.
In \cite{LL2203.11371}, the linearized operator, which can be computed explicitly, posses one pair of internal mode with $N=2$.
Then, using the virial type estimates, the Darboux transform and the Fermi Golden Rule estimate developed in the series of papers \cite{KM22,KMM2,KMM1,KMM22JEMS,KMMvdB21AnnPDE}, they succeed in  proving the conditional asymptotic stability without assuming any spectral assumptions.
This was possible because they were able to prove the Fermi Golden Rule assumption (the assumption (H4) in this paper) and other properties by explicit computation.
On the other hand, in this paper, our aim is to consider \textit{generic} ground states and our aim is to show the  the asymptotic stability on the center-stable manifold under \textit{generic} assumptions such as (H2)-(H4).
Numerically verifying the assumption such as (H2)-(H4) for specific examples would be an interesting future work. 

This paper is organized as follows.
In section \ref{sec:mfd} we prove Theorem \ref{thm:orbmfd} and in section \ref{sec:condasymp}, we prove Theorem \ref{sec:condasymp}.

In the following, for $t_1<t_2$ we use
$
\mathrm{Stz}(t_1,t_2):=L^\infty((t_1,t_2),H^1)\cap L^2((t_1,t_2),W^{1,6})$ and $ \|\cdot\|_{\mathrm{Stz}(t_2,t_2)}:=\|\cdot\|_{L^\infty((t_2,t_2),H^1)}+\|\cdot\|_{L^2((t_1,t_2),W^{1,6})}.
$

\section{Center stable manifolds}\label{sec:mfd}

In this section, we prove Theorem \ref{thm:orbmfd}.

We write
\[\tilde{u}=\begin{pmatrix}
u\\
\bar{u}
\end{pmatrix}\mbox{ and } \mu=\mu(\omega_*) 
.\]

\subsection{Localized equation}



\begin{lemma}\label{lem:eig}
The purely imaginary eigenvalues of $\mathcal{H}_{\omega_*}$ are $0$ and simple eigenvalues $\pm \im \mu$. 
Moreover there exist $\tilde{\xi}_+,\tilde{\xi}_- \in H^1$ such that $\mathcal{H}_{\omega_*}\tilde{\xi}_{\pm}=\pm \im \mu \tilde{\xi}_{\pm}$ and $(\tilde{\xi}_+, \im\sigma_3 \tilde{\xi}_-)_{L^2}=1$.
\end{lemma}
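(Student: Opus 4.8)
The plan is to reduce the claim to standard spectral facts about the linearized NLS operator $\mathcal{H}_{\omega_*}$ together with the structural symmetries of $\mathcal{H}_{\omega}$. First I would recall the known location of the spectrum: since the potential part of $\mathcal{H}_{\omega_*}$ decays exponentially, $\sigma_{\mathrm{ess}}(\mathcal{H}_{\omega_*}) = (-\infty,-\omega_*] \cup [\omega_*,\infty)$ and the discrete spectrum is finite. Combined with (H1) and the Vakhitov--Kolokolov condition $\left.\frac{d}{d\omega}\right|_{\omega=\omega_*}\|\varphi_\omega\|_{L^2}^2 < 0$, Corollary 2.12 of \cite{CPV05CPAM} gives exactly one pair $\pm\im\mu$ of non-real eigenvalues with $\mu = \mu(\omega_*) > 0$, and these lie on the imaginary axis. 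The symmetry of $\sigma(\mathcal{H}_\omega)$ with respect to the real and imaginary axes (coming from $\sigma_1\mathcal{H}_\omega\sigma_1 = -\mathcal{H}_\omega$ and $\overline{\mathcal{H}_\omega} = \mathcal{H}_\omega$) then forces any purely imaginary eigenvalue other than $0$ to be exactly $\pm\im\mu$, and the generalized kernel $\mathcal{N}_g(\mathcal{H}_{\omega_*}) = \mathrm{span}\{\im\sigma_3\phi_{\omega_*}, \partial_\omega\phi_{\omega_*}\}$ is two-dimensional, so $0$ contributes nothing further to the purely imaginary point spectrum outside this list.

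Next I would establish simplicity of $\pm\im\mu$. The key point is that, because $\left.\frac{d}{d\omega}\right|_{\omega=\omega_*}\|\varphi_\omega\|_{L^2}^2 < 0$, the eigenvalue $0$ of $L_{\omega_*,+}$ is absent (that is, $\mathrm{ker}\,L_{\omega_*,+}|_{L^2_{\mathrm{rad}}} = \{0\}$ by (H1)) and $L_{\omega_*,+}$ has exactly one negative eigenvalue, while $L_{\omega_*,-} = -\Delta + \omega_* + g(\varphi_{\omega_*}^2)$ is nonnegative with kernel spanned by $\varphi_{\omega_*}$. A standard index-type computation (as in \cite{GSS} or the spectral analysis underlying \cite{CPV05CPAM}) shows that under these hypotheses $\mathcal{H}_{\omega_*}$ has precisely one unstable-stable eigenvalue pair $\pm\im\mu$ and that each is algebraically simple; alternatively one can invoke directly the conclusion of Corollary 2.12 of \cite{CPV05CPAM}, which is stated to produce a simple pair. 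So the first sentence of the Lemma follows.

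For the second sentence, let $\tilde\xi_+$ be an eigenfunction with $\mathcal{H}_{\omega_*}\tilde\xi_+ = \im\mu\tilde\xi_+$. Using the conjugation $\mathcal{H}_{\omega_*}\overline{v} = \overline{\mathcal{H}_{\omega_*} v}$ one checks that $\overline{\sigma_1 \tilde\xi_+}$ (or simply $\sigma_1\overline{\tilde\xi_+}$, whichever intertwiner is cleanest) is an eigenfunction for $-\im\mu$; call a suitably chosen such vector $\tilde\xi_-$, so $\mathcal{H}_{\omega_*}\tilde\xi_- = -\im\mu\tilde\xi_-$. It remains to arrange the normalization $(\tilde\xi_+, \im\sigma_3\tilde\xi_-)_{L^2} = 1$. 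Because $\mathcal{H}_{\omega_*}$ is self-adjoint with respect to the indefinite pairing $\langle\cdot,\cdot\rangle := (\sigma_3\,\cdot,\cdot)$ — that is, $\sigma_3\mathcal{H}_{\omega_*}$ is symmetric — eigenvectors for distinct eigenvalues $\im\mu \neq -\im\mu$ that are not mutually orthogonal in this pairing have a nonzero pairing $(\sigma_3\tilde\xi_+,\tilde\xi_-)$; the fact that this pairing is nonzero is exactly the statement that $\pm\im\mu$ is a \emph{semisimple} (here simple) eigenvalue pair, i.e.\ there is no Jordan block, which is part of the simplicity already established. Rescaling $\tilde\xi_\pm$ and absorbing a phase then yields $(\tilde\xi_+,\im\sigma_3\tilde\xi_-)_{L^2} = 1$, and these eigenfunctions lie in $H^1$ (in fact in $\Sigma$, by elliptic regularity and exponential decay of eigenfunctions associated to isolated eigenvalues in the gap).

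The main obstacle I anticipate is justifying the \emph{simplicity} and the non-degeneracy of the symplectic pairing $(\sigma_3\tilde\xi_+,\tilde\xi_-) \neq 0$ in a self-contained way: a priori $\pm\im\mu$ could carry a nontrivial Jordan block, which would both break simplicity and make the desired normalization impossible. The cleanest route is to lean on Corollary 2.12 of \cite{CPV05CPAM}, which under (H1)--(H2) and the Vakhitov--Kolokolov instability condition delivers exactly this: a single pair of algebraically simple purely imaginary eigenvalues $\pm\im\mu$. Everything else — the absence of other imaginary eigenvalues, the construction of $\tilde\xi_-$ from $\tilde\xi_+$ via complex conjugation and $\sigma_1$, and the normalization — is then routine linear algebra on the two-dimensional $\mathcal{H}_{\omega_*}$-invariant subspace $\mathrm{span}\{\tilde\xi_+,\tilde\xi_-\}$ together with elliptic regularity for the $H^1$ (indeed $\Sigma$) membership.
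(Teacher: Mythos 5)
Your route is viable but shifts the real work to a citation, and it reverses the logical order of the paper's argument. The paper's proof uses Theorems 5.1 and 5.8 of \cite{GSS90JFA} (which require only (H1) and $\left.\frac{d}{d\omega}\right|_{\omega=\omega_*}\|\varphi_{\omega}\|_{L^2}^2<0$) to get $\sigma(\mathcal{H}_{\omega_*})\setminus\R=\{\pm\im\mu\}$ with $\dim\ker(\mathcal{H}_{\omega_*}\mp\im\mu)=1$, and then proves the nondegeneracy of the pairing \emph{directly}: choosing $\tilde\xi_+$ with $\sigma_1\tilde\xi_+=\overline{\tilde\xi_+}$, decomposing $\tilde\xi_+=a_+\chi_0+b_+\sigma_3\phi_{\omega_*}+\tilde\xi_+^{\perp}$ along the unique negative direction $\chi_0$ of $\sigma_3\mathcal{H}_{\omega_*}$ given by (H1), and using $0=(\sigma_3\tilde\xi_+,\mathcal{H}_{\omega_*}\tilde\xi_+)_{L^2}$ together with coercivity of $\sigma_3\mathcal{H}_{\omega_*}$ on the complement, it shows $\im\mu(\tilde\xi_+,\sigma_3\sigma_1\tilde\xi_+)_{L^2}<0$; algebraic simplicity is then \emph{deduced from} this nonvanishing pairing. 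You instead take algebraic simplicity as an input (from Corollary 2.12 of \cite{CPV05CPAM} or an unspecified index computation) and recover the nonvanishing of $(\tilde\xi_+,\sigma_3\tilde\xi_-)_{L^2}$ from simplicity via the rank-one Riesz projection and $\mathcal{H}_{\omega_*}^{*}=\sigma_3\mathcal{H}_{\omega_*}\sigma_3$. That implication is correct, but it deserves the short projection argument rather than the bare assertion that ``nonzero pairing is exactly semisimplicity,'' since this equivalence is precisely what could fail if there were a Jordan block.

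The concrete weak point is the hypothesis under which you invoke the citation. This lemma is used to prove Theorem \ref{thm:orbmfd}, which assumes only (H1) and the Vakhitov--Kolokolov condition; (H2) is not in force here. You explicitly invoke Corollary 2.12 of \cite{CPV05CPAM} ``under (H1)--(H2)'' (as the introduction of the paper does for a different purpose), so as written your argument establishes the lemma only under an additional spectral assumption. To close this you would need either a counting result whose multiplicity count does not require absence of embedded eigenvalues and threshold resonances, or the paper's self-contained quadratic-form argument, which is exactly what the authors supply. The remaining steps in your proposal (symmetry of the spectrum, construction of $\tilde\xi_-$ from $\tilde\xi_+$ via $\sigma_1$ and conjugation, normalization, and $H^1$ membership of eigenfunctions) agree with the paper and are fine.
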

\begin{proof}
By Theorem 5.1 and Theorem 5.8 of \cite{GSS90JFA}, the assumption (H1) and $\frac{d}{d\omega}|_{\omega=\omega_*}\norm{\varphi_{\omega}}_{L^2}^2<0$ yields that there exists $\mu>0$ such that  $\sigma(\mathcal{H}_{\omega_*})\setminus \R=\{\pm \im\mu\}$ and $\mbox{dim}(\mbox{ker}(\mathcal{H}_{\omega_*}\pm \im \mu))=1$.
Let $\tilde{\xi}_+$ be an eigenfunction of $\mathcal{H}_{\omega_*}$ corresponding to $\im\mu$ satisfying $\sigma_1\tilde{\xi}_+=\overline{\tilde{\xi}_+}$.
Then, $\sigma_1\tilde{\xi}_+$ is an eigenfunction of $\mathcal{H}_{\omega_*}$ corresponding to $-\im\mu$.
By the assumption (H1), there exist $\mu_0>0$ and a eigenfunction $ \chi_0$ of $\sigma_3\mathcal{H}_{\omega_*}$ corresponding to $-\mu_0$  such that $ \sigma_1\chi=\bar{\chi}=\chi$.
We define $\tilde{\xi}_+^{\perp}$ by $\tilde{\xi}_+=a_+\chi_0+b_+\sigma_3\phi_{\omega_*}+\tilde{\xi}_+^{\perp}$, $(\tilde{\xi}_+^{\perp},\chi_0)_{L^2}=(\tilde{\xi}_+^{\perp},\sigma_3\phi_{\omega_*})_{L^2}=0$.
By the equations $\sigma_1 \sigma_3\mathcal{H}_{\omega_*}\sigma_1=\sigma_3\mathcal{H}_{\omega_*}$ and $\sigma_1\tilde{\xi}_+=\overline{\tilde{\xi}_+}$, we have
\begin{align}\label{eq-lem:eig}
0=(\sigma_3 \tilde{\xi}_+, \mathcal{H}_{\omega_*} \tilde{\xi}_+)_{L^2}=&-\mu_0|a_+|^2\norm{\chi_0}_{L^2}^2+(\tilde{\xi}_+^{\perp},\sigma_3 \mathcal{H}_{\omega_*}\tilde{\xi}_+^{\perp})_{L^2} \notag \\
=&-\mu_0|a_+|^2\norm{\chi_0}_{L^2}^2+(\sigma_1\tilde{\xi}_+^{\perp},\sigma_3 \mathcal{H}_{\omega_*}\sigma_1\tilde{\xi}_+^{\perp})_{L^2}.
\end{align}
Therefore, combining $\tilde{\xi}_+^{\perp}\neq \sigma_1\tilde{\xi}_+^{\perp}$, the equality \eqref{eq-lem:eig} and the orthogonality $(\tilde{\xi}_+^{\perp}-\sigma_1\tilde{\xi}_+^{\perp},\chi_0)_{L^2}=(\tilde{\xi}_+^{\perp}-\sigma_1\tilde{\xi}_+^{\perp},\sigma_3\phi_{\omega_*})_{L^2}=0$, we have
\begin{align*}
\im\mu(\tilde{\xi}_+,\sigma_3\sigma_1\tilde{\xi}_+)_{L^2}=&-\mu_0|a_+|^2\norm{\chi_0}_{L^2}^2+(\tilde{\xi}_+^{\perp},\sigma_3 \mathcal{H}_{\omega_*}\sigma_1\tilde{\xi}_+^{\perp})_{L^2}\\
=&-\frac{1}{2}((\tilde{\xi}_+^{\perp},\sigma_3 \mathcal{H}_{\omega_*}\tilde{\xi}_+^{\perp})_{L^2} -2(\tilde{\xi}_+^{\perp},\sigma_3 \mathcal{H}_{\omega_*}\sigma_1\tilde{\xi}_+^{\perp})_{L^2}+(\sigma_1\tilde{\xi}_+^{\perp},\sigma_3 \mathcal{H}_{\omega_*}\sigma_1\tilde{\xi}_+^{\perp})_{L^2})\\
=&-\frac{1}{2}((\tilde{\xi}_+^{\perp}-\sigma_1\tilde{\xi}_+^{\perp}), \sigma_3\mathcal{H}_{\omega_*}(\tilde{\xi}_+^{\perp}-\sigma_1\tilde{\xi}_+^{\perp}))_{L^2}< 0.
\end{align*}
Therefore, the degeneracy $(\tilde{\xi}_+,\sigma_3\sigma_1\tilde{\xi}_+)_{L^2}\neq 0$ yields that $i\mu$ is a simple eigenvalue of $\mathcal{H}_{\omega_*}$.

\end{proof}


Let 
\begin{align*}
&P_{\pm}\tilde{u}=(\tilde{u},\im\sigma_3\tilde{\xi}_{\mp})_{L^2}\tilde{\xi}_{\pm},\\
&P_{0}\tilde{u}=\frac{(\tilde{u},\im\sigma_3\partial_{\omega_*}\phi_{\omega_*})_{L^2}}{(\partial_{\omega}\phi_{\omega_*},\phi_{\omega_*})_{L^2}}i\sigma_3\phi_{\omega_*} + \frac{(\tilde{u},\phi_{\omega_*})_{L^2}}{(\partial_{\omega}\phi_{\omega_*},\phi_{\omega_*})_{L^2}}\partial_{\omega}\phi_{\omega_*},\\
&P_{d}=P_{-}+P_{+}+P_{0},\\
&P_{\gamma}=Id-P_{d},\\
&P_{\leq 0}=Id-P_{+}.
\end{align*}
We define 
\[\norm{\tilde{u}}_E =(\norm{P_{0}\tilde{u}}_{L^2}^2+|(\tilde{u},\im\sigma_3\tilde{\xi}_+)_{L^2}|^2+|(\tilde{u},\im\sigma_3 \tilde{\xi}_-)_{L^2}|^2+(P_{\gamma}\tilde{u},\sigma_3 \mathcal{H}_{\omega_*}P_{\gamma}\tilde{u})_{H^1,H^{-1}})^{\frac{1}{2}}.\]


\begin{lemma}\label{lem:coersive}
There exists $C_c>0$ such that $(P_{\gamma}\tilde{u},\sigma_3 \mathcal{H}_{\omega_*}P_{\gamma}\tilde{u})_{H^1,H^{-1}}\geq C_c \norm{P_{\gamma}\tilde{u}}_{H^1}^2$.
\end{lemma}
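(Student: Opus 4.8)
The plan is to prove the coercivity estimate $(P_\gamma\tilde u,\sigma_3\mathcal H_{\omega_*}P_\gamma\tilde u)\geq C_c\|P_\gamma\tilde u\|_{H^1}^2$ by the standard spectral decomposition argument for the linearized Hamiltonian, adapted to the range of $P_\gamma$, which is precisely the spectral subspace complementary to the generalized kernel $\mathcal N_g(\mathcal H_{\omega_*})=\mathrm{span}\{\im\sigma_3\phi_{\omega_*},\partial_\omega\phi_{\omega_*}\}$ and to the unstable/stable eigenvectors $\tilde\xi_\pm$.

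\medskip

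First I would reduce to the self-adjoint operators $L_{\omega_*,+}$ and $L_{\omega_*,-}$ underlying $\sigma_3\mathcal H_{\omega_*}$: writing $\tilde u = {}^t(u\ \bar u)$ and using real/imaginary parts $v=\Re u$, $w=\Im u$, the quadratic form $(\tilde u,\sigma_3\mathcal H_{\omega_*}\tilde u)$ splits as a sum $\langle L_{\omega_*,+}v,v\rangle + \langle L_{\omega_*,-}w,w\rangle$ (up to a harmless constant factor), where $L_{\omega_*,-}=-\Delta+\omega_*+g(\varphi_{\omega_*}^2)$ is nonnegative with kernel $\varphi_{\omega_*}$, and $L_{\omega_*,+}$ by (H1) has exactly one simple negative eigenvalue and trivial kernel on $L^2_{\mathrm{rad}}$. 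The key point is that the Vakhitov–Kolokolov condition $\frac{d}{d\omega}\|\varphi_\omega\|_{L^2}^2<0$ guarantees that the negative direction of $L_{\omega_*,+}$ is "seen" by the generalized kernel: more precisely, $\langle L_{\omega_*,+}\partial_\omega\varphi_{\omega_*},\partial_\omega\varphi_{\omega_*}\rangle = -\tfrac12\frac{d}{d\omega}\|\varphi_\omega\|_{L^2}^2>0$ is false — it is $<0$ — so $\partial_\omega\varphi_{\omega_*}$ lies in the negative cone, which is exactly what lets one conclude positivity on the orthogonal complement.

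\medskip

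The technical heart is a standard but slightly delicate lemma (going back to Weinstein): if $L$ is a self-adjoint operator on $L^2$, bounded below, with essential spectrum $[c,\infty)$ for some $c>0$, exactly one negative eigenvalue with normalized eigenvector $\psi_-$, and trivial kernel, and if $\phi$ is a function with $\langle L^{-1}\phi,\phi\rangle<0$, then $L$ is positive definite (coercive) on $\{\psi_-\}^\perp\cap\{\phi\}^\perp$ — more precisely, $\langle Lf,f\rangle\geq c_0\|f\|_{L^2}^2$ there, and then elliptic regularity ($\langle Lf,f\rangle\gtrsim\|f\|_{H^1}^2 - C\|f\|_{L^2}^2$) upgrades this to the $H^1$ bound. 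I would apply this with $L=L_{\omega_*,+}$, $\phi=\varphi_{\omega_*}$ (noting $L_{\omega_*,+}\partial_\omega\varphi_{\omega_*}=-\varphi_{\omega_*}$ from differentiating \eqref{sp}, hence $\langle L_{\omega_*,+}^{-1}\varphi_{\omega_*},\varphi_{\omega_*}\rangle=-\langle\partial_\omega\varphi_{\omega_*},\varphi_{\omega_*}\rangle=-\tfrac12\frac{d}{d\omega}\|\varphi_\omega\|_{L^2}^2>0$ — wait, I need to recheck the sign, but the VK condition is exactly calibrated to make this work); for $L_{\omega_*,-}$, which is merely nonnegative, coercivity on $\{\varphi_{\omega_*}\}^\perp$ is immediate from the simplicity of the zero eigenvalue plus elliptic regularity. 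Finally, I must check that the orthogonality conditions built into $P_\gamma$ (namely $\tilde u\perp\im\sigma_3\partial_\omega\phi_{\omega_*}$, $\tilde u\perp\phi_{\omega_*}$, and $\tilde u\perp\im\sigma_3\tilde\xi_\mp$) translate, after the real/imaginary splitting, into exactly the orthogonality conditions required by the abstract lemma for each of $L_{\omega_*,\pm}$; this bookkeeping is where the roles of the two generalized-kernel vectors and the two hyperbolic eigenvectors get matched to the negative direction and the zero mode.

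\medskip

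The main obstacle I anticipate is handling the presence of the \emph{complex} eigenvalues $\pm\im\mu$: the eigenvectors $\tilde\xi_\pm$ are not real, so $\sigma_3\mathcal H_{\omega_*}$ restricted to their span is not controlled by the naive $L_{\pm}$ splitting, and one must verify that projecting them out, together with the generalized kernel, removes precisely the non-positive part of the quadratic form $(\cdot,\sigma_3\mathcal H_{\omega_*}\cdot)$. The clean way to do this is to invoke the known spectral picture (as in \cite{GSS90JFA} and the references already cited): under (H1) and VK$<0$, the quadratic form $(\tilde u,\sigma_3\mathcal H_{\omega_*}\tilde u)$ has Morse index accounted for entirely by the two-dimensional space $\mathrm{span}\{\chi_0\oplus?,\dots\}$ contributing one negative direction plus the generalized kernel, and once $\tilde\xi_\pm$ and $\mathcal N_g$ are removed the form is a genuine inner product equivalent to $\|\cdot\|_{H^1}^2$ on the complement. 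Packaging all this as "the restriction of $\sigma_3\mathcal H_{\omega_*}$ to $\mathrm{Ran}\,P_\gamma$ is a positive operator with spectrum bounded away from $0$, hence coercive in $H^1$ by elliptic regularity" is the cleanest route, and the constant $C_c$ comes from the spectral gap.
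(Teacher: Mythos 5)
Your plan has a genuine gap, and it sits exactly where you flagged your own uncertainty about the sign. Differentiating \eqref{sp} in $\omega$ gives $L_{\omega_*,+}\partial_\omega\varphi_{\omega_*}=-\varphi_{\omega_*}$, so
$\langle L_{\omega_*,+}^{-1}\varphi_{\omega_*},\varphi_{\omega_*}\rangle=-\langle\partial_\omega\varphi_{\omega_*},\varphi_{\omega_*}\rangle=-\tfrac12\left.\tfrac{d}{d\omega}\right|_{\omega=\omega_*}\norm{\varphi_\omega}_{L^2}^2$,
and under the standing hypothesis of this paper this quantity is \emph{positive}, since $\left.\tfrac{d}{d\omega}\right|_{\omega=\omega_*}\norm{\varphi_\omega}_{L^2}^2<0$. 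Hence the Weinstein-type lemma you want to apply (which needs $\langle L^{-1}\phi,\phi\rangle<0$) is not available: $L_{\omega_*,+}$ restricted to $\{\varphi_{\omega_*}\}^\perp$ still carries a negative direction, and this surviving negative direction is precisely what produces the exponentially unstable pair $\pm\im\mu$. Consequently the quadratic form $(\cdot,\sigma_3\mathcal{H}_{\omega_*}\cdot)$ is \emph{not} coercive on the complement of the generalized kernel alone; coercivity holds only on $\mathrm{Ran}\,P_\gamma$, i.e.\ after additionally projecting out $\tilde{\xi}_\pm$. Your real/imaginary ($L_+$/$L_-$) splitting cannot see this cancellation, because $\tilde{\xi}_\pm$ are genuinely complex and couple the two components, so the orthogonality conditions encoded in $P_\pm$ do not translate into separate orthogonality conditions for $L_{\omega_*,+}$ and $L_{\omega_*,-}$. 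Showing that removing the two-dimensional span of $\tilde{\xi}_\pm$ absorbs the residual negative direction is the entire content of the lemma, and your proposal does not supply an argument for it.

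Your closing paragraph resolves this obstacle only by "invoking the known spectral picture," which is in effect the paper's proof: the paper deduces the lemma directly from Lemma 2.2 of \cite{NS12CVPDE} together with Lemma \ref{lem:eig} (simplicity of $\pm\im\mu$ and the normalization $(\tilde{\xi}_+,\im\sigma_3\tilde{\xi}_-)_{L^2}=1$, which is needed so that $P_\pm$, and hence $P_\gamma$, are well defined and the discrete spectral subspace is correctly split off). So the portion of your argument that goes beyond that citation — the VK/Weinstein reduction — is the portion that fails under the sign convention of this (unstable) regime; if you want a self-contained proof you would need to run the index-counting argument on the full non-self-adjoint problem (as in \cite{NS12CVPDE} or \cite{GSS90JFA}), not on $L_\pm$ separately with orthogonality to $\varphi_{\omega_*}$ and $\partial_\omega\varphi_{\omega_*}$ alone.
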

\begin{proof}
The conclusion follows  Lemma 2.2 in \cite{NS12CVPDE} and Lemma \ref{lem:eig}.
\end{proof}


We define the conservation laws
\begin{align*}
&E(u)=\frac{1}{2}\int_{\R^3} |\nabla u|^2 dx +\int_{\R^3} G(u)dx, \quad \left(G(r)=\int_0^r g(s^2)s \ ds\right)\\
&M(u)=\frac{1}{2}\int_{\R^3}|u|^2 dx\\
&S_{\omega}(u)=E(u)+\omega M(u).
\end{align*}


By the change of variable $\tilde{u}(t)=e^{\im (\theta(t)+\omega_*t)\sigma_3}(\tilde{v}(t)+\phi_{\omega(t)})$, we rewrite the equation \eqref{NLS} as 
\begin{align}\label{NLSv}
\partial_t \tilde{v}=-\im \mathcal{H}_{\omega_*}\tilde{v}-\im\sigma_3(\dot{\theta}+\omega_*-\omega)\phi_{\omega_*}-\dot{\omega}\partial_{\omega}\phi_{\omega_*}+F_2(\tilde{v},\omega,\dot{\omega},\dot{\theta}),
\end{align}
where
\begin{align}
&F_2(\tilde{v},\omega,\dot{\omega},\dot{\theta})=-\im (\dot{\theta}+\omega_*-\omega)\sigma_3(\phi_{\omega}-\phi_{\omega_*})-\im\sigma_3\dot{\theta}\tilde{v}-\dot{\omega}(\partial_{\omega}\phi_{\omega}-\partial_{\omega}\phi_{\omega_*})-\im\sigma_3F_1(\tilde{v},\phi_{\omega_*}),\label{nt-NLSv1}\\
&F_1(\tilde{v},\phi_{\omega_*})=
\begin{pmatrix}
g(|\varphi_{\omega_*}+v|^2)(\varphi_{\omega_*}+v)-g(\varphi_{\omega_*}^2)\varphi_{\omega_*}-(g(\varphi_{\omega_*}^2)+g'(\varphi_{\omega_*}^2)\varphi_{\omega_*})v-g'(\varphi_{\omega_*}^2)\bar{v}\\
g(|\varphi_{\omega_*}+v|^2)(\varphi_{\omega_*}+\bar{v})-g(\varphi_{\omega_*}^2)\varphi_{\omega_*}-(g(\varphi_{\omega_*}^2)+g'(\varphi_{\omega_*}^2)\varphi_{\omega_*})\bar{v}-g'(\varphi_{\omega_*}^2)v
\end{pmatrix}.\label{nt-NLSv2}
\end{align}
Let $\chi \in C^{\infty}(\R)$ satisfying $0\leq \chi \leq 1$ and
\begin{align*}
\chi(r)=
\begin{cases}
1, & |r|\leq 1\\
0, & |r|\geq 2.
\end{cases}
\end{align*}
We define 
\begin{align*}
\chi_{\delta}(\tilde{v},\omega)=\chi(\delta^{-2}(\norm{\tilde{v}}_{H^1}^2+|\omega-\omega_*|^2)).
\end{align*}
We consider the following localized equation of \eqref{NLSv}.
\begin{align}\label{LNLS}
\partial_t \tilde{v}=-\im \mathcal{H}_{\omega_*}\tilde{v}-\im\sigma_3(\dot{\theta}+\omega_*-\omega)\phi_{\omega_*}-\dot{\omega}\partial_{\omega}\phi_{\omega_*}+\chi_{\delta}(\tilde{v},\omega)F_2(\tilde{v},\omega,\dot{\omega},\dot{\theta}).
\end{align}
To fix the modulation parameters $\omega,\theta$ we consider the orthogonal condition
\[ (\tilde{v}(t),\im\sigma_3 \partial_{\omega}\phi_{\omega_*})_{L^2}=(\tilde{v}(t), \phi_{\omega_*})_{L^2}=0. \leqno{\mbox{\rm (C1)}}\]

By (C1), we obtain the following equations:

\begin{align*}
0=&\frac{d}{dt}(\tilde{v},\im\sigma_3\partial_{\omega}\phi_{\omega_*})_{L^2}\\
=&(-\im\mathcal{H}_{\omega_*}\tilde{v}-\im\sigma_3(\dot{\theta}+\omega_*-\omega)\phi_{\omega_*}-\dot{\omega}\partial_{\omega}\phi_{\omega_*}+\chi_{\delta}F_2,\im\sigma_3\partial_{\omega}\phi_{\omega_*})_{L^2}\\
=&(\tilde{v}, \phi_{\omega_*})_{L^2}-(\dot{\theta}+\omega_*-\omega)(\phi_{\omega_*},\partial_{\omega_*}\phi_{\omega_*})_{L^2}+\chi_{\delta}(F_2,\im\sigma_3\partial_{\omega}\phi_{\omega_*})_{L^2}
\end{align*}

\begin{align*}
0=&\frac{d}{dt}(\tilde{v},\phi_{\omega_*})_{L^2}
=-\dot{\omega}(\phi_{\omega_*},\partial_{\omega_*}\phi_{\omega_*})_{L^2}+\chi_{\delta}(F_2,\phi_{\omega_*})_{L^2}
\end{align*}

Then, for any solution $(\tilde{v},\omega,\theta)$ of \eqref{LNLS}, $(\tilde{v},\omega,\theta)$ satisfies (C1) if and only if $(\tilde{v},\omega,\theta)$ satisfies
\[
\begin{cases}
(\tilde{v}(0),\im\sigma_3 \partial_{\omega}\phi_{\omega_*})_{L^2}=(\tilde{v}(0), \phi_{\omega_*})_{L^2}=0,\\
\dot{\theta}+\omega_*-\omega=(\phi_{\omega_*},\partial_{\omega_*}\phi_{\omega_*})_{L^2}^{-1}\chi_{\delta}(F_2,\im\sigma_3\partial_{\omega}\phi_{\omega_*})_{L^2},\\
\dot{\omega}=(\phi_{\omega_*},\partial_{\omega_*}\phi_{\omega_*})_{L^2}^{-1}\chi_{\delta}(F_2,\phi_{\omega_*})_{L^2}.
\end{cases}\leqno{\mbox{\rm(C2)}}
\]

To construct, we introduce the following Cauchy problem of the system:
\begin{align}\label{SLNLS}
\begin{cases}
\partial_t \tilde{v}=-\im \mathcal{H}_{\omega_*}\tilde{v}-\im\sigma_3(\dot{\theta}+\omega_*-\omega)\phi_{\omega_*}-\dot{\omega}\partial_{\omega}\phi_{\omega_*}+\chi_{\delta}(\tilde{v},\omega)F_2(\tilde{v},\omega,\dot{\omega},\dot{\theta}),\\
\dot{\theta}+\omega_*-\omega=(\phi_{\omega_*},\partial_{\omega_*}\phi_{\omega_*})_{L^2}^{-1}\chi_{\delta}(F_2(\tilde{v},\omega,\dot{\omega},\dot{\theta}),\im\sigma_3\partial_{\omega}\phi_{\omega_*})_{L^2},\\
\dot{\omega}=(\phi_{\omega_*},\partial_{\omega_*}\phi_{\omega_*})_{L^2}^{-1}\chi_{\delta}(F_2(\tilde{v},\omega,\dot{\omega},\dot{\theta}),\phi_{\omega_*})_{L^2},\\
(\tilde{v}(0),\im\sigma_3 \partial_{\omega}\phi_{\omega_*})_{L^2}=(\tilde{v}(0), \phi_{\omega_*})_{L^2}=0,\\
(\tilde{v}(0),\omega(0),\theta(0)) \in \widetilde{\mathcal{H}}_0\times \R_+\times \R,
\end{cases}
\end{align}
where
\begin{align*}
 \widetilde{\mathcal{H}}_0=\{\tilde{u} \in H^1_{rad}(\R^3,\C^2) \mid \sigma_1\tilde{u}=\bar{\tilde{u}}, (\tilde{u},\im\sigma_3 \partial_{\omega}\phi_{\omega_*})_{L^2}=(\tilde{u}, \phi_{\omega_*})_{L^2}=0\}.
\end{align*}

\begin{lemma}\label{lem:orth-ini}
There exists $\delta>0$ s.t.\ if $\inf_{\theta}\norm{\tilde{u}-e^{\im\theta \sigma_3}\phi_{\omega}}_{H^1}<\delta$, then, there exist unique $\theta(\tilde{u}) \in \R/2\pi \Z$ and $\omega(\tilde{u})>0$ s.t.\ $\tilde{v}=e^{-\im \theta (u) \sigma_3}\tilde{u} -\phi_{\omega(\tilde{u})} \in \widetilde{\mathcal{H}}_0$.
\end{lemma}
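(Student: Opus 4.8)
The plan is to realize the modulation parameters $(\theta,\omega)$ via the implicit function theorem applied to the map that records the two orthogonality conditions in (C1). Concretely, I would introduce
\begin{align*}
\Phi:\ \R/2\pi\Z\times(0,\infty)\times\{\tilde u:\inf_\theta\|\tilde u-e^{\im\theta\sigma_3}\phi_\omega\|_{H^1}<\delta\}\ \longrightarrow\ \R^2,\qquad
\Phi(\theta,\omega,\tilde u)=\begin{pmatrix}(e^{-\im\theta\sigma_3}\tilde u-\phi_\omega,\im\sigma_3\partial_\omega\phi_{\omega_*})_{L^2}\\ (e^{-\im\theta\sigma_3}\tilde u-\phi_\omega,\phi_{\omega_*})_{L^2}\end{pmatrix}.
\end{align*}
When $\tilde u=e^{\im\theta_0\sigma_3}\phi_{\omega_0}$ lies exactly on the ground-state orbit, one checks $\Phi(\theta_0,\omega_0,\tilde u)=0$ (using that $\phi_{\omega_0}$ is real and radial, so the $\im\sigma_3$-weighted pairing against the real function $\partial_\omega\phi_{\omega_*}$ vanishes, while the remaining pairing is $(\phi_{\omega_0}-\phi_{\omega_*},\phi_{\omega_*})_{L^2}$, which vanishes at $\omega_0=\omega_*$; more robustly one argues by a continuity/degree count rather than an exact root, see below). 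The first step is therefore to compute the $2\times2$ Jacobian $D_{(\theta,\omega)}\Phi$ at a point on the orbit with $\omega$ near $\omega_*$: differentiating in $\theta$ brings down a factor $-\im\sigma_3$, differentiating in $\omega$ brings down $-\partial_\omega\phi_\omega$, and one obtains, at leading order, the matrix
\begin{align*}
D_{(\theta,\omega)}\Phi\big|_{\text{orbit}}\ \approx\ \begin{pmatrix}(-\im\sigma_3\phi_{\omega_*},\im\sigma_3\partial_\omega\phi_{\omega_*})_{L^2} & 0\\ 0 & -(\partial_\omega\phi_{\omega_*},\phi_{\omega_*})_{L^2}\end{pmatrix}
=\begin{pmatrix}-(\phi_{\omega_*},\partial_\omega\phi_{\omega_*})_{L^2} & 0\\ 0 & -(\partial_\omega\phi_{\omega_*},\phi_{\omega_*})_{L^2}\end{pmatrix},
\end{align*}
which is invertible precisely because $(\phi_{\omega_*},\partial_\omega\phi_{\omega_*})_{L^2}=\tfrac12\tfrac{d}{d\omega}\|\varphi_\omega\|_{L^2}^2\big|_{\omega_*}\neq0$ by the Vakhitov--Kolokolov hypothesis. (The off-diagonal entries are $O(|\omega-\omega_*|)$ by the parity/reality structure, so invertibility persists for $\omega$ in a neighborhood of $\omega_*$ and $\delta$ small.)

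Next I would apply the implicit function theorem: since $\Phi$ is $C^1$ in all arguments ($\omega\mapsto\phi_\omega\in C^2(\mathcal O,H^1_{\mathrm{rad}}\cap L^\infty)$ by hypothesis, and the pairings are smooth bilinear forms), and since for any $\tilde u$ with $\inf_\theta\|\tilde u-e^{\im\theta\sigma_3}\phi_\omega\|_{H^1}<\delta$ there is \emph{some} nearby $(\theta_0,\omega_0)$ with $\|\tilde u-e^{\im\theta_0\sigma_3}\phi_{\omega_0}\|_{H^1}<\delta$ at which $\Phi$ is small (of size $O(\delta)$), the quantitative IFT around that base point produces, for $\delta$ sufficiently small, a unique solution $(\theta(\tilde u),\omega(\tilde u))$ in a fixed neighborhood of $(\theta_0,\omega_0)$ with $\Phi(\theta(\tilde u),\omega(\tilde u),\tilde u)=0$, i.e.\ $\tilde v:=e^{-\im\theta(\tilde u)\sigma_3}\tilde u-\phi_{\omega(\tilde u)}$ satisfies both conditions in (C1). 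Uniqueness in the full neighborhood (not just locally near $(\theta_0,\omega_0)$) follows by a standard connectedness argument: two solutions $\theta$ differing by a non-infinitesimal amount would have to make $e^{-\im\theta\sigma_3}\tilde u-\phi_\omega$ large, contradicting $\delta$ smallness, and along the curve of candidate solutions the Jacobian never degenerates. Finally one must record that $\tilde v\in\widetilde{\mathcal H}_0$: the conditions $(\tilde v,\im\sigma_3\partial_\omega\phi_{\omega_*})_{L^2}=(\tilde v,\phi_{\omega_*})_{L^2}=0$ are exactly $\Phi=0$, and the reality constraint $\sigma_1\tilde v=\overline{\tilde v}$ is inherited automatically because $\sigma_1\tilde u=\overline{\tilde u}$ holds for any $\tilde u={}^t(u\ \bar u)$, $\phi_\omega$ is real, and $e^{-\im\theta\sigma_3}$ commutes with complex conjugation composed with $\sigma_1$.

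The main obstacle is purely bookkeeping rather than conceptual: one must verify that the Jacobian is invertible \emph{uniformly} for all $\omega$ in a $\delta$-neighborhood of $\omega_*$ (not just at $\omega_*$), and that the implicit solution stays in the regime where the cutoff and later estimates are valid — this amounts to tracking the $O(|\omega-\omega_*|)$ and $O(\delta)$ error terms and shrinking $\delta$ accordingly. The reality/parity cancellations that kill the dangerous off-diagonal Jacobian entries are the one place where the radial assumption and the specific form of $\widetilde{\mathcal H}_0$ are genuinely used; everything else is the textbook modulation-parameter construction (cf.\ \cite{GSS90JFA, NS11Book}).
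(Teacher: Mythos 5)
Your proposal is correct and follows essentially the same route as the paper, which simply invokes the ``standard implicit function theorem argument'': you set up the two orthogonality conditions as a map $\Phi(\theta,\omega,\tilde u)\in\R^2$, check that the $(\theta,\omega)$-Jacobian on the orbit is (block-)diagonal with entries proportional to $(\phi_{\omega_*},\partial_\omega\phi_{\omega_*})_{L^2}\neq 0$ by the Vakhitov--Kolokolov hypothesis, and conclude existence, uniqueness, and the inherited constraint $\sigma_1\tilde v=\overline{\tilde v}$. The only blemish is the aside about $(\phi_{\omega_0}-\phi_{\omega_*},\phi_{\omega_*})_{L^2}$: with your own definition of $\Phi$ (subtracting $\phi_\omega$ at the variable $\omega$), $\Phi$ vanishes exactly on the orbit, so no continuity/degree argument is needed there.
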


\begin{proof}
The lemma follows from standard implicit function theorem argument.
\end{proof}

We define the distance $d$ on $\widetilde{\mathcal{H}}_0\times \R_+$ by
\begin{align*}
d(\mathbf{v}_0,\mathbf{v}_1)=(\norm{\tilde{v}_0-\tilde{v}_1}_{E}^2+|\log w_0 -\log w_1|^2)^\frac{1}{2}, \quad \mathbf{v}_j=(\tilde{v}_j,\omega_j) \in \widetilde{H}_0\times \R_+\ (j=0,1).
\end{align*}

Then, we have the following estimate of solutions to the problem \eqref{SLNLS}.
\begin{proposition}\label{prop-GWPw}
For $0<\delta\ll \max\{\omega_*,1\}$, $\eqref{SLNLS}$ is globally well-posed.
Moreover, solutions $(\tilde{v}_j,\omega_j,\theta_j)$ to $\eqref{SLNLS}$ $(j=0,1)$ satisfy the following:
\begin{align}
&(\tilde{v}_0, \dot{\omega}_0,\dot{\theta}_0) \in \mathrm{Stz}(-T,T) \times L^2(-T,T)\times L^2(-T,T), \quad T>0,\label{est-GWP-1}\\
&\norm{\tilde{v}_0(t)-\tilde{v}_1(t)}_{\mbox{\rm Stz}(-1,1)}\lesssim  d(\mathbf{v}_0(0),\mathbf{v}_1(0)),\label{est-GWP-2}\\
&\norm{\dot{\theta}_0(t)-\omega_0(t)-\dot{\theta}_1(t)+\omega_1(t)}_{L^2(-1,1)}+\norm{\dot{\omega}_0(t)-\dot{\omega}_1(t)}_{L^2(-1,1)}\lesssim \delta d(\mathbf{v}_0(0),\mathbf{v}_1(0)),\label{est-GWP-3}\\
&\sup_{|t|\leq 1}\left| \log \omega_0(t)-\log \omega_1(t)-\log \omega_0(0)+\log \omega_1(0)\right| \lesssim \delta  d(\mathbf{v}_0(0),\mathbf{v}_1(0)),\label{est-GWP-4}\\
&\sup_{|t|\leq 1}\norm{P_d(\tilde{v}_0(t)-e^{-\im t\mathcal{H}_{\omega_*}}\tilde{v}_0(0))}_{H^1} \lesssim \min\{\norm{\tilde{v}_0(0)}_{H^1}(\norm{\tilde{v}_0(0)}_{H^1}+|\omega_*-\omega_0(0)|),\delta^2 \},\label{est-GWP-5}\\
&\sup_{|t|\leq 1}\left | \norm{P_\gamma \tilde{v}_0(t)}_E^2-\norm{P_\gamma \tilde{v}_0(0)}_E^2 \right| \lesssim \min\{\norm{\tilde{v}_0(0)}_{H^1}(\norm{\tilde{v}_0(0)}_{H^1}^{2}+|\omega_*-\omega_0(0)|^2),\delta^3 \}, \label{est-GWP-6}\\
&\sup_{|t|\leq 1}\norm{P_d(\tilde{v}_0(t)-\tilde{v}_1(t)-e^{-\im t\mathcal{H}_{\omega_*}}(\tilde{v}_0(0)-\tilde{v}_1(0)))}_{H^1} \lesssim \delta d(\mathbf{v}_0(0),\mathbf{v}_1(0)),\label{est-GWP-7}\\
&\sup_{|t|\leq 1}\left | \norm{P_\gamma (\tilde{v}_0(t)-\tilde{v}_1(t))}_E^2-\norm{P_\gamma (\tilde{v}_0(0)-\tilde{v}_1(0))}_E^2 \right| \lesssim \delta d(\mathbf{v}_0(0),\mathbf{v}_1(0))^2.\label{est-GWP-8}
\end{align}
\end{proposition}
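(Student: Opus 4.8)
The plan is to set up the Cauchy problem \eqref{SLNLS} as a fixed point problem in the space $\mathrm{Stz}(-T,T)$ for $\tilde v$ coupled with $L^2$ control of $(\dot\theta+\omega_*-\omega,\dot\omega)$, using the Duhamel formula for the semigroup $e^{-\im t\mathcal{H}_{\omega_*}}$ restricted via the spectral projections. First I would record the necessary linear estimates: Strichartz estimates for $e^{-\im t\mathcal{H}_{\omega_*}}P_\gamma$ (available from (H1), (H2) and the absence of embedded eigenvalues/resonances at $\pm\omega_*$, although here since we are on a bounded time interval only the trivial unitary-type bounds are needed together with the exponential growth/decay of the one-dimensional modes $P_\pm$ and the explicit linear flow on $P_0$), and the fact that the cutoff $\chi_\delta$ localizes $\tilde v$ to the ball $\|\tilde v\|_{H^1}^2+|\omega-\omega_*|^2\le 4\delta^2$. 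Then one checks that $F_2$, defined in \eqref{nt-NLSv1}--\eqref{nt-NLSv2}, is a locally Lipschitz map on that ball with Lipschitz constant $O(\delta)$: this uses $g\in C^\infty$, the growth condition \eqref{fgrowth}, the $C^2$ dependence $\omega\mapsto\varphi_\omega$, and the smoothness of the cutoff, so that $\chi_\delta F_2$ is globally Lipschitz with small constant. Global well-posedness of \eqref{SLNLS} then follows by a standard contraction/continuation argument since the nonlinearity is globally Lipschitz once truncated; one also recovers \eqref{est-GWP-1} by reading off the Strichartz norm from the Duhamel formula and the $L^2_t$ bound on $(\dot\theta,\dot\omega)$ from the last two lines of \eqref{SLNLS} and the $L^2_t$-in-time smallness of $F_2$.

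Next I would prove the Lipschitz-in-data estimates \eqref{est-GWP-2}--\eqref{est-GWP-4}. Writing $w=\tilde v_0-\tilde v_1$, subtracting the two Duhamel formulas, and using that $\chi_\delta F_2$ has Lipschitz constant $O(\delta)$ in $(\tilde v,\omega)$ jointly (measured in the $E$-norm, which is equivalent to $H^1$ on the relevant subspace thanks to Lemma \ref{lem:coersive}), one gets on the short interval $|t|\le 1$ a bound $\|w\|_{\mathrm{Stz}(-1,1)}\lesssim d(\mathbf v_0(0),\mathbf v_1(0))+\delta\|w\|_{\mathrm{Stz}(-1,1)}+\delta|\log\omega_0-\log\omega_1|$; absorbing the $\delta$ term for $\delta$ small yields \eqref{est-GWP-2}. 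For \eqref{est-GWP-3} one plugs the bound just obtained into the modulation equations (lines 2--3 of \eqref{SLNLS}), using that $(F_2(\tilde v_0,\dots)-F_2(\tilde v_1,\dots),\cdot)_{L^2}$ is $O(\delta)$ times $d(\mathbf v_0(0),\mathbf v_1(0))$ because the first, third and fourth terms of $F_2$ are at least quadratic or carry a factor $(\omega-\omega_*)$ or $\dot\theta,\dot\omega$ (hence small), and integrating in $t$ gives the $L^2(-1,1)$ estimate. Estimate \eqref{est-GWP-4} is then immediate by integrating $\dot\omega_0-\dot\omega_1$ and using \eqref{est-GWP-3}.

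For \eqref{est-GWP-5}--\eqref{est-GWP-8} I would exploit the structure of the finite-dimensional part $P_d$ and the coercive part $P_\gamma$ separately. Applying $P_d$ to the Duhamel formula for \eqref{LNLS}: since $P_d$ commutes with $\mathcal{H}_{\omega_*}$ and the linear part $-\im\sigma_3(\dot\theta+\omega_*-\omega)\phi_{\omega_*}-\dot\omega\partial_\omega\phi_{\omega_*}$ lies in the range of $P_0$, one has $P_d(\tilde v(t)-e^{-\im t\mathcal{H}_{\omega_*}}\tilde v(0))=\int_0^t e^{-\im(t-s)\mathcal{H}_{\omega_*}}P_d\big(-\im\sigma_3(\dot\theta+\omega_*-\omega)\phi_{\omega_*}-\dot\omega\partial_\omega\phi_{\omega_*}+\chi_\delta F_2\big)\,ds$, but by (C2) the $P_0$-component of the full right-hand side vanishes (that is exactly how $\dot\theta,\dot\omega$ were chosen), so only $P_\pm\chi_\delta F_2$ survives; bounding $\|\chi_\delta F_2\|_{H^1}\lesssim \min\{\|\tilde v\|_{H^1}(\|\tilde v\|_{H^1}+|\omega-\omega_*|),\delta^2\}$ (the quadratic bound from $F_1$ plus the linear-times-small bounds on the other terms, and the crude $\delta^2$ bound from the cutoff) and integrating over $|t|\le1$ gives \eqref{est-GWP-5}; \eqref{est-GWP-7} follows the same way for the difference, using the $O(\delta)$-Lipschitz bound on $\chi_\delta F_2$. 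For \eqref{est-GWP-6} I would differentiate $\|P_\gamma\tilde v(t)\|_E^2=(P_\gamma\tilde v,\sigma_3\mathcal H_{\omega_*}P_\gamma\tilde v)$ in $t$; the quadratic form is conserved under $e^{-\im t\mathcal H_{\omega_*}}$, so $\tfrac{d}{dt}\|P_\gamma\tilde v\|_E^2=2(P_\gamma\tilde v,\sigma_3\mathcal H_{\omega_*}P_\gamma(\text{nonlinear + modulation terms}))$, and since $P_\gamma$ kills $\phi_{\omega_*},\partial_\omega\phi_{\omega_*}$ modulo $O(\omega-\omega_*)$ corrections, the integrand is $O(\|\tilde v\|_{H^1}\cdot\|\chi_\delta F_2\|_{H^1})+O(\cdots)$, which after the cubic bookkeeping gives $\min\{\|\tilde v(0)\|_{H^1}(\|\tilde v(0)\|_{H^1}^2+|\omega_*-\omega_0(0)|^2),\delta^3\}$ once we note that on $|t|\le1$ all these quantities stay comparable to their $t=0$ values by \eqref{est-GWP-2}--\eqref{est-GWP-5}; \eqref{est-GWP-8} is the analogous computation for $\|P_\gamma(\tilde v_0-\tilde v_1)\|_E^2$ using bilinearity and the Lipschitz bounds. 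The main obstacle I anticipate is the careful bookkeeping in \eqref{est-GWP-6} and \eqref{est-GWP-8}: one must verify that every error term genuinely carries either an extra power of $\delta$ (or $\|\tilde v\|_{H^1}$) beyond the leading quadratic term, which requires using that $P_\gamma$ is \emph{almost} orthogonal to the modulation directions and that the choice (C2) exactly removes the $P_0$ resonant part, together with the equivalence of $\|\cdot\|_E$ and $\|\cdot\|_{H^1}$ on $\mathrm{Ran}\,P_\gamma$ from Lemma \ref{lem:coersive}; the $\min\{\cdot,\delta^k\}$ form then needs the two separate estimates (the quadratic/cubic one valid always, and the crude $\delta$-power one from the cutoff) to be combined.
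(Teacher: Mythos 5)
There is a genuine gap in the linear input of your argument. You propose to run the contraction for \eqref{SLNLS} with the propagator $e^{-\im t\mathcal{H}_{\omega_*}}$, citing Strichartz estimates "available from (H1), (H2)" or, alternatively, "trivial unitary-type bounds" on the bounded interval $|t|\le 1$. Neither justification works here. First, Proposition \ref{prop-GWPw} belongs to the proof of Theorem \ref{thm:orbmfd}, which assumes only (H1) and the Vakhitov--Kolokolov condition; (H2) (absence of embedded eigenvalues and threshold resonances) is precisely what this part of the paper avoids, so Strichartz estimates for the matrix operator $\mathcal{H}_{\omega_*}$ are not at your disposal. Second, the fallback claim is false: the norm $\mathrm{Stz}(-1,1)$ contains $L^2((-1,1),W^{1,6})$, which is not controlled by $L^\infty_t H^1$ bounds of the (non-unitary, but locally bounded) group $e^{-\im t\mathcal{H}_{\omega_*}}$; and since \eqref{fgrowth} allows quintic growth, you genuinely need the $W^{1,6}$ component both to close the fixed point in $H^1_{\mathrm{rad}}$ and to bound the nonlinear term $\chi_\delta F_2$ in the energy estimates \eqref{est-GWP-6} and \eqref{est-GWP-8} (the paper's own bound there uses $\norm{\tilde v_j}_{\mathrm{Stz}(-1,1)}$ up to fourth powers). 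So as written, the dispersive estimate underlying \eqref{est-GWP-1}--\eqref{est-GWP-3} is missing.

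The paper supplies exactly this missing ingredient by a device you do not use: it sets $\theta_*(t)=\omega_* t+\int_0^t\chi_\delta(\omega-\omega_*)\,ds$ and conjugates, $\tilde w=e^{-\im\theta_*\sigma_3}\tilde v$, obtaining the system \eqref{SLNLSw} whose principal part is the free operator $\im\Delta\sigma_3$; the exponentially decaying potential part of $\mathcal{H}_{\omega_*}$, the modulation vector fields and $F_3$ are then treated as source terms on $[-1,1]$, and the Strichartz estimate for the \emph{free} Schr\"odinger equation closes the argument without (H2). You could in principle keep $e^{-\im t\mathcal{H}_{\omega_*}}$, but then you must first prove a local-in-time Strichartz estimate for it by the same perturbative Duhamel expansion around $e^{\im t\Delta\sigma_3}$ (using that the potential is a bounded, decaying multiplication operator and that the group is bounded on $H^1$ for $|t|\le 1$) --- this is a genuine, if short, argument, not a triviality, and again requires no spectral hypothesis beyond (H1). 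Once this linear estimate is in place, the remainder of your plan --- the $O(\delta)$ Lipschitz bound on $\chi_\delta F_2$, reading off \eqref{est-GWP-3}--\eqref{est-GWP-4} from the modulation equations, killing the $P_0$ component via (C2) for \eqref{est-GWP-5} and \eqref{est-GWP-7}, and differentiating $\norm{P_\gamma\tilde v}_E^2$ with the cancellation of the $\mathcal{H}_{\omega_*}$ term for \eqref{est-GWP-6} and \eqref{est-GWP-8} --- coincides with the paper's treatment and is sound, modulo the bookkeeping you already flag.
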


\begin{proof}
We define 
\[\theta_*(t)=\omega_* t +\int_0^t \chi_\delta(\tilde{v}(s),\omega(s)) (\omega(s)-\omega_*)ds\]
and
\[\tilde{w}(t)=e^{-\im\theta_*(t)\sigma_3}\tilde{v}(t)\]
for $(\tilde{v}, \dot{\omega},\dot{\theta}) \in \mathrm{Stz}(-T,T) \times L^2(-T,T)\times L^2(-T,T)$.
Then, $(\tilde{v}, \omega,\theta)$ satisfies \eqref{SLNLS} if and only if $(\tilde{w}, \omega,\theta)$ satisfies
\begin{align}
\begin{cases}
\partial_t \tilde{w}=\im \Delta \sigma_3 \tilde{w}-\im(\mathcal{H}_{\omega_*}+(\Delta-\omega_*)\sigma_3)\tilde{w}-\im\sigma_3(\dot{\theta}+\omega_*-\omega)e^{-\im\theta_*(t)\sigma_3}\phi_{\omega_*}-\dot{\omega}e^{-\im\theta_*(t)\sigma_3}\partial_{\omega}\phi_{\omega_*}\\
\qquad \qquad +\chi_{\delta}(\tilde{v},\omega)e^{-\im\theta_*(t)\sigma_3}F_3(e^{\im\theta_*(t)\sigma_3}\tilde{w},\omega,\dot{\omega},\dot{\theta}),\\
\dot{\theta}+\omega_*-\omega=(\phi_{\omega_*},\partial_{\omega_*}\phi_{\omega_*})_{L^2}^{-1}\chi_{\delta}(F_3(e^{\im\theta_*(t)\sigma_3}\tilde{w},\omega,\dot{\omega},\dot{\theta}),\im\sigma_3\partial_{\omega}\phi_{\omega_*})_{L^2},\\
\dot{\omega}=(\phi_{\omega_*},\partial_{\omega_*}\phi_{\omega_*})_{L^2}^{-1}\chi_{\delta}(F_3(e^{\im\theta_*(t)\sigma_3}\tilde{w},\omega,\dot{\omega},\dot{\theta}),\phi_{\omega_*})_{L^2},\\
(\tilde{w}(0),\im\sigma_3 \partial_{\omega}\phi_{\omega_*})_{L^2}=(\tilde{w}(0), \phi_{\omega_*})_{L^2}=0,\\
(\tilde{w}(0),\omega(0),\theta(0)) \in \widetilde{\mathcal{H}}_0\times \R_+\times \R,
\end{cases}\label{SLNLSw}
\end{align}
where
\begin{align*}
F_3(\tilde{v},\omega,\dot{\omega},\dot{\theta})=\im(\dot{\theta}+\omega_*-\omega)\sigma_3(\phi_{\omega*}-\phi_{\omega})-\im(\dot{\theta}+\omega_*-\omega)\sigma_3\tilde{w}-\dot{\omega}(\partial_{\omega}\phi_{\omega}-\partial_{\omega}\phi_{\omega_*})-\im\sigma_3F_1(\tilde{v},\phi_{\omega}).
\end{align*}
By applying the Strichartz estimate for the free Schr\"odinger equation, we obtain the global well-posedness of \eqref{SLNLSw} and  \eqref{est-GWP-1}--\eqref{est-GWP-3} for solution $(\tilde{w},\omega,\theta)$ of $\eqref{SLNLSw}$.
For  solutions $(\tilde{w}_j,\omega_j,\theta_j)$ of $\eqref{SLNLSw}$ $(j=0,1)$, we define 
\[\theta_{*,j}(t)=\omega_* t +\int_0^t \chi_\delta(\tilde{w}_j(s),\omega_j(s)) (\omega_j(s)-\omega_*)ds\]
and
\[\tilde{v}_j(t)=e^{\im\theta_{*,j}(t)\sigma_3}\tilde{w}_j(t).\]
By \eqref{est-GWP-2}--\eqref{est-GWP-3} for $(\tilde{w}_j,\omega_j,\theta_j)$, we have
\begin{align*}
|\theta_{*,0}(t)-\theta_{*,1}(t)|\leq& \int_0^t|\chi_\delta(\tilde{w}_1(s),\omega_1(s))(\omega_1(s)-\omega_*)-\chi_{\delta}(\tilde{w}_2(s),\omega_2(s))(\omega_2(s)-\omega_*)|ds\\
\lesssim & \sup_{|t|\leq 1} (\chi_\delta(\tilde{w}_1(s),\omega_1(s))+\chi_\delta(\tilde{w}_2(s),\omega_2(s)))d(\mathbf{w}_0(t),\mathbf{w}_1(t))
\end{align*}
Therefore, we have
\begin{align*}
\norm{\tilde{v}_0-\tilde{v}_1}_{\mathrm{Stz}(-1,1)}\leq &\norm{\tilde{w}_0-\tilde{w}_1}_{\mathrm{Stz}(-1,1)}+\norm{e^{i\theta_{*,0}}-e^{i\theta_{*,1}}}_{L^\infty(-1,1)}\min_{j=1,2}\norm{\tilde{w}_j}_{\mathrm{Stz}(-1,1)}\\
\lesssim & d(\mathbf{w}_0(0),\mathbf{w}_1(0)).
\end{align*}
Since $|F_{1,0}-F_{1,1}|\lesssim ( \<\tilde{v}_0\>^{3}+\<\tilde{v}_1\>^{3})(|\tilde{v}_0| +|\tilde{v}_1| )|\tilde{v}_0-\tilde{v}_1|$ and 
\begin{align*}
&\frac{d}{dt}\norm{P_\gamma(\tilde{v}_0-\tilde{v}_1)}_{E}^2\\
=&2(P_\gamma[-\im\mathcal{H}_{\omega_*}(\tilde{v}_0-\tilde{v}_1)-(\dot{\theta}_0-\omega_0-\dot{\theta}_1+\omega_1)\im\sigma_3\phi_{\omega_*}-(\dot{\omega}_0-\dot{\omega}_1)\partial_\omega\phi_{\omega_*}+\chi_{\delta,0}F_{2,0}-\chi_{\delta,1}F_{2,1}]\\
& \qquad,\sigma_3\mathcal{H}_{\omega_*}P_\gamma(\tilde{v}_0-\tilde{v}_1))_{H^1,H^{-1}}\\
=&2(\chi_{\delta,0}F_{2,0}-\chi_{\delta,1}F_{2,1},\sigma_3\mathcal{H}_{\omega_*}P_\gamma(\tilde{v}_0-\tilde{v}_1))_{H^1,H^{-1}},
\end{align*}
we have 
\begin{align*}
&\sup_{|t|\leq 1}\left | \norm{P_\gamma (\tilde{v}_0(t)-\tilde{v}_1(t))}_E^2-\norm{P_\gamma (\tilde{v}_0(0)-\tilde{v}_1(0))}_E^2 \right|\notag\\
\lesssim & (\chi_{\delta,0}\norm{\tilde{v}_0}_{\mathrm{Stz}(-1,1)}+\chi_{\delta,1}\norm{\tilde{v}_1}_{\mathrm{Stz}(-1,1)} +\chi_{\delta,0}\norm{\tilde{v}_0}_{\mathrm{Stz}(-1,1)}^4+\chi_{\delta,1}\norm{\tilde{v}_1}_{\mathrm{Stz}(-1,1)}^4)\norm{\tilde{v}_0-\tilde{v}_1}_{\mathrm{Stz}(-1,1)}^2\notag\\
\lesssim & \delta d(\mathbf{v}_0(0),\mathbf{v}_1(0))^2,
\end{align*}
where
\[\chi_{\delta,j}=\chi_\delta(\tilde{v}_j,\omega_j), \quad F_{k,j}=F_k(\tilde{v}_j,\omega_j,\dot{\omega}_j,\dot{\theta}_j) \quad (j=0,1, \quad k=1,2).\]
We can show the inequalities \eqref{est-GWP-4}--\eqref{est-GWP-8} similarly.
\end{proof}

\subsection{Construction of invariant manifolds}

To apply the Hadamard method in \cite{NS12SIAM} to construct invariant manifolds, we define 
\begin{align*}
\mathscr{G}_{l}=\{G: \widetilde{\mathcal{H}}_0\times \R_+ \to P_+ \widetilde{\mathcal{H}}_0 \mid &G=G \circ P_{\leq 0}, G(0,\omega_*)=0, \notag \\
&\norm{G(\mathbf{u}_0)-G(\mathbf{u}_1)}\leq l d(\mathbf{u}_0,\mathbf{u}_1), \mathbf{u}_0,\mathbf{u}_1 \in \widetilde{\mathcal{H}}_0\times \R_+\}
\end{align*}
and the graph of $G \in \mathscr{G}_{l}$ as
\begin{align*}
\gbr{G}=\{(\tilde{u},\omega)\in \widetilde{\mathcal{H}}_0\times \R_+ \mid P_+ \tilde{u}=G(\tilde{u},\omega)\},
\end{align*}
where $P_{\leq 0}(\tilde{u},\omega)=(P_{\leq 0}\tilde{u},\omega)$.

\begin{lemma}\label{lem-inv-est}
There exists $C_L>0$ such that if $l,\delta >0$ satisfy
\[l+\delta \ll 1 \mbox{ and } l^{-1}\delta^{\frac{1}{2}} \ll 1, \leqno{ \mbox{\rm (C1)}_{l,\delta}}\]
then for solutions $(\tilde{v}_j,\omega_j,\theta_j)$ to $\eqref{LNLS}$ $(j=0,1)$ satisfying
\begin{align*}
\norm{P_+(\tilde{v}_0(0)-\tilde{v}_1(0))}_{E} \leq l d(\mathbf{v}_0(0),\mathbf{v}_1(0)),
\end{align*}
one has
\begin{align}\label{eq-inv-est-0}
\norm{P_+(\tilde{v}_0(t)-\tilde{v}_1(t))}_{E} \leq
\begin{cases}
C_L l d(\mathbf{v}_0(t),\mathbf{v}_1(t)), & |t|\leq 1,\\
l d(\mathbf{v}_0(t),\mathbf{v}_1(t)), & -1 \leq t \leq -\frac{1}{2}.
\end{cases}
\end{align}
\end{lemma}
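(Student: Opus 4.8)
The plan is to estimate $P_+(\tilde v_0(t)-\tilde v_1(t))$ by isolating the unstable linear dynamics and treating everything else perturbatively. Write $\tilde p(t)=\tilde v_0(t)-\tilde v_1(t)$ and $\tilde q(t)=P_{\le 0}\tilde p(t)$, so $d(\mathbf v_0(t),\mathbf v_1(t))^2 \simeq \norm{\tilde q(t)}_E^2 + |\log\omega_0(t)-\log\omega_1(t)|^2$ up to the $P_+$-component $\norm{P_+\tilde p(t)}_E$ itself. Since $\mathcal H_{\omega_*}$ restricted to the range of $P_+$ acts as multiplication by $\im\mu$ on $\tilde\xi_+$, the component $a(t):=(\tilde p(t),\im\sigma_3\tilde\xi_-)_{L^2}$ (so that $P_+\tilde p=a\tilde\xi_+$ and $\norm{P_+\tilde p}_E=|a|$) satisfies, by \eqref{LNLS} and the definition of $F_2$,
\begin{align*}
\dot a = \mu\, a + \mathcal{E}(t),\qquad |\mathcal{E}(t)|\lesssim \left(\dot\theta_0-\omega_0-\dot\theta_1+\omega_1\right) + (\dot\omega_0-\dot\omega_1) + \norm{\chi_{\delta,0}F_{2,0}-\chi_{\delta,1}F_{2,1}}_{L^2\text{-in-}x},
\end{align*}
where the first two terms come from the modulation forcing paired against $\im\sigma_3\tilde\xi_-$ and the last from the nonlinear term; here one uses that $\pm\im\mu$ is a simple, isolated eigenvalue (Lemma \ref{lem:eig}) so the spectral projections are bounded and $\Re\mu>0$ strictly. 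By Proposition \ref{prop-GWPw}, specifically \eqref{est-GWP-3} and the Lipschitz bound \eqref{est-GWP-2} feeding the nonlinear difference (which is at least quadratic, hence carries a factor $\delta$ once localized by $\chi_\delta$), one gets $\norm{\mathcal{E}}_{L^1(-1,1)} \lesssim \delta\, d(\mathbf v_0(0),\mathbf v_1(0))$, and moreover $d(\mathbf v_0(t),\mathbf v_1(t))$ is comparable to $d(\mathbf v_0(0),\mathbf v_1(0))$ on $|t|\le 1$ by \eqref{est-GWP-2}--\eqref{est-GWP-4}, so I may write all bounds in terms of either.

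Now apply Duhamel/Gronwall to $\dot a=\mu a+\mathcal E$. For $|t|\le 1$ this gives $|a(t)|\le e^{\mu}|a(0)| + e^{\mu}\norm{\mathcal E}_{L^1(-1,1)} \le e^\mu\big(l + C\delta\big)\,d(\mathbf v_0(0),\mathbf v_1(0)) \le C_L\, l\, d(\mathbf v_0(t),\mathbf v_1(t))$, where the last step uses $\delta\ll l$ (from $l^{-1}\delta^{1/2}\ll1$, hence $l^{-1}\delta\ll1$) and the comparability of the distances; this proves the first case. For the second case $-1\le t\le-\tfrac12$, I run the estimate \emph{backwards} in time from $0$: integrating $\dot a=\mu a+\mathcal E$ on $[t,0]$ yields $a(0)=e^{-\mu t}a(t)+\int_t^0 e^{-\mu s}\mathcal E(s)\,ds$, i.e. $|a(t)| = e^{\mu t}\,|a(0)-\int_t^0 e^{-\mu s}\mathcal E\,ds| \le e^{\mu t}\big(|a(0)| + C\norm{\mathcal E}_{L^1}\big) \le e^{\mu t}\big(l+C\delta\big)\,d(\mathbf v_0(0),\mathbf v_1(0))$. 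Since $e^{\mu t}\le e^{-\mu/2}<1$ for $t\le-\tfrac12$ and $l+C\delta\le l\cdot e^{\mu/2}\cdot(1+C\delta/l)$ with $\delta/l$ as small as we wish, the product is $\le l\, d(\mathbf v_0(t),\mathbf v_1(t))$ after re-expressing the distance at time $t$; this is exactly the claimed contraction below $1$ in the backward regime.

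The main obstacle is bookkeeping the forcing term $\mathcal E(t)$ correctly: one must verify that every contribution to $\dot a$ other than the diagonal $\mu a$ — the two modulation parameters and the projected nonlinearity — is genuinely $O(\delta)\cdot d(\mathbf v_0(0),\mathbf v_1(0))$ in $L^1_t$, which requires (i) that the nonlinear difference $F_{2,0}-F_{2,1}$, after being cut off by $\chi_\delta$, is bounded by $\delta$ times the Strichartz difference norm — this is the estimate already displayed in the proof of Proposition \ref{prop-GWPw} — and (ii) that pairing against the exponentially decaying eigenfunction $\tilde\xi_-$ is legitimate, i.e. $F_2$ difference lies in $L^2_x$ with the right bound, which follows from $\varphi_{\omega_*}\in L^\infty$ and the $H^1\hookrightarrow L^6$ Sobolev embedding used in the Strichartz space $\mathrm{Stz}$. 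The rest is the elementary scalar ODE comparison above, and the constraint $(\mathrm{C1})_{l,\delta}$ is used precisely to absorb the $O(\delta)$ forcing into the cone width $l$ while keeping the time-$1$ growth factor $e^{\mu}$ (and the time-$\tfrac12$ decay factor $e^{-\mu/2}$) under control.
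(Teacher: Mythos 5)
Your reduction of the unstable component to the scalar ODE $\dot a=\mu a+\mathcal{E}$ with $\|\mathcal{E}\|_{L^1(-1,1)}\lesssim \delta\, d(\mathbf v_0(0),\mathbf v_1(0))$ is essentially equivalent to the paper's upper bound \eqref{eq-inv-est-1} (which is obtained from \eqref{est-GWP-7}), and it handles the numerator correctly in both time regimes. The genuine gap is in the denominator, i.e.\ in the lower bound for $d(\mathbf v_0(t),\mathbf v_1(t))$, and it is fatal precisely in the case $-1\le t\le -\tfrac12$, where the conclusion must hold with the sharp constant $l$ (no $C_L$) — this sharpness is what later makes $\mathcal{U}_\delta(t)$ map $\mathscr{G}_l$ into itself. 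You invoke only a generic ``comparability'' of $d(t)$ and $d(0)$ via \eqref{est-GWP-2}--\eqref{est-GWP-4}; but those are forward Lipschitz-type bounds, and the best cone-free lower bound one can extract (second line of \eqref{eq-inv-est-3}) is $d(t)\geq (e^{-2\mu|t|}-C\delta)^{1/2}d(0)$, which near $t=-1$ is of size $e^{-\mu}d(0)$. Your final step, $e^{\mu t}(l+C\delta)d(0)\le l\,d(t)$, requires $d(t)\geq e^{\mu t}(1+C\delta/l)\,d(0)$, and as written (after you discard the factor $e^{-\mu/2}$ into the inequality $l+C\delta\le l e^{\mu/2}(1+C\delta/l)$) it even requires $d(t)\geq (1+C\delta/l)d(0)>d(0)$, which is false in general. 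Neither bound follows from generic comparability.

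The missing idea is that the hypothesis $\norm{P_+(\tilde v_0(0)-\tilde v_1(0))}_E\le l\,d(\mathbf v_0(0),\mathbf v_1(0))$ must be fed into the lower bound for $d(t)$: going backward in time only the $P_+$ component contracts (by $e^{\mu t}$), and by the cone condition that component carries at most an $l$-fraction of $d(0)$, while the $P_0$, $P_-$, $P_\gamma$ and $\log\omega$ parts are preserved up to $O(\delta)$ errors (this is \eqref{eq-inv-est-2}, built from \eqref{est-GWP-7}, \eqref{est-GWP-8} and \eqref{est-GWP-4}). This yields the paper's key estimate $d(\mathbf v_0(t),\mathbf v_1(t))^2\ge (1-l^2+l^2e^{2\mu t}-C\delta)\,d(\mathbf v_0(0),\mathbf v_1(0))^2$ for $-1\le t\le 0$, so that the ratio is bounded by $\bigl(e^{-\mu/2}l+C\delta\bigr)\bigl(1-l^2-C\delta\bigr)^{-1/2}\le l$, the strict gap $e^{-\mu/2}<1$ absorbing the $O(l^2)+O(\delta/l)$ losses under $(\mathrm{C1})_{l,\delta}$. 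Without this use of the cone condition your argument proves only the first line of \eqref{eq-inv-est-0} (any constant $C_L$), not the second; and even for the first line you should justify $d(t)\gtrsim d(0)$ on $|t|\le1$ by the time-reversed analogue of these estimates rather than by \eqref{est-GWP-2}--\eqref{est-GWP-4} alone.
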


\begin{proof}
By Proposition \ref{prop-GWPw}, we have 
\begin{align}
\norm{P_+(\tilde{v}_0(t)-\tilde{v}_1(t))}_{E}\leq &\norm{e^{-\im t\mathcal{H}_{\omega_*}}(\tilde{v}_0(0)-\tilde{v}_1(0))}_{E}+C\delta d(\mathbf{v}_0(0),\mathbf{v}_1(0))\notag\\
\leq & (e^{\mu t}l+C\delta)d(\mathbf{v}_0(0),\mathbf{v}_1(0))\label{eq-inv-est-1}
\end{align}
and
\begin{align}
\norm{\tilde{v}_0(t)-\tilde{v}_1(t)}_{E}^2\geq \norm{P_de^{-\im t\mathcal{H}_{\omega_*}}(\tilde{v}_0(0)-\tilde{v}_1(0))}_{E}^2+\norm{P_\gamma \tilde{v}_0(0)-P_\gamma \tilde{v}_1(0)}_{E}^2-C\delta  d(\mathbf{v}_0(0),\mathbf{v}_1(0))^2.\label{eq-inv-est-2}
\end{align}
Combining \eqref{est-GWP-4} and \eqref{eq-inv-est-2}, we have 
\begin{align}\label{eq-inv-est-3}
d(\mathbf{v}_0(t),\mathbf{v}_1(t))^2\geq 
\begin{cases}
(1-l^2+l^2e^{2\mu t}-C\delta)d(\mathbf{v}_0(0),\mathbf{v}_1(0))^2, & -1 \leq t \leq 0\\
(e^{-2\mu |t|}-C\delta )d(\mathbf{v}_0(0),\mathbf{v}_1(0))^2, & |t|\leq 1
\end{cases}
\end{align}
The inequality \eqref{eq-inv-est-0} follows the inequalities \eqref{eq-inv-est-2} and \eqref{eq-inv-est-3}.
\end{proof}

Let $U_\delta(t)$ be the solution map of \eqref{SLNLS}.

\begin{lemma}\label{lem-graph-map}
Assume $\mbox{\rm (C1)}_{l,\delta}$. 
For $-1\leq t \leq 1$, there exists uniquely $\mathcal{U}_{\delta}(t):\mathscr{G}_{l} \to \mathscr{G}_{C_Ll}$ such that
\begin{align*}
U_\delta(t) (\gbr{G}\times \R)=\gbr{\mathcal{U}_\delta (t)G}\times \R.
\end{align*}
Moreover, if $-1\leq t \leq -\frac{1}{2}$, then $\mathcal{U}_{\delta}(t)$ is a map $\mathscr{G}_{l}$ into itself.
\end{lemma}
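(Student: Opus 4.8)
The plan is to build the graph transform $\mathcal{U}_\delta(t)$ by a fixed-point argument in the fiber direction, following the Hadamard scheme of \cite{NS12SIAM} and using the quantitative estimates of Lemma \ref{lem-inv-est} and Proposition \ref{prop-GWPw}. Fix $G\in\mathscr{G}_l$ and $-1\leq t\leq 1$. Given a base point $\mathbf{u}_1=(\tilde u_1,\omega_1)\in\widetilde{\mathcal{H}}_0\times\R_+$, I want to find a unique $\mathbf{v}_0(0)\in\gbr{G}\times\R$ (i.e. with $P_+\tilde v_0(0)=G(\tilde v_0(0),\omega_0(0))$) whose image under $U_\delta(t)$ has the prescribed $P_{\leq 0}$-component equal to $P_{\leq 0}\mathbf{u}_1$ and the same $\R$-component; then $\mathcal{U}_\delta(t)G$ is defined by reading off the $P_+$-component of that image: $(\mathcal{U}_\delta(t)G)(\mathbf{u}_1):=P_+\big(U_\delta(t)\mathbf{v}_0(0)\big)_{\uparrow}$, and one checks $\mathcal{U}_\delta(t)G\circ P_{\leq 0}=\mathcal{U}_\delta(t)G$ and $(\mathcal{U}_\delta(t)G)(0,\omega_*)=0$ from $F_2(0,\omega_*,\cdot,\cdot)=0$ and uniqueness of the trivial solution. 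The identity $U_\delta(t)(\gbr{G}\times\R)=\gbr{\mathcal{U}_\delta(t)G}\times\R$ is then immediate from this construction together with invertibility of the flow on the localized equation.

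First I would set up the fiber map. Write a candidate initial point as $\mathbf{v}_0(0)=(\tilde w + G(\tilde w,\omega_0(0)),\,\omega_0(0))$ where $\tilde w\in P_{\leq 0}\widetilde{\mathcal{H}}_0$ and $\omega_0(0)\in\R_+$ are the unknowns, and let $\Phi(\tilde w,\omega_0(0))$ denote the $P_{\leq 0}$ and $\R$ components of $U_\delta(t)\mathbf{v}_0(0)$ minus $(P_{\leq 0}\tilde u_1,\omega_1)$. I want $\Phi=0$. The key point is that the $P_+$-component of the flow contracts backward in the relevant sense: by Lemma \ref{lem-inv-est} with $\mathbf{v}_1$ replaced by a second such point, the difference of the $P_+$-data stays controlled by $C_L l$ times the $E$-distance, while \eqref{eq-inv-est-3} shows $d(\mathbf{v}_0(t),\mathbf{v}_1(t))$ is comparable to $d(\mathbf{v}_0(0),\mathbf{v}_1(0))$ up to the factors $e^{\pm\mu t}$ and $C\delta$. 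Combined with the Lipschitz bound on $G$ (constant $l$) and on the nonlinear remainder terms (size $O(\delta)$ via \eqref{est-GWP-2}--\eqref{est-GWP-3}), the map $\Phi$ is, in the metric $d$ on $\widetilde{\mathcal{H}}_0\times\R_+$ restricted to the $(\tilde w,\omega_0(0))$ variables, a small Lipschitz perturbation of the invertible linear flow $e^{-\im t\mathcal{H}_{\omega_*}}$ on $P_{\leq 0}$ (which is bounded with bounded inverse on the bounded time interval $|t|\le1$, since $\mathcal{H}_{\omega_*}$ restricted to the range of $P_{\leq 0}$ has no unstable spectrum contributing there). Hence $\Phi$ is a bijection with Lipschitz inverse, giving existence and uniqueness of $\mathbf{v}_0(0)$; this is where condition $\mbox{(C1)}_{l,\delta}$ (namely $l+\delta\ll1$ and $l^{-1}\delta^{1/2}\ll1$) is used to guarantee the perturbation is small enough. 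The Lipschitz constant $C_L l$ of $\mathcal{U}_\delta(t)G$ then comes directly from \eqref{eq-inv-est-0}, first line, and from controlling how the base point $\mathbf{u}_1$ moves the corresponding $\mathbf{v}_0(0)$.

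For the last assertion, restrict to $-1\leq t\leq-\frac12$. Here the second line of \eqref{eq-inv-est-0} gives $\|P_+(\tilde v_0(t)-\tilde v_1(t))\|_E\leq l\,d(\mathbf{v}_0(t),\mathbf{v}_1(t))$ exactly, i.e. no loss of the Lipschitz constant, so $\mathcal{U}_\delta(t)$ maps $\mathscr{G}_l$ into itself rather than into $\mathscr{G}_{C_Ll}$; the normalization conditions $\mathcal{U}_\delta(t)G\circ P_{\leq0}=\mathcal{U}_\delta(t)G$ and $(\mathcal{U}_\delta(t)G)(0,\omega_*)=0$ are preserved as above. The main obstacle I anticipate is the fiber fixed-point step: one must verify that the "unstable-direction-contracts-backward, everything-else-is-a-small-perturbation" heuristic survives the fact that the localized nonlinearity $\chi_\delta F_2$ couples all spectral components and that the modulation parameters $\theta,\omega$ are themselves solved for along the flow. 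Concretely this means carefully tracking, through \eqref{est-GWP-2}--\eqref{est-GWP-8}, that the coupling terms are genuinely $O(\delta)$ in the $d$-metric and that the graph constraint $P_+\tilde v_0(0)=G(\cdot)$ can be substituted without destroying the contraction — this is exactly the place where the bookkeeping in \cite{NS12SIAM} has to be adapted to the present setting, and where the somewhat delicate balance $l^{-1}\delta^{1/2}\ll1$ enters.
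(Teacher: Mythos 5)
Your overall architecture (define the graph transform by flowing the graph, get the Lipschitz constant $C_Ll$, resp.\ $l$ for $-1\le t\le -\tfrac12$, from Lemma \ref{lem-inv-est}, handle the $\R$-factor by $\theta$-equivariance, and get $(\mathcal U_\delta(t)G)(0,\omega_*)=0$ from the trivial solution) matches the paper, but the central existence step is where your proposal has a genuine gap. You propose to solve for the preimage by realizing $\Phi:(\tilde w,\omega)\mapsto\bigl(P_{\le 0}\tilde v_0(t),\omega_0(t)\bigr)$, with initial datum $\tilde w+G(\tilde w,\omega)$, as a small Lipschitz perturbation of the invertible linear map $(\tilde w,\omega)\mapsto (e^{-\im t\mathcal H_{\omega_*}}\tilde w,\omega)$ and then invert by contraction. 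For that you need, for differences of two solutions, an estimate of the form $\sup_{|t|\le 1}\|P_{\le 0}\bigl(\tilde v_0(t)-\tilde v_1(t)-e^{-\im t\mathcal H_{\omega_*}}(\tilde v_0(0)-\tilde v_1(0))\bigr)\|_E\lesssim \delta\, d(\mathbf v_0(0),\mathbf v_1(0))$. Proposition \ref{prop-GWPw} provides this only for the discrete directions, \eqref{est-GWP-7}, and for $\omega$, \eqref{est-GWP-4}; for the continuous component $P_\gamma$ it gives only the $O(1)$-Lipschitz bound \eqref{est-GWP-2} and the approximate conservation of the energy norm of differences \eqref{est-GWP-8}. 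Those two facts yield injectivity-type information but not surjectivity: in the infinite-dimensional $P_\gamma$-direction an injective, almost norm-preserving nonlinear map need not be onto, so bijectivity of $\Phi$ does not follow from the estimates you cite. Closing the gap would require proving the missing $P_\gamma$-analogue of \eqref{est-GWP-7} (e.g.\ Duhamel with respect to $e^{-\im t\mathcal H_{\omega_*}}$ together with local-in-time Strichartz/energy bounds for that propagator), which is additional work that your proposal only gestures at.

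The paper avoids this infinite-dimensional inversion altogether. After the steps you also carry out, the surjectivity $P_{\le 0}A=P_{\le 0}(\widetilde{\mathcal H}_0\times\R_+)$ is reduced to a one-dimensional intermediate value theorem: for a fixed target $(\tilde v_0,\omega_0)$ one searches only along the unstable fiber $\alpha\mapsto(\tilde v_0+\alpha\tilde\xi_+,\omega_0)$ in the image space, and the condition that the backward image $U_\delta(-t)(\tilde v_0+\alpha\tilde\xi_+,\omega_0,0)$ lies on $\gbr{G}$ is a single scalar equation $F_0(\alpha)=0$; since for $|\alpha|\gg e^{\mu}$ the cutoff $\chi_\delta$ vanishes along the trajectory and the flow is exactly linear, $F_0$ takes both signs, so continuity forces a zero. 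This uses no quantitative perturbation-of-linear estimate on $P_\gamma$, which is precisely why \eqref{est-GWP-2}--\eqref{est-GWP-8} suffice there. So either supply the missing $P_\gamma$ estimate to make your contraction scheme rigorous, or replace the fiber fixed-point step by this scalar IVT argument; the remaining parts of your proposal (well-definedness and Lipschitz constants via \eqref{eq-inv-est-0}, the case $-1\le t\le-\tfrac12$, and the normalizations) are in line with the paper.
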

\begin{proof}
Let $(\tilde{v}(t),\omega(t),\theta(t))$ be the solution to \eqref{SLNLS} with initial data $(\tilde{v}(0),\omega(0),\theta(0))$.
Then, for $\theta_0 \in \R$, we have
\[U_\delta(t)(\tilde{v}(0),\omega(0),\theta(0)+\theta_0)=(\tilde{v}(t),\omega(t),\theta(t)+\theta_0).\]
Therefore, there exists $A \subset \widetilde{\mathcal{H}}_0\times \R_+$ such that $U_\delta(t)(\gbr{G}\times \R)=A\times \R.$
We define $G_t:A \to P_+\widetilde{\mathcal{H}}_{\delta}$ by $G_t(\tilde{v},\omega)=P_+\tilde{v}$.
For $(\tilde{v}_j^*,\omega_j^*) \in A$, there exist $(\tilde{v}_j(0),\omega_j(0),\theta_j(0)) \in \gbr{G}\times \R$ such that 
\[
(\tilde{v}_j^*,\omega_j^*,0)=U_\delta(t)(\tilde{v}_{j}(0),\omega_j (0),\theta_j(0)).
\]
Therefore, by Lemma \ref{lem-inv-est}, we obtain
\begin{align}\label{eq-graph-map-1}
\norm{P_+(\tilde{v}_0(t)-\tilde{v}_1(t))}_{E}\leq C_L l d(\mathbf{v}_0(t),\mathbf{v}_1(t)).
\end{align}
Therefore, $P_{\leq 0}(\tilde{v}_0^*,\omega_0^*)=P_{\leq 0}(\tilde{v}_1^*,\omega_1^*)$ implies $P_+\tilde{v}_0^*=P_+\tilde{v}_1^*$.

\underline{Claim} 
\begin{align}\label{eq-graph-map-2}
P_{\leq 0}A=P_{\leq 0}(\widetilde{\mathcal{H}}_0\times \R_+)
\end{align}
We assume there exists $-1\leq t \leq 1$ s.t. $P_{\leq 0}A \not \supset P_{\leq 0}(\widetilde{\mathcal{H}}_0\times \R_+)$.
There exists $(\tilde{v}_0,\omega_0) \in P_{\leq 0}(\widetilde{\mathcal{H}}_0\times \R_+)$ such that for $\alpha \in \R$ $(\tilde{v}_0+\alpha \tilde{\xi}_+,\omega_0) \notin A.$
We define $F_0: \R \to \R$ as 
\begin{align*}
F_0(\alpha)=(\im P_+(Id-G)(PU_\delta(-t)(\tilde{v}_0+\alpha \tilde{\xi}_+,\omega_0,0)),\sigma_3\xi_-)_{L^2},
\end{align*}
where $P:\widetilde{\mathcal{H}}_0 \times \R_+ \times \R \to \widetilde{\mathcal{H}}_0\times \R_+$ such that $P(\tilde{v},\omega,\theta)=(\tilde{v},\omega)$.
Then, $F_0(\alpha)=0$ if and only if $U_\delta(-t)(\tilde{v}_0+\alpha \tilde{\xi}_+,\omega_0,0) \in \gbr{G}$.
Thus, $0 \notin F_0\R$.
On the other hand, if $|\alpha|\gg e^{\mu}$, $U_\delta$ is the flow map of the system of the linear equations
\begin{align*}
\begin{cases}
\partial_t\tilde{v}=-\im\mathcal{H}_{\omega_*}\tilde{v},\\
\dot{\theta}+\omega_*-\omega=0,\\
\dot{\omega}=0.
\end{cases}
\end{align*}
Therefore, for $|\alpha|\gg e^{\mu}$ 
\begin{align}
U_\delta(t)(\tilde{v}_0+\alpha \tilde{\xi}_+,\omega_0,0)=(e^{\im t\mathcal{H}_{\omega_*}}\tilde{v}_0+\alpha e^{-\mu t}\tilde{\xi}_+,\omega_0,(\omega_*-\omega_0)t)
\end{align}
which yields $F_0\R \cap (0,\infty) \neq \emptyset$ and $F_0\R \cap (-\infty,0) \neq \emptyset$.
This contradicts the continuity of $F_0$ and $0 \notin F_0\R$.

From \eqref{eq-graph-map-1} and \eqref{eq-graph-map-2}, we can extend $G_t$ as the map $\mathcal{U}_\delta(t)G$ form $\widetilde{\mathcal{H}}_0 \times \R_+$ to $P_+\widetilde{\mathcal{H}}_0$ satisfying $\mathcal{U}_\delta(t)G \in \mathscr{G}_{C_Ll}$ and $\mathcal{U}_\delta(t)G=G_t$ on $A$ uniquely.

By Lemma \ref{lem-inv-est}, in the case $-1\leq t \leq -\frac{1}{2}$, we have $C_L=1$.

\end{proof}

We define
\begin{align*}
\norm{G}_{\mathscr{G}}=\sup_{\mathbf{v} \in \widetilde{\mathcal{H}}_0\times \R_+, \mathbf{v}\neq (0,\omega_0)} \frac{\norm{G}_E}{d(\mathbf{v},(0,\omega_*))}
\end{align*}
for $G \in \mathscr{G}_{l}$.
Then, $G \in \mathscr{G}_{l}$ satisfies $\norm{G}_{\mathscr{G}}\leq l$.
We define $\mathcal{U}_{\delta}(t)$ for $t<-1$ by $\mathcal{U}_{\delta}(t)=\mathcal{U}_{\delta}(-1) \circ \mathcal{U}_{\delta}(t+1)$.

\begin{lemma}\label{lem-cot-map}
Assume $\mbox{\rm (C1)}_{l,\delta}$.
The mapping $\mathcal{U}_\delta(t)$ is a contraction on $(\mathscr{G}_{l},\norm{\cdot}_{\mathscr{G}})$ for $t<-\frac{1}{2}$.
Moreover, there exists $G_+^{\delta} \in \mathscr{G}_{l}$ such that $\mathcal{U}_\delta (t) G_+^\delta =G_+^\delta$ for $t<0$.
In particular, the uniqueness holds for any $t<0$.
\end{lemma}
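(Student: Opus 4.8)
The plan is to run a Banach fixed point argument for $\mathcal{U}_\delta(t)$ on the complete metric space $(\mathscr{G}_l,\norm{\cdot}_{\mathscr{G}})$, exactly in the spirit of \cite{NS12SIAM}. First I would check completeness of $\mathscr{G}_l$ with respect to $\norm{\cdot}_{\mathscr{G}}$: this is standard since a $\norm{\cdot}_{\mathscr{G}}$-Cauchy sequence of Lipschitz graphs converges pointwise to a limit that inherits the Lipschitz constant $l$, the factorization through $P_{\leq 0}$, and the normalization at $(0,\omega_*)$. Next, recalling from Lemma \ref{lem-graph-map} that for $-1\le t\le -\tfrac12$ the map $\mathcal{U}_\delta(t)$ actually sends $\mathscr{G}_l$ into itself (the constant $C_L$ can be taken to be $1$ there), the only thing left is the contraction estimate.

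For the contraction estimate, fix $G_0,G_1\in\mathscr{G}_l$ and a base point $\mathbf{v}^*=(\tilde v^*,\omega^*)$ with $\tilde v^*=P_{\le 0}\tilde v^*+P_+\tilde v^*$, and let $(\tilde v^*,\omega^*,0)=U_\delta(t)(\tilde v_j(0),\omega_j(0),\theta_j(0))$ with $P_+\tilde v_j(0)=G_j(P_{\le 0}\tilde v_j(0),\omega_j(0))$, for $j=0,1$, so that both initial data share the same $P_{\le 0}$-projection and the same time-$t$ image under the flow. The key point is that, because we are flowing \emph{backward} ($t<-\tfrac12$), the unstable direction is contracted: combining \eqref{est-GWP-7} with the bound $\norm{P_+e^{-\im t\mathcal{H}_{\omega_*}}w}_E=e^{\mu t}\norm{P_+w}_E$ gives
\begin{align*}
\norm{P_+(\tilde v_0(0)-\tilde v_1(0))}_E\le \(e^{\mu t}\norm{P_+}_{op}+C\delta\)d(\mathbf{v}_0(0),\mathbf{v}_1(0)),
\end{align*}
while \eqref{est-GWP-7}, \eqref{est-GWP-8} together with the expansion-type lower bound \eqref{eq-inv-est-3} of Lemma \ref{lem-inv-est} give $d(\mathbf{v}_0(0),\mathbf{v}_1(0))\lesssim d(\mathbf{v}^*,\mathbf{v}^*)$ replaced by a comparison with $d(\mathbf{v}^*, (0,\omega_*))$; quantitatively, iterating $\mathcal{U}_\delta(-1)$ if necessary and using $\norm{G_j}_{\mathscr G}\le l$, one extracts a factor $e^{\mu t}+ (\text{lower order in }\delta,l)$ relating $\norm{(\mathcal{U}_\delta(t)G_0-\mathcal{U}_\delta(t)G_1)(\mathbf{v}^*)}_E$ to $\sup\norm{(G_0-G_1)}_E/d(\cdot,(0,\omega_*))$. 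Under $\mbox{\rm (C1)}_{l,\delta}$ (i.e. $l+\delta\ll1$, $l^{-1}\delta^{1/2}\ll1$) and taking $t<-\tfrac12$ (or $t$ more negative, then using $\mathcal{U}_\delta(t)=\mathcal{U}_\delta(-1)\circ\mathcal{U}_\delta(t+1)$ and the semigroup-type composition), this factor is strictly less than $1$, giving the contraction.

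Once $\mathcal{U}_\delta(t)$ is a contraction for each fixed $t<-\tfrac12$, Banach's theorem yields a unique fixed point $G_+^\delta=G_+^\delta(t)\in\mathscr{G}_l$. To see $G_+^\delta$ is independent of $t$ and is fixed for \emph{all} $t<0$: for $t,s<-\tfrac12$ the maps commute, $\mathcal{U}_\delta(t)\mathcal{U}_\delta(s)=\mathcal{U}_\delta(t+s)\mathcal{U}_\delta(0)=\mathcal{U}_\delta(s)\mathcal{U}_\delta(t)$ on the relevant range by uniqueness of solutions of \eqref{SLNLS}, so $\mathcal{U}_\delta(s)G_+^\delta(t)$ is again a fixed point of $\mathcal{U}_\delta(t)$, hence equals $G_+^\delta(t)$; thus $G_+^\delta(t)$ is fixed by every $\mathcal{U}_\delta(s)$, $s<-\tfrac12$, and by the extension $\mathcal{U}_\delta(t)=\mathcal{U}_\delta(-1)\circ\mathcal{U}_\delta(t+1)$ also for all $t<0$; uniqueness of the fixed point for the strict contraction then forces $G_+^\delta(t)$ to be the same graph for all such $t$, and the final "uniqueness holds for any $t<0$" is immediate. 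The main obstacle is the contraction estimate: one must carefully bookkeep the comparison constants so that the genuinely contractive factor $e^{\mu t}$ from the backward unstable flow dominates the $O(\delta)$ and $O(l)$ error terms coming from the nonlinearity and the modulation equations in Proposition \ref{prop-GWPw}, which is precisely where the smallness conditions $\mbox{\rm (C1)}_{l,\delta}$ are consumed.
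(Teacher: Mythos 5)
Your overall strategy coincides with the paper's: completeness of $(\mathscr{G}_{l},\norm{\cdot}_{\mathscr{G}})$, the self-mapping property from Lemma \ref{lem-graph-map}, a contraction factor produced by the backward decay $e^{\mu t}$ of the unstable mode combined with the $O(\delta)$ perturbative bounds of Proposition \ref{prop-GWPw}, and then Banach's fixed point theorem together with $\mathcal{U}_\delta(t)\circ\mathcal{U}_{\delta}(s)=\mathcal{U}_{\delta}(t+s)$ to obtain one $G_+^{\delta}$ fixed for all $t<0$; that last part of your plan is fine and is exactly what the paper does.

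However, the configuration on which you base the contraction estimate cannot occur, and this is the heart of the proof. You ask for initial data on $\gbr{G_0}$ and $\gbr{G_1}$ which share ``the same $P_{\leq 0}$-projection and the same time-$t$ image under the flow''. Since \eqref{SLNLS} is globally well-posed (Proposition \ref{prop-GWPw}), $U_\delta(t)$ is injective, so equal time-$t$ images force the two initial data to coincide; the comparison then degenerates to $0\leq 0$ and yields no bound on $\mathcal{U}_\delta(t)G_0-\mathcal{U}_\delta(t)G_1$. The correct configuration is the reverse one: fix the base point $(\psi,a)$ at time $T\in[-1,-\frac12]$, take the two graph points $P_{\leq 0}\psi+(\mathcal{U}_\delta(T)G_j)(\psi,a)$ --- same $P_{\leq 0}$-part, different $P_+$-parts --- and flow them forward by $-T$, so that at time $0$ they land on $\gbr{G_0}$, $\gbr{G_1}$ at generally \emph{different} base points; the quantity to estimate is $\norm{P_+(\tilde v_0(T)-\tilde v_1(T))}_E=\norm{(\mathcal{U}_\delta(T)G_0)(\psi,a)-(\mathcal{U}_\delta(T)G_1)(\psi,a)}_E$. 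Moreover, besides the factor $e^{\mu T}$ and \eqref{est-GWP-7}, the step that actually closes the argument is the bound $\norm{P_{\leq 0}(\tilde v_0(0)-\tilde v_1(0))}_E\lesssim\delta^{1/2}\norm{P_+(\tilde v_0(T)-\tilde v_1(T))}_E$, available precisely because the $P_{\leq 0}$-parts agree at time $T$ (see \eqref{eq-cot-map-1}); it allows the Lipschitz contribution of $G_1$, which is $l\cdot O(\delta^{1/2})\norm{P_+(\tilde v_0(T)-\tilde v_1(T))}_E$, to be absorbed into the left-hand side, leaving a factor of the form $(e^{\mu T}+C\delta^{1/2})(1+C\delta)<1$ under $\mbox{\rm (C1)}_{l,\delta}$. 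Your plan never identifies this self-referential absorption --- the intermediate claim that $d(\mathbf{v}_0(0),\mathbf{v}_1(0))\lesssim d(\mathbf{v}^*,\mathbf{v}^*)$ ``replaced by a comparison with $d(\mathbf{v}^*,(0,\omega_*))$'' is not meaningful --- so as written the contraction estimate does not go through; with the corrected configuration it becomes exactly the paper's argument.
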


\begin{proof}
We show that $\mathcal{U}_{\delta}(t)$ is contraction for $t<-\frac{1}{2}$.
Let $G_0,G_1 \in \mathscr{G}_{l}$ and $T \in [-1,-\frac{1}{2}]$.
We define
\begin{align*}
(\tilde{v}_j(t),\omega_j(t),\theta_j(t))=U_\delta(t-T)(P_{\leq 0}\psi+(\mathcal{U}_{\delta}(T)G_j)(\psi,a),a,b)
\end{align*}
for $t \in \R$, $j=0,1$ and $(\psi,a,b) \in \widetilde{\mathcal{H}}_0\times \R_+\times \R$.
Since $P_{\leq 0}\tilde{v}_0(T)=P_{\leq 0}\psi =P_{\leq 0}\tilde{v}_1(T)$, we have
\begin{align}
\norm{P_{\leq 0}(\tilde{v}_0(0)-\tilde{v}_1(0))}_{E}\lesssim & \norm{P_{\leq 0}(\tilde{v}_0(T)-\tilde{v}_1(T))}_E+\delta^{\frac{1}{2}}d(\mathbf{v}_0(T),\mathbf{v}_1(T))\notag \\
=& \delta^{\frac{1}{2}}\norm{P_+(\tilde{v}_0(T)-\tilde{v}_1(T))}_{E}. \label{eq-cot-map-1}
\end{align}
Thus,
\begin{align}\label{eq-cot-map-2}
d((P_{\leq 0}\tilde{v}_0(0),\omega_0(0)),(0,\omega_*))\leq (1+C\delta)d((P_{\leq 0}\tilde{v}_0(T),\omega_0(T)),(0,\omega_*))+C\delta\norm{P_+\tilde{v}_0(T)}_E.
\end{align}
The equality 
\begin{align*}
P_+(P_{\leq 0}\psi+\mathcal{U}_{\delta}(T)G_j(\psi,a))=\mathcal{U}_{\delta}(T)G_j(P_{\leq 0}\psi+\mathcal{U}_{\delta}(T)G_j(\psi,a),a)
\end{align*}
implies
\begin{align*}
(P_{\leq 0}\psi+\mathcal{U}_\delta(T)G_j(\psi,a),a) \in \gbr{\mathcal{U}_\delta(T)G}.
\end{align*}
Thus, 
\begin{align*}
U_\delta(-T)(P_{\leq 0}\psi+\mathcal{U}_\delta(T)G_j(\psi,a),a,b) \in U_\delta(-T)(\gbr{\mathcal{U}_\delta(T)G} \times \R)=\gbr{G}\times \R
\end{align*}
which implies 
\begin{align}\label{eq-cot-map-3}
P_+\tilde{v}_j(0)=G_j(U_{\delta}(-T)(P_{\leq 0}\psi +\mathcal{U}_{\delta}(T)G_j(\psi,a),a,b),\omega_j(0))=G_j(\tilde{v}_j(0),\omega_j(0)).
\end{align}
By \eqref{eq-cot-map-1}--\eqref{eq-cot-map-3}, we have
\begin{align*}
&\norm{(\mathcal{U}_{\delta}(T)G_0)(\psi,a)-(\mathcal{U}_{\delta}(T)G_1)(\psi,a)}_E\notag\\
=&\norm{P_+(\tilde{v}_0(T)-\tilde{v}_1(T))}_E\notag\\
\leq & (e^{\mu T}+C\delta^{\frac{1}{2}})\norm{e^{\im T\mathcal{H}_{\omega*}}P_+(\tilde{v}_0(T)-\tilde{v}_1(T))}_E\notag \\
\leq & (e^{\mu T}+C\delta^{\frac{1}{2}})\norm{P_+(\tilde{v}_0(0)-\tilde{v}_1(0))}_E\notag \\
\leq &(e^{\mu T}+C\delta^{\frac{1}{2}}) (\norm{G_0-G_1}_{\mathscr{G}}d((P_{\leq 0}\tilde{v}_0(0),\omega_0(0)),(0,\omega_*))+\norm{G_1}_{\mathscr{G}}d(P_{\leq 0}\mathbf{v}_0(0),\mathbf{v}_1(0)))\notag \\
\leq & (e^{\mu T}+C\delta^{\frac{1}{2}})((1+C\delta)\norm{G_0-G_1}_{\mathscr{G}}d((P_{\leq 0}\psi,a),(0,\omega_*))+C\delta l \norm{G_0-G_1}_{\mathscr{G}}d((\psi,a),(0,\omega_*))\notag\\
&+Cl\delta^{\frac{1}{2}}\norm{P_+(\tilde{v}_0(T)-\tilde{v}_1(T))}_E)
\end{align*}
Therefore, under $\mbox{\rm (C)}_{l,\delta}$, we obtain there exists $\Lambda>0$ such that $\Lambda<1$ and
\begin{align*}
\frac{\norm{(\mathcal{U}_{\delta}(T)G_0)(\psi,a)-(\mathcal{U}_{\delta}(T)G_1)(\psi,a)}_E}{d((\psi,a),(0,\omega))} \leq \Lambda \norm{G_0-G_1}_{\mathscr{G}}.
\end{align*}
Therefore, $\mathcal{U}_{\delta}(t)$ is contraction for $t \in [-1,-\frac{1}{2}]$ and $\mathcal{U}_{\delta}(t)$ has the fixed point $G_+^{\delta}$.
The uniqueness of the fixed point of $\mathcal{U}_{\delta}(t)$ for $t<0$ follows the equation $\mathcal{U}_\delta(t) \circ \mathcal{U}_{\delta}(s)=\mathcal{U}_{\delta}(t+s)$.

\end{proof}

We define
\begin{align*}
\mathfrak{g}(\tilde{v},\omega)=\tilde{v}+G_+^{\delta}(\tilde{v},\omega)+\phi_{\omega}.
\end{align*}
and
\begin{align*}
\mathcal{M}_{cs}^{\delta}(\omega_*,r)=\{e^{\im\theta \sigma_3}\mathfrak{g}(\tilde{v},\omega)\mid \tilde{v} \in P_{\leq 0}\widetilde{\mathcal{H}}_0, |\omega-\omega_*|<\frac{\omega_*}{2}, \theta \in \R, \inf_{q \in \R} \norm{\mathfrak{g}(\tilde{v},\omega)-e^{\im q\sigma_3}\phi_{\omega_*}}_{H^1}<r\}.
\end{align*}

\begin{theorem}\label{thm-csm}
Assume $\mbox{\rm (C1)}_{l,\delta}$.
There exists $\tilde{C}>1$ such that for $\varepsilon>0$ satisfying $\tilde{C}^2 \varepsilon < \delta$ and for $\tilde{u}_0 \in \mathcal{M}_{cs}^{\delta}(\omega_*,\varepsilon)$ the solution $u$ to $\eqref{NLS}$ with $\tilde{u}(0)=\tilde{u}_0$ satisfies $\tilde{u}(t) \in \mathcal{M}_{cs}^{\delta}(\omega_*,\tilde{C}\varepsilon)$ for all $t \geq 0$.
\end{theorem}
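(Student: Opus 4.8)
The plan is to show that the center-stable manifold $\mathcal{M}_{cs}^\delta(\omega_*,r)$ built from the graph $G_+^\delta$ is locally invariant under the genuine flow of \eqref{NLS}, by exploiting the fact that the localized equation \eqref{SLNLS} coincides with \eqref{NLS} (up to the modulation decomposition) as long as the solution stays inside the ball where $\chi_\delta\equiv 1$, i.e.\ as long as $\|\tilde v(t)\|_{H^1}^2+|\omega(t)-\omega_*|^2\le \delta^2$. First I would fix $\tilde u_0\in\mathcal{M}_{cs}^\delta(\omega_*,\varepsilon)$. By Lemma \ref{lem:orth-ini}, writing $\tilde u_0=e^{\im\theta_0\sigma_3}(\tilde v_0+\phi_{\omega_0})$ with $\tilde v_0\in\widetilde{\mathcal H}_0$, the smallness $\inf_q\|\mathfrak g(\tilde v_0,\omega_0)-e^{\im q\sigma_3}\phi_{\omega_*}\|_{H^1}<\varepsilon$ together with $P_+\tilde v_0=G_+^\delta(P_{\le 0}\tilde v_0,\omega_0)$ and the Lipschitz bound $\|G_+^\delta\|_{\mathscr G}\le l$ gives $\|\tilde v_0\|_{H^1}+|\omega_0-\omega_*|\lesssim\varepsilon$; this is where the constant $\tilde C$ enters, chosen so that $\tilde C^2\varepsilon<\delta$ guarantees we start well inside the cutoff region.

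Next I would run the solution of \eqref{SLNLS} (equivalently \eqref{SLNLSw}) with this initial data: by Proposition \ref{prop-GWPw} it is global, satisfies (C1) for all time, and by the fixed-point property $\mathcal U_\delta(t)G_+^\delta=G_+^\delta$ for $t<0$ (Lemma \ref{lem-cot-map}) the graph $\gbr{G_+^\delta}\times\R$ is invariant under $U_\delta(t)$ for negative times, hence — using the group/semigroup identity $\mathcal U_\delta(t)\circ\mathcal U_\delta(s)=\mathcal U_\delta(t+s)$ and the uniqueness of the fixed point — one extends this to obtain positive-time invariance of $\gbr{G_+^\delta}$ as well, so that $(\tilde v(t),\omega(t))\in\gbr{G_+^\delta}$ and therefore $e^{\im(\theta(t)+\omega_*t)\sigma_3}\mathfrak g(P_{\le 0}\tilde v(t),\omega(t))=\tilde u(t)$ stays on $\mathcal{M}_{cs}^\delta$ for all $t\ge 0$, provided it does not leave the ball of radius $r=\tilde C\varepsilon$ about the orbit of $\phi_{\omega_*}$. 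The substance is then a continuity/bootstrap argument: define $T_*=\sup\{T\ge 0:\sup_{0\le t\le T}\inf_q\|\tilde u(t)-e^{\im q\sigma_3}\phi_{\omega_*}\|_{H^1}<\tilde C\varepsilon\}$; on $[0,T_*)$ the cutoff is inactive, so $\tilde u$ solves \eqref{NLS} exactly and the conserved quantities $E,M$ are available. Using the coercivity Lemma \ref{lem:coersive} for $P_\gamma$, the finite-dimensional bounds \eqref{est-GWP-5}–\eqref{est-GWP-6} from Proposition \ref{prop-GWPw} iterated over unit time steps, and the constraint that on the manifold the unstable component $P_+\tilde v$ is slaved to $P_{\le 0}\tilde v$ with Lipschitz constant $l\ll 1$, one shows $\sup_{0\le t<T_*}(\|\tilde v(t)\|_{H^1}^2+|\omega(t)-\omega_*|^2)^{1/2}\le \tilde C_0\varepsilon$ for some absolute $\tilde C_0$; re-choosing $\tilde C$ larger than $\tilde C_0$ and its companion modulation constants then forces the strict inequality defining $T_*$ to persist, whence $T_*=\infty$.

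The main obstacle is the control of the unstable mode $P_+\tilde v(t)$ along the \emph{forward} flow: the linear semigroup $e^{-\im t\mathcal H_{\omega_*}}$ expands this component like $e^{\mu t}$, so a naive Gronwall estimate loses. This is precisely why the manifold must be used rather than just an energy argument — on $\gbr{G_+^\delta}$ the unstable component is not free but equals $G_+^\delta(P_{\le 0}\tilde v,\omega)$, and the Lipschitz smallness $l$, combined with the coercive control of $P_\gamma\tilde v$ via $\|\cdot\|_E$ (Lemma \ref{lem:coersive}) and the smallness of the modulation speeds $\dot\omega,\dot\theta+\omega_*-\omega$ from (C2)/Proposition \ref{prop-GWPw}, closes the bound. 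The secondary technical point is patching the unit-time-interval estimates \eqref{est-GWP-4}–\eqref{est-GWP-6} into a global-in-time bound with a constant independent of $T$: since each step multiplies the stable/central part by at most $1+C\delta$ while the coercive energy $\|P_\gamma\tilde v\|_E^2$ changes by $O(\delta^3)$ per unit step and the solution stays $O(\varepsilon)$-small, the accumulated growth over $[0,\infty)$ is controlled once $\delta$ is small relative to $l$ as in $\mbox{\rm (C1)}_{l,\delta}$, exactly as in the Hadamard-method argument of \cite{NS12SIAM}.
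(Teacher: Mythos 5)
Your overall architecture matches the paper's: decompose via Lemma \ref{lem:orth-ini}, run the localized system \eqref{SLNLS}, use the invariance of $\gbr{G_+^{\delta}}$ under $U_\delta(t)$ so that the unstable component is slaved, $\|P_+\tilde v\|\lesssim l(\|\tilde v\|_{H^1}+|\omega-\omega_*|)$, observe that the cutoff is inactive as long as the solution stays within $O(\tilde C\varepsilon)$ of the ground state (so \eqref{SLNLS} really is \eqref{NLS}), and close with a continuity/contradiction argument at the putative exit time. Up to that point your proposal is sound. The gap is in the mechanism you propose for the uniform-in-time bound on the remaining components. You suggest ``patching'' the unit-time-interval estimates \eqref{est-GWP-4}--\eqref{est-GWP-6} and claim the accumulated growth over $[0,\infty)$ is controlled because each step only contributes a factor $1+C\delta$ or an increment $O(\delta^3)$. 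This does not close: over $n$ unit steps a factor $(1+C\delta)$ compounds to $(1+C\delta)^n$, and additive increments $O(\delta^3)$ (or $O(\varepsilon^3)$) accumulate linearly in time, so no smallness condition on $\delta$ relative to $l$ yields a $T$-independent bound for the continuous part $P_\gamma\tilde v$, which has no decay mechanism. The paper avoids any accumulation by using the exact conservation law: it fixes $\omega_u$ by mass matching $\|u_0\|_{L^2}=\|\varphi_{\omega_u}\|_{L^2}$ and Taylor-expands the conserved functional $S_{\omega_*}(u)=E(u)+\omega_*M(u)$ at each fixed time directly against the ground state, obtaining \eqref{eq-csm-3}; coercivity (Lemma \ref{lem:coersive}) then controls $\|P_\gamma\tilde v(t)\|_{H^1}$ at every $t$ with constants independent of $t$. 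Merely saying ``the conserved quantities $E,M$ are available'' is not a substitute for this computation, since the way the conservation law is exploited (mass-matched $\omega_u$, cancellation of the linear term, isolation of the quadratic form) is exactly where the theorem's content lies.

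A second, related omission: you never control the stable discrete mode $P_-\tilde v$. It is invisible to the coercive part of the energy expansion — the quadratic form $(\sigma_3\mathcal H_{\omega_*}\cdot,\cdot)$ vanishes on each of $\mathrm{span}\{\tilde\xi_\pm\}$ and only the cross term $(\sigma_3\mathcal H_{\omega_*}P_+\tilde v,P_-\tilde v)$ survives — so neither coercivity nor the graph constraint bounds it, yet $\|\tilde v\|_{H^1}$ contains it. The paper handles it by the forward-damped equation $\partial_t P_-\tilde v=-\mu P_-\tilde v+P_-\chi_\delta F_2$, integrated to give \eqref{eq-csm-2}; here the exponential decay $e^{-\mu t}$ is what prevents the quadratic forcing from accumulating. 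If you want to keep an iteration-flavoured argument, this is the one component for which iterating \eqref{est-GWP-5} would actually work (because of the decay), but you must state it; for $P_\gamma$ you must switch to the conservation-law argument. With these two repairs — conservation of $S_{\omega_*}$ with mass-matched $\omega_u$ for the continuous part, and the damped ODE for $P_-$ — your bootstrap closes exactly as in the paper's proof by contradiction.
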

\begin{proof}
We prove the conclusion by contradiction.
We assume that for any $\tilde{C}\gg 1$ there exists $\varepsilon_0>0$ such that $\tilde{C}^2 \varepsilon_0 < \delta$ and there exist $t_0>0$ and a solution $u$ to \eqref{NLS} with $\tilde{u}(0)=e^{\im\theta \sigma_3}\mathfrak{g}(\tilde{v}_0,\omega_0) \in \mathcal{M}_{cs}^{\delta}(\omega_*,\varepsilon_0)$ satisfying 
\begin{align*}
\sup_{0\leq t \leq t_0} \inf_{q \in \R} \norm{u(t)-e^{\im q}\varphi_{\omega_*}}_{H^1}\leq \tilde{C}\varepsilon_0
\end{align*}
and
\begin{align*}
\inf_{q \in \R} \norm{u(t_0)-e^{\im q}\varphi_{\omega_*}}_{H^1}=\tilde{C} \varepsilon_0.
\end{align*}
Let $(\tilde{v},\omega,\theta)$ be the solution of \eqref{SLNLS} with initial data $(\tilde{v}_0+G_+^{\delta}(\tilde{v}_0,\omega_0),\omega_0,\theta_0)$.
Since
\begin{align*}
\norm{\tilde{v}(t)}_{H^1} + |\omega(t)-\omega_*| \lesssim \inf_{q \in \R} \norm{e^{\im \theta(t)\sigma_3}(\tilde{v}(t)+\phi_{\omega(t)})-e^{\im q\sigma_3}\phi_{\omega_*}}_{H^1},
\end{align*}
by the continuity of $u(t)$ and $(\tilde{v}(t),\omega(t),\theta(t))$ we have
\begin{align*}
\norm{\tilde{v}(t)}_{H^1}+|\omega(t)-\omega_*|\lesssim \tilde{C} \varepsilon_0 \ll \delta
\end{align*}
and
\begin{align*}
\tilde{u}(t)=e^{\im\theta(t)\sigma_3}(\tilde{v}(t)+\phi_{\omega(t)})
\end{align*}
for $0\leq t \leq t_0$.
We define $\omega_u>0$ by $\norm{u_0}_{L^2}=\norm{\varphi_{\omega_u}}_{L^2}$.
Then, we have
\begin{align*}
\norm{\phi_{\omega_u}}_{L^2}^2=\norm{\tilde{u}}_{L^2}^2&=\norm{\tilde{v}+\phi_{\omega}}_{L^2}^2\\
&=\norm{\tilde{v}}_{L^2}^2+\norm{\phi_{\omega}}_{L^2}^2+2(\tilde{v},\phi_{\omega}-\phi_{\omega_u})_{L^2}+2(\tilde{v},\phi_{\omega_u}-\phi_{\omega_*})_{L^2}.
\end{align*}
Since
\begin{align*}
\norm{\phi_{\omega}}_{L^2}^2-\norm{\phi_{\omega_u}}_{L^2}^2=2(\omega-\omega_u)(\partial_\omega \phi_{\omega_*},\phi_{\omega_*})_{L^2}+O(|\omega_u-\omega|^2+|\omega_*-\omega_u|^2),
\end{align*}
we obtain
\begin{align*}
\omega-\omega_u=O(\norm{\tilde{v}}_{L^2}^2+|\omega_u-\omega_*|)=O(\norm{\tilde{v}}_{L^2}^2+|\norm{u_0}_{L^2}-\norm{\varphi_{\omega_*}}_{L^2}|).
\end{align*}
Therefore, we obtain
\begin{align*}
&S_{\omega_*}(u)-S_{\omega_*}(\varphi_{\omega_*})\\
=&S_{\omega_u}(u)-S_{\omega_u}(\varphi_{\omega_u})+S_{\omega_u}(\varphi_{\omega_u})-S_{\omega_*}(\varphi_{\omega_*})-(\omega_u-\omega_*)M(\varphi_{\omega_*})+(\omega_u-\omega_*)(M(\varphi_{\omega_*})-M(\varphi_{\omega_u}))\\
=& \frac{1}{2}\<S_{\omega_u}''(\varphi_{\omega_u})(e^{-\im\theta}u-\varphi_{\omega_u}),e^{-\im\theta}u-\varphi_{\omega_u}\>_{H^{-1},H^1}+O(|\omega_u-\omega_*|^2+\norm{e^{-\im\theta}u-\varphi_{\omega_u}}_{H^1}^{3})\\
=&\frac{1}{4}(\sigma_3\mathcal{H}_{\omega_u}\tilde{v},\tilde{v})_{H^{-1},H^1}+\frac{1}{2}(\sigma_3\mathcal{H}_{\omega_u}(\phi_{\omega}-\phi_{\omega_u}),\tilde{v})_{L^2}+\frac{1}{4}(\sigma_3\mathcal{H}_{\omega_u}(\phi_{\omega}-\phi_{\omega_u}),\phi_{\omega}-\phi_{\omega_u})_{L^2}\\
&+O(|\norm{u_0}_{L^2}-\norm{\varphi_{\omega_*}}_{L^2}|^2+\norm{\tilde{v}}_{H^1}^{3})\\
=& \frac{1}{4}(\sigma_3\mathcal{H}_{\omega_*}\tilde{v},\tilde{v})_{H^{-1},H^1}-\frac{1}{2}(\omega-\omega_*)(\phi_{\omega_*},\tilde{v})_{L^2}+O(|\norm{u_0}_{L^2}-\norm{\varphi_{\omega_*}}_{L^2}|^2+\norm{\tilde{v}}_{H^1}^{3})\\
=& \frac{1}{4}(\sigma_3\mathcal{H}_{\omega_*}P_\gamma\tilde{v},P_\gamma\tilde{v})_{H^{-1},H^1}+\frac{1}{2}(\sigma_3\mathcal{H}_{\omega_*}P_+\tilde{v},P_-\tilde{v})_{H^{-1},H^1}+O(|\norm{u_0}_{L^2}-\norm{\varphi_{\omega_*}}_{L^2}|^2+\norm{\tilde{v}}_{H^1}^{3}).
\end{align*}
Since $U_{\delta}(t)(\gbr{G_+^{\delta}}\times \R)=\gbr{G_+^{\delta}}\times \R$, we have
\begin{align}\label{eq-csm-1}
\norm{P_+\tilde{v}}_{H^1} \lesssim \norm{P_+\tilde{v}}_{E}  =\norm{G_+^{\delta}(\tilde{v},\omega)}_E \leq l (\norm{\tilde{v}}_{H^1}+|\omega-\omega_*|) \lesssim l \tilde{C} \varepsilon_0.
\end{align}
$P_-\tilde{v}$ satisfies the equation 
\begin{align*}
\partial_t P_- \tilde{v}=-\mu P_- \tilde{v} +P_- \chi_{\delta}F_2.
\end{align*}
Thus,
\begin{align}\label{eq-csm-2}
\sup_{0\leq t \leq t_0} \norm{P_-\tilde{v}(t)}_{E} \lesssim \norm{P_-\tilde{v}(0)}_E +\tilde{C}^2\varepsilon_0^2\lesssim \varepsilon_0+\tilde{C}^2\varepsilon_0^2.
\end{align}
Applying Lemma \ref{lem:coersive}, the inequalities \eqref{eq-csm-1} and \eqref{eq-csm-2} to the above Taylor expansion of $S_{\omega_*}(u)$, we obtain there exists $c_1>0$ such that
\begin{align}\label{eq-csm-3}
S_{\omega_*}(u)-S_{\omega_*}(\varphi_{\omega_*}) \geq c_1 \norm{P_{\gamma}\tilde{v}}_{H^1}^2 +O(\varepsilon_0^2+l\tilde{C}^2\varepsilon_0^2+(\tilde{C}\varepsilon_0)^{3}).
\end{align}
Combining \eqref{eq-csm-1}--\eqref{eq-csm-3}, we obtain that there exists $C,C'>0$ such that 
\begin{align*}
\inf_{q \in \R} \norm{u(t)-e^{\im q}\varphi_{\omega_*}}_{H^1}\leq C (\norm{\tilde{v}}_{H^1}+|\omega-\omega_*|)\leq C'( (\tilde{C}\varepsilon_0)^{\frac{3}{2}}+\varepsilon_0+l^{\frac{1}{2}}\tilde{C}\varepsilon_0)
\end{align*}
for $0\leq t \leq t_0$, where $C'$ are not depend $l, \delta, \tilde{C}$ and $\varepsilon_0$.
This contradicts $l^{\frac{1}{2}} \ll C' \ll \tilde{C}$, $ C'(\tilde{C}\varepsilon_0)^{\frac{1}{2}}\ll C'\delta^{\frac{1}{2}}\ll 1$ and 
\[
\inf_{q \in \R} \norm{u(t_0)-e^{\im q}\varphi_{\omega_*}}_{H^1}=\tilde{C} \varepsilon_0.
\]
Therefore, we obtain the conclusion.

\end{proof}

\subsection{Behavior of solutions off Center stable manifolds}

\begin{lemma}\label{lem:ejection1}
If $\delta,l_0>0$ satisfy 
\[
\delta (1+l_0^{-2}+l_0)\ll 1, \leqno{\mbox{\rm (C2)}_{\delta,l_0}}
\]
then for any solution $(\tilde{v}_0,\omega_0,\theta_0)$ and $(\tilde{v}_1,\omega_1,\theta_1)$ to the system $\eqref{SLNLS}$ satisfying
\begin{align}\label{eq:ejection1-1}
d(\mathbf{v}_0(0),\mathbf{v}_1(0))^2-\norm{P_+(\tilde{v}_0(0)-\tilde{v}_1(0))}_{E}^2 \leq l_0^2 \norm{P_+(\tilde{v}_0(0)-\tilde{v}_1(0))}_{E}^2
\end{align}
one has
\begin{align}\label{eq:ejection1-2}
d(\mathbf{v}_0(t),\mathbf{v}_1(t))^2-\norm{P_+(\tilde{v}_0(t)-\tilde{v}_1(t))}_{E}^2 \leq
\begin{cases}
2l_0^2 \norm{P_+(\tilde{v}_0(t)-\tilde{v}_1(t))}_{E}^2, & 0\leq t <1/2\\
l_0^2 \norm{P_+(\tilde{v}_0(t)-\tilde{v}_1(t))}_{E}^2, & 1/2 \leq t \leq 1
\end{cases}
\end{align}
and
\begin{align}\label{eq:ejection1-3}
\norm{P_+(\tilde{v}_0(t)-\tilde{v}_1(t))}_{E}\geq 
\begin{cases}
\frac{1}{2}e^{\frac{\mu t}{2}} \norm{P_+(\tilde{v}_0(0)-\tilde{v}_1(0))}_{E}, & 0 \leq t <1/2\\
e^{\frac{\mu t}{2}} \norm{P_+(\tilde{v}_0(0)-\tilde{v}_1(0))}_{E}, & 1/2 \leq t \leq 1.
\end{cases}
\end{align}
\end{lemma}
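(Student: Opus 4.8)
The plan is to argue exactly as in Lemma~\ref{lem-inv-est}, but forward in time and tracking a \emph{lower} bound for the unstable component. Put $w(t):=\tilde{v}_0(t)-\tilde{v}_1(t)$, $A_0:=\norm{P_+w(0)}_E$, and $D_0:=d(\mathbf{v}_0(0),\mathbf{v}_1(0))$. Since $\tilde{v}_0,\tilde{v}_1$ both solve \eqref{SLNLS}, they satisfy (C1), so $P_0w(t)\equiv0$ and hence
\[
d(\mathbf{v}_0(t),\mathbf{v}_1(t))^2-\norm{P_+w(t)}_E^2=\norm{P_-w(t)}_E^2+\norm{P_\gamma w(t)}_E^2+\bigl|\log\omega_0(t)-\log\omega_1(t)\bigr|^2 .
\]
In particular \eqref{eq:ejection1-1} says that at $t=0$ the right-hand side is $\le l_0^2A_0^2$, whence $D_0^2\le(1+l_0^2)A_0^2$.

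First I would record, on $[0,1]$, the linear comparison estimates of Proposition~\ref{prop-GWPw}. As $e^{-\im t\mathcal{H}_{\omega_*}}$ acts on $P_\pm\widetilde{\mathcal{H}}_0$ as multiplication by $e^{\pm\mu t}$ and $\norm{\cdot}_E\sim\norm{\cdot}_{H^1}$ on the finite-dimensional range of $P_d$, estimate \eqref{est-GWP-7} gives $\bigl|\norm{P_\pm w(t)}_E-e^{\pm\mu t}\norm{P_\pm w(0)}_E\bigr|\lesssim\delta D_0$; similarly \eqref{est-GWP-8} gives $\norm{P_\gamma w(t)}_E^2\le\norm{P_\gamma w(0)}_E^2+C\delta D_0^2$ and \eqref{est-GWP-4} gives $\bigl|\log\omega_0(t)-\log\omega_1(t)\bigr|\le\bigl|\log\omega_0(0)-\log\omega_1(0)\bigr|+C\delta D_0$. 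From the $P_+$ estimate and $D_0\le\sqrt{1+l_0^2}\,A_0$ I get $\norm{P_+w(t)}_E\ge(e^{\mu t}-C\delta\sqrt{1+l_0^2})A_0$ on $[0,1]$; pulling out a factor $e^{\mu t/2}\ge1$ and using that $\mbox{\rm (C2)}_{\delta,l_0}$ forces $C\delta\sqrt{1+l_0^2}\le e^{\mu/4}-1$, the remaining factor is $\ge1$ for $t\in[1/2,1]$ and $\ge\tfrac12$ for $t\in[0,1/2)$, which is precisely \eqref{eq:ejection1-3}.

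For \eqref{eq:ejection1-2} I would insert the above estimates into the displayed identity. Using $e^{-\mu t}\le1$, its right side is at most $\bigl(\norm{P_-w(0)}_E^2+\norm{P_\gamma w(0)}_E^2+|\log\omega_0(0)-\log\omega_1(0)|^2\bigr)+C\delta D_0^2\le l_0^2A_0^2+C\delta(1+l_0^2)A_0^2$. To re-express $A_0^2$ through $\norm{P_+w(t)}_E^2$ I use $\norm{P_+w(t)}_E^2\ge(A_0-C\delta D_0)^2\ge(1-C\delta\sqrt{1+l_0^2})A_0^2$, i.e.\ $A_0^2\le(1+C'\delta\sqrt{1+l_0^2})\norm{P_+w(t)}_E^2$, which yields $d(\mathbf{v}_0(t),\mathbf{v}_1(t))^2-\norm{P_+w(t)}_E^2\le\bigl(l_0^2+C''\delta(1+l_0^2)^{3/2}\bigr)\norm{P_+w(t)}_E^2$. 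On $[0,1/2)$ the condition $\mbox{\rm (C2)}_{\delta,l_0}$ makes $C''\delta(1+l_0^2)^{3/2}\le l_0^2$ --- checking $l_0\le1$ against $\delta l_0^{-2}\ll1$ and $l_0>1$ against $\delta l_0\ll1$ --- giving the bound with $2l_0^2$. On $[1/2,1]$ I instead use the gained growth $\norm{P_+w(t)}_E^2\ge e^{\mu/2}A_0^2$ together with $C\delta(1+l_0^2)\le(e^{\mu/2}-1)l_0^2$ (again from $\mbox{\rm (C2)}_{\delta,l_0}$) to get $d(\mathbf{v}_0(t),\mathbf{v}_1(t))^2-\norm{P_+w(t)}_E^2\le l_0^2A_0^2+C\delta(1+l_0^2)A_0^2\le e^{\mu/2}l_0^2A_0^2\le l_0^2\norm{P_+w(t)}_E^2$, i.e.\ the sharper constant.

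The only delicate point is the constant bookkeeping: one must check that every resulting algebraic inequality reduces to $\mbox{\rm (C2)}_{\delta,l_0}$, and one must notice that all error terms in Proposition~\ref{prop-GWPw} are controlled by $D_0=d(\mathbf{v}_0(0),\mathbf{v}_1(0))$ \emph{uniformly} on $[0,1]$; this is exactly what allows the argument to close directly, with no continuity or bootstrap step. No analytic input beyond Proposition~\ref{prop-GWPw} (together with the positivity of the $P_\gamma$ part of $\norm{\cdot}_E$ from Lemma~\ref{lem:coersive}) is needed.
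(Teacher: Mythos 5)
Your proof is correct and takes essentially the same route as the paper's: both rest on the hypothesis at $t=0$, the comparison estimates \eqref{est-GWP-4}, \eqref{est-GWP-7}, \eqref{est-GWP-8} of Proposition \ref{prop-GWPw}, and the forward exponential growth $e^{\mu t}$ of the $P_+$ component, with the extra factor gained for $t\geq 1/2$ producing the sharper constant $l_0^2$ there. Your explicit use of (C1) to discard $P_0(\tilde v_0-\tilde v_1)$ and your slightly different constant bookkeeping under $\mbox{\rm (C2)}_{\delta,l_0}$ are only cosmetic differences from the paper's argument.
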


\begin{proof}
By the assumption \eqref{eq:ejection1-1}, we have
\begin{align}\label{eq:ejection1-p-1}
d(\mathbf{v}_0(0),\mathbf{v}_1(0)) \leq (1+l_0^2)^{1/2}\norm{P_+(\tilde{v}_0(0)-\tilde{v}_1(0))}_E.
\end{align}
By the inequality \eqref{est-GWP-7}, we obtain 
\begin{align}\label{eq:ejection1-p-2}
\norm{P_+(\tilde{v}_0(0)-\tilde{v}_1(0))}_E \leq e^{-\mu t}\norm{P_+(\tilde{v}_0(t)-\tilde{v}_1(t))}_E+ C\delta d(\mathbf{v}_0(0),\mathbf{v}_1(0))
\end{align}
for $0\leq t \leq 1$. 
Substituting \eqref{eq:ejection1-p-1} into \eqref{eq:ejection1-p-2}, we have
\begin{align}\label{eq:ejection1-p-3}
\norm{P_+(\tilde{v}_0(0)-\tilde{v}_1(0))}_E \leq (1-C\delta (1+l_0^2)^{1/2})^{-1}e^{-\mu t}\norm{P_+(\tilde{v}_0(t)-\tilde{v}_1(t))}_E.
\end{align}
By Proposition \ref{prop-GWPw} and the inequalities \eqref{eq:ejection1-p-1} and \eqref{eq:ejection1-p-3}, we obtain
\begin{align*}
&d(\mathbf{v}_0(t),\mathbf{v}_1(t))^2-\norm{P_+(\tilde{v}_0(t)-\tilde{v}_1(t))}_E^2\notag \\
\leq & e^{-2\mu t}\norm{P_-(\tilde{v}_0(0)-\tilde{v}_1(0))}_E^2 +\norm{P_0(\tilde{v}_0(0)-\tilde{v}_1(0))}_E^2+\norm{P_\gamma(\tilde{v}_0(0)-\tilde{v}_1(0))}_E^2\notag \\
&+|\log \omega_0(0)-\log \omega_1(0)|^2+C\delta d(\mathbf{v}_0(0),\mathbf{v}_1(0))^2\notag \\
\leq & (l_0^2+C\delta (1+l_0^2))\norm{P_+(\tilde{v}_0(0)-\tilde{v}_1(0))}_E^2\notag \\
\leq & l_0^2(1+C\delta l_0^{-2}(1+l_0^2))(1-C\delta (1+l_0^2)^{1/2})^{-2}e^{-2\mu t}\norm{P_+(\tilde{v}_0(t)-\tilde{v}_1(t))}_E^2
\end{align*}
which implies \eqref{eq:ejection1-2}.
The inequality \eqref{eq:ejection1-3} follows \eqref{eq:ejection1-p-3}.
\end{proof}

\begin{lemma}\label{lem:ejection2}
Assume $\delta,l_0>0$ satisfy {\rm (C2)$_{\delta,l_0}$}.
There exists $\varepsilon_*=\varepsilon_*(\omega_*,\delta,l_0)$ such that for $0<\varepsilon <\varepsilon_*$ and solutions $u_0$ and $u_1$ to the equation $\eqref{NLS}$ satisfying 
\begin{align}\label{eq:ejection2-1} 
\sup_{t \geq 0}\inf_{\theta \in \R} \norm{u_1(t)-e^{\im\theta}\varphi_{\omega_*}}_{H^1}< \varepsilon, \quad \inf_{\theta \in \R} \norm{u_0(0)-e^{\im\theta}\varphi_{\omega_*}}_{H^1}<\varepsilon
\end{align}
and
\begin{align}\label{eq:ejection2-2} 
d(\mathbf{v}_0(0),\mathbf{v}_1(0))^2 -\norm{P_+(\tilde{v}_0(0)-\tilde{v}_1(0))}_E^2 <l_0^2 \norm{P_+(\tilde{v}_0(0)-\tilde{v}_1(0))}_E^2
\end{align}
one has
\begin{align}\label{eq:ejection2-3} 
\inf_{\theta \in \R} \norm{u_0(t_0)-e^{\im\theta}\varphi_{\omega_*}}_{H^1} \geq \varepsilon
\end{align}
for some $t_0 >0$, where $(\tilde{v}_0(0),\omega_0(0),\theta_0(0))$ and $(\tilde{v}_1(0),\omega_1(0),\theta_1(0))$ satisfy $\tilde{v}_j(0) \in \widetilde{\mathcal{H}}_0$, 
\begin{align*}
\tilde{u}_j(0)=e^{\im\sigma_3\theta_j(0)} (\tilde{v}_j(0)+\phi_{\omega_j (0)}), \quad
\tilde{u}_j=
\begin{pmatrix}
u_j\\
\bar{u}_j
\end{pmatrix}
\quad (j=0,1).
\end{align*}
\end{lemma}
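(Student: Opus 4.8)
The plan is to argue by contradiction: suppose that $u_0$ never ejects, i.e. $\inf_{\theta}\norm{u_0(t)-e^{\im\theta}\varphi_{\omega_*}}_{H^1}<\varepsilon$ for all $t\geq 0$. Together with \eqref{eq:ejection2-1}, both $u_0$ and $u_1$ stay in the $\varepsilon$-neighborhood of the ground state orbit for all $t\geq 0$, so by Lemma \ref{lem:orth-ini} (choosing $\varepsilon<\varepsilon_*$ small enough that $\tilde C\varepsilon<\delta$ and the implicit function decomposition applies) the modulated solutions $(\tilde v_j,\omega_j,\theta_j)$ are globally defined, satisfy (C1), hence solve \eqref{SLNLS} (the cutoff $\chi_\delta$ is identically $1$ along the trajectory because $\norm{\tilde v_j}_{H^1}+|\omega_j-\omega_*|\lesssim\varepsilon\ll\delta$). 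Thus $U_\delta(t)$ coincides with the genuine flow on these data, and in particular the difference estimates of Proposition \ref{prop-GWPw} hold on every unit time interval.

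Next I would iterate Lemma \ref{lem:ejection1}. The hypothesis \eqref{eq:ejection2-2} is precisely \eqref{eq:ejection1-1} at $t=0$; the conclusion \eqref{eq:ejection1-2} at $t=1$ says \eqref{eq:ejection1-1} is reproduced at $t=1$ (with the same $l_0$, not $2l_0$, on the interval $[1/2,1]$), so by induction on $n$ the cone condition $d(\mathbf v_0(n),\mathbf v_1(n))^2-\norm{P_+(\tilde v_0(n)-\tilde v_1(n))}_E^2\leq l_0^2\norm{P_+(\tilde v_0(n)-\tilde v_1(n))}_E^2$ holds for all $n\in\N$, and meanwhile \eqref{eq:ejection1-3} gives the exponential lower bound
\begin{align*}
\norm{P_+(\tilde v_0(t)-\tilde v_1(t))}_E\geq c\,e^{\mu t/2}\norm{P_+(\tilde v_0(0)-\tilde v_1(0))}_E
\end{align*}
for all $t\geq 0$ (chaining the unit-interval estimates). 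Since $\tilde v_0(0)\neq\tilde v_1(0)$ would be needed for this to be useful, one first observes that if $P_+(\tilde v_0(0)-\tilde v_1(0))=0$ then \eqref{eq:ejection2-2} forces $\tilde v_0(0)=\tilde v_1(0)$ and $\omega_0(0)=\omega_1(0)$, and then by uniqueness $u_0\equiv u_1$ (up to phase), contradicting that $u_0$ should eventually leave while $u_1$ stays — so WLOG $\norm{P_+(\tilde v_0(0)-\tilde v_1(0))}_E=:a>0$.

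Then the contradiction is quantitative: on one hand $d(\mathbf v_0(t),\mathbf v_1(t))\leq d(\mathbf v_0(t),(0,\omega_*))+d(\mathbf v_1(t),(0,\omega_*))\lesssim \varepsilon$ uniformly in $t$, because both solutions stay $\varepsilon$-close to $\varphi_{\omega_*}$ and $\norm{\cdot}_E\sim\norm{\cdot}_{H^1}$ on the relevant subspace by Lemma \ref{lem:coersive} plus the boundedness of $P_0,P_\pm$; on the other hand the cone condition gives $d(\mathbf v_0(t),\mathbf v_1(t))\geq\norm{P_+(\tilde v_0(t)-\tilde v_1(t))}_E\geq c\,e^{\mu t/2}a\to\infty$, which is absurd for $t$ large. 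This forces the assumption to fail, i.e. there is $t_0>0$ with $\inf_\theta\norm{u_0(t_0)-e^{\im\theta}\varphi_{\omega_*}}_{H^1}\geq\varepsilon$, which is \eqref{eq:ejection2-3}. The main obstacle I anticipate is bookkeeping the constants in the induction so that (C2)$_{\delta,l_0}$ genuinely closes the loop — i.e. verifying that the factor degradation from $l_0^2$ to $2l_0^2$ on $[0,1/2]$ and back to $l_0^2$ on $[1/2,1]$ in Lemma \ref{lem:ejection1} really does give a clean induction over all integer times, and that the $\varepsilon$-smallness needed for the cutoff to vanish and for the $\norm{\cdot}_E$--$\norm{\cdot}_{H^1}$ equivalence is compatible with the $\delta,l_0$ already fixed; the choice $\varepsilon_*=\varepsilon_*(\omega_*,\delta,l_0)$ in the statement is exactly what absorbs this.
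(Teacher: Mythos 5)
Your proposal is correct and follows essentially the same route as the paper's proof: argue by contradiction, note that the smallness of both trajectories makes the cutoff flow \eqref{SLNLS} coincide with the genuine modulated NLS flow for all $t\geq 0$, iterate Lemma \ref{lem:ejection1} over unit time intervals to propagate the cone condition and obtain exponential growth of $\norm{P_+(\tilde{v}_0(t)-\tilde{v}_1(t))}_E$, and contradict the uniform $O(\varepsilon)$ bound coming from both solutions remaining near the ground-state orbit. The only quibble is your degenerate-case digression: since \eqref{eq:ejection2-2} is a strict inequality, $P_+(\tilde{v}_0(0)-\tilde{v}_1(0))=0$ is outright incompatible with the hypothesis (it would give $d(\mathbf{v}_0(0),\mathbf{v}_1(0))^2<0$), so $\norm{P_+(\tilde{v}_0(0)-\tilde{v}_1(0))}_E>0$ is automatic and no uniqueness argument is needed.
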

\begin{proof}
We prove the inequality \eqref{eq:ejection2-3} by the contradiction.
Assume for any $0<\varepsilon \ll \delta^2$ there exist $0<\varepsilon <\varepsilon_*$ and solutions $u_0$ and $u_1$ to the equation \eqref{NLS} satisfying \eqref{eq:ejection2-1}, \eqref{eq:ejection2-2} and 
\begin{align*}
\sup_{t \geq 0} \inf_{\theta \in \R} \norm{u_0(t)-e^{\im\theta} \varphi_{\omega_*}}_{H^1} < \varepsilon.
\end{align*}
Let $(\tilde{v}_j,\omega_j,\theta_j)$ be solutions to the system \eqref{SLNLS} with initial data $(\tilde{v}_j(0),\omega_j(0),\theta_j(0))$.
By the inequality 
\begin{align*}
\norm{\tilde{v}_j(t)}_{H^1}+|\omega_j(t)-\omega_*| \lesssim \inf_{\theta \in \R} \norm{u_j(t)-e^{\im\theta}\varphi_{\omega_*}}_{H^1} \leq \varepsilon
\end{align*}
and $\tilde{u}_j(t)=e^{\im\theta_j(t) \sigma_3}(\tilde{v}_j(t)+\phi_{\omega_j(t)})$ as long as $\norm{v_j(t)}_{H^1}^2 +|\omega_1(t)-\omega_*|^2 <\delta^2 (j=0,1)$, for $j \in \{0,1\}$ and sufficiently small $\varepsilon$ we obtain $\tilde{u}_j(t)=e^{\im\theta_j(t) \sigma_3}(\tilde{v}_j(t)+\phi_{\omega_j(t)})$ for all $t\geq 0$.
From the assumption \eqref{eq:ejection2-2}, Lemma \ref{lem:ejection2} yields 
\begin{align}\label{eq:ejection2-p-1}
0<\frac{1}{2}e^{\mu t/2}\norm{P_+(\tilde{v}_0(0)-\tilde{v}_1(0))}_{E} \leq \norm{P_+(\tilde{v}_0(t)-\tilde{v}_1(t))}_{E}
\end{align}
and
\begin{align*}
d(\mathbf{v}_0(t),\mathbf{v}_1(t)) < (1+2l_0^2)^{\frac{1}{2}} \norm{P_+(\tilde{v}_0(t)-\tilde{v}_1(t))}_{E}
\end{align*}
for all $t >0$.
Therefore,
\begin{align*}
\norm{P_+(\tilde{v}_0(t)-\tilde{v}_1(t))}_{E} \lesssim \max_{j=0,1} \inf_{\theta \in \R} \norm{\tilde{v}_j(t)+\phi_{\omega_j(t)}-e^{\im\theta \sigma_3}\phi_{\omega_*}}_{H^1}<2 \varepsilon.
\end{align*}
which contradicts the inequality \eqref{eq:ejection2-p-1} for sufficiently large $t_0$.

\end{proof}

In the following corollary, we show solutions off $\mathcal{M}_{cs}^{\delta}$ go out of the neighborhood of the standing waves.
\begin{corollary}\label{cor:ejection}
Assume $\delta,l,l_0>0$ satisfy {\rm (C1)$_{\delta,l}$} and {\rm (C2)$_{\delta,l_0}$}.
There exists $\varepsilon^*=\varepsilon^*(\omega_*,\delta,l_0)>0$ such that for $\tilde{u}(0) \in (H^1_{rad})^2 \setminus \mathcal{M}_{cs}^{\delta}$ with $\sigma_1 \tilde{u} = \bar{\tilde{u}}$ and $\inf_{ \theta \in \R} \norm{u(0)-e^{\im\theta}\varphi_{\omega_*}}_{H^1}< \varepsilon^*$, the solution $u(0)$ satisfies equation \eqref{NLS} with initial data $u(0)$ satisfies 
\begin{align*}
\inf_{\theta \in \R} \norm{u(t_0)-e^{\im\theta }\varphi_{\omega_*}}_{H^1} \geq \varepsilon^*
\end{align*}
for some $t_0>0$.
\end{corollary}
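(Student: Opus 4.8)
The plan is to deduce Corollary~\ref{cor:ejection} from Lemma~\ref{lem:ejection2} by choosing the auxiliary solution $u_1$ appropriately, exploiting the structure of the center stable manifold $\mathcal{M}_{cs}^\delta$. Fix $\tilde u(0)\notin\mathcal{M}_{cs}^\delta$ with $\sigma_1\tilde u=\bar{\tilde u}$ and $\inf_\theta\|u(0)-e^{\im\theta}\varphi_{\omega_*}\|_{H^1}<\varepsilon^*$. First I would apply Lemma~\ref{lem:orth-ini} to obtain the modulation parameters $\theta_0(0),\omega_0(0)$ and the orthogonal component $\tilde v_0(0)\in\widetilde{\mathcal{H}}_0$ so that $\tilde u(0)=e^{\im\theta_0(0)\sigma_3}(\tilde v_0(0)+\phi_{\omega_0(0)})$. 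Since $\tilde u(0)\notin\mathcal{M}_{cs}^\delta$, we have $P_+\tilde v_0(0)\neq G_+^\delta(P_{\leq 0}\tilde v_0(0),\omega_0(0))$. I then define the comparison datum to lie on the manifold: set $\tilde v_1(0):=P_{\leq 0}\tilde v_0(0)+G_+^\delta(P_{\leq 0}\tilde v_0(0),\omega_0(0))$, $\omega_1(0):=\omega_0(0)$, $\theta_1(0):=\theta_0(0)$, and let $u_1$ be the solution of \eqref{NLS} with this initial data. Because $(\tilde v_1(0),\omega_1(0))\in\gbr{G_+^\delta}$ and $\gbr{G_+^\delta}$ is flow-invariant under $U_\delta$ (by Lemma~\ref{lem-cot-map}, since $\mathcal{U}_\delta(t)G_+^\delta=G_+^\delta$ for $t<0$, hence for all $t$ by the group law), the solution $u_1(t)$ stays on $\mathcal{M}_{cs}^\delta$; combined with Theorem~\ref{thm-csm} this gives $\sup_{t\geq 0}\inf_\theta\|u_1(t)-e^{\im\theta}\varphi_{\omega_*}\|_{H^1}\lesssim \varepsilon^*$, so the first hypothesis \eqref{eq:ejection2-1} of Lemma~\ref{lem:ejection2} holds after absorbing the constant into $\varepsilon^*$.

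Next I would verify the transversality hypothesis \eqref{eq:ejection2-2}. By construction $P_{\leq 0}(\tilde v_0(0)-\tilde v_1(0))=0$ and $\omega_0(0)=\omega_1(0)$, so $d(\mathbf v_0(0),\mathbf v_1(0))^2=\|P_+(\tilde v_0(0)-\tilde v_1(0))\|_E^2$, which trivially satisfies \eqref{eq:ejection2-2}: indeed the left side $d^2-\|P_+(\cdot)\|_E^2$ equals $0<l_0^2\|P_+(\tilde v_0(0)-\tilde v_1(0))\|_E^2$ as long as $P_+(\tilde v_0(0)-\tilde v_1(0))\neq 0$, which is exactly the statement $\tilde u(0)\notin\mathcal{M}_{cs}^\delta$. (Here one should note that the defining condition of $\mathcal{M}_{cs}^\delta$ only constrains the $P_+$-component, so being off the manifold is precisely $P_+\tilde v_0(0)\neq G_+^\delta(P_{\leq 0}\tilde v_0(0),\omega_0(0))$.) Then Lemma~\ref{lem:ejection2} applies directly and yields $t_0>0$ with $\inf_\theta\|u_0(t_0)-e^{\im\theta}\varphi_{\omega_*}\|_{H^1}\geq\varepsilon$, where $u_0=u$ is the original solution; this is the desired conclusion with $\varepsilon^*$ chosen as the $\varepsilon$ of Lemma~\ref{lem:ejection2} (shrunk if necessary so that the estimates $\|\tilde v_j(t)\|_{H^1}+|\omega_j(t)-\omega_*|\lesssim\varepsilon^*\ll\delta$ keep the localized flow \eqref{SLNLS} coincident with the genuine NLS flow).

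The main subtlety — and what I expect to be the chief obstacle — is the bookkeeping that ensures the localized system \eqref{SLNLS} governs $u_1$ (and $u_0$) for \emph{all} $t\geq 0$, not merely a short interval: one needs the a priori smallness $\|\tilde v_1(t)\|_{H^1}+|\omega_1(t)-\omega_*|\ll\delta$ globally in time, which for $u_1$ comes from Theorem~\ref{thm-csm} applied to a point of $\mathcal{M}_{cs}^\delta(\omega_*,\tilde C\varepsilon^*)$, and for $u_0$ from the contradiction hypothesis $\sup_t\inf_\theta\|u_0(t)-e^{\im\theta}\varphi_{\omega_*}\|_{H^1}<\varepsilon^*$ inside the proof of Lemma~\ref{lem:ejection2}. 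A secondary point is to confirm that the flow-invariance $U_\delta(t)(\gbr{G_+^\delta}\times\R)=\gbr{G_+^\delta}\times\R$ — already used in the proof of Theorem~\ref{thm-csm} in \eqref{eq-csm-1} — indeed propagates forward in time; this follows from Lemma~\ref{lem-cot-map} together with the semigroup identity $\mathcal{U}_\delta(t)\circ\mathcal{U}_\delta(s)=\mathcal{U}_\delta(t+s)$, which lets one extend the fixed-point property from $t<0$ to all $t$. Once these consistency checks are in place, the corollary is a direct corollary of Lemma~\ref{lem:ejection2}, as its name promises.
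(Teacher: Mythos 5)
Your proposal is correct and follows essentially the same route as the paper: both construct the comparison solution $u_1$ by projecting the initial data onto the graph of $G_+^{\delta}$ (same $P_{\leq 0}$-part, $\omega$, $\theta$), use Theorem \ref{thm-csm} to keep $u_1$ near the standing wave for all $t\geq 0$, note that the difference at $t=0$ is purely in the $P_+$-component so that \eqref{eq:ejection2-2} holds trivially, and then apply Lemma \ref{lem:ejection2}.
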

\begin{proof}
Let $(v_1,\omega_1,\theta_1)$ be the solution to the system \eqref{SLNLS} with initial data 
\begin{align*}
(P_{\leq 0}\tilde{v}(\tilde{u}(0))+G_+^{\delta}(\tilde{v}(\tilde{u}(0)),\omega(\tilde{u}(0))),\omega(\tilde{u}(0)),\theta(\tilde{u}(0))),
\end{align*}
where $\tilde{v}(\tilde{u}(0)), \omega(\tilde{u}(0)) $ and $\theta(\tilde{u}(0))$ are defined in Lemma \ref{lem:orth-ini}.
Since 
\begin{align*}
\norm{\tilde{v}(\tilde{u}(0))}_{H^1}+|\omega(\tilde{u}(0))| \lesssim \inf_{q \in \R} \norm{u(0)-e^{\im q}\varphi_{\omega_*}}_{H^1}< \varepsilon^*,
\end{align*}
by the Lipschitz continuity of $G_+^{\delta}$ we have
\begin{align*}
\norm{G_+^{\delta}(\tilde{v}(\tilde{u}(0)),\omega(\tilde{u}(0)))}_{H^1} \lesssim \varepsilon^*
\end{align*}
and $\norm{\tilde{v}_1(0)}_{H^1}< \delta$ for sufficiently small $\varepsilon^*$.
Therefore, by Theorem \ref{thm-csm}, we have $e^{\im\theta_1(t)\sigma_3}(\tilde{v}_1(t)+\phi_{\omega_1(t)}) $ is a solution to the equation \eqref{NLS} and in $\mathcal{M}_{cs}^{\delta}(\omega_*,\tilde{C}\varepsilon_*)$ for $\tilde{C}^2\varepsilon_*< \delta $ and all $t>0$.
Since
\begin{align*}
d((\tilde{v}(\tilde{u}(0)),\omega(\tilde{u}(0))),(\tilde{v}_1(0),\omega_1(0)))^2-\norm{P_+(\tilde{v}(\tilde{u}(0))-\tilde{v}_1(0))}_E^2=0<l_0 \norm{P_+(\tilde{v}(\tilde{u}(0))-\tilde{v}_1(0))}_E^2,
\end{align*}
by Lemma \ref{lem:ejection2}  we obtain 
\begin{align*}
\inf_{q \in \R}\norm{u(t)-e^{\im q}\varphi_{\omega_*}}_{H^1} \geq \varepsilon^*.
\end{align*}
\end{proof}

\subsection{The $C^1$ regularity of center stable manifolds}

In this subsection, we prove $G_+^{\delta}$ is a $C^1$ function from $P_{\leq 0} \widetilde{\mathcal{H}}_0 \times \R_+$ to $P_+ \widetilde{\mathcal{H}}_0$ by applying the argument in \cite{NS12SIAM}.

Firstly, we show the G\^ateaux differentiability of $G_+^{\delta}$.
Let $\varepsilon>0$, $a_0>0, a_1 \in \R$ and $\psi_0,\psi_1 \in P_{\leq 0} \widetilde{\mathcal{H}}_0$ with $|a_0-\omega_*|+\norm{\psi_0}_{H^1}<\frac{\varepsilon}{2 \tilde{C}}$, where $\tilde{C}$ is defined in Theorem \ref{thm-csm}.
We define a solution $(\tilde{v}_0,\omega_0,\theta_0)$ to the system \eqref{SLNLS} with initial data $\tilde{v}_0(0)=\psi_0+G_+^{\delta}(\psi_0,a_0)$ and $\omega_0(0)=a_0$.
Let $\tilde{v}_h$ be a solution to the equation
\begin{align}\label{eq:mod-NLS}
\partial_t \tilde{v}=-\im\mathcal{H}_{\omega_*}\tilde{v}-\im\sigma_3(\dot{\theta}_0+\omega_*-\omega_h)\phi_{\omega_*}-\dot{\omega}_0\partial_{\omega}\phi_{\omega_*}+F_2(\tilde{v},\omega_h,\dot{\omega}_0,\dot{\theta}_0)
\end{align}
with initial data $\tilde{v}_h(0)=\psi_0+h\psi_1+G_+^{\delta}(\psi_0+h\psi_1,a_0+a_1h)$,
where $\omega_h(t)=\omega_0(t)+a_1h$ and $F_2$ is defined by \eqref{nt-NLSv1} and \eqref{nt-NLSv2}.
If $\varepsilon>0$ and $|h|$ are sufficiently small, then $e^{\im\theta_0\sigma_3}(\tilde{v}_0+\phi_{\omega_0})$ and $e^{\im\theta_0\sigma_3}(\tilde{v}_h+\phi_{\omega_h})$ are solutions to the equation \eqref{NLS}.
By the Lipschitz continuity of $G_+^{\delta}$, for any sequence $\{h_n\}_n$ in $\R$ with $h_n \to 0$ as $n \to \infty$ there exist a subsequence $\{h_n'\}_n \subset \{h_n\}_n$ and $\psi_* \in P_+ \widetilde{\mathcal{H}}_0$ such that 
\begin{align*}
\frac{G_+^{\delta}(\psi_0+h_n'\psi_1,a_0+a_1h_n')-G_+^{\delta}(\psi_0,a_0)}{h_n'} \to \psi_* \mbox{ as } n \to \infty.
\end{align*}
We define $\tilde{w}$ as the solution to the equation 
\begin{align}\label{eq:mod-LNLS}
\partial_t \tilde{w}=-\im\mathcal{H}_{\omega_*}\tilde{w}-\im\sigma_3\dot{\theta}_0\tilde{w}-\im\sigma_3(V(\tilde{v}_0)-V(0))\tilde{w}-a_1(\im\sigma_3\phi_{\omega_0}+\dot{\omega}_0\partial_{\omega}^2\phi_{\omega_0}+\im(\dot{\theta}_0+\omega_*-\omega_0)\sigma_3\partial_{\omega}\phi_{\omega_0})
\end{align}
with initial data $\tilde{w}(0)=\psi_1+\psi_*$, where 
\begin{align*}
V(\tilde{v}_0)=
\begin{pmatrix}
g(|v_0+\varphi_{\omega_*}|^2)+g'(|v_0+\varphi_{\omega_*}|^2)|v_0+\varphi_{\omega_*}|^2 & g'(|v_0+\varphi_{\omega_*}|^2)(v_0+\varphi_{\omega_*})^2\\
g'(|v_0+\varphi_{\omega_*}|^2)(\overline{v_0}+\varphi_{\omega_*})^2 & g(|v_0+\varphi_{\omega_*}|^2)+g'(|v_0+\varphi_{\omega_*}|^2)|v_0+\varphi_{\omega_*}|^2
\end{pmatrix}.
\end{align*}
By the $C^1$-smoothness of the map from initial data to solutions to the equation \eqref{NLS}, we have for all $T>0$ 
\begin{align}\label{eq:c1-conv}
\norm{\frac{\tilde{v}_{h'_n}-\tilde{v}_0}{h_n'}-\tilde{w}}_{L^\infty((-T,T),H^1)} \to 0 \mbox{ as } n \to \infty.
\end{align}
We define the norm $\norm{\cdot}_{E_\iota}$ as
\begin{align*}
\norm{\tilde{v}}_{E_\iota}^2=\norm{(Id-P_0)\tilde{v}}_{E}^2+\iota^2\left| \frac{(\tilde{v},\im\sigma_3\partial_\omega \phi_{\omega_*})_{L^2}}{(\partial_\omega \phi_{\omega_*},\phi_{\omega_*})_{L^2}}\right|^2\norm{\phi_{\omega_*}}_{L^2}^2+\left| \frac{(\tilde{v}, \phi_{\omega_*})_{L^2}}{(\partial_\omega \phi_{\omega_*},\phi_{\omega_*})_{L^2}}\right|^2\norm{\partial_\omega\phi_{\omega_*}}_{L^2}^2.
\end{align*}
Then, we have the following lemma.
\begin{lemma}\label{lem:mod-LNLS}
Let $K_0>0$.
There exist $\iota_0=\iota_0(K_0),K_1=K_1(K_0)>0$ such that for $0<\iota<\iota_0$, $a_1 \in \R$ and a solution $(\tilde{v}_0,\omega_0,\theta_0)$ to the system $\eqref{SLNLS}$ satisfying that $\sup_{t \geq 0}(\norm{\tilde{v}_0(t)}_{H^1}+|\omega_0(t)-\omega_*|)\leq \iota$ and $e^{\im\theta_0 \sigma_3}(\tilde{v}_0+\phi_{\omega_0})$ is a solution to the equation $\eqref{NLS}$ on $[0,\infty)$, the following holds.
If a solution $\tilde{w}$ to the equation $\eqref{eq:mod-LNLS}$ satisfies
\begin{align}\label{eq:mod-LNLS-0}
K_0\iota^{1/3}(\norm{P_{\leq 0}\tilde{w}(t_0)}_{E_{\iota^{1/3}}}+|a_1|) < \norm{P_+\tilde{w}(t_0)}_{E}
\end{align}
for some $t_0\geq 0$, then for $t\geq t_0 +1/2$ 
\begin{align}\label{eq:mod-LNLS-1}
3\norm{P_+\tilde{w}(t)}_{E}>e^{\frac{\mu (t-t_0)}{2}}(\norm{P_+\tilde{w}(t_0)}_{E_{\iota^{1/3}}}+K_0 \iota^{1/3}(\norm{P_{\leq 0}\tilde{w}(t)}_{E_{\iota^{1/3}}}+|a_1|)).
\end{align}
On the other hand, if $\eqref{eq:mod-LNLS-1}$ fails for $t_0 \geq 0$, then for $t \geq 0$
\begin{align}\label{eq:mod-LNLS-2}
\norm{P_+\tilde{w}(t)}_{E}\leq K_0 \iota^{1/3}(\norm{P_{\leq 0}\tilde{w}(t)}_{E_{\iota^{1/3}}}+|a_1|) \lesssim e^{K_1\iota^{1/6}t}\iota^{1/3} (\norm{P_{\leq 0}\tilde{w}(0)}_{E_{\iota^{1/3}}}+|a_1|).
\end{align}
\end{lemma}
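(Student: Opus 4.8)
The plan is to treat \eqref{eq:mod-LNLS} as a perturbation of the linearized flow $\partial_t \tilde w = -\im\mathcal{H}_{\omega_*}\tilde w$, exactly as Lemma \ref{lem:ejection1} did for the nonlinear system \eqref{SLNLS}, but now tracking the explicit inhomogeneous terms (the $\dot\theta_0 \tilde w$, the potential difference $V(\tilde v_0)-V(0)$, and the $a_1$-driven forcing) as errors of size $\lesssim \iota$ relative to the $H^1$-norm of the solution. The key structural point is that on $\gbr{G_+^\delta}$ the $P_+$-component is slaved to the $P_{\leq 0}$-component by the Lipschitz bound, and the spectral gap $\mu$ governs the splitting: $P_+$ grows like $e^{\mu t}$, $P_-$ decays like $e^{-\mu t}$, and $P_0 \oplus P_\gamma$ is (nearly) conserved in the $E$-norm thanks to Lemma \ref{lem:coersive}. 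The modified norm $\norm{\cdot}_{E_\iota}$ with the $\iota^{1/3}$ weight on the $P_0$-direction is a bookkeeping device so that the $a_1\partial_\omega\phi$-type forcing, which lives in the $P_0 \oplus P_\gamma$ part, gets absorbed with a favorable power of $\iota$.

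First I would establish the analogue of \eqref{est-GWP-7}--\eqref{est-GWP-8} for \eqref{eq:mod-LNLS}: writing $\tilde w(t) = e^{-\im t \mathcal{H}_{\omega_*}}\tilde w(0) + (\text{Duhamel terms})$ and using that each coefficient in the perturbation ($\dot\theta_0$, $V(\tilde v_0)-V(0)$, and the $\omega_0$-modulation terms) is $O(\iota)$ in the relevant norms by the hypothesis $\sup_t(\norm{\tilde v_0}_{H^1}+|\omega_0-\omega_*|)\le\iota$ together with \eqref{est-GWP-1}, I get for $|t-t_0|\le 1$
\[
\Bigl\|P_+\tilde w(t) - e^{-\im(t-t_0)\mathcal{H}_{\omega_*}}P_+\tilde w(t_0)\Bigr\|_{E}\lesssim \iota\Bigl(\sup_{|s-t_0|\le 1}\norm{\tilde w(s)}_{E_{\iota^{1/3}}} + |a_1|\Bigr),
\]
and a matching statement for the $P_{\leq 0}$-part, with the $a_1$-forcing contributing $O(\iota)$ (since its $L^\infty_t$-norm is $\lesssim|a_1|$, with the extra $\iota$ coming from $\dot\omega_0,\dot\theta_0+\omega_*-\omega_0 = O(\iota)$ and from the fact that $\im\sigma_3\phi_{\omega_0}\in P_0$, so it is measured in the $\iota^{1/3}$-weighted norm, producing the net gain). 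Then, under the cone condition \eqref{eq:mod-LNLS-0} at $t_0$, a bootstrap on the dyadic-in-time interval $[t_0,t_0+1]$ — first showing the cone persists (the $P_+$-part dominates because $e^{\mu}$ beats the $O(\iota)$ errors once $K_0\iota^{1/3}$ is the cone aperture), then showing $\norm{P_+\tilde w}_E \ge \tfrac12 e^{\mu(t-t_0)/2}\norm{P_+\tilde w(t_0)}_{E_{\iota^{1/3}}}$ as in \eqref{eq:ejection1-3} — and iterating over unit time steps gives \eqref{eq:mod-LNLS-1} for all $t\ge t_0+1/2$. This is the ejection alternative, and it is essentially the content of Lemma \ref{lem:ejection1} transplanted to the linear-in-$\tilde w$ equation.

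For the complementary statement: if the cone condition \eqref{eq:mod-LNLS-0} ever fails at some $t_0$, I want to show it fails at $t=0$ and, moreover, that the $P_+$-part stays inside the cone for all time with the stated slow exponential growth. The point is that the cone $\{\,\norm{P_+\tilde w}_E \le K_0\iota^{1/3}(\norm{P_{\leq 0}\tilde w}_{E_{\iota^{1/3}}}+|a_1|)\,\}$ is forward-invariant: if $\tilde w(t_0)$ is in it, then on $[t_0,t_0+1]$ the linearized evolution would push $P_+$ out by a factor $e^{\mu}$, but the perturbative estimate above shows that growth is controlled by $e^{\mu}$ times the cone aperture, which is $K_0\iota^{1/3}$ times the $P_{\leq 0}$-size; choosing $K_0$ large and $\iota$ small (i.e.\ $\iota_0=\iota_0(K_0)$) makes the cone trap stable — more precisely one shows that if $\tilde w$ exits the cone at some first time, that time would satisfy the reversed inequality \eqref{eq:mod-LNLS-0}, contradicting minimality, which is why "exits the cone at $t_0$" forces the cone condition at all earlier times and hence at $t=0$. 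Inside the cone, $\norm{P_{\leq 0}\tilde w}_{E_{\iota^{1/3}}}$ satisfies a Grönwall inequality $\tfrac{d}{dt}\norm{P_{\leq 0}\tilde w}_{E_{\iota^{1/3}}}^2 \lesssim \iota^{1/6}\norm{P_{\leq 0}\tilde w}_{E_{\iota^{1/3}}}^2 + \iota\cdot(\dots)$ coming from the $O(\iota)$ perturbation and the coercivity Lemma \ref{lem:coersive} on the $P_\gamma$-part, yielding the $e^{K_1\iota^{1/6}t}$ factor in \eqref{eq:mod-LNLS-2}, and then $\norm{P_+\tilde w}_E$ inherits the same bound via the cone inequality.

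\textbf{The main obstacle} I anticipate is the careful accounting of the weighted norm $\norm{\cdot}_{E_{\iota^{1/3}}}$: one must verify that the $a_1$-forcing term $a_1(\im\sigma_3\phi_{\omega_0} + \dot\omega_0\partial_\omega^2\phi_{\omega_0} + \im(\dot\theta_0+\omega_*-\omega_0)\sigma_3\partial_\omega\phi_{\omega_0})$ really does contribute with the claimed power of $\iota$ in the $P_{\leq 0}$-estimate — the leading piece $a_1\im\sigma_3\phi_{\omega_0}$ is $O(|a_1|)$, not $O(\iota|a_1|)$, but it projects into the $P_0$-direction where the norm carries the $\iota^{1/3}$ weight (so it enters as $\iota^{1/3}|a_1|$, comparable to the cone aperture times $|a_1|$), while the remaining pieces genuinely gain a factor $\iota$ from $\dot\omega_0, \dot\theta_0+\omega_*-\omega_0 = O(\iota)$. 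Getting the exponents $1/3$ and $1/6$ to close the bootstrap (so that $K_0\iota^{1/3}$ is small but $e^{K_1\iota^{1/6}t}$ is a negligible correction to $e^{\mu t/2}$ on unit time scales) is the delicate quantitative step; everything else is a routine transcription of the arguments in Lemmas \ref{lem:ejection1} and \ref{lem:ejection2} and of \cite{NS12SIAM}.
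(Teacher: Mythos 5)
Your argument for the first alternative is essentially the paper's: componentwise estimates over unit time intervals (the paper writes them as differential/energy inequalities, \eqref{eq:mod-LNLS-p-1}--\eqref{eq:mod-LNLS-p-5}, rather than via Duhamel, but the content is the same), the $\iota^{1/3}$-weighted norm absorbing the $a_1\im\sigma_3\phi_{\omega_0}$ forcing in the $P_0$-direction, then propagation of the cone condition \eqref{eq:mod-LNLS-0} and iteration over unit steps. One quantitative caveat: measured against $\norm{\tilde w}_{E_{\iota^{1/3}}}$ the perturbation is only $O(\iota^{2/3})$, not $O(\iota)$, since $\norm{\tilde w}_{H^1}\lesssim \iota^{-1/3}\norm{\tilde w}_{E_{\iota^{1/3}}}$; this is precisely where the fractional exponents come from (cf.\ the paper's \eqref{eq:mod-LNLS-p-5}) and the bootstrap still closes, so this is not a gap.

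The genuine problem is your second alternative. You claim the stable cone $\{\norm{P_+\tilde w}_E\le K_0\iota^{1/3}(\norm{P_{\leq 0}\tilde w}_{E_{\iota^{1/3}}}+|a_1|)\}$ is forward invariant (``the cone trap is stable''), and that failure of \eqref{eq:mod-LNLS-0} at some $t_0$ forces the solution to remain in this cone for all time. This is false and contradicts the mechanism of your own first part: a trajectory at the boundary of the stable cone satisfies (the closure of) \eqref{eq:mod-LNLS-0}, and then $P_+$ gains a factor $\approx e^{\mu}$ per unit time while $P_{\leq 0}$ gains only $1+C\iota^{1/3}$ (this is \eqref{eq:mod-LNLS-p-4}), so the trajectory is ejected and never returns; what propagates forward is the unstable cone condition \eqref{eq:mod-LNLS-0}, not its negation. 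Your first-exit-time argument (``that time would satisfy the reversed inequality \eqref{eq:mod-LNLS-0}, contradicting minimality'') yields no contradiction: satisfying \eqref{eq:mod-LNLS-0} at and after the exit time is perfectly consistent with that time being minimal. The repair is simpler and is what the paper does: the hypothesis of the second alternative is that the growth conclusion \eqref{eq:mod-LNLS-1} fails, and the contrapositive of the first alternative already gives that \eqref{eq:mod-LNLS-0} fails at \emph{every} $t\ge 0$; the first inequality in \eqref{eq:mod-LNLS-2} is thus a logical consequence of the hypothesis, not something to be proved by a dynamical invariance argument. Once \eqref{eq:mod-LNLS-0} fails at every time, your Gr\"onwall/iteration step on $\norm{P_{\leq 0}\tilde w}_{E_{\iota^{1/3}}}$ (the paper iterates \eqref{eq:mod-LNLS-p-4}) gives the $e^{K_1\iota^{1/6}t}$ bound, and $P_+$ inherits it through the failed cone inequality; that final step of your sketch is correct.
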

\begin{proof}
For $t_1,t_2 \geq 0$ with $|t_1-t_2|<1$ we have
\begin{align}\label{eq:mod-LNLS-p-1}
&|\norm{P_{\gamma}\tilde{w}(t_2)}_{E_{\iota^{1/3}}}^2-\norm{P_{\gamma}\tilde{w}(t_1)}_{E_{\iota^{1/3}}}^2|\notag \\
\lesssim& \norm{\dot{\theta}_0}_{L^\infty}\norm{\tilde{w}(t_1)}_{H^1}^2+\norm{V(\tilde{v}_0)-V(0)}_{L^{\infty}((0,\infty),L^{3/2}+L^\infty)}\norm{\tilde{w}(t_1)}_{H^1}^2\notag \\
&+(\norm{\tilde{v}_0(t)}_{H^1}+|\omega_0(t)-\omega_*|)|a_1|\norm{\tilde{w}(t_1)}_{H^1} \notag \\
\lesssim & \iota^{1/3} \norm{\tilde{w}(t_1)}_{E}^2 +\iota (\norm{P_+\tilde{w}(t_1)}_{E_{\iota^{1/3}}}^2+|a_1|^2).
\end{align}
By the inequality
\begin{align*}
\norm{P_-\partial_t \tilde{w}(t)+i\mathcal{H}_{\omega_*}P_-\tilde{w}(t)}_{E_{\iota^{1/3}}}+|(\partial_t \tilde{w}(t),\phi_{\omega_*})_{L^2}|\lesssim \iota (\norm{\tilde{w}(t)}_{E_{\iota^{1/3}}}+|a_1|),
\end{align*}
we have
\begin{align}\label{eq:mod-LNLS-p-2}
||(\tilde{w}(t_2),\phi_{\omega_*})_{L^2}|-|(\tilde{w}(t_1),\phi_{\omega_*})_{L^2}|| \lesssim \iota^{2/3} \norm{\tilde{w}(t_1)}_{E_{\iota^{1/3}}}+\iota |a_1|
\end{align}
and
\begin{align}\label{eq:mod-LNLS-p-3}
\norm{P_-\tilde{w}(t_2)}_{E_{\iota^{1/3}}}-e^{-\mu (t_2-t_1)}\norm{P_-\tilde{w}(t_1)}_{E_{\iota^{1/3}}}\lesssim \iota^{2/3} \norm{\tilde{w}(t_1)}_{E_{\iota^{1/3}}} + \iota |a_1|
\end{align}
for $t_1,t_2 \geq 0$ with $|t_1-t_2|<1$.
By the inequality 
\begin{align*}
|(\partial_t\tilde{w}(t),\im\sigma_3\partial_{\omega}\phi_{\omega_*})_{L^2}| \lesssim \norm{P_{\gamma}\tilde{w}(t)}_E+|( \tilde{w}(t),\phi_{\omega_*})_{L^2}|+|a_1|+\iota \norm{P_d\tilde{w}(t)}_E
\end{align*}
we have
\begin{align}\label{eq:mod-LNLS-p-3a}
&||(\tilde{w}(t_2),\im\sigma_3\partial_{\omega}\phi_{\omega_*})_{L^2}|-|(\tilde{w}(t_1),\im\sigma_3\partial_{\omega}\phi_{\omega_*})_{L^2}||\notag \\
\lesssim& |a_1|+\norm{(P_{\gamma}+P_0)\tilde{w}(t_1)}_{E_{\iota^{1/3}}}+\iota\norm{(P_-+P_+)\tilde{w}(t_1)}_{E_{\iota^{1/3}}}
\end{align}
for $t_1,t_2 \geq 0$ with $|t_1-t_2|<1$.
Combining the inequalities \eqref{eq:mod-LNLS-p-1}--\eqref{eq:mod-LNLS-p-3a}, we obtain 
\begin{align}\label{eq:mod-LNLS-p-4}
\norm{P_{\leq 0}\tilde{w}(t_2)}_{E_{\iota^{1/3}}} \leq (1+C\iota^{1/3})\norm{P_{\leq 0}\tilde{w}(t_1)}_{E_{\iota^{1/3}}}+C\iota^{1/2}\norm{P_+\tilde{w}(t_1)}_{E}+C\iota^{1/3}|a_1|.
\end{align}
Since
\begin{align*}
\norm{P_+\partial_t \tilde{w}(t)+\im\mathcal{H}_{\omega_*}P_+\tilde{w}(t)}_E \lesssim \iota \norm{\tilde{w}(t)}_E+\iota |a_1|,
\end{align*}
we have there exists $C>0$ such that for $t_1,t_2\geq 0$ with $|t_1-t_2|<1$
\begin{align}\label{eq:mod-LNLS-p-5}
\norm{P_+\tilde{w}(t_2)}_E \geq e^{\mu (t_2-t_1)}\norm{P_+\tilde{w}(t_1)}_E-\iota^{2/3}(e^{\mu (t_2-t_1)}-1)(\norm{\tilde{w}(t_1)}_{E_{\iota^{1/3}}}+|a_1|)
\end{align}
for $t_1,t_2 \geq 0$ with $|t_1-t_2|<1$.
Therefore, the inequality \eqref{eq:mod-LNLS-0} implies
\begin{align}\label{eq:mod-LNLS-p-6}
\norm{P_+\tilde{w}(t)}_E \geq (1-C(K_0)^{-1} \iota^{1/3} )e^{\mu (t-t_0)}\norm{P_+\tilde{w}(t_0)}_E
\end{align}
for $t_0\leq t <t_0+1$.
Combining the inequalities \eqref{eq:mod-LNLS-0}, \eqref{eq:mod-LNLS-p-4} and \eqref{eq:mod-LNLS-p-6}, we obtain 
\begin{align*}
\norm{P_+\tilde{w}(t)}_E > (1-C(K_0)^{-1} \iota^{1/3})((1+C\iota^{1/3})(K_0\iota^{1/3})^{-1}+C\iota^{1/2})^{-1}e^{\mu  (t-t_0)}(\norm{P_{\leq 0}\tilde{w}(t_0)}_{E_{\iota^{1/3}}}+|a_1|)
\end{align*}
for $t_0\leq t <t_0+1$.
Therefore, for $t_0+1/2\leq t <t_0+1$ and sufficiently small $\iota>0$, we have
\begin{align*}
\norm{P_+\tilde{w}(t)}_E >e^{2\mu  (t-t_0)/3} K_0 \iota^{1/3} (\norm{P_{\leq 0}\tilde{w}(t)}_{E_{\iota^{1/3}}} +|a_1|).
\end{align*}
Applying this manner repeatedly, we obtain the inequality \eqref{eq:mod-LNLS-1} for $t \geq t_0+1/2$.

On the other hand, if \eqref{eq:mod-LNLS-0} fails for $t \geq 0$, then the inequality \eqref{eq:mod-LNLS-p-4} yields the inequality \eqref{eq:mod-LNLS-1} for all $t \geq 1$.
\end{proof}

In the following lemma, we show the uniqueness of unstable mode of solutions to \eqref{eq:mod-LNLS} not satisfying the growth condition.
\begin{lemma}\label{lem:unique-LNLS}
Let $K_0>0$.
Then, there exists $\iota_1>0$ such that for $0<\iota <\iota_1$, $a_1 \in \R$, a solution $(\tilde{v}_0,\omega_0,\theta_0)$ to the system $\eqref{SLNLS}$ with $\sup_{t \geq 0}(\norm{\tilde{v}_0(t)}_{H^1}+|\omega_0(t)-\omega_*|)\leq \iota$ and for the solutions $\tilde{w}_1$ and $\tilde{w}_2$ to the equation $\eqref{eq:mod-LNLS}$ with $P_{\leq 0}\tilde{w}_1(0)=P_{\leq 0}\tilde{w}_2(0)$ not satisfying $\eqref{eq:mod-LNLS-0}$ for  some $t \geq 0$, we have $P_+\tilde{w}_1(0)=P_+\tilde{w}_2(0)$.
\end{lemma}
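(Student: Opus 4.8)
The plan is to reduce to a dichotomy and argue by contradiction, using that \eqref{eq:mod-LNLS} is \emph{linear} in $\tilde w$. The only term of \eqref{eq:mod-LNLS} not linear in $\tilde w$ is the inhomogeneity proportional to $a_1$; since $\tilde w_1$ and $\tilde w_2$ solve \eqref{eq:mod-LNLS} with the \emph{same} $a_1$ and the \emph{same} background $(\tilde v_0,\omega_0,\theta_0)$, the difference $\tilde w:=\tilde w_1-\tilde w_2$ solves \eqref{eq:mod-LNLS} with $a_1=0$, namely
\begin{align*}
\partial_t\tilde w=-\im\mathcal{H}_{\omega_*}\tilde w-\im\sigma_3\dot\theta_0\tilde w-\im\sigma_3\bigl(V(\tilde v_0)-V(0)\bigr)\tilde w ,
\end{align*}
and it satisfies $P_{\leq 0}\tilde w(0)=0$; note $\tilde w$ exists globally since this equation is linear with globally defined coefficients. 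It therefore suffices to show $P_+\tilde w(0)=0$, so I would assume for contradiction that $P_+\tilde w(0)\neq0$, which, combined with $P_{\leq 0}\tilde w(0)=0$, gives $\tilde w(0)=P_+\tilde w(0)\neq0$.

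I would first extract a \emph{lower} bound on the unstable component of $\tilde w$. The ejection condition \eqref{eq:mod-LNLS-0} holds for $\tilde w$ at $t_0=0$: its left-hand side is $K_0\iota^{1/3}\norm{P_{\leq 0}\tilde w(0)}_{E_{\iota^{1/3}}}=0<\norm{P_+\tilde w(0)}_E$. Hence the first alternative \eqref{eq:mod-LNLS-1} of Lemma~\ref{lem:mod-LNLS} applies with $t_0=0$; since the $\iota$-weight in $\norm{\cdot}_{E_\iota}$ only affects the $P_0$-component, which vanishes on the range of $P_+$, we have $\norm{P_+\tilde w(0)}_{E_{\iota^{1/3}}}=\norm{P_+\tilde w(0)}_E$, and \eqref{eq:mod-LNLS-1} gives
\begin{align*}
3\,\norm{P_+\tilde w(t)}_E>e^{\mu t/2}\,\norm{P_+\tilde w(0)}_E ,\qquad t\geq\tfrac12 .
\end{align*}

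I would then extract a matching \emph{upper} bound. By hypothesis neither $\tilde w_1$ nor $\tilde w_2$ satisfies \eqref{eq:mod-LNLS-0} for any $t\geq0$, so the second alternative \eqref{eq:mod-LNLS-2} of Lemma~\ref{lem:mod-LNLS} applies to each of them, giving $\norm{P_+\tilde w_j(t)}_E\lesssim e^{K_1\iota^{1/6}t}\iota^{1/3}\bigl(\norm{P_{\leq 0}\tilde w_j(0)}_{E_{\iota^{1/3}}}+|a_1|\bigr)$ for $j=1,2$ and all $t\geq0$; adding these and using $\norm{P_+\tilde w(t)}_E\leq\norm{P_+\tilde w_1(t)}_E+\norm{P_+\tilde w_2(t)}_E$ yields $\norm{P_+\tilde w(t)}_E\lesssim e^{K_1\iota^{1/6}t}\iota^{1/3}C_0$ for all $t\geq0$, with $C_0:=\norm{P_{\leq 0}\tilde w_1(0)}_{E_{\iota^{1/3}}}+\norm{P_{\leq 0}\tilde w_2(0)}_{E_{\iota^{1/3}}}+|a_1|$ independent of $t$. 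Combining with the lower bound, for $t\geq\tfrac12$,
\begin{align*}
e^{\mu t/2}\,\norm{P_+\tilde w(0)}_E<3\,\norm{P_+\tilde w(t)}_E\lesssim e^{K_1\iota^{1/6}t}\,\iota^{1/3}\,C_0 .
\end{align*}
Choosing $\iota_1\leq\iota_0(K_0)$ small enough that $K_1\iota^{1/6}<\mu/4$ for $0<\iota<\iota_1$, the left-hand side is $\geq e^{\mu t/4}\norm{P_+\tilde w(0)}_E\to\infty$ as $t\to\infty$ while the right-hand side remains bounded; this forces $\norm{P_+\tilde w(0)}_E=0$, a contradiction. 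Hence $P_+\tilde w_1(0)=P_+\tilde w_2(0)$.

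I expect the only genuinely delicate point to be the upper bound: one must convert the \emph{absence} of ejection (the failure of \eqref{eq:mod-LNLS-0} for all $t$) into an at most slowly growing bound on $\norm{P_+\tilde w(t)}_E$ that is uniform enough to beat the $e^{\mu t/2}$ lower bound, and this is precisely what the non-ejection branch \eqref{eq:mod-LNLS-2} of Lemma~\ref{lem:mod-LNLS} furnishes once applied to $\tilde w_1,\tilde w_2$ separately. Everything else is routine: the linearity of the equation, the equivalence of $\norm{\cdot}_E$ and $\norm{\cdot}_{E_{\iota^{1/3}}}$ on the finite-dimensional spectral subspaces, and the choice of $\iota_1$.
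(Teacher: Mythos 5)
Your proposal is correct and follows essentially the same route as the paper: form the difference $\tilde w=\tilde w_1-\tilde w_2$ (which solves \eqref{eq:mod-LNLS} with $a_1=0$ and $P_{\leq 0}\tilde w(0)=0$), apply the growth branch \eqref{eq:mod-LNLS-1} of Lemma \ref{lem:mod-LNLS} to $\tilde w$ and the non-ejection branch \eqref{eq:mod-LNLS-2} to $\tilde w_1,\tilde w_2$ separately, and contradict exponential growth with the slow-growth bound for $\iota$ small. Your explicit remarks (the norm equivalence on the $P_+$ subspace, keeping the $|a_1|$ term) only make the argument slightly more careful than the paper's.
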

\begin{proof}
Assume there exist $0<\iota\ll \iota_0(K_0)$, a solution $(\tilde{v}_0,\omega_0,\theta_0)$ to the system $\eqref{SLNLS}$ with $$\sup_{t \geq 0}(\norm{\tilde{v}_0(t)}_{H^1}+|\omega_0(t)-\omega_*|)\leq \iota,$$ and the solutions $\tilde{w}_1$ and $\tilde{w}_2$ to the equation $\eqref{eq:mod-LNLS}$ with $P_{\leq 0}\tilde{w}_1(0)=P_{\leq 0}\tilde{w}_2(0)$ and $P_+\tilde{w}_1(0) \neq P_+\tilde{w}_2(0)$ not satisfying $\eqref{eq:mod-LNLS-0}$ for  some $t \geq 0$.
Let $\tilde{w}=\tilde{w}_1-\tilde{w}_2$. 
Then, $\tilde{w}$ is a solution to \eqref{eq:mod-LNLS} with $a_1=0$.
By $\iota^{1/3}K_0\norm{P_{\leq 0}\tilde{w}(0)}_{E_{\iota^{1/3}}}< \norm{P_+\tilde{w}(0)}_{E_{\iota^{1/3}}}$ and Lemma \ref{lem:mod-LNLS}, we have
\begin{align}\label{eq:unique-LNLS-p-1}
3\norm{P_+\tilde{w}(t)}_{E} \geq e^{\frac{\mu t}{2}}(\norm{P_+\tilde{w}(0)}_{E}+K_0\iota^{1/3}\norm{P_{\leq 0}\tilde{w}(t)}_{E_{\iota^{1/3}}})
\end{align}
for $t \geq 1/2$.
On the other hand, \eqref{eq:mod-LNLS-2} yields 
\begin{align}\label{eq:unique-LNLS-p-2}
\norm{P_+\tilde{w}(t)}_{E}\lesssim e^{K_1 \iota^{1/6}t}(\norm{P_{\leq 0}\tilde{w}_1(0)}_{E_{\iota^{1/3}}}+\norm{P_{\leq 0}\tilde{w}_2(0)}_{E_{\iota^{1/3}}})
\end{align}
for $t \geq 0$ and $K_1\iota^{1/2} \ll \mu$, where $K_1=K_1(K_0)$ is defined in Lemma \ref{lem:mod-LNLS}.
The inequality \eqref{eq:unique-LNLS-p-2} contradicts the inequality \eqref{eq:unique-LNLS-p-1}.
Therefore, $P_+\tilde{w}_1(0)=P_+\tilde{w}_2(0)$.

\end{proof}

We show the G\^ateaux differentiability of $G_+^{\delta}$.
Let $\delta,l,l_0>0$ satisfying {\rm (C1)$_{\delta,l}$} and {\rm (C2)$_{\delta,l_0}$}.
Since $\tilde{v}_h(0)=\psi_0+h\psi_1+G_+^{\delta}(\psi_0+h\psi_1,\omega_h(0))$ and
\begin{align*}
\norm{\tilde{v}_h(0)}+|\omega_h(0)-\omega_*|\lesssim \varepsilon+h(\norm{\psi_1}_{H^1}+|a_1|),
\end{align*}
by Theorem \ref{thm-csm} we have
\begin{align}\label{eq:c1-1}
\sup_{t \geq 0}\inf_{q \in \R} \norm{e^{\im\theta_0(t)\sigma_3}(\tilde{v}_h(t)+\phi_{\omega_h(t)})-e^{\im q\sigma}\phi_{\omega_*}}_{H^1} < \min\{ \varepsilon_*,\delta\}
\end{align}
for sufficiently small $\varepsilon,h>0$, where $\varepsilon_*=\varepsilon_*(\omega_*,\delta,l_0)$ is defined in Lemma \ref{lem:ejection2}.
By Lemma \ref{lem:ejection2}, the inequalities \eqref{eq:c1-1} yields 
\begin{align*}
d((P_{\leq 0}\tilde{v}_0(t),c_0(t)),(P_{\leq 0}\tilde{v}_h(t),c_h(t))) >l_0 \norm{P_+(\tilde{v}_0(t)-\tilde{v}_h(t))}_E
\end{align*}
for $t \geq 0$.
Thus, we have
\begin{align}\label{eq:c1-2}
\frac{\norm{P_+(\tilde{v}_h(t)-\tilde{v}_0(t))}_E}{\norm{P_{\leq 0}(\tilde{v}_h(t)-\tilde{v}_0(t))}_{E_{\iota^{1/3}}}+|a_1h|} \leq \frac{l_0^{-1}d((P_{\leq 0}\tilde{v}_0(t),c_0(t)),(P_{\leq 0}\tilde{v}_h(t),c_h(t)))}{\norm{P_{\leq 0}(\tilde{v}_h(t)-\tilde{v}_0(t))}_{E_{\iota^{1/3}}}+|a_1h|} \lesssim \iota^{-1/3}l_0^{-1}
\end{align}
for $t \geq 0$ and $\iota>0$.
The convergence \eqref{eq:c1-conv} implies
\begin{align}\label{eq:c1-3}
\frac{\norm{(h_n')^{-1}P_+(\tilde{v}_{h_n'}(t)-\tilde{v}_0(t))}_E}{\norm{(h_n')^{-1}P_{\leq 0}(\tilde{v}_{h_n'}(t)-\tilde{v}_0(t))}_{E_{\iota^{1/3}}}+|a_1|} \to \frac{\norm{P_+\tilde{w}(t)}_E}{\norm{P_{\leq 0} \tilde{w}(t)}_{E_{\iota^{1/3}}}+|a_1|} 
\end{align}
as $n \to \infty$ for $t \geq 0$ and sufficiently small $\iota>0$.
By the inequality \eqref{eq:c1-2} and the convergence \eqref{eq:c1-3}, we obtain for $t, \iota >0$
\begin{align*}
\frac{\norm{P_+\tilde{w}(t)}_E}{\norm{P_{\leq 0} \tilde{w}(t)}_{E_{\iota^{1/3}}}+|a_1|}  \lesssim\iota^{-1/3} l_0^{-1}
\end{align*}
which yields the inequality \eqref{eq:mod-LNLS-1} fails for sufficiently large $t >0$.
By Lemma \ref{lem:mod-LNLS}, we have that $\tilde{w}$ does not satisfy \eqref{eq:mod-LNLS-0} for all $t \geq 0$.
Therefore, Lemma \ref{lem:unique-LNLS} yields the convergence
\begin{align*}
\frac{G_+^{\delta}(\psi_0+h\psi_1,a_0+ha_1)-G_+^{\delta}(\psi_0,a_0)}{h} \to \psi_* \mbox{ as } h \to 0
\end{align*}
which means that $G_+^{\delta}$ is G\^ateaux differentiable at $(\psi_0,a)$.
The linearity of the G\^ateaux derivative of $G_+^{\delta}$ follows the linearity of solutions to the equation \eqref{eq:mod-LNLS} with respect to the initial data.
Moreover, the Lipschitz continuity of $G_+^{\delta}$ implies the boundedness of the G\^ateaux derivative of $G_+^{\delta}$.

Secondly, we show the continuity of the G\^ateaux derivative of $G_+^{\delta}$.
Let 
\begin{align*}
0<\varepsilon \ll \min\{\tilde{C}^{-2}\delta,\iota_0(1),K_1(1)^{-6}\}
\end{align*}
 $\{ \psi_n\}_{n=0}^\infty \subset P_{\leq 0}\widetilde{\mathcal{H}}_0$ and $a_n >0$ satisfying $\psi_n \to \psi_0$ in $(H^1)^2$ as $n \to \infty$, $a_n \to a_0$ as $n \to \infty$ and $\sup_{n} (\norm{\psi_n}_{H^1}+|a_n-\omega_*|)<\tilde{C}^{-1}\varepsilon$, where $\tilde{C}$ is defined in Theorem \ref{thm-csm}.
We define $(\tilde{v}_n,\omega_n,\theta_n)$ as the solution to the system \eqref{SLNLS} with initial data $(\psi_n,a_n,0)$.
Then, by Theorem \ref{thm-csm} we obtain
\begin{align*}
\sup_{t\geq 0,n}(\norm{\tilde{v}_n(t)}_{H^1}+|\omega_n(t)-\omega_*|)<\varepsilon.
\end{align*}
Let $\tilde{w}_n^{\psi,a}$ be the solution to the equation 
\begin{align*}
\im\partial_t\tilde{w}=\mathcal{H}_{\omega_*}\tilde{w}+\sigma_3\dot{\theta}_n\tilde{w}+\sigma_3(V(\tilde{v}_n)-V(0))\tilde{w}+a(\sigma_3\phi_{\omega_n}-\im\dot{\omega}_n\partial_{\omega}^2\phi_{\omega_n}+(\dot{\theta}_n+\omega_*-\omega_n)\sigma_3\partial_{\omega}\phi_{\omega_n})
\end{align*}
with initial data $\psi$.
Since the sequence $\{(\tilde{v}_n,\omega_n,\theta_n)\}_n$ converges to $(\tilde{v}_0,\omega_0,\theta_0)$ as $n \to \infty$ local in time, for $T,C>0$ we have
\begin{align}\label{eq:c1-conv-1}
\norm{\tilde{w}_n^{\psi,a}-\tilde{w}_0^{\psi,a}}_{L^{\infty}((0,T),H^1)} \to 0
\end{align}
as $n \to \infty$ uniformly on $\{(\psi,a) \in P_{\leq 0} \widetilde{\mathcal{H}}_0 \times \R: \norm{\psi}_{H^1}+|a|\leq C\}$.
On the other hand, for $T>0$, by the boundedness of $\{\norm{\tilde{v}_n}_{L^2((0,T),W^{1,6})\cap L^{\infty}((0,T),H^1)}+\norm{\omega_n-\omega_*}_{L^\infty(0,T)}\}_n$, we have 
\begin{align}\label{eq:c1-conv-2}
\sup_{n \geq 0}\norm{\tilde{w}_n^{\psi_*,a_*}-\tilde{w}_n^{\psi,a}}_{L^\infty((0,T),H^1)} \to 0
\end{align}
as $\psi \to \psi_*$ in $(H^1)^2$ and $a \to a_*$.
Let $\partial G_+^{\delta,n}$ be the G\^ateaux derivative of $G_+^{\delta}$ at $(\psi_n,a_n)$.
Applying Lemma \ref{lem:mod-LNLS} to $\tilde{w}_0^{\psi+\partial G_+^{\delta,0}(\psi,a),a}$, by the definition of $\partial G_+^{\delta,0}(\psi,a)$ and $\varepsilon$ we have there exists $C>1$ such that
\begin{align}\label{eq:c1-4}
3\norm{P_+\tilde{w}_0^{\psi+\partial G_+^{\delta,0}(\psi,a),a}(t)}_{E} \leq& 3\varepsilon^{1/3}\left(\norm{P_{\leq 0}\tilde{w}_0^{\psi+\partial G_+^{\delta,0}(\psi,a),a}(t)}_{E_{\varepsilon^{1/3}}}+|a|\right) \notag \\
\leq& Ce^{K_1 \varepsilon^{1/6}t}\varepsilon^{1/3}(\norm{\psi}_{E_{\varepsilon^{1/3}}}+|a|)
\end{align}
for $t>0$, $\psi \in P_{\leq 0}\widetilde{\mathcal{H}}_0 ,a >0$ with $\norm{\psi}_{E_{\varepsilon^{1/3}}}+|a|\leq 1$.
Applying Lemma \ref{lem:mod-LNLS} to $\tilde{w}_0^{\psi_+,0}$ for $\psi_+ \in P_+\widetilde{\mathcal{H}}_0 \setminus \{0\}$, we have
\begin{align}\label{eq:c1-5}
3\norm{P_+\tilde{w}_0^{\psi_+,0}(t)}_{E}>e^{\mu t/2}\norm{\psi_+}_{E}+e^{\mu t/2}\varepsilon^{1/3}\norm{P_{\leq 0}\tilde{w}_0^{\psi_+,0}(t)}_{E_{\varepsilon^{1/3}}}
\end{align}
for $t>1/2$.
By the inequalities \eqref{eq:c1-4} and \eqref{eq:c1-5}, we obtain 
\begin{align*}
&3\norm{P_+\tilde{w}_0^{\psi+\partial G_+^{\delta,0}(\psi,a)+\psi_+,a}(t)}_{E}\notag\\
\geq & 3\norm{P_+\tilde{w}_0^{\psi_+,0}(t)}_{E} -3\norm{P_+\tilde{w}_0^{\psi+\partial G_+^{\delta,0}(\psi,a),a}(t)}_{E}\notag \\
>& e^{\mu t/2} \norm{\psi_+}_{E_{\varepsilon^{1/3}}}-2Ce^{K_1 \varepsilon^{1/6}t}\varepsilon^{1/3}(\norm{\psi}_{E_{\varepsilon^{1/3}}}+|a|)+3\varepsilon^{1/3}\left(\norm{P_{\leq 0}\tilde{w}_0^{\psi+\partial G_+^{\delta,0}(\psi,a),a}(t)}_{E_{\varepsilon^{1/3}}}+|a|\right)\notag \\
&+e^{\mu t/2}\varepsilon^{1/3}\norm{P_{\leq 0}\tilde{w}_0^{\psi_+,0}(t)}_{E_{\varepsilon^{1/3}}}
\end{align*}
for $t>1/2, \psi_+ \in P_+\widetilde{\mathcal{H}}_0\setminus\{0\}, \psi \in P_{\leq 0}\widetilde{\mathcal{H}}_0$ and $a \in \R$ with  $\norm{\psi}_{E_{\varepsilon^{1/3}}}+|a|\leq 1$.
By the convergence \eqref{eq:c1-conv-1}, we obtain for $T>0$ there exists $n_{T,\varepsilon}>0$ such that for $n\geq n_{T,\varepsilon}$ and $1/2<t\leq T$
\begin{align*}
6\norm{P_+\tilde{w}_n^{\psi+\partial G_+^{\delta,0}(\psi,a)+\psi_+,a}(t)}_{E}\geq &e^{\frac{\mu  t}{2}} (\norm{\psi_+}_{E}-2Ce^{-\frac{\mu t}{2}+K_1 \varepsilon^{1/6}t}\varepsilon^{1/3}(\norm{\psi}_{E_{\iota^{1/3}}}+|a|)) \notag \\
&+3\varepsilon^{1/3}\left(\norm{P_{\leq 0}\tilde{w}_n^{\psi+\partial G_+^{\delta,0}(\psi,a),a}(t)}_{E_{\varepsilon^{1/3}}}+|a|\right).
\end{align*}
Therefore, for $\sigma>0$ there exists $T_{\delta,\varepsilon}>0$ such that $\tilde{w}_n^{\psi+\partial G_+^{\delta,0}(\psi,a)+\psi_+,a}$ satisfies \eqref{eq:mod-LNLS-0} at $T_{\delta,\varepsilon}$ for $n \geq n_{T_{\delta,\varepsilon},\varepsilon}$, $a \in \R$, $\psi \in P_{\leq 0}\widetilde{\mathcal{H}}_0$ and $\psi_+ \in P_+\widetilde{\mathcal{H}}_0$ with $\norm{\psi}_{E_{\varepsilon^{1/3}}}+|a|\leq 1$ and $\norm{\psi_+}_{E} \geq \sigma$.
Since $\tilde{w}_n^{\psi+\partial G_+^{\delta,n}(\psi,a),a}$ does not satisfy \eqref{eq:mod-LNLS-0}  for some $t \geq 0$, we obtain for $n \geq n_{T_{\delta,\varepsilon},\varepsilon}$
\begin{align*}
\norm{\partial G_+^{\delta,n}(\psi,a)-\partial G_+^{\delta,0}(\psi,a)}_E < \sigma
\end{align*}
which yields the continuity of the G\^ateaux derivative of $G_+^{\delta}$ at $(\psi_0,a_0)$ in the sence of the operator norm form $P_{\leq 0}\widetilde{\mathcal{H}}_0 \times \R$ to $P_+\widetilde{\mathcal{H}}_0$.
Thus, $G_+^{\delta}$ is $C^1$ class on $P_{\leq 0}\widetilde{\mathcal{H}}_0 \times (0,\infty)$ in the sense of the Fr\'echet differential.
Therefore, for sufficiently small $r>0$, the subspace $\mathcal{M}_{cs}^{\delta}(\omega_*,r)$ of $(H^1)^2$ is $C^1$ embedded submanifolds of $(H^1)^2$.

\begin{proof}[Proof of Theorem \ref{thm:orbmfd}]
We define 
\begin{align}\label{def-manif}
\mathcal{M}=\{ u(t) \in H^1 : \tilde{u}(0) \in \mathcal{M}_{cs}^{\delta}(\omega_*,r), t \geq 0\}
\end{align}
for sufficiently small $r$.
Then, $\mathcal{M}$ is a $C^1$ embedded submanifold of $H^1$ and satisfies the statements (i) and (ii) in Theorem \ref{thm:main} and the statement of the orbital stability on $\mathcal{M}$ in Theorem \ref{thm:orbmfd}.
\end{proof}

\section{Conditional asymptotic stability}\label{sec:condasymp}
In this section, we prove Theorem \ref{thm:cond:asymp}.
We set
\begin{align}
&\mathcal{H}[\omega,z]:=\sigma_3(-\Delta+\omega)\\&+\begin{pmatrix}
 g(|\varphi[0,\omega,z]|^2) + g'(|\varphi[0,\omega,z]|^2)|\varphi[0,\omega,z]|^2 & g'(|\varphi[0,\omega,z]|^2)|\varphi[0,\omega,z]|^2\\
-g'(|\varphi[0,\omega,z]|^2)|\varphi[0,\omega,z]|^2 & 
{- g(|\varphi[0,\omega,z]|^2) - g'(|\varphi[0,\omega,z]|^2)|\varphi[0,\omega,z]|^2 }\nonumber
\end{pmatrix},
\end{align}
where $\varphi[\theta,\omega,z]$ is given in Proposition \ref{prop:rp}.
Since $\mathcal{H}[\omega,z]$ is a small perturbation of $\mathcal{H}_{\omega_*}=\mathcal{H}[\omega_*,0]$ when $|z|\ll 1$, there exist eigenvalues near $\pm \im \mu(\omega)$.
Moreover, since the spectrum of $\mathcal{H}[\omega,z]$ are symmetric w.r.t.\ $\R$ and $\im \R$ axis, there exists $\mu(\omega,z)\in\R$ s.t. $\mu(\omega,0)=\mu(\omega)$ and $\pm \im \mu(\omega,z)$ are the only non-real eigenvalues.
As the case of $z=0$ (Lemma \ref{lem:eig}), there exists $\xi[\omega,z]$ s.t. $\widetilde{\xi}_+[\omega,z]:={}^t(\xi[\omega,z]\ \overline{\xi[\omega,z]})$ (resp.\ $\widetilde{\xi}_-[\omega,z]:=\sigma_1\widetilde{\xi}[\omega,z]$) is the eigenvector of $\mathcal{H}[\omega,z]$ associated to $\im \mu[\omega,z]$ ($-\im \mu[\omega,z]$).
We normalize $\xi[\omega,z]$ to satisfy 
\begin{align}
	\<\im \xi[\omega,z],\overline{\xi[\omega,z]}\>=-1,
\end{align}
as the case $z=0$.
We set $G_\pm$ by $  \mathfrak{G} ={}^t(G_+\ G_-)$. For $\mathfrak{G}$ and $P_*$, recall (H4). 

Now, for $\Theta=(\theta,\omega,z,w_+,w_-)$, we set
\begin{align}\label{def:rpKAI}
\varphi[\Theta]=\varphi[\theta,\omega,z] + e^{\im \theta}\(w_+\xi[\omega,z]+w_-\overline{\xi[\omega,z]}\).
\end{align}

\begin{proposition}\label{prop:rpKAI}
There exists $\Theta_\pm \in \R^{1+1}\times \C\times \R^{1+1}$ s.t.
setting
\begin{align}\label{tildeTheta}
\widetilde{\Theta}(\Theta)&:=\(\sum_{j=0}^{M'}|z|^{2j}\widetilde{\theta}_j(\omega),
		0,
		-\im\sum_{j=0}^{M}|z|^{2j}z \lambda_j(\omega),\mu(\omega,z)w_+,-\mu(\omega,z)w_-
	\) + z^N\Theta_+ +\overline{z}^N\Theta_-,\\
\mathcal{R}[\Theta]&:=
-\im D\varphi[\Theta]\widetilde{\Theta}(\Theta)+(-\Delta+\omega_*)\varphi[\Theta]+g(|\varphi[\Theta]|^2)\varphi[\Theta],\label{rpKAI}
\end{align}
we have
\begin{align}\label{Rdecomp}
\mathcal{R}[\Theta]&=
e^{\im \theta}\(z^N G_+ +\overline{z}^N G_-\) +\mathcal{R}_1[\Theta]+\mathcal{R}_2[\Theta],
\end{align}
with
\begin{align}
\|\mathcal{R}_1[\Theta]\|_{\Sigma}\lesssim \(|\omega-\omega_*|+|z| + |w_+|+|w_-|\)\(|z|^N+|w_+|+|w_-|\),\ 
\|\mathcal{R}_2[\Theta]\|_{\Sigma}\lesssim w_+^2+w_-^2. 
\end{align}
Here, recall that $\widetilde{\theta}$ and $\lambda_j$ are given in Proposition \ref{prop:rp}.

\end{proposition}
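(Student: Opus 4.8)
The plan is to extend the refined-profile construction of Proposition~\ref{prop:rp} (equivalently, Proposition~1.12 of \cite{CM22JEE}) so as to absorb the stable and unstable mode directions. Write $\varphi[\Theta]=\varphi[\theta,\omega,z]+\rho$ with $\rho=e^{\im\theta}\psi$ and $\psi=w_+\xi[\omega,z]+w_-\overline{\xi[\omega,z]}$, noting $\|\psi\|_{\Sigma}\lesssim|w_+|+|w_-|$ uniformly for $(\omega,z)$ near $(\omega_*,0)$ thanks to the smooth dependence of $\xi[\omega,z]$ and its $\omega$- and $z$-derivatives in $\Sigma$, which follows as in Lemma~\ref{lem:eig} from the uniform exponential decay of the eigenfunctions of the perturbation $\mathcal{H}[\omega,z]$ of $\mathcal{H}_{\omega_*}$. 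First I would insert this splitting into $\mathcal{R}[\Theta]=(-\Delta+\omega_*)\varphi[\Theta]+f(\varphi[\Theta])-\im D\varphi[\Theta]\widetilde{\Theta}(\Theta)$, with $f(u)=g(|u|^2)u$, and Taylor expand around $\varphi[\theta,\omega,z]$; using the gauge covariance $f(e^{\im\theta}u)=e^{\im\theta}f(u)$,
\begin{align*}
f(\varphi[\Theta])=f(\varphi[\theta,\omega,z])+e^{\im\theta}Df(\varphi[0,\omega,z])[\psi]+e^{\im\theta}\mathcal{Q}[\omega,z,\psi],
\end{align*}
where $\mathcal{Q}$ gathers the terms quadratic and of higher order in $\psi$, so $\|e^{\im\theta}\mathcal{Q}\|_{\Sigma}\lesssim w_+^2+w_-^2$; this becomes $\mathcal{R}_2[\Theta]$. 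Accordingly I would split $\widetilde{\Theta}(\Theta)=\widetilde{\Theta}^{\mathrm{rp}}+\widetilde{\Theta}^{w}+z^{N}\Theta_++\overline{z}^{N}\Theta_-$, where $\widetilde{\Theta}^{\mathrm{rp}}$ carries only the $(\theta,\omega,z)$-rates of Proposition~\ref{prop:rp} and $\widetilde{\Theta}^{w}$ only the rates $(0,0,0,\mu(\omega,z)w_+,-\mu(\omega,z)w_-)$.

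The part of $\mathcal{R}[\Theta]$ coming from $(-\Delta+\omega_*)\varphi[\theta,\omega,z]+f(\varphi[\theta,\omega,z])-\im D\varphi[\theta,\omega,z]\widetilde{\Theta}^{\mathrm{rp}}$ is, by Proposition~\ref{prop:rp}, precisely $e^{\im\theta}(z^{N}\widetilde{G}_++\overline{z}^{N}\widetilde{G}_-)+\widetilde{\mathcal{R}}_1[\theta,\omega,z]$ with $\|\widetilde{\mathcal{R}}_1\|_{\Sigma}\lesssim(|z|+|\omega-\omega_*|)|z|^{N}$. For the part linear in $(w_+,w_-)$ I would exploit that $\widetilde{\xi}_\pm[\omega,z]$ are eigenvectors of $\mathcal{H}[\omega,z]$ with eigenvalues $\pm\im\mu(\omega,z)$: the upper component of $\mathcal{H}[\omega,z]\,{}^t(\psi\ \overline{\psi})$ is $\im\mu(\omega,z)(w_+\xi[\omega,z]-w_-\overline{\xi[\omega,z]})$, which by the choice of the $w$-rates is cancelled by $-\im D\varphi[\Theta]\widetilde{\Theta}^{w}=-\im\mu(\omega,z)e^{\im\theta}(w_+\xi[\omega,z]-w_-\overline{\xi[\omega,z]})$. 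Since the $\overline{\psi}$-coefficient of $Df(\varphi[0,\omega,z])[\psi]$ is $g'(|\varphi[0,\omega,z]|^2)\varphi[0,\omega,z]^2$ while the corresponding entry of $\mathcal{H}[\omega,z]$ carries $g'(|\varphi[0,\omega,z]|^2)|\varphi[0,\omega,z]|^2$, one is left with
\begin{align*}
&(-\Delta+\omega_*)\rho+e^{\im\theta}Df(\varphi[0,\omega,z])[\psi]-\im D\varphi[\Theta]\widetilde{\Theta}^{w}\\
&\qquad=e^{\im\theta}\Big((\omega_*-\omega)\psi+g'(|\varphi[0,\omega,z]|^2)\big(\varphi[0,\omega,z]^2-|\varphi[0,\omega,z]|^2\big)\overline{\psi}\Big);
\end{align*}
and since $\varphi[0,\omega,0]=\varphi_{\omega}$ is real, $\operatorname{Im}\varphi[0,\omega,z]=O(|z|)$, so the right-hand side is $O\big((|\omega-\omega_*|+|z|)(|w_+|+|w_-|)\big)$ in $\Sigma$. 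Likewise the $(w_+,w_-)$-dependent corrections to $D\varphi[\Theta]$ in the $(\theta,\omega,z)$-directions, applied to $\widetilde{\Theta}^{\mathrm{rp}}$, are $O\big((|\omega-\omega_*|+|z|)(|w_+|+|w_-|)\big)$ in $\Sigma$, using $\widetilde{\theta}_0(\omega)=\omega-\omega_*$ and that the $z$-rate in $\widetilde{\Theta}^{\mathrm{rp}}$ is $O(|z|)$. All these error terms are admissible for $\mathcal{R}_1[\Theta]$.

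It remains to choose $\Theta_\pm$ so that $-\im D\varphi[\Theta](z^{N}\Theta_++\overline{z}^{N}\Theta_-)$ turns the leftover source $e^{\im\theta}(z^{N}\widetilde{G}_++\overline{z}^{N}\widetilde{G}_-)$ into $e^{\im\theta}(z^{N}G_++\overline{z}^{N}G_-)$ modulo admissible terms. Let $\ell_*$ be the $\theta$-independent real-linear map with $e^{-\im\theta}D\varphi[\Theta]\big|_{\omega=\omega_*,\,z=0,\,w_\pm=0}=\ell_*$; its range is the span of the upper components of $\im\sigma_3\phi_{\omega_*}$, $\partial_\omega\phi_{\omega_*}$, $\zeta[\omega_*]$, $\sigma_1\zeta[\omega_*]$, $\widetilde{\xi}_+[\omega_*]$, $\widetilde{\xi}_-[\omega_*]$ — i.e.\ of the discrete generalized eigenvectors of $\mathcal{H}_{\omega_*}$ — the directions $\partial_\theta$, $\partial_\omega$, $\partial_z/\partial_{\overline{z}}$, $\partial_{w_+}/\partial_{w_-}$ hitting respectively the generalized kernel, the internal mode and the stable/unstable modes. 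On the other hand, since $\mathfrak{G}=\sigma_3 P_*\sigma_3\,{}^t(\widetilde{G}_+\ \widetilde{G}_-)$ and $P_*=\mathrm{Id}-P_*^{\perp}$, we have ${}^t(\widetilde{G}_+-G_+\ \ \widetilde{G}_--G_-)=\sigma_3 P_*^{\perp}\sigma_3\,{}^t(\widetilde{G}_+\ \widetilde{G}_-)$, whose components lie in that same span, using (H1), (H3) and Lemma~\ref{lem:eig} for the structure of $\sigma_{\mathrm d}(\mathcal{H}_{\omega_*})$ together with the reality constraint $\sigma_1\overline{(\cdot)}=(\cdot)$ obeyed by $\mathfrak{G}$ and the eigenvectors. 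Hence one can solve the finite-dimensional real-linear systems $\im\ell_*\Theta_\pm=\widetilde{G}_\pm-G_\pm$ for $\Theta_\pm\in\R^{1+1}\times\C\times\R^{1+1}$; and since $D\varphi[\Theta]=e^{\im\theta}\ell_*+O(|\omega-\omega_*|+|z|+|w_+|+|w_-|)$ in operator norm into $\Sigma$, the term $-\im D\varphi[\Theta](z^{N}\Theta_++\overline{z}^{N}\Theta_-)$ equals $-e^{\im\theta}\big(z^{N}(\widetilde{G}_+-G_+)+\overline{z}^{N}(\widetilde{G}_--G_-)\big)$ up to an $O\big((|\omega-\omega_*|+|z|+|w_+|+|w_-|)|z|^{N}\big)$ error in $\Sigma$. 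Collecting all pieces gives \eqref{Rdecomp} with $\|\mathcal{R}_1[\Theta]\|_{\Sigma}\lesssim(|\omega-\omega_*|+|z|+|w_+|+|w_-|)(|z|^{N}+|w_+|+|w_-|)$ and $\|\mathcal{R}_2[\Theta]\|_{\Sigma}\lesssim w_+^2+w_-^2$.

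The step I expect to be the main obstacle is the last one — verifying rigorously that $\widetilde{G}_\pm-G_\pm$ lies in the range of the modulation operator $\ell_*$ and that the associated linear systems are solvable within the real parameter space $\R^{1+1}\times\C\times\R^{1+1}$. This forces one to track carefully the interplay of the doubling formalism, the $\sigma_3$-conjugation in the definition of $\mathfrak{G}$ in (H4), and the description of $\sigma_{\mathrm d}(\mathcal{H}_{\omega_*})$ from Lemma~\ref{lem:eig}, (H1) and (H3); it is essentially the content of the analogous step in \cite{CM22JEE}. Everything else is a routine Taylor expansion combined with the uniform $\Sigma$-regularity and smooth dependence of $\xi[\omega,z]$, $\zeta[\omega]$ and $\varphi_\omega$ in $\omega$ (and $z$).
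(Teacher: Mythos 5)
Your proposal follows essentially the same route as the paper's proof: Taylor-expand around $\varphi[\theta,\omega,z]$ (the quadratic remainder giving $\mathcal{R}_2$), cancel the $w_\pm$-rates using the eigenvalue equation for $\xi[\omega,z]$, invoke Proposition \ref{prop:rp} for the $(\theta,\omega,z)$ part, and absorb the discrete-spectral component $\sigma_3P_*^\perp\sigma_3\,{}^t(\widetilde G_+\ \widetilde G_-)$ into the modulation rates by solving a finite-dimensional real-linear system for $\Theta_\pm$, which is exactly what the paper does through its explicit $\Re(z^N)$, $\Im(z^N)$ decomposition and the combinations $\widetilde G_+\pm\widetilde G_-$. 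If anything, you are more explicit than the paper in tracking the $O\big((|\omega-\omega_*|+|z|)(|w_+|+|w_-|)\big)$ discrepancy between $-\Delta+\omega_*+Df(\varphi[0,\omega,z])$ and the matrix operator $\mathcal{H}[\omega,z]$, an error that is admissible for $\mathcal{R}_1$.
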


\begin{proof}
Recall $f(u)=g(|u|^2)u$.
First, by \eqref{def:rpKAI},  we have
\begin{align*}
(-\Delta+\omega_*)\varphi[\Theta]+f(\varphi[\Theta])
=&(-\Delta+\omega_*)\varphi[\theta,\omega,z]
+f(\varphi[\theta,\omega,z])\\&+e^{\im \theta}\((-\Delta +\omega_*+Df(\varphi[0,\omega,z])\)
(w_+\xi[\omega,z]+w_-\overline{\xi[\omega,z]}))
+\mathcal{R}_2,
\end{align*}
where $\|\mathcal{R}_2\|_{\Sigma}\lesssim w_+^2+w_-^2$ and $Df(u)v:=\left.\frac{d}{d\epsilon}\right|_{\epsilon=0}f(u+\epsilon v)$.
Then, by
\begin{align*}
\(-\Delta +\omega_*+Df(\varphi[0,\omega,z])\)
(w_+\xi[\omega,z]+w_-\overline{\xi[\omega,z]})
&=\im \mu(\omega,z)w_+\xi[\omega,z]-\im \mu(\omega,z)w_-\overline{ \xi[\omega,z]}\\&
=\im e^{-i\theta}D\varphi[\Theta](0,0,0,\mu(\omega,z)w_+,-\mu(\omega,z)w_-).
\end{align*}
and Proposition \ref{prop:rp}, we have
\begin{align*}
&(-\Delta+\omega_*)\varphi[\Theta]+f(\varphi[\Theta])=\im D\varphi[\Theta]\(\sum_{j=0}^{M'}|z|^{2j}\widetilde{\theta}_j(\omega),
		0,
		-\im\sum_{j=0}^{M}|z|^{2j}z \lambda_j(\omega),\mu(\omega,z)w_+,-\mu(\omega,z)w_-
	\)
	\\&+\im e^{i\theta}w_+D_z\xi[\omega,z]\(\im\sum_{j=0}^{M}|z|^{2j}z \lambda_j(\omega)\)+\im  e^{i\theta} w_-D_z\overline{\xi[\omega,z]}\(\im\sum_{j=0}^{M}|z|^{2j}z \lambda_j(\omega)\)
	\\&  +e^{i\theta}\sum_{j=0}^{M'}|z|^{2j}\widetilde{\theta}_j(\omega)(w_+\xi[\omega,z]+w_-\overline{\xi [\omega,z]}) 
	\\&+e^{\im \theta}\(z^N \widetilde{G}_+ + \overline{z}^N \widetilde{G}_-\)+\widetilde{\mathcal{R}}_1[\theta,\omega,z],
\end{align*}
where $D_z \xi[\omega,z]w:=\left.\frac{d}{d\epsilon}\right|_{\epsilon=0}\xi[\omega,z+\epsilon w]$

Next, we set $\Theta_\pm=(\widetilde{\theta}_\pm,\widetilde{\omega}_\pm,\widetilde{z}_\pm,\widetilde{w}_{+,\pm},\widetilde{w}_{-,\pm})$ by
 \begin{align}
-P_*^\perp \im \sigma_3 \begin{pmatrix}
\widetilde{G}_++\widetilde{G}_-\\
\widetilde{G}_++\widetilde{G}_-
\end{pmatrix}
&=\widehat{\theta}_+ \im \sigma_3\phi_{\omega_*}+\widetilde{\omega}_+\partial_\omega\phi_{\omega_*}+\widetilde{z}_+\zeta+\overline{\widetilde{z}_+}\sigma_1\zeta+\widetilde{w}_{+,+}\xi +\widetilde{w}_{+,-}\overline{\xi}\\&=D\phi[0,\omega_*,0,0,0]\Theta_+,\\
P_*^\perp \begin{pmatrix}
\widetilde{G}_+-\widetilde{G}_-\\
\widetilde{G}_+-\widetilde{G}_-
\end{pmatrix}
&=\widehat{\theta}_- \im \sigma_3\phi_{\omega_*}+\widetilde{\omega}_-\partial_\omega\phi_{\omega_*}+\widetilde{z}_-\zeta+\overline{\widetilde{z}_-}\sigma_1\zeta+\widetilde{w}_{-,+}\xi +\widetilde{w}_{-,-}\overline{\xi}\\&=D\phi[0,\omega_*,0,0,0]\Theta_-.
\end{align}
Then, we have 
\begin{align}
&z^N \begin{pmatrix}
\widetilde{G}_+\\ \widetilde{G}_-
\end{pmatrix}
+\overline{z}^N \begin{pmatrix}
\widetilde{G}_-\\ \widetilde{G}_+
\end{pmatrix}
=z^N  \mathfrak{G} +\overline{z}^N  \sigma_1 \mathfrak{G}\\& \quad -\im \sigma_3 P_*^\perp\(\Re(z^N)\im \sigma_3\begin{pmatrix}
\widetilde{G}_++\widetilde{G}_-\\
\widetilde{G}_++\widetilde{G}_-
\end{pmatrix}-\Im (z^N)\begin{pmatrix}
\widetilde{G}_+-\widetilde{G}_-\\
\widetilde{G}_+-\widetilde{G}_-
\end{pmatrix}\)\\
&=
z^N \mathfrak{G} +\overline{z}^N \sigma_1 \mathfrak{G}+\Re (z^N) \im \sigma_3 D\phi[0,\omega_*,0,0,0]\Theta_+ + \Im (z^N) \im \sigma_3 D\phi[0,\omega_*,0,0,0]\Theta_-.
\end{align}
Therefore, taking
\begin{align*}
\mathcal{R}_1[\Theta]=&\widetilde{\mathcal{R}}_1[\theta,\omega,z]+\im  e^{i\theta} w_+D_z\xi[\omega,z]\(\im\sum_{j=0}^{M}|z|^{2j}z \lambda_j(\omega)\)+\im e^{i\theta} w_-D_z\overline{\xi[\omega,z]}\(\im\sum_{j=0}^{M}|z|^{2j}z \lambda_j(\omega)\)\\&
 +e^{i\theta}\sum_{j=0}^{M'}|z|^{2j}\widetilde{\theta}_j(\omega)(w_+\xi[\omega,z]+w_-\overline{\xi [\omega,z]}) \\&
+\Re (z^N)  \im  \(D\varphi[\theta,\omega_*,0,0,0]-D\varphi[\Theta]\)\Theta_+ + \Im (z^N)  \im  \(D\varphi[\theta,\omega_*,0,0,0]-D\varphi[\Theta]\)\Theta_-,
\end{align*}
we have the conclusion.
\end{proof}

By differentiating \eqref{rpKAI} w.r.t.\ $\Theta$ in $\Xi$ direction, we have
\begin{align}\label{rpDiff}
D\mathcal{R}[\Theta]\Xi +\im D^2\varphi[\Theta](\widetilde{\Theta}(\Theta),\Xi) +\im D\varphi[\Theta]D\widetilde{\Theta}(\Theta)\Xi = H[\Theta]D\varphi[\Theta]\Xi,
\end{align}
where
\begin{align}
H[\Theta]=(-\Delta + \omega_*) + D f(\varphi[\Theta])\cdot.
\end{align}

We set
\begin{align}
\mathcal{H}_c[\Theta]:=\{u\in H^1_{\mathrm{rad}}\ |\ \forall\Xi\in \R^{1+1}\times \C\times \R^{1+1},\ \<\im u, D\varphi[\Theta]\Xi\>=0\}.
\end{align}

We now decompose $u$ using $\mathcal{H}_c[\Theta]$.
\begin{lemma}\label{lem:3mod}
There exists $\delta>0$ s.t.\ if $\inf_{\theta\in \R}\|u-e^{\im \theta}\varphi_{\omega_*}\|_{H^1}<\delta$, then there exists $\Theta(u)=(\theta(u),\omega(u),z(u),w_+(u),w_-(u))$ s.t.\ $\eta(u):=u-\varphi[\Theta(u)]\in \mathcal{H}_c[\Theta(u)]$.
Further, we have
\begin{align}\label{lem:modcondasymp}
|\omega(u)-\omega_*|+|z(u)|+|w_+(u)|+|w_-(u)| + \|\eta(u)\|_{H^1}\lesssim \inf_{\theta\in \R}\|u-e^{\im \theta}\varphi_{\omega_*}\|_{H^1}.
\end{align}
\end{lemma}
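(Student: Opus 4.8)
The plan is to obtain $\Theta(u)$ from the implicit function theorem, as in the standard modulation decomposition. Fix a real basis $e_1,\dots,e_6$ of the parameter space $\R^{1+1}\times\C\times\R^{1+1}$ and set
$F(\Theta,u):=\big(\langle\im(u-\varphi[\Theta]),D\varphi[\Theta]e_j\rangle\big)_{j=1}^{6}\in\R^6$,
so that $\eta(u)=u-\varphi[\Theta(u)]\in\mathcal{H}_c[\Theta(u)]$ is exactly $F(\Theta(u),u)=0$. By the $C^2$ regularity of $\omega\mapsto\varphi_\omega$, analytic perturbation theory for the isolated simple eigenvalue $\lambda(\omega)$ from (H3) (producing a $C^1$ family $\zeta[\omega]$), and the smooth dependence of $\xi[\omega,z]$ on $(\omega,z)$ established above, the map $\Theta\mapsto\varphi[\Theta]$ is $C^1$ into $H^1_{\mathrm{rad}}$, hence $F$ is $C^1$.

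First I would carry out the computation at a base point. For $\theta_0\in\R$ put $\Theta_0=(\theta_0,\omega_*,0,0,0)$; since $\varphi[\Theta_0]=e^{\im\theta_0}\varphi_{\omega_*}$ we have $F(\Theta_0,e^{\im\theta_0}\varphi_{\omega_*})=0$, and because the $\eta$-factor vanishes there, $D_\Theta F(\Theta_0,e^{\im\theta_0}\varphi_{\omega_*})$ equals $-1$ times the matrix of the antisymmetric form $\langle\im\cdot,\cdot\rangle$ restricted to $\mathrm{span}\{D\varphi[\Theta_0]e_j\}_{j=1}^6$. Up to the symplectomorphism $e^{\im\theta_0}$, these six functions are the upper components of, respectively, the generalized kernel $\mathrm{span}\{\im\sigma_3\phi_{\omega_*},\partial_\omega\phi_{\omega_*}\}$ (namely $\im\varphi_{\omega_*}$ and $\partial_\omega\varphi_{\omega_*}$), the $\pm\lambda(\omega_*)$-eigenspace of $\mathcal{H}_{\omega_*}$ (namely $z\,\varphi_{1,0}[\omega_*]+\overline z\,\varphi_{0,1}[\omega_*]$, $z\in\C$, by Proposition \ref{prop:rp}), and the $\pm\im\mu$-eigenspace (namely $\xi[\omega_*,0]$ and $\overline{\xi[\omega_*,0]}$). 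Since $\mathcal{H}_{\omega_*}$ is infinitesimally symplectic for $\langle\im\cdot,\cdot\rangle$, generalized eigenspaces attached to eigenvalue pairs with nonzero sum are mutually orthogonal, so this $6\times6$ matrix is block-diagonal with three $2\times2$ blocks. The kernel block is non-degenerate because its off-diagonal entry is a nonzero multiple of $\frac{d}{d\omega}\big|_{\omega=\omega_*}\|\varphi_\omega\|_{L^2}^2$, which is nonzero by the Vakhitov--Kolokolov hypothesis; the $\lambda$-block is non-degenerate because its off-diagonal entry is a nonzero multiple of $(\sigma_3\zeta[\omega_*],\zeta[\omega_*])=1$; and the $\mu$-block is non-degenerate because its off-diagonal entry is a nonzero multiple of $\langle\im\xi[\omega_*,0],\overline{\xi[\omega_*,0]}\rangle=-1$. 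Hence $D_\Theta F$ is invertible at $(\Theta_0,e^{\im\theta_0}\varphi_{\omega_*})$; this is the only step that requires real care, and it rests entirely on the spectral picture of $\mathcal{H}_{\omega_*}$ (so that the three contributions decouple) together with the Vakhitov--Kolokolov condition and the two normalizations, everything else being routine implicit-function-theorem bookkeeping.

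Finally, the implicit function theorem produces, for $\|u-e^{\im\theta_0}\varphi_{\omega_*}\|_{H^1}$ small, a $C^1$ map $u\mapsto\Theta(u)$ with $\Theta(e^{\im\theta_0}\varphi_{\omega_*})=\Theta_0$ and $F(\Theta(u),u)=0$. The construction is equivariant under $u\mapsto e^{\im\vartheta}u$, which replaces $\theta$ by $\theta+\vartheta$ and leaves $\omega,z,w_\pm$ unchanged, and the circle $\{e^{\im\theta_0}\varphi_{\omega_*}:\theta_0\in\R\}$ is compact, so the size of the neighborhood and the Lipschitz constant of $\Theta(\cdot)$ are uniform in $\theta_0$; this gives the uniform $\delta>0$ of the statement. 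Choosing $\theta_0$ to almost realize $\inf_{\theta\in\R}\|u-e^{\im\theta}\varphi_{\omega_*}\|_{H^1}$ and using the uniform Lipschitz bound yields
$|\omega(u)-\omega_*|+|z(u)|+|w_+(u)|+|w_-(u)|\lesssim|\Theta(u)-\Theta_0|\lesssim\inf_{\theta\in\R}\|u-e^{\im\theta}\varphi_{\omega_*}\|_{H^1}$,
and then $\|\eta(u)\|_{H^1}\leq\|u-e^{\im\theta_0}\varphi_{\omega_*}\|_{H^1}+\|\varphi[\Theta(u)]-\varphi[\Theta_0]\|_{H^1}\lesssim\inf_{\theta\in\R}\|u-e^{\im\theta}\varphi_{\omega_*}\|_{H^1}$ by the local Lipschitz continuity of $\Theta\mapsto\varphi[\Theta]$, which is \eqref{lem:modcondasymp}.
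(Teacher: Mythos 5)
Your argument is correct and is exactly the route the paper takes: the paper's proof of this lemma is simply the statement that it ``follows from standard implicit function theorem argument with the assumption $\frac{d}{d\omega}\|\varphi_{\omega}\|\neq 0$ at $\omega=\omega_*$,'' which is what you have carried out in detail. Your verification of the invertibility of the Jacobian via the block decomposition (generalized kernel block controlled by the Vakhitov--Kolokolov quantity, internal-mode block by $(\sigma_3\zeta,\zeta)=1$, and the $\pm\im\mu$ block by $\langle\im\xi,\overline{\xi}\rangle=-1$), together with the uniformity in $\theta_0$, fills in precisely the bookkeeping the paper leaves implicit.
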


\begin{proof}
The proof follows from standard implicit function theorem argument with the assumption $\frac{d}{d\omega}\|\varphi_{\omega}\|\neq 0$ at $\omega=\omega_*$.
\end{proof}

Substituting $u=\varphi[\Theta]+\eta$ into \eqref{NLS}, we have
\begin{align}\label{eq:modeta}
	\im \partial_t \eta + \im D\varphi[\Theta](\dot{\Theta}-\widetilde{\Theta})=H[\Theta]\eta + F[\Theta,\eta] +\mathcal{R}[\Theta],
\end{align}
where
\begin{align}\label{def:F}
	F[\Theta,\eta]&=f(\varphi[\Theta]+\eta)-f(\varphi[\Theta])-Df(\varphi[\Theta])\eta.
\end{align}

The following is the main Proposition in this section.

\begin{proposition}\label{prop:boot}
	For $C_0>1$, there exist $\epsilon_0>0$ and $C_1>0$ s.t.\ if $u_0\in H^1_{\mathrm{rad}}$ satisfies $\epsilon:=\inf_{\theta\in \R} \| u_0-e^{\im \theta}\varphi_{\omega_*}\|_{H^1}<\epsilon_0$, $\sup_{t>0}\inf_{\theta}\| u(t)-e^{\im \theta}\varphi_{\omega_*}\|_{H^1}<C_0\epsilon_0$ and
	\begin{align}\label{bootass}
		\max\(\|\eta\|_{\mathrm{Stz}(0,T)}, \|z^N\|_{L^2(0,T)}, \|w_+\|_{L^2(0,T)}, \|w_-\|_{L^2(0,T)}\)\leq C_1\epsilon,
	\end{align}
for some $T>0$, then we have \eqref{bootass} with $C_1$ replaced by $C_1/2$.
Here, $u(t)$ is the solution of \eqref{NLS} with $u(0)=u_0$
\end{proposition}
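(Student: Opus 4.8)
The plan is to run a standard bootstrap/continuity argument for the modulation system \eqref{eq:modeta}, splitting the estimate on the right-hand side of \eqref{bootass} into three pieces: the dispersive part $\eta$, the stable/unstable modes $w_\pm$, and the discrete internal mode $z^N$. First I would derive the modulation equations for $\dot\Theta$. Projecting \eqref{eq:modeta} onto $D\varphi[\Theta]\Xi$ and using the orthogonality $\eta\in\mathcal H_c[\Theta]$ together with the identity \eqref{rpDiff}, one obtains $\dot\Theta-\widetilde\Theta = (\text{invertible matrix})^{-1}(\text{terms quadratic in }\eta, z, w_\pm)$, hence pointwise bounds
\begin{align*}
|\dot\omega| + \big|\dot z + \im(\textstyle\sum_j |z|^{2j}z\lambda_j)\big| + |\dot w_+ - \mu w_+| + |\dot w_- + \mu w_-| \lesssim \|\eta\|_{H^1}^2 + |z|^{N+1} + \|\eta\|_{H^1}(|z|^N + |w_+| + |w_-|) + w_+^2 + w_-^2,
\end{align*}
using the $\Sigma$-bounds on $\mathcal R_1[\Theta], \mathcal R_2[\Theta]$ from Proposition \ref{prop:rpKAI} and the fact that $\varphi_{j,k}[\omega], \xi[\omega,z]$ decay exponentially so pairing with them costs nothing. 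Under the bootstrap hypothesis \eqref{bootass} and $\inf_\theta\|u-e^{\im\theta}\varphi_{\omega_*}\|_{H^1}\lesssim C_0\epsilon$, these right-hand sides are $O(C_0 C_1\epsilon^2 + C_1\epsilon\cdot C_0\epsilon)$-type quantities, integrable in time against the known $L^2$ norms.

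Next I would estimate $\eta$. Writing $P_c[\Theta]$ for the projection onto the continuous spectral subspace of $\mathcal H[\omega,z]$ (or $\mathcal H_{\omega_*}$, transported), applying $P_c$ to \eqref{eq:modeta} gives a Schr\"odinger-type equation $\im\partial_t(P_c\eta) = \mathcal H_{\omega_*}(P_c\eta) + (\text{commutator/perturbation terms}) + P_c(F[\Theta,\eta] + \mathcal R[\Theta] - e^{\im\theta}(z^N G_+ + \bar z^N G_-))$. The crucial point is that the resonant source term $e^{\im\theta}(z^N G_+ + \bar z^N G_-)$ has already been removed into $\mathcal R$ via the refined profile, so what remains is $\mathcal R_1 + \mathcal R_2$ which is genuinely higher order in $\Sigma$. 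Then Strichartz estimates and the dispersive decay for $e^{-\im t\mathcal H_{\omega_*}}P_c$ (available by (H2), no embedded eigenvalues or resonances at $\pm\omega_*$), combined with the smallness from \eqref{bootass}, close the $\|\eta\|_{\mathrm{Stz}(0,T)}\le (C_1/2)\epsilon$ bound — with the initial contribution $\lesssim\epsilon$ from $\|\eta(0)\|_{H^1}\lesssim\epsilon$ dominating, and the nonlinear/remainder contributions being $O(C_1^2\epsilon^2 + C_0 C_1\epsilon^2)$, small once $\epsilon_0$ is small relative to $1/C_1$.

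For $w_\pm$ I would use the energy-type control indicated in the introduction (the analogue of Lemma \ref{lem:wplusminusest}): the quantity $w_+ w_-$ (or $|w_+|^2 - |w_-|^2$) is, up to the exponential factors $e^{\pm 2\mu t}$, almost conserved because the coupling in \eqref{eq:modeta} between the $\xi_\pm$ components and the rest is quadratic; integrating the ODE for $w_\pm$ and using the smallness of $\|\eta\|_{\mathrm{Stz}}$ and $\|z^N\|_{L^2}$ gives $\|w_\pm\|_{L^2(0,T)}\lesssim \epsilon + C_1\epsilon^2$. The genuinely new and hardest step is the internal-mode estimate $\|z^N\|_{L^2(0,T)}\le (C_1/2)\epsilon$: here one substitutes the expansion of $\eta$ into the normal-form-free equation, isolates the term in $\dot{(\text{something})}$ whose imaginary part, after integration by parts in time (the standard FGR computation), produces $\int_0^T |z^N|^2\,dt$ times a positive constant. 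That constant is $\mathrm{Im}$ of a limiting resolvent expression which, by (H4), equals $c\,|(\mathcal F W^*\mathfrak G)_\uparrow(\sqrt{N\lambda(\omega_*)-\omega_*})|^2 > 0$; the remaining terms are controlled by $\epsilon^2 + \|\eta\|_{\mathrm{Stz}}^2 + (\text{small})\|z^N\|_{L^2}^2$ which can be absorbed. I expect this Fermi Golden Rule step — correctly identifying the resonant frequency $\sqrt{N\lambda(\omega_*)-\omega_*}$, handling the wave operator $W$ in the quadratic form, and absorbing all error terms without circularity — to be the main obstacle; everything else is a routine (if lengthy) continuity argument, following \cite{CM22JEE} closely as the authors indicate.
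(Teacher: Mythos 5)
Your overall architecture --- modulation estimates for $\dot\Theta-\widetilde\Theta$, a Strichartz bound for $\eta$, an ODE/energy bound for $w_\pm$, a Fermi Golden Rule estimate for $\|z^N\|_{L^2}$, and closure by taking $C_1$ large and then $\epsilon_0$ small --- matches the paper's. But there are two related problems at exactly the step you yourself flag as hardest. First, the resonant source $e^{\im\theta}(z^N G_++\bar z^N G_-)$ is \emph{not} removed from the $\eta$-equation by the refined profile: it is kept as the leading part of $\mathcal{R}[\Theta]$ precisely because $N\lambda(\omega_*)>\omega_*$ makes it non-removable (this is what drives the FGR mechanism), so it remains a genuine source in \eqref{eq:modeta}, and the Strichartz estimate for $\eta$ necessarily carries a $+\|z^N\|_{L^2}$ contribution (cf.\ Lemma \ref{lem:esteta}); dropping it, as in your displayed equation for $P_c\eta$, is not legitimate. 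This slip alone would still be harmless once corrected (the extra $\|z^N\|_{L^2}$ is handled by the FGR lemma), but it signals a misreading of Proposition \ref{prop:rpKAI}.

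Second, and decisively, your error budget in the FGR step does not close. You claim the remainders are $\epsilon^2+\|\eta\|_{\mathrm{Stz}}^2+(\text{small})\,\|z^N\|_{L^2}^2$. Under the bootstrap hypothesis $\|\eta\|_{\mathrm{Stz}}\le C_1\epsilon$, a term $C\|\eta\|_{\mathrm{Stz}}^2\le C(C_1\epsilon)^2$ with an absolute constant $C$ can neither be absorbed into $\Gamma\|z^N\|_{L^2}^2$ nor be made smaller than $((C_1/2)\epsilon)^2$, no matter how $C_1$ and $\epsilon_0$ are chosen; the same obstruction appears in the natural cross term $\lesssim \|\eta\|_{L^2L^{2,-\sigma}}\|z^N\|_{L^2}$ when $\eta$ is paired against the localized function $z^N\mathfrak{G}$. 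Every error in the FGR identity must carry a gain relative to $(C_1\epsilon)^2$: a factor $C_0\epsilon$, a factor $C_1^{-1}$ (i.e.\ a bound of size $\epsilon$ rather than $C_1\epsilon$ on the relevant piece of $\eta$), or $C_0^2C_1^{-2}$ coming from boundary terms. This is exactly why the paper subtracts the resonance-driven part: it sets $\boldsymbol{g}=\boldsymbol{\zeta}-\boldsymbol{Z}$ with $\boldsymbol{Z}=z^N\boldsymbol{Z}_++\bar z^N\boldsymbol{Z}_-$ built from limiting resolvents, see \eqref{def:g}, so that the pairing of $\boldsymbol{Z}$ against $z^N\mathfrak{G}$ produces the sign-definite term $N\lambda(\omega_*)\Gamma|z|^{2N}$ (Plemelj formula plus (H4)) together with a total time derivative $A$ with $\|A\|_{L^\infty}\lesssim(C_0\epsilon)^{2N}$, while the remainder $\boldsymbol{g}$ solves an equation from which the resonant source has been removed and hence obeys the local-decay bound $\|\boldsymbol{g}\|_{L^2L^{2,-\sigma}}\lesssim(1+C_0C_1\epsilon)\epsilon$ of Lemma \ref{lem:g}; the factor $\epsilon$ instead of $C_1\epsilon$ converts the dangerous cross term into $(C_1^{-1}+C_0\epsilon)(C_1\epsilon)^2$ and yields the paper's bound $\|z^N\|_{L^2}^2\lesssim(C_1^{-1}+C_0^2C_1^{-2}+C_0\epsilon)(C_1\epsilon)^2$, which does close. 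Your proposal contains no analogue of this decomposition or of the improved $C_1$-independent bound, and without it the internal-mode estimate, and hence the bootstrap, cannot be closed as written.
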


In the following, we fix $C_0,C_1>1$ and $u_0 \in H^1_{\mathrm{rad}}$ with $\epsilon:=\|u_0-\varphi_{\omega_*}\|_{H^1}<\epsilon_0$ with $\epsilon_0,C_1$ to be determined, and assume $\sup_{t>0}\inf_{\theta}\|u(t)-e^{\im \theta}\varphi_{\omega_*}\|_{H^1}\leq C_0\epsilon$ and  \eqref{bootass}.
Replacing $\epsilon_0$ by $\min(\epsilon_0,C_1^{-1}, C_0^{-1})$ if necessary, we can assume $C_0\epsilon\leq 1$ and $C_1\epsilon\leq 1$ without loss of generality.  
We further set $a(t)=a(u(t))$ for $a=\Theta,\theta,\omega,z,w_\pm$ and $\eta$.
From the assumption we have just made and \eqref{lem:modcondasymp}, we have
\begin{align}\label{ass:orbbound}
	\|\omega-\omega_*\|_{L^\infty}+\|z\|_{L^\infty}+\|w_+\|_{L^\infty}+\|w_-\|_{L^\infty}+\|\eta\|_{L^\infty H^1}\lesssim C_0\epsilon,
\end{align}
and from \eqref{tildeTheta}, we have
\begin{align}\label{est:tildeTheta}
	\|\widetilde{\Theta}\|_{L^\infty}\lesssim C_0\epsilon.
\end{align}

\begin{remark}
	The implicit constants in \eqref{ass:orbbound} and \eqref{est:tildeTheta} are independent of $T, C_0, C_1,\epsilon,\epsilon_0$ and $u$.
	In the following, all implicit constant will never depend on $T,C_0,C_1,\epsilon,\epsilon_0$ nor $u$.
\end{remark}

We assume \eqref{bootass} holds for some $T>0$.
In the following, we write $\| \cdot\|_{L^p(0,T)}=\|\cdot\|_{L^p}$, $\|\cdot\|_{\mathrm{Stz}(0,T)}=\|\cdot\|_{\mathrm{Stz}}$ and $\|\cdot\|_{L^p L^q(0,T)}=\|\cdot\|_{L^pL^q}$.
There should be no confusion.

For the proof of Proposition \ref{prop:boot}, we start from the estimate of $F$ given in \eqref{def:F}.

\begin{lemma}\label{lem:Fest}
We have
\begin{align}
\|F[\Theta,\eta]\|_{L^\infty L^{6/5}}&\lesssim (C_0\epsilon)^2,\label{est:F1}\\
\|F[\Theta,\eta]\|_{L^2W^{1,6/5}} &\lesssim C_0C_1\epsilon^2,\label{est:F3}\\
\|F[\Theta,\eta]\|_{L^1L^{6/5}} &\lesssim (C_1\epsilon)^2.\label{est:F4}
\end{align}
\end{lemma}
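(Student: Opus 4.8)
The plan is to reduce the three estimates to two pointwise bounds — one for $F[\Theta,\eta]$ and one for $\nabla F[\Theta,\eta]$ — and then to dispatch them by H\"older's inequality in $x$, the Sobolev embeddings $H^1(\R^3)\hookrightarrow L^p$ for $2\le p\le 6$ and $W^{1,6}(\R^3)\hookrightarrow L^\infty$, and a split of the time integrability between the bound $\|\eta\|_{L^\infty H^1}\lesssim C_0\epsilon$ coming from \eqref{ass:orbbound} and the component $\|\eta\|_{L^2W^{1,6}}\le\|\eta\|_{\mathrm{Stz}}\le C_1\epsilon$ of \eqref{bootass}.

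For the pointwise bounds I would write $F[\Theta,\eta]=\int_0^1(1-s)\,D^2f(\varphi[\Theta]+s\eta)(\eta,\eta)\,ds$. Since $f(u)=g(|u|^2)u$ is smooth with $f(0)=Df(0)=D^2f(0)=0$, the growth condition \eqref{fgrowth} gives $|D^2f(w)|\lesssim|w|\,\langle w\rangle^2$ and $|D^3f(w)|\lesssim\langle w\rangle^2$; combined with $\|\varphi[\Theta]\|_{L^\infty}\lesssim1$ this yields, along the segment $w=\varphi[\Theta]+s\eta$, the estimate $|D^2f(\varphi[\Theta]+s\eta)|\lesssim(|\varphi[\Theta]|+|\eta|)\langle\eta\rangle^2$, hence
\begin{align*}
|F[\Theta,\eta]|\lesssim|\varphi[\Theta]|\,|\eta|^2+|\eta|^3+|\eta|^5.
\end{align*}
Differentiating $F$ and using $Df(0)=D^2f(0)=0$ once more, so that $|Df(\varphi[\Theta]+\eta)-Df(\varphi[\Theta])|\lesssim(|\varphi[\Theta]|+|\eta|)\langle\eta\rangle^2|\eta|$ while the second-order Taylor remainder of $Df$ is $\lesssim\langle\eta\rangle^2|\eta|^2$, one gets
\begin{align*}
|\nabla F[\Theta,\eta]|\lesssim\big(|\varphi[\Theta]|+|\eta|\big)\langle\eta\rangle^2|\eta|\,|\nabla\eta|+|\nabla\varphi[\Theta]|\,\langle\eta\rangle^2|\eta|^2.
\end{align*}
The crucial structural point is that every monomial here is either genuinely quadratic in $\eta$ but carries a decaying weight ($|\varphi[\Theta]|$ or $|\nabla\varphi[\Theta]|$), or is of order $\ge3$ in $\eta$ (or contains an extra factor $\nabla\eta$). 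Because $\sigma$ is large, $\varphi[\Theta]$ and $\nabla\varphi[\Theta]$ lie in $\Sigma$ and hence in $L^q(\R^3)$ for every $q$ in the range needed, with norms $\lesssim1$ uniformly in $\Theta$ near the base point.

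The three estimates then follow by H\"older/Sobolev bookkeeping. For \eqref{est:F1} I bound every term in $L^\infty_t$: $\|\varphi[\Theta]\eta^2\|_{L^{6/5}}\le\|\varphi[\Theta]\|_{L^3}\|\eta\|_{L^6}^2$, $\||\eta|^3\|_{L^{6/5}}\lesssim\|\eta\|_{H^1}^3$, $\||\eta|^5\|_{L^{6/5}}=\|\eta\|_{L^6}^5$, so $\|F\|_{L^\infty L^{6/5}}\lesssim\|\eta\|_{L^\infty H^1}^2(1+\|\eta\|_{L^\infty H^1}^3)\lesssim(C_0\epsilon)^2$ since $C_0\epsilon\le1$. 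For \eqref{est:F4} I put both $\eta$-factors of the quadratic term into $L^2L^6$, letting the weight absorb the remaining spatial integrability, e.g.\ $\|\varphi[\Theta]\eta^2\|_{L^1L^{6/5}}\le\|\varphi[\Theta]\|_{L^\infty L^2}\|\eta\|_{L^2L^6}^2\lesssim(C_1\epsilon)^2$, while for the cubic and quintic terms I keep two $\eta$-factors in $L^2L^6$ and the rest in $L^\infty_t$, e.g.\ $\||\eta|^3\|_{L^1L^{6/5}}\le\|\eta\|_{L^\infty L^2}\|\eta\|_{L^2L^6}^2\lesssim(C_0\epsilon)(C_1\epsilon)^2\le(C_1\epsilon)^2$. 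For \eqref{est:F3} I use the gradient bound and place, in each term, exactly one factor of $\eta$ or $\nabla\eta$ in $L^2L^6$, the remaining factors in $L^\infty_t$ (using $\|\eta\|_{L^\infty L^6}\lesssim C_0\epsilon$, $\|\nabla\eta\|_{L^\infty L^2}\lesssim C_0\epsilon$) and each weight in a fixed $L^q_x$: for instance $\|\varphi[\Theta]\,\eta\nabla\eta\|_{L^2L^{6/5}}\le\|\varphi[\Theta]\|_{L^\infty L^3}\|\eta\|_{L^\infty L^6}\|\nabla\eta\|_{L^2L^6}\lesssim C_0C_1\epsilon^2$, $\||\eta|^2\nabla\eta\|_{L^2L^{6/5}}\le\|\eta\|_{L^\infty L^6}\|\eta\|_{L^2L^6}\|\nabla\eta\|_{L^\infty L^2}\lesssim C_0^2C_1\epsilon^3\le C_0C_1\epsilon^2$, $\|\nabla\varphi[\Theta]\,\eta^2\|_{L^2L^{6/5}}\le\|\nabla\varphi[\Theta]\|_{L^\infty L^3}\|\eta\|_{L^\infty L^6}\|\eta\|_{L^2L^6}\lesssim C_0C_1\epsilon^2$; the zeroth-order ($L^2L^{6/5}$) part of the $W^{1,6/5}$ norm is treated exactly as in \eqref{est:F4} but with one factor moved to $L^2_t$, giving $C_0C_1\epsilon^2$.

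The only real obstacle is that $L^{6/5}_x$ is a spatial exponent below $2$ that is \emph{not} reachable from $H^1$ or $W^{1,6}$ by Sobolev embedding alone, so the cruder pointwise bound $|F|\lesssim(1+|\eta|^3)|\eta|^2$ used elsewhere in the paper would force a factor of $|(0,T)|^{1/2}$ into \eqref{est:F4} and into the zeroth-order part of \eqref{est:F3}, destroying the $T$-uniformity required by the bootstrap. Keeping the decaying coefficient $\varphi[\Theta]$ (resp.\ $\nabla\varphi[\Theta]$) in front of the purely quadratic part of $F$ (resp.\ $\nabla F$) — which is precisely what $f(0)=Df(0)=D^2f(0)=0$ provides — supplies the missing spatial integrability, and after that no dispersive input is needed: the three bounds reduce to elementary H\"older estimates in the two known norms of $\eta$.
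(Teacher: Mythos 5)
Your argument is correct and is essentially the standard proof that the paper itself only cites (Lemma 4.3 of \cite{CM22JEE}): Taylor expansion of $F$ to second order, the pointwise bounds $|F|\lesssim|\varphi[\Theta]||\eta|^2+|\eta|^3+|\eta|^5$ and the corresponding bound on $\nabla F$ coming from $f(0)=Df(0)=D^2f(0)=0$ and \eqref{fgrowth}, followed by H\"older/Sobolev bookkeeping splitting the time integrability between $\|\eta\|_{L^\infty H^1}\lesssim C_0\epsilon$ and $\|\eta\|_{L^2W^{1,6}}\lesssim C_1\epsilon$. The only flaws are arithmetic: in the terms carrying the weight you pair $\|\varphi[\Theta]\|_{L^3}$ (resp.\ $\|\nabla\varphi[\Theta]\|_{L^3}$) with two $L^6$ factors, which gives $1/3+1/6+1/6=2/3\neq 5/6$; the weight should be taken in $L^2$ so that $1/2+1/6+1/6=5/6$, a harmless fix since $\varphi[\Theta]$ and $\nabla\varphi[\Theta]$ lie in every $L^q$ with norm $\lesssim 1$.
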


\begin{proof}
See, Lemma 4.3 of \cite{CM22JEE}.
The estimate \eqref{est:F1} follows from (4.6) of \cite{CM22JEE} and $H^1\hookrightarrow L^6$.
\end{proof}

We next estimate of the modulation parameters $\dot{\Theta}-\widetilde{\Theta}$.

\begin{lemma}\label{lem:dotThetatildeTheta}
We have
\begin{align}
\|\dot{\Theta}-\widetilde{\Theta}\|_{L^2}&\lesssim C_0C_1\epsilon^2 \ \text{and}\label{est:L2mod}\\
\|\dot{\Theta}-\widetilde{\Theta}\|_{L^\infty}&\lesssim (C_0\epsilon)^2, \label{est:Linftymod}\\
	\|\dot{\Theta}\|_{L^\infty}&\lesssim C_0\epsilon \label{est:dotTheta}
\end{align}
\end{lemma}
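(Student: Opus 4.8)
\emph{Plan.} The plan is to obtain the modulation equations by differentiating in $t$ the orthogonality relations $\langle\im\eta,D\varphi[\Theta]\Xi\rangle=0$ (for all $\Xi$) characterizing $\eta(t)\in\mathcal{H}_c[\Theta(t)]$, inserting \eqref{eq:modeta}, and solving the resulting finite-dimensional linear system for $\dot\Theta-\widetilde\Theta$. Differentiating the constraint gives
\[
\langle\im\partial_t\eta,D\varphi[\Theta]\Xi\rangle+\langle\im\eta,D^2\varphi[\Theta](\dot\Theta,\Xi)\rangle=0,
\]
and one substitutes $\im\partial_t\eta=-\im D\varphi[\Theta](\dot\Theta-\widetilde\Theta)+H[\Theta]\eta+F[\Theta,\eta]+\mathcal{R}[\Theta]$, uses that $H[\Theta]$ is formally self-adjoint for the real $L^2$ pairing to move it onto $D\varphi[\Theta]\Xi$, and then applies \eqref{rpDiff} to write $H[\Theta]D\varphi[\Theta]\Xi=D\mathcal{R}[\Theta]\Xi+\im D^2\varphi[\Theta](\widetilde\Theta,\Xi)+\im D\varphi[\Theta]D\widetilde\Theta(\Theta)\Xi$. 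Two simplifications occur: $\langle\eta,\im D\varphi[\Theta]D\widetilde\Theta(\Theta)\Xi\rangle=-\langle\im\eta,D\varphi[\Theta]D\widetilde\Theta(\Theta)\Xi\rangle=0$ by the orthogonality defining $\mathcal{H}_c[\Theta]$, and the two occurrences of $D^2\varphi[\Theta](\widetilde\Theta,\cdot)$ cancel. What remains is
\[
\mathcal{A}[\Theta,\eta](\dot\Theta-\widetilde\Theta)[\Xi]=-\langle\eta,D\mathcal{R}[\Theta]\Xi\rangle-\langle F[\Theta,\eta],D\varphi[\Theta]\Xi\rangle-\langle\mathcal{R}[\Theta],D\varphi[\Theta]\Xi\rangle,
\]
where $\mathcal{A}[\Theta,\eta](\,\cdot\,)[\Xi]:=-\langle\im D\varphi[\Theta]\,\cdot\,,D\varphi[\Theta]\Xi\rangle+\langle\im\eta,D^2\varphi[\Theta](\,\cdot\,,\Xi)\rangle$. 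At $(\Theta,\eta)=((\theta,\omega_*,0,0,0),0)$ this is the symplectic form on the tangent space, nondegenerate by (H1), $\frac{d}{d\omega}\|\varphi_\omega\|_{L^2}^2\neq 0$, and the normalizations $(\sigma_3\zeta[\omega_*],\zeta[\omega_*])=1$, $\langle\im\xi[\omega_*,0],\overline{\xi[\omega_*,0]}\rangle=-1$; since it differs from this by $O(|\omega-\omega_*|+|z|+|w_+|+|w_-|+\|\eta\|_{L^2})=O(C_0\epsilon)$ (see \eqref{ass:orbbound}), for $\epsilon_0$ small it is invertible with $O(1)$ inverse, so $|\dot\Theta-\widetilde\Theta|$ is controlled by the three pairings on the right.

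It then remains to estimate those pairings, the guiding principle being that every factor that is only $L^\infty_t$-small ($|\omega-\omega_*|$ and $|z|$, each $\lesssim C_0\epsilon$) must appear multiplied by a factor that is $L^2_t$-small under \eqref{bootass} ($\|z^N\|_{L^2}$, $\|w_\pm\|_{L^2}$, or $\|\eta\|_{L^2W^{1,6}}$, each $\lesssim C_1\epsilon$). Since $N\geq 2$ by (H3) (as $0<\lambda(\omega_*)<\omega_*$), differentiating \eqref{Rdecomp} gives $\|D\mathcal{R}[\Theta]\|_\Sigma\lesssim|\omega-\omega_*|+|z|+|w_+|+|w_-|$, so $|\langle\eta,D\mathcal{R}[\Theta]\Xi\rangle|\lesssim\|\eta\|_{L^6}\|D\mathcal{R}[\Theta]\|_\Sigma$ and $\|\langle\eta,D\mathcal{R}[\Theta]\Xi\rangle\|_{L^2(0,T)}\lesssim\|\eta\|_{L^2W^{1,6}}\|D\mathcal{R}[\Theta]\|_{L^\infty\Sigma}\lesssim C_0C_1\epsilon^2$ — here it is essential that $D\mathcal{R}[\Theta]\Xi\in\Sigma\hookrightarrow L^{6/5}$, so $\eta$ is paired only against objects for which the Strichartz bound $\|\eta\|_{L^2W^{1,6}}$ can be used. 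Next $|\langle F[\Theta,\eta],D\varphi[\Theta]\Xi\rangle|\lesssim\|F[\Theta,\eta]\|_{L^{6/5}}$, so its $L^2(0,T)$ norm is $\lesssim\|F[\Theta,\eta]\|_{L^2W^{1,6/5}}\lesssim C_0C_1\epsilon^2$ by \eqref{est:F3} and its $L^\infty$ norm is $\lesssim(C_0\epsilon)^2$ by \eqref{est:F1}. For $\langle\mathcal{R}[\Theta],D\varphi[\Theta]\Xi\rangle$ split $\mathcal{R}=e^{\im\theta}(z^NG_++\overline z^NG_-)+\mathcal{R}_1+\mathcal{R}_2$: the $\mathcal{R}_1$ contribution is $\lesssim\|\mathcal{R}_1\|_\Sigma\lesssim(C_0\epsilon)(|z|^N+|w_+|+|w_-|)$, which is $\lesssim C_0C_1\epsilon^2$ in $L^2(0,T)$, and the $\mathcal{R}_2$ contribution is $\lesssim\|\mathcal{R}_2\|_\Sigma\lesssim w_+^2+w_-^2$ with $\|w_\pm^2\|_{L^2}\leq\|w_\pm\|_{L^\infty}\|w_\pm\|_{L^2}\lesssim C_0C_1\epsilon^2$; both are $\lesssim(C_0\epsilon)^2$ in $L^\infty$. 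Taking all factors in $L^\infty_t$ throughout yields \eqref{est:Linftymod}, and \eqref{est:dotTheta} then follows from $\|\dot\Theta\|_{L^\infty}\leq\|\dot\Theta-\widetilde\Theta\|_{L^\infty}+\|\widetilde\Theta\|_{L^\infty}$ together with \eqref{est:tildeTheta}.

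The main obstacle is the leading term $\langle e^{\im\theta}(z^NG_++\overline z^NG_-),D\varphi[\Theta]\Xi\rangle$ of $\langle\mathcal{R}[\Theta],D\varphi[\Theta]\Xi\rangle$: at face value it is only $O(|z|^N)$, which with $\|z^N\|_{L^2}\lesssim C_1\epsilon$ is first order in $\epsilon$ rather than $O(C_0C_1\epsilon^2)$. The point is that $\mathfrak{G}=\sigma_3P_*\sigma_3{}^t(\widetilde G_+\ \widetilde G_-)$ lies in the continuous spectral subspace of $\mathcal{H}_{\omega_*}$ — which is exactly why the projection $\sigma_3P_*\sigma_3$ is built into (H4) — so, by biorthogonality of the spectral subspaces of $\mathcal{H}_{\omega_*}$, the corresponding doubled object $z^N\mathfrak{G}+\overline z^N\sigma_1\mathfrak{G}$ pairs to zero with the discrete (generalized) eigenfunctions that span the leading ($\Theta\to(\theta,\omega_*,0,0,0)$) part of $D\varphi[\Theta]\Xi$; only the remainder, which is $O(|\omega-\omega_*|+|z|+|w_+|+|w_-|)$, survives. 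This gives $|\langle e^{\im\theta}(z^NG_++\overline z^NG_-),D\varphi[\Theta]\Xi\rangle|\lesssim|z|^N(|\omega-\omega_*|+|z|+|w_+|+|w_-|)$, whose $L^2(0,T)$ norm is $\lesssim\|z^N\|_{L^2}\,C_0\epsilon\lesssim C_0C_1\epsilon^2$ and whose $L^\infty$ norm is $\lesssim(C_0\epsilon)^2$. Apart from this cancellation and the systematic ``orbital $\times$ integrable'' bookkeeping, the remaining work is the routine product and H\"older estimates combining Lemma \ref{lem:Fest}, \eqref{ass:orbbound}, and \eqref{est:tildeTheta}.
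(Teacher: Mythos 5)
Your proposal is correct and follows essentially the same route as the paper: pairing \eqref{eq:modeta} with $D\varphi[\Theta]\Xi$, using the differentiated orthogonality and \eqref{rpDiff} to reach exactly the paper's identity \eqref{eq:dotThetatildeTheta}, inverting the (perturbed) nondegenerate form, and bounding the four right-hand terms with the same estimates — including the key observation, implicit in the paper's splitting $D\varphi[\Theta]-D\varphi[(\theta,\omega_*,0,0,0)]$, that $\mathfrak{G}\in\mathrm{Range}\,P_*$ pairs to zero with the discrete modes so the $z^N G_\pm$ term only contributes $O(C_0\epsilon(|z|^N+|w_+|+|w_-|))$, after which the $L^2_t$ and $L^\infty_t$ conclusions follow from Lemma \ref{lem:Fest}, \eqref{bootass}, \eqref{ass:orbbound} and \eqref{est:tildeTheta} as in the paper.
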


\begin{proof}
First, differentiating $\eqref{eq:modeta}$ w.r.t.\ $\Theta$ (in the direction of $\Xi$), we have
\begin{align}\label{eq:modetadiff}
D\mathcal{R}[\Theta]\Xi+\im D^2\varphi[\Theta](\widetilde{\Theta}(\Theta),\Xi)+\im D\varphi[\Theta]D\widetilde{\Theta}(\Theta)\Xi = H[\Theta]D\varphi[\Theta]\Xi.
\end{align}
Taking the inner product between \eqref{eq:modeta} and $D\varphi[\Theta]\Xi$, we have
\begin{align}
\<\im D\varphi[\Theta](\dot{\Theta}-\widetilde{\Theta}),D\varphi[\Theta]\Xi\>=
&\<\im \eta,D^2\varphi[\Theta](\dot{\Theta}-\widetilde{\Theta},\Xi)\>
+\<\eta,D\mathcal{R}[\Theta]\Xi\>+\<F[\Theta,\eta],D\varphi[\Theta]\Xi\>\label{eq:dotThetatildeTheta}\\&
+\<\mathcal{R}[\Theta],D\varphi[\Theta]\Xi\>.\nonumber
\end{align}
For $|\Xi|\leq 1$, each term in the 1st line of r.h.s.\ of \eqref{eq:dotThetatildeTheta} can be bounded as
\begin{align}
	|\<\im \eta,D^2\varphi[\Theta](\dot{\Theta}-\widetilde{\Theta},\Xi)\>|&\lesssim C_0\epsilon |\dot{\Theta}-\widetilde{\Theta}|,\\
	|\<\eta,D\mathcal{R}[\Theta]\Xi\>|&\lesssim C_0\epsilon \|\eta\|_{L^6},\\
	|\<F[\Theta,\eta],D\varphi[\Theta]\Xi\>|&\lesssim \|F[\Theta,\eta]\|_{L^{6/5}}.
\end{align}
For the last term of \eqref{eq:dotThetatildeTheta}, by Proposition \ref{prop:rpKAI}, we have
\begin{align*}
	\<\mathcal{R}[\Theta],D\varphi[\Theta]\Xi\>=\<e^{\im \theta}\(z^N G_+ + \bar{z}^N G_-\),\(D\varphi[\Theta]-D\varphi[(\theta,\omega_*,0,0,0)]\)\Xi\>+\<\mathcal{R}_1+\mathcal{R}_2,D\varphi[\Theta]\Xi\>.
\end{align*}
Thus, by Proposition \ref{prop:rpKAI},
\begin{align}\label{est:Rorth}
	|\<\mathcal{R}[\Theta],D\varphi[\Theta]\Xi\>|\lesssim C_0\epsilon\(|z|^N+|w_+|+|w_-|\)
\end{align}
Using \eqref{ass:orbbound} and the fact that $\<\im D\varphi[\Theta] \cdot,D\varphi[\Theta]\cdot\>$ is a non-degenerate (real) bilinear form, we have
\begin{align}
|\dot{\Theta}-\widetilde{\Theta}|\leq C\( C_0\epsilon|\dot{\Theta}-\widetilde{\Theta}|+C_0\epsilon \|\eta\|_{L^6}+\|F\|_{L^{6/5}}  +C_0\epsilon\(|z|^N+|w_+|+|w_-|\)\).
\end{align} 
Thus, taking $\epsilon_0$ smaller if necessary so that $CC_0\epsilon\leq 1/2$ and taking the $L^2$ norm in time, we have \eqref{est:L2mod} by \eqref{bootass} and \eqref{est:F3}.
Similarly, taking $L^\infty$ norm instead of $L^2$ norm, we have \eqref{est:Linftymod}.
Finally, combining \eqref{est:Linftymod} with \eqref{est:tildeTheta}, we have \eqref{est:dotTheta}.
\end{proof}

We next estimate $w_\pm$.
We will write
\begin{align}
\widetilde{\Theta}=(\widetilde{\theta},\widetilde{\omega},\widetilde{z},\widetilde{w}_+,\widetilde{w}_-)
\text{ and } \Theta_\pm = (\widetilde{\theta}_\pm,\widetilde{\omega}_\pm,\widetilde{z}_\pm,\widetilde{w}_{+,\pm},\widetilde{w}_{-,\pm}).\label{tildeThetaelement}
\end{align}
\begin{lemma}\label{lem:wplusminusest}
We have
\begin{align}\label{eq:wplusminusest}
\|w_+\|_{L^2}+\|w_-\|_{L^2}\lesssim (C_0\epsilon)^{1/2}C_1\epsilon+\|z^N\|_{L^2}.
\end{align}
\end{lemma}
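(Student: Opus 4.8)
The equations for $w_\pm$ are obtained by pairing \eqref{eq:modeta} with the dual eigenfunctions $\im\sigma_3\overline{\xi[\omega,z]}$ and $\im\sigma_3\xi[\omega,z]$ (equivalently, by applying to \eqref{NLS} the spectral projections onto the one-dimensional unstable/stable eigenspaces of $\mathcal{H}[\omega,z]$). Because $\widetilde\Theta$ was designed in Proposition \ref{prop:rpKAI} so that the $w_\pm$-components of $\widetilde\Theta$ are $\pm\mu(\omega,z)w_\pm$, the linear part of these scalar equations is $\dot w_\pm = \pm\mu(\omega,z)w_\pm + (\text{error})$, i.e.\ the motion is governed by a real exponential rate $\mu(\omega,z)>0$ (unstable) resp.\ $-\mu(\omega,z)<0$ (stable). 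The right-hand error terms collect: (i) the modulation defect $\dot\Theta-\widetilde\Theta$ paired against $D\varphi$, controlled in $L^2$ by $C_0C_1\epsilon^2$ via Lemma \ref{lem:dotThetatildeTheta}; (ii) the nonlinearity $F[\Theta,\eta]$, controlled in $L^1 L^{6/5}$ by $(C_1\epsilon)^2$ via Lemma \ref{lem:Fest}; (iii) the source $\mathcal R[\Theta]$ paired against $D\varphi[\Theta]\Xi$, for which the FGR-resonant piece $e^{\im\theta}(z^NG_++\bar z^NG_-)$ pairs to something of size $\lesssim C_0\epsilon(|z|^N+|w_+|+|w_-|)$ by the orthogonality estimate \eqref{est:Rorth} (the bulk $z^N\mathfrak{G}$ having been projected away into $P_*^\perp$), plus $\|\mathcal R_1\|_\Sigma+\|\mathcal R_2\|_\Sigma \lesssim (C_0\epsilon)(|z|^N+|w_+|+|w_-|)+w_+^2+w_-^2$; and (iv) terms coupling $\eta$ to $w_\pm$ through $D\mathcal R$ and $D^2\varphi$, again of size $\lesssim C_0\epsilon\|\eta\|_{L^6}$ pointwise.

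For the \emph{stable} component $w_-$, one integrates the scalar ODE forward from $t=0$ with the dissipative rate $-\mu(\omega,z)\le -\mu/2$ (using $\|\omega-\omega_*\|_\infty,\|z\|_\infty \ll 1$ so $\mu(\omega,z)$ stays close to $\mu$). Duhamel against the decaying exponential turns an $L^1_t$ forcing into an $L^2_t$ (indeed $L^\infty_t$) bound, giving
\begin{align*}
\|w_-\|_{L^2} \lesssim |w_-(0)| + \|\text{forcing}\|_{L^1} \lesssim \epsilon + C_0\epsilon\|z^N\|_{L^2}\cdot T^{0}+\dots
\end{align*}
— more precisely one gets $\|w_-\|_{L^2}\lesssim \epsilon + (C_0\epsilon)(\|z^N\|_{L^2}+\|w_+\|_{L^2}+\|w_-\|_{L^2}) + \|\eta\|_{\mathrm{Stz}}^2+\dots$, and since $|w_-(0)|\le \epsilon$ and $C_0\epsilon\ll1$ one absorbs the $w_-$ term on the left. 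The $\|z^N\|_{L^2}$ contribution is the dominant source term on the right. The genuinely delicate part is the \emph{unstable} component $w_+$: here the free evolution \emph{grows}, so one cannot integrate forward. Instead, one uses the a priori bound $\sup_t|w_+(t)|\lesssim C_0\epsilon$ (from \eqref{ass:orbbound}) together with the representation obtained by integrating \emph{backward} from any time — i.e.\ writing $w_+(t) = -\int_t^\infty e^{-\int_t^s \mu(\omega,z)}(\text{forcing})(s)\,ds$ is not available since we only have the solution up to $T$; the correct move, as in \cite{CM22JEE} and in the energy-type Lemma structure flagged in the introduction, is to observe that $e^{-\mu t}w_+(t)$ has derivative $\lesssim e^{-\mu t}|\text{forcing}|$, integrate, and bound $|w_+(0)|$ directly by $\epsilon$; then $\|w_+\|_{L^2(0,T)}$ is estimated by combining the uniform bound $\|w_+\|_{L^\infty}\lesssim C_0\epsilon$ with the convolution estimate on the forcing. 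Concretely, $|w_+(t)| \le e^{\mu t}|w_+(0)| + \int_0^t e^{\mu(t-s)}|\text{forcing}(s)|\,ds$, which is useless for large $t$; so instead one truncates, using $\|w_+\|_{L^2}^2 = \int_0^T |w_+|^2 \le \|w_+\|_{L^\infty}\|w_+\|_{L^1}$ and controls $\|w_+\|_{L^1}$ by a forward Grönwall argument that only costs a factor of the (finite but unbounded) $T$ — this is exactly why the estimate carries the weaker weight $(C_0\epsilon)^{1/2}$ rather than $C_0\epsilon$: one splits $|w_+|\le |w_+|^{1/2}\cdot|w_+|^{1/2}$, bounds one factor by $(C_0\epsilon)^{1/2}$ and the other by an $L^2$-integrable quantity built from the forcing and $z^N$.

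Assembling the two pieces gives
\begin{align*}
\|w_+\|_{L^2}+\|w_-\|_{L^2} \lesssim \epsilon + (C_0\epsilon)^{1/2}\big(\|z^N\|_{L^2}+C_0C_1\epsilon^2 + (C_1\epsilon)^2 + \|\eta\|_{\mathrm{Stz}}^2\big) + C_0\epsilon\big(\|w_+\|_{L^2}+\|w_-\|_{L^2}\big),
\end{align*}
and absorbing the last term on the left (for $\epsilon_0$ small so $C_0\epsilon\le 1/2$) and using $\|\eta\|_{\mathrm{Stz}},\|z^N\|_{L^2}\le C_1\epsilon$ from \eqref{bootass}, all the higher-order pieces $C_0C_1\epsilon^2$, $(C_1\epsilon)^2$, $\|\eta\|_{\mathrm{Stz}}^2$ are $\lesssim (C_1\epsilon)\cdot(C_0\epsilon\text{ or }C_1\epsilon)$ hence dominated by $(C_0\epsilon)^{1/2}C_1\epsilon$, and $\epsilon \le (C_0\epsilon)^{1/2}C_1\epsilon$ trivially once $C_1\ge1$; this yields exactly \eqref{eq:wplusminusest}. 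The main obstacle is item (iv)/(iii) — ensuring that the source $\mathcal R[\Theta]$ really pairs to size $C_0\epsilon(|z|^N+|w_\pm|)$ and not $C_0\epsilon\cdot|z|$, which is precisely what the refined-profile construction in Propositions \ref{prop:rp}–\ref{prop:rpKAI} and the projection $P_*^\perp$ buy us; and handling the unstable direction $w_+$ without a backward-in-time evolution, which forces the loss of half a power of $C_0\epsilon$ reflected in the statement.
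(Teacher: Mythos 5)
Your setup (pairing \eqref{eq:modeta} against $D\varphi[\Theta]\Xi$ in the directions that single out $w_\pm$, and the inventory of error terms via Lemmas \ref{lem:Fest}, \ref{lem:dotThetatildeTheta} and \eqref{est:Rorth}) matches the paper's starting point, and your forward-in-time Duhamel treatment of the stable mode $w_-$ is acceptable. The genuine gap is your treatment of the unstable mode $w_+$. The paper does not use any Gr\"onwall or convolution argument there: it tests \eqref{eq:dotThetatildeTheta} with $\Xi_-=(0,0,0,w_-,0)$ and $\Xi_+=(0,0,0,0,w_+)$, which produces the two identities \eqref{eq:wminus}--\eqref{eq:wplus}, namely $\tfrac12\tfrac{d}{dt}w_-^2+\mu(\omega,z)w_-^2=\mathrm{err}$ and $-\tfrac12\tfrac{d}{dt}w_+^2+\mu(\omega,z)w_+^2=\mathrm{err}$; adding them, $\mu(\omega,z)(w_+^2+w_-^2)$ equals a total time derivative plus errors, so integrating over $[0,T]$ the time-boundary contributions are controlled through the $T$-independent a priori bound \eqref{ass:orbbound} and one lands on \eqref{wpmbound1}. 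Your alternative for $w_+$ --- ``a forward Gr\"onwall argument that only costs a factor of the (finite but unbounded) $T$'', followed by $\|w_+\|_{L^2}^2\le\|w_+\|_{L^\infty}\|w_+\|_{L^1}$ --- does not produce a bound uniform in $T$, and uniformity in $T$ is precisely what the bootstrap in Proposition \ref{prop:boot} requires in order to propagate \eqref{bootass} to $T=\infty$; a bound growing with $T$ is useless here. Moreover, your stated reason for rejecting backward integration is not valid: one does not need the solution beyond $T$; integrating the $w_+$ equation backward from $t=T$ is available, and the boundary term $e^{-\mu(T-t)}w_+(T)$ is controlled by \eqref{ass:orbbound}. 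Either this backward integration or the paper's energy-type identity is needed; your proposal supplies neither.

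Two further inaccuracies. First, the factor $(C_0\epsilon)^{1/2}$ in \eqref{eq:wplusminusest} does not originate from any splitting $|w_+|\le|w_+|^{1/2}|w_+|^{1/2}$ or from a loss on the unstable direction: in the paper it arises because the squared quantity $\|w_+\|_{L^2}^2+\|w_-\|_{L^2}^2$ is bounded, among other terms, by $C_0\epsilon\,\|F\|_{L^1L^{6/5}}\lesssim C_0\epsilon\,(C_1\epsilon)^2$ (using \eqref{est:F4}), whose square root is $(C_0\epsilon)^{1/2}C_1\epsilon$. Second, your final absorption step invokes ``$\epsilon\le(C_0\epsilon)^{1/2}C_1\epsilon$ trivially once $C_1\ge1$'', which is false for small $\epsilon$: $(C_0\epsilon)^{1/2}C_1\epsilon\ge\epsilon$ would require $C_1^2C_0\epsilon\gtrsim1$. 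Hence, even granting your intermediate bounds, the assembled chain as written does not yield \eqref{eq:wplusminusest}.
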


\begin{proof}
We use \eqref{eq:dotThetatildeTheta} with $\Xi_{-}=(0,0,0,w_-,0)$ and $\Xi_+=(0,0,0,0,w_+)$.
Then, we have
\begin{align}\nonumber
&\frac{1}{2}\frac{d}{dt}w_-^2+\mu(\omega,z) w_-^2=-\<\im D\varphi[\Theta](\dot{\theta}-\widetilde{\theta},
		\dot{\omega}-\widetilde{\omega},\dot{z}-\widetilde{z},0,0),w_-\xi[\omega,z]\>\\&+\<\im D\varphi[\Theta](z^N(0,0,0,\widetilde{w}_{+,+},\widetilde{w}_{-,+})+\overline{z}^N (0,0,0,\widetilde{w}_{+,-},\widetilde{w}_{-,-})),w_-\xi[\omega,z]\>\nonumber\\&+\<\im \eta,D^2\varphi[\Theta](\dot{\Theta}-\widetilde{\Theta},\Xi_-)\>+\<\eta,D\mathcal{R}[\Theta]\Xi_-\>+\<F[\Theta,\eta],D\varphi[\Theta]\Xi_-\>+\<\mathcal{R}[\Theta],D\varphi[\Theta]\Xi_-\>\label{eq:wminus}
\end{align}
and
\begin{align}
&-\frac{1}{2}\frac{d}{dt}w_+^2+\mu(\omega,z) w_+^2=
-\<\im D\varphi[\Theta](\dot{\theta}-\widetilde{\theta},
		\dot{\omega}-\widetilde{\omega},\dot{z}-\widetilde{z},0,0),w_+\overline{\xi[\omega,z]}\>\nonumber\\&
		+\<\im D\varphi[\Theta](z^N(0,0,0,\widetilde{w}_{+,+},\widetilde{w}_{-,+})+\overline{z}^N(0,0,0,\widetilde{w}_{+,-},\widetilde{w}_{-,-})),w_+\overline{\xi[\omega,z]}\>\nonumber\\&
		+\<\im \eta,D^2\varphi[\Theta](\dot{\Theta}-\widetilde{\Theta},\Xi_+)\>
		+\<\eta,D\mathcal{R}[\Theta]\Xi_+\>+\<F[\Theta,\eta],D\varphi[\Theta]\Xi_+\>+\<\mathcal{R}[\Theta],D\varphi[\Theta]\Xi_+\>\label{eq:wplus}.
\end{align}
Therefore, form \eqref{ass:orbbound} and \eqref{est:Rorth} (with $\Xi$ repalced by $\Xi_\pm$), we have
\begin{align*}
&\left|\mu(\omega,z)(w_+^2+w_-^2)-\frac{1}{2}\frac{d}{dt}(w_+^2-w_-^2)\right|\lesssim\(|\dot{\Theta}-\widetilde{\Theta}|+|z^N|\) \(|w_+|+|w_-|\)\\&+\(C_0\epsilon\)\|\eta\|_{L^6}(|w_+|+|w_-|)+(C_0\epsilon)\|F\|_{L^{6/5}}+\(C_0\epsilon|z|^N +C_0\epsilon\(|w_+|+|w_-|\)\)(|w_+|+|w_-|).
\end{align*}
Thus, integrating $\mu(\omega,z)(w_+^2+w_-^2)$ by time, from we have
\begin{align}\label{wpmbound1}
\|w_+\|_{L^2}^2+\|w_-\|_{L^2}^2\lesssim \|\dot{\Theta}-\widetilde{\Theta}\|_{L^2}^2+\|z^N\|_{L^2}^2+(C_0\epsilon)^2 \|\eta\|_{\mathrm{Stz}}^2+C_0\epsilon \|F\|_{L^1L^{6/5}}+(C_0\epsilon)^{2}\|z^N\|_{L^2}^2.
\end{align}
Using \eqref{bootass} for the 3rd and 5th term in r.h.s.\ of \eqref{wpmbound1} and Lemmas \ref{lem:Fest} and \ref{lem:dotThetatildeTheta} for the 4th and 1st term of r.h.s.\ of \eqref{wpmbound1} respectively, we have the conclusion.
\end{proof}

We next estimate the continuous part $\eta$.
\begin{lemma}\label{lem:esteta}
We have
\begin{align}
\|\eta\|_{\mathrm{Stz}}\lesssim \(1+C_0C_1\epsilon\)\epsilon+\|z^N\|_{L^2}.
\end{align}
\end{lemma}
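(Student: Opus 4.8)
The plan is to estimate $\eta$ using the Strichartz estimates for the propagator generated by the linearized operator restricted to the continuous spectral subspace, exploiting the orthogonality $\eta\in\mathcal{H}_c[\Theta]$. First I would rewrite the equation \eqref{eq:modeta} for $\eta$ by moving the modulation term to the right-hand side, obtaining
\begin{align*}
\im\partial_t\eta = H[\Theta]\eta + F[\Theta,\eta] + \mathcal{R}[\Theta] - \im D\varphi[\Theta](\dot\Theta-\widetilde\Theta).
\end{align*}
Since $\eta$ lives (approximately) in the continuous subspace of $\mathcal{H}_{\omega_*}$, after freezing the operator at $\omega_*$ and treating $H[\Theta]-\mathcal{H}_{\omega_*}$ as a small localized perturbation, I would apply the Strichartz estimates for $e^{-\im t\mathcal{H}_{\omega_*}}P_*$ (which hold under (H2), via the wave operator $W$ appearing in (H4)). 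The projection of $\eta$ onto the finite-dimensional part is controlled by \eqref{lem:modcondasymp} and the orthogonality conditions defining $\mathcal{H}_c[\Theta]$, which show that the discrete components of $\eta$ are quadratically small (they are slaved to $z,w_\pm$ and $\eta$ itself through the $\Theta$-dependence of $D\varphi[\Theta]$), so they can be absorbed.

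The key steps, in order: (1) Decompose $\eta = P_*\eta + P_*^\perp\eta$ and bound $\|P_*^\perp\eta\|$ in terms of $(C_0\epsilon)\|\eta\|_{\mathrm{Stz}}$ plus lower-order contributions, using that $\eta\in\mathcal{H}_c[\Theta]$ and that $D\varphi[\Theta]$ differs from the $\Theta=(\theta,\omega_*,0,0,0)$ value by $O(C_0\epsilon)$. (2) Apply the Strichartz estimate to $P_*\eta$: the linear flow contributes $\|\eta(0)\|_{H^1}\lesssim\epsilon$, the nonlinearity $F$ contributes via \eqref{est:F1}--\eqref{est:F4} (the Duhamel term with $F$ in a dual Strichartz space gives $\lesssim C_0C_1\epsilon^2$ using \eqref{est:F3}), the modulation term $D\varphi[\Theta](\dot\Theta-\widetilde\Theta)$ contributes $\lesssim\|\dot\Theta-\widetilde\Theta\|_{L^2}\lesssim C_0C_1\epsilon^2$ via Lemma \ref{lem:dotThetatildeTheta}, and the source term $\mathcal{R}[\Theta]$ is handled by Proposition \ref{prop:rpKAI}: the leading piece $e^{\im\theta}(z^N G_+ + \bar z^N G_-)$ contributes $\lesssim\|z^N\|_{L^2}$ (since $\mathfrak{G}=\sigma_3 P_*\sigma_3{}^t(\widetilde G_+\ \widetilde G_-)$ is the $P_*$-projected part, the remaining $P_*^\perp$ part was absorbed into $\Theta_\pm$ and hence into $\dot\Theta-\widetilde\Theta$), while $\mathcal{R}_1,\mathcal{R}_2\in\Sigma$ with norms $\lesssim(C_0\epsilon)(|z|^N+|w_\pm|)+w_\pm^2$ contribute $\lesssim C_0\epsilon(\|z^N\|_{L^2}+\|w_+\|_{L^2}+\|w_-\|_{L^2})$, which by Lemma \ref{lem:wplusminusest} is $\lesssim C_0\epsilon((C_0\epsilon)^{1/2}C_1\epsilon+\|z^N\|_{L^2})$, absorbable. (3) Collect all terms; the terms proportional to $\|\eta\|_{\mathrm{Stz}}$ carry a factor $C_0\epsilon$, so for $\epsilon_0$ small they are absorbed into the left side, yielding $\|\eta\|_{\mathrm{Stz}}\lesssim(1+C_0C_1\epsilon)\epsilon+\|z^N\|_{L^2}$.

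The main obstacle I expect is Step (2)'s treatment of the source term $\mathcal{R}[\Theta]$, specifically showing that the non-decaying leading term $e^{\im\theta}(z^N G_+ + \bar z^N G_-)$ only contributes $\|z^N\|_{L^2}$ and not something worse. This requires that $G_\pm$, when fed through the Duhamel integral against $e^{-\im(t-s)\mathcal{H}_{\omega_*}}P_*$, produce a genuinely $L^2_t$-controlled output; since $\mathfrak{G}=\sigma_3 P_*\sigma_3{}^t(\widetilde G_+\ \widetilde G_-)\in\Sigma$ already lies in the continuous subspace, the weighted $L^2$ (local smoothing / Kato smoothing) bounds for $\mathcal{H}_{\omega_*}$ give $\|\int_0^t e^{-\im(t-s)\mathcal{H}_{\omega_*}}P_* e^{\im\theta(s)}z(s)^N G_+\,ds\|_{\mathrm{Stz}}\lesssim\||z|^N\|_{L^2}$ provided $\sigma$ in the definition of $\Sigma$ is chosen large enough and we use the phase $e^{\im\theta}$ is bounded; the subtlety is that $\theta$ is not merely $\omega_* t$ but has a slowly-varying correction, so one must commute carefully, but this only costs lower-order terms by \eqref{est:dotTheta}. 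The rest is routine bookkeeping with the estimates already established. One may cite Lemma 4.5 (or its analogue) of \cite{CM22JEE} for the precise Strichartz-with-source argument, since the structure is identical once the refined profile $\varphi[\Theta]$ is in place.
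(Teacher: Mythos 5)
Your proposal is correct and follows essentially the same route as the paper (which itself tracks Lemma 4.6 of \cite{CM22JEE}): gauge away the phase, pass to the continuous spectral part of $\mathcal{H}_{\omega_*}$ (the paper does this via $\boldsymbol{\zeta}=P_*e^{-\im\sigma_3\theta}\boldsymbol{\eta}$ and the inverse $R[\omega,z,w_\pm]$ of $P_*$ on $\mathcal{H}_c$, which is your $P_*\eta+P_*^\perp\eta$ splitting in disguise), apply the Strichartz estimate of Proposition 4.5 of \cite{CM22JEE}, feed in \eqref{est:F3}, Lemma \ref{lem:dotThetatildeTheta} and Proposition \ref{prop:rpKAI}, and absorb the $C_0\epsilon\|\eta\|_{\mathrm{Stz}}$ terms. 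The only cosmetic difference is your treatment of the source $e^{\im\theta}(z^NG_++\bar z^NG_-)$: no local-smoothing argument is needed, since $G_\pm\in\Sigma\subset W^{1,6/5}$ one simply places $z^N G_\pm$ in the dual Strichartz space $L^2_tW^{1,6/5}_x$, which directly yields the $\|z^N\|_{L^2}$ contribution.
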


\begin{proof}
The proof is parallel to Lemma 4.6 of \cite{CM22JEE}, so we will only briefly sketch it.

To use the Strichartz estimate related to $\mathcal{H}_{\omega_*}=\mathcal{H}[\omega_*,0]$, we double \eqref{eq:modeta} as
\begin{align}\label{eq:boldeta}
\im  \partial_t \boldsymbol{\eta} + \im  D\boldsymbol{\varphi}[\Theta](\dot{\Theta}-\widetilde{\Theta})=\mathcal{H}[\Theta]\boldsymbol{\eta}+\sigma_3(\boldsymbol{F}[\Theta,\eta]+\boldsymbol{\mathcal{R}}[\Theta]),
\end{align}
where $\boldsymbol{x}={}^t(x\ \overline{x})$ for $x=\eta$, $\varphi$, $F$, $\mathcal{R}$ and $\mathcal{H}[\Theta]\boldsymbol{\eta}={}^t(H[\Theta]\eta\ -\overline{H[\Theta]\eta})$.
Now, let $R[\omega,z,w_+,w_-]$ be the inverse of $P_*$ restricted on $\mathcal{H}_c[0,\omega,z,w_+,w_-]$.
Then, we have
\begin{align}\label{eq:invPest}
\|R[\omega,z,w_+,w_-]-1\|_{\Sigma^*\to \Sigma}\lesssim |\omega-\omega_*|+|z|+|w_+|+|w_-|,\ \|\partial_x R[\omega,z,w_+,w_-]\|_{\Sigma^*\to \Sigma}\lesssim 1,
\end{align}
for $x=\omega,\Re z,\Im z, w_\pm$.
Setting $\boldsymbol{\zeta}=P_* e^{-\im \sigma_3\theta}\boldsymbol{\eta}$, we have
\begin{align}\label{eq:zetaeta}
\boldsymbol{\eta}=e^{\im \sigma_3\theta}R[\omega,z,w_+,w_-]\boldsymbol{\zeta}.
\end{align}
In particular, we have
\begin{align}\label{etazetaequiv}
\|\eta\|_{\mathrm{Stz}}\sim \|\boldsymbol{\eta}\|_{\mathrm{Stz}}\sim \|\boldsymbol{\zeta}\|_{\mathrm{Stz}}.
\end{align}
Substituting \eqref{eq:zetaeta} into \eqref{eq:boldeta} and applying $P_* e^{-\im \sigma_3\theta}$, we have
\begin{align}\label{eq:zeta}
\im \partial_t \boldsymbol{\zeta}=\mathcal{H}_{\omega_*}\boldsymbol{\zeta}+P_*(\sigma_3\dot{\theta} \boldsymbol{\zeta})+R_{\boldsymbol{\zeta}},
\end{align}
where
\begin{align}
R_{\boldsymbol{\zeta}}=&-\im P_*\(\sigma_3\dot{\theta}(R[\omega,z,w_+,w_-]-1)\boldsymbol{\zeta}+DR[\omega,z,w_+,w_-](\dot{\omega},\dot{z},\dot{w}_+,\dot{w}_-)\boldsymbol{\zeta}\)\nonumber\\&
-\im P_* D\boldsymbol{\varphi}[0,\omega,z,w_+,w_-](\dot{\Theta}-\widetilde{\Theta})
+P_*\(\mathcal{H}[0,\omega,z,w_+,w_-]-\mathcal{H}_{\omega_*}\)R\boldsymbol{\zeta}\nonumber\\&
+P_*e^{-\im \sigma_3\theta}\(\sigma_3(\boldsymbol{F}[\Theta,\eta]+\boldsymbol{\mathcal{R}}[\Theta])\).\label{def:Rzeta}
\end{align}
By Strichartz estimate, see (Proposition 4.5 of \cite{CM22JEE}), we have
\begin{align}
\|\boldsymbol{\zeta}\|_{\mathrm{Stz}}\lesssim \|\boldsymbol{\zeta}(0)\|_{H^1}+\|R_{\boldsymbol{\zeta}}\|_{L^2W^{1,6/5}}.
\end{align}
From Proposition \ref{prop:rpKAI}, Lemmas \ref{lem:Fest} and \ref{lem:dotThetatildeTheta} and \eqref{eq:invPest},  we have
\begin{align}\label{est:Rzeta}
\|R_{\boldsymbol{\zeta}}\|_{L^2W^{1,6/5}}\lesssim
C_0\epsilon\|\boldsymbol{\zeta}\|_{\mathrm{Stz}}+C_0C_1\epsilon^2+\|z^N\|_{L^2}.
\end{align}
Thus, combining this with \eqref{etazetaequiv}, we have the conclusion.
\end{proof}

Before going into the estimate of $\|z^N\|_{L^2}$, we prepare several lemmas.
\begin{lemma}\label{lem:partialtzN}
We have
\begin{align}
\|\im \partial_t \(z^N\) - \lambda(\omega_*)Nz^N\|_{L^2}\lesssim C_0C_1\epsilon^2.\label{est:partialtzN}
\end{align}
\end{lemma}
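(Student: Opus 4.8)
The plan is to derive an ODE for $z(t)$ from the modulation equations and then take its $N$-th power. Recall that $z$ is defined via the decomposition $u = \varphi[\Theta] + \eta$ with $\eta \in \mathcal{H}_c[\Theta]$, and that the equation governing the modulation parameters is \eqref{eq:modeta}, with the projected version \eqref{eq:dotThetatildeTheta}. Using \eqref{eq:dotThetatildeTheta} with the test direction $\Xi$ picking out the $z$-component (i.e.\ $\Xi$ dual to $\dot z - \widetilde z$, so that $\Xi = (0,0,w,0,0)$ for $w\in\C$ and one reads off the equation for $\dot z - \widetilde z$ from the non-degeneracy of $\<\im D\varphi[\Theta]\cdot, D\varphi[\Theta]\cdot\>$), I obtain
\begin{align*}
\dot z - \widetilde z = O\!\left(C_0\epsilon|\dot\Theta-\widetilde\Theta| + C_0\epsilon\|\eta\|_{L^6} + \|F\|_{L^{6/5}} + C_0\epsilon(|z|^N+|w_+|+|w_-|)\right),
\end{align*}
which is exactly the componentwise version of the bound proved in Lemma \ref{lem:dotThetatildeTheta}. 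Now recall from \eqref{tildeTheta} that the $z$-component of $\widetilde\Theta$ is $\widetilde z = -\im \sum_{j=0}^M |z|^{2j} z\,\lambda_j(\omega) + z^N\widetilde z_+ + \overline z^N \widetilde z_-$, and in particular the leading term is $-\im \lambda_0(\omega) z = -\im \lambda(\omega) z$ (since $\lambda_0 = \lambda(\omega)$ by Proposition \ref{prop:rp}).

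First I would write $\im \dot z = \lambda(\omega) z + \im(\dot z - \widetilde z) + \im(\widetilde z + \im\lambda(\omega)z)$; the last bracket collects all the higher-order terms in $\widetilde z$, namely $\im\sum_{j=1}^M |z|^{2j}z\,\lambda_j(\omega) + \im z^N \widetilde z_+ + \im\overline z^N\widetilde z_-$, each of which is $O((C_0\epsilon)^2|z| + |z|^N)$ by \eqref{ass:orbbound}. Then I would differentiate $z^N$: $\im\partial_t(z^N) = N z^{N-1}\cdot \im \dot z = N\lambda(\omega) z^N + N z^{N-1}\left(\im(\dot z - \widetilde z) + \im(\widetilde z + \im\lambda(\omega)z)\right)$. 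Replacing $\lambda(\omega)$ by $\lambda(\omega_*)$ costs $N|\lambda(\omega)-\lambda(\omega_*)||z|^N \lesssim |\omega-\omega_*||z|^N \lesssim C_0\epsilon|z|^N$ pointwise, hence $\lesssim C_0\epsilon\|z^N\|_{L^2}\lesssim C_0C_1\epsilon^2$ after using \eqref{bootass}. It remains to bound the two error terms in $L^2$.

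For the term $N z^{N-1}\im(\dot z - \widetilde z)$, I use $|z|^{N-1}\lesssim (C_0\epsilon)^{N-1}\lesssim 1$ together with the pointwise bound on $\dot z - \widetilde z$ above: taking $L^2$ in time, the contributions are controlled by $C_0\epsilon\|\dot\Theta-\widetilde\Theta\|_{L^2} \lesssim (C_0\epsilon)(C_0C_1\epsilon^2)$ via \eqref{est:L2mod}, $C_0\epsilon\|\eta\|_{L^2L^6}\lesssim C_0\epsilon\|\eta\|_{\mathrm{Stz}}\lesssim C_0C_1\epsilon^2$ via \eqref{bootass}, $\|F\|_{L^2 L^{6/5}}\lesssim \|F\|_{L^2W^{1,6/5}}\lesssim C_0C_1\epsilon^2$ via \eqref{est:F3}, and $C_0\epsilon(\|z^N\|_{L^2}+\|w_+\|_{L^2}+\|w_-\|_{L^2})\lesssim C_0\epsilon\cdot C_1\epsilon = C_0C_1\epsilon^2$ via \eqref{bootass}; all of these are $\lesssim C_0C_1\epsilon^2$ (absorbing harmless extra factors of $C_0\epsilon\le 1$). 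For the term $N z^{N-1}\im(\widetilde z + \im\lambda(\omega)z)$, the higher $\lambda_j$ terms give $(C_0\epsilon)^{N-1}\cdot(C_0\epsilon)^2|z| = O((C_0\epsilon)^{N}|z|)$ which after $L^2$ is dominated by $(C_0\epsilon)\|z^N\|_{L^2}\lesssim C_0C_1\epsilon^2$ (for $N\ge 2$; note $N\ge 2$ follows from (H3) since $(N-1)\lambda(\omega_*)<\omega_*<N\lambda(\omega_*)$ forces $N\ge 2$ unless $\omega_* < \lambda(\omega_*)$, and in the borderline $N=1$ case $M=M'=0$ so $\widetilde z = -\im\lambda(\omega)z + z\widetilde z_+ + \overline z \widetilde z_-$ and the argument is identical), while $|z|^{N-1}\cdot|z|^N|\widetilde z_\pm|\lesssim |z|^{2N-1}\lesssim (C_0\epsilon)^{N-1}|z|^N$ is again absorbed into $C_0\epsilon\|z^N\|_{L^2}$.

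The main obstacle is purely bookkeeping: one must carefully extract the scalar $\dot z$-equation from the bilinear form in \eqref{eq:dotThetatildeTheta} with the correct test direction (the point being that $\Xi = (0,0,w,0,0)$ isolates the $z$-component because $\<\im D\varphi[\Theta]\cdot,D\varphi[\Theta]\cdot\>$ is close to its value at the symmetric point, which is block-structured), and then track which powers of $C_0\epsilon$, $C_1\epsilon$ each error term carries, making sure the final bound is $C_0C_1\epsilon^2$ and not, say, $C_0^2\epsilon^2$ or $(C_1\epsilon)^2$. There is no genuine analytic difficulty beyond what is already packaged in Lemmas \ref{lem:Fest}, \ref{lem:dotThetatildeTheta} and the bootstrap assumption \eqref{bootass}; the estimate is essentially a consequence of the facts that $z$ oscillates with frequency $\approx\lambda(\omega_*)$ and that all corrections to this oscillation are quadratically small.
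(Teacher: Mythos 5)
Your argument is essentially the paper's proof: both write $\im\partial_t(z^N)-N\lambda(\omega_*)z^N=Nz^{N-1}\bigl(\im(\dot z-\widetilde z)+(\im\widetilde z-\lambda(\omega_*)z)\bigr)$, expand $\im\widetilde z-\lambda(\omega_*)z$ via \eqref{tildeTheta} into $(\lambda(\omega)-\lambda(\omega_*))z+\sum_{j\geq1}|z|^{2j}z\lambda_j(\omega)+\im\widetilde z_+z^N+\im\widetilde z_-\overline z^N$, and conclude from $\|\dot\Theta-\widetilde\Theta\|_{L^2}\lesssim C_0C_1\epsilon^2$ (Lemma \ref{lem:dotThetatildeTheta}) and $\|z^N\|_{L^2}\lesssim C_1\epsilon$ (\eqref{bootass}); your re-derivation of the componentwise bound on $\dot z-\widetilde z$ from \eqref{eq:dotThetatildeTheta} is redundant but harmless. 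One intermediate step is not valid as literally written: having replaced $|z|^{N-1}$ by $(C_0\epsilon)^{N-1}$ you assert that $(C_0\epsilon)^{N}\|z\|_{L^2}$ is dominated by $(C_0\epsilon)\|z^N\|_{L^2}$, which fails in general (pointwise $(C_0\epsilon)^{N-1}|z|\gtrsim|z|^N$, not the reverse, and $\|z\|_{L^2(0,T)}$ need not be small uniformly in $T$); the fix is simply to keep the powers together, $|z|^{N-1}\,|z|^{2j+1}\lesssim(C_0\epsilon)^{2j}|z|^N$, which yields the intended $(C_0\epsilon)^{2}\|z^N\|_{L^2}\lesssim C_0C_1\epsilon^2$ and is exactly the grouping used in the paper. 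Finally, the $N=1$ aside is moot (and its claim that the argument is identical would not hold, since $\widetilde z_{\pm}$ are fixed, not small, constants): the internal mode lies in the spectral gap, so $\lambda(\omega_*)<\omega_*$ and (H3) forces $N\geq2$.
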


\begin{proof}
We have
\begin{align}
\im \partial_t \(z^N\) - \lambda(\omega_*)Nz^N=N z^{N-1}\( \im (\dot{z}-\widetilde{z}) +(\im \widetilde{z}-\lambda(\omega_*)z)\).
\end{align}
Further, we have
\begin{align}
\im \widetilde{z}-\lambda(\omega_*)z=(\lambda(\omega)-\lambda(\omega_*))z +\sum_{j=1}^M|z|^{2j}z \lambda_j(\omega)+\im \widetilde{z}_+ z^N +\im \widetilde{z}_-\overline{z}^N.
\end{align}
Thus, we have
\begin{align}
\|\im \partial_t z^N - \lambda(\omega_*)z^N\|_{L^2}\lesssim C_0\epsilon\(\|\dot{\Theta}-\widetilde{\Theta}\|_{L^2} + \|z^N\|_{L^2}\).
\end{align}
From \eqref{bootass} and Lemma \ref{lem:dotThetatildeTheta}, we have the conclusion.
\end{proof}

We set
\begin{align}\label{def:g}
\boldsymbol{g}&=\boldsymbol{\zeta}-\mathbf{Z}\ \mathrm{with}\ \boldsymbol{Z}=z^N\boldsymbol{Z}_++\bar{z}^N\boldsymbol{Z}_-,\\
\boldsymbol{Z}_+&=-(\mathcal{H}_{\omega_*}-N\lambda(\omega_*)-\im 0)^{-1} \sigma_3 \mathfrak{G},\\
\boldsymbol{Z}_-&=-(\mathcal{H}_{\omega_*}+N\lambda(\omega_*)-\im 0)^{-1} \sigma_3 \sigma_1\mathfrak{G}.
\end{align}
Substituting this into \eqref{eq:zeta}, we have
\begin{align}\label{eq:g}
\im \partial_t \boldsymbol{g}=\mathcal{H}_{\omega_*}\boldsymbol{g}+P_*(\sigma_3\dot{\theta}\boldsymbol{g})+P_*(\sigma_3 \dot{\theta} \boldsymbol{Z})+R_{\boldsymbol{g}}
-(\im \partial_t(z^N)-\lambda(\omega_*)z^N)\boldsymbol{Z}_+-\(\im \partial_t(\bar{z}^N)+\lambda(\omega_*)\bar{z}^N\)\boldsymbol{Z}_-,
\end{align}
where
\begin{align}\label{def:Rg}
R_{\boldsymbol{g}}=R_{\boldsymbol{\zeta}}-z^N \sigma_3 \mathfrak{G}-z^N\sigma_3\sigma_1\mathfrak{G}.
\end{align}

As proceeding as Lemma 4.9 of \cite{CM22JEE}, we have
\begin{lemma}\label{lem:g}
For sufficiently large $\sigma>0$, we have
\begin{align}
\|\boldsymbol{g}\|_{L^2 L^{2,-\sigma}}\lesssim (1+C_0C_1\epsilon)\epsilon.
\end{align}
\end{lemma}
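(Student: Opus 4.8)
The plan is to treat \eqref{eq:g} via Duhamel's formula against the propagator $e^{-\im t\mathcal{H}_{\omega_*}}$ restricted to the range of $P_*$, and then to use the local smoothing (Kato smoothing) estimate adapted to $\mathcal{H}_{\omega_*}$. Concretely, under assumptions (H2) (no embedded eigenvalues, no resonances at $\pm\omega_*$) the operator $\mathcal{H}_{\omega_*}$ restricted to the continuous spectral subspace is known (via the wave operator $W$ and $W^*$, cf. (H4) and \cite{CM22JEE}) to satisfy the smoothing bound $\|\langle x\rangle^{-\sigma}e^{-\im t\mathcal{H}_{\omega_*}}P_*f\|_{L^2_tL^2_x}\lesssim\|f\|_{L^2}$ together with the dual and the inhomogeneous (``local smoothing $\to$ local smoothing'') estimate $\|\langle x\rangle^{-\sigma}\int_0^t e^{-\im(t-s)\mathcal{H}_{\omega_*}}P_*F(s)\,ds\|_{L^2_tL^2_x}\lesssim\|\langle x\rangle^{\sigma}F\|_{L^2_tL^2_x}$ for $\sigma$ large. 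First I would record these three estimates (this is exactly the content of the corresponding step in \cite[Lemma 4.9]{CM22JEE}, which we may invoke), and also note the Strichartz $\to$ local smoothing bound so that terms already controlled in $\mathrm{Stz}$ can be fed in.

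Next I would go through the right-hand side of \eqref{eq:g} term by term. The initial datum contributes $\|\boldsymbol g(0)\|_{H^1}\lesssim\|\boldsymbol\zeta(0)\|_{H^1}+|z(0)|^N\lesssim\epsilon$ by \eqref{ass:orbbound} and the boundedness of $\boldsymbol Z_\pm$ in $\Sigma$. The term $P_*(\sigma_3\dot\theta\boldsymbol g)$ is absorbed: $\|\dot\theta\|_{L^\infty}\lesssim C_0\epsilon$ by \eqref{est:dotTheta}, so it produces $C_0\epsilon\|\boldsymbol g\|_{L^2L^{2,-\sigma}}$, which is moved to the left-hand side once $\epsilon_0$ is small. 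The term $P_*(\sigma_3\dot\theta\boldsymbol Z)$ is handled by putting $\dot\theta$ in $L^\infty$, $z^N$ in $L^2$ (using \eqref{bootass}), and $\boldsymbol Z_\pm$ in $\Sigma$, giving $\lesssim C_0\epsilon\,C_1\epsilon$. For $R_{\boldsymbol g}$, recall \eqref{def:Rg}: the new point relative to $R_{\boldsymbol\zeta}$ is that the two summands $z^N\sigma_3\mathfrak G$ and $\bar z^N\sigma_3\sigma_1\mathfrak G$ in $R_{\boldsymbol\zeta}$ coming from $\boldsymbol{\mathcal R}[\Theta]$ (via the $e^{\im\theta}(z^NG_++\bar z^NG_-)$ part of Proposition \ref{prop:rpKAI}, together with $P_*\sigma_3P_*\sigma_3=P_*$ so that $P_*e^{-\im\sigma_3\theta}\sigma_3\boldsymbol{\mathcal R}$ produces exactly $z^N\sigma_3 P_*\sigma_3\sigma_3\mathfrak G$ etc.) are cancelled by the subtraction in \eqref{def:Rg}, so that $R_{\boldsymbol g}$ contains only: the resolvent/derivative terms $\dot\theta(R-1)\boldsymbol\zeta$, $DR(\dot\omega,\dot z,\dot w_\pm)\boldsymbol\zeta$, the modulation term $D\boldsymbol\varphi(\dot\Theta-\widetilde\Theta)$, the potential-difference term $(\mathcal H[0,\omega,z,w_+,w_-]-\mathcal H_{\omega_*})R\boldsymbol\zeta$, the nonlinearity $\boldsymbol F[\Theta,\eta]$, and the remainders $\boldsymbol{\mathcal R}_1[\Theta]+\boldsymbol{\mathcal R}_2[\Theta]$. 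Each of these is estimated in $L^2\langle x\rangle^{\sigma}L^2$: $\|(R-1)\|_{\Sigma^*\to\Sigma}\lesssim C_0\epsilon$ and $\|\eta\|_{\mathrm{Stz}}\lesssim C_1\epsilon$ give the first; $\|DR\|_{\Sigma^*\to\Sigma}\lesssim 1$ with $\|\dot\Theta\|_{L^\infty}\lesssim C_0\epsilon$ (Lemma \ref{lem:dotThetatildeTheta}) and $\|\eta\|_{L^\infty H^1}\lesssim C_0\epsilon$, $\|\eta\|_{L^2W^{1,6}}\lesssim C_1\epsilon$ the second; $\|\dot\Theta-\widetilde\Theta\|_{L^2}\lesssim C_0C_1\epsilon^2$ the modulation term; the potential difference is $O(C_0\epsilon)$ in $\Sigma\to\Sigma$ acting on $R\boldsymbol\zeta$, so $\lesssim C_0\epsilon\,C_1\epsilon$; $\boldsymbol F$ is handled by Lemma \ref{lem:Fest} after using that $F$ is supported where $\varphi[\Theta]$ is (or by $L^2W^{1,6/5}\hookrightarrow L^2\Sigma^*$ with the weighted-space trick as in \cite{CM22JEE}), giving $\lesssim C_0C_1\epsilon^2$; and $\|\boldsymbol{\mathcal R}_1+\boldsymbol{\mathcal R}_2\|_\Sigma\lesssim (C_0\epsilon)(|z|^N+|w_+|+|w_-|)+w_+^2+w_-^2$ by Proposition \ref{prop:rpKAI}, whose $L^2_t$ norm is $\lesssim C_0\epsilon\,C_1\epsilon$ by \eqref{bootass}. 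Finally the two correction terms $(\im\partial_t(z^N)-N\lambda(\omega_*)z^N)\boldsymbol Z_+$ and its conjugate are estimated using Lemma \ref{lem:partialtzN}: $\|\im\partial_t(z^N)-N\lambda(\omega_*)z^N\|_{L^2}\lesssim C_0C_1\epsilon^2$ and $\boldsymbol Z_\pm\in\Sigma$.

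Collecting everything, the Duhamel estimate gives $\|\boldsymbol g\|_{L^2L^{2,-\sigma}}\lesssim \epsilon + C_0\epsilon\|\boldsymbol g\|_{L^2L^{2,-\sigma}} + C_0C_1\epsilon^2$, and absorbing the middle term for $\epsilon_0$ small yields $\|\boldsymbol g\|_{L^2L^{2,-\sigma}}\lesssim(1+C_0C_1\epsilon)\epsilon$, which is the claim (with the caveat that the implicit constant does not depend on $T,C_0,C_1,\epsilon,\epsilon_0,u$; the only place care is needed is that all estimates used are uniform in $T$, which is the case for the smoothing and Strichartz estimates). The main obstacle, and the step I would spend the most care on, is the precise bookkeeping in $R_{\boldsymbol g}$: one must verify that the subtraction in \eqref{def:Rg} exactly cancels the leading $z^N$-resonant forcing produced by $\boldsymbol{\mathcal R}[\Theta]$ under $P_*e^{-\im\sigma_3\theta}\sigma_3(\cdot)$ — this uses the identities $P_*\sigma_3=\sigma_3 P_*$ (since $\mathcal H_{\omega_*}$ commutes appropriately with $\sigma_3$ on the relevant subspaces) and $\sigma_3\sigma_3=1$, together with the definition $\mathfrak G=\sigma_3P_*\sigma_3{}^t(\widetilde G_+\ \widetilde G_-)$ from (H4) — so that after the subtraction $R_{\boldsymbol g}$ genuinely has the size $O(C_0C_1\epsilon^2+\|z^N\|_{L^2})$ with no term of mere size $\|z^N\|_{L^2}$ surviving except through the (harmless, already-subtracted) resonant profile; everything else is a routine term-by-term application of Lemmas \ref{lem:Fest}, \ref{lem:dotThetatildeTheta}, \ref{lem:partialtzN}, Proposition \ref{prop:rpKAI}, and the smoothing estimates, exactly paralleling \cite[Lemma 4.9]{CM22JEE}.
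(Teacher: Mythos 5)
Your overall architecture is the same as the paper's: set up a Duhamel/smoothing estimate for \eqref{eq:g} (the paper imports this wholesale from Proposition 4.5 and Lemma 4.8 of \cite{CM22JEE}), observe that the subtraction in \eqref{def:Rg} cancels exactly the resonant forcing $z^N\sigma_3\mathfrak{G}+\bar z^N\sigma_3\sigma_1\mathfrak{G}$ so that $\|R_{\boldsymbol{g}}\|_{L^2W^{1,6/5}}\lesssim C_0C_1\epsilon^2$ without the $\|z^N\|_{L^2}$ loss present in \eqref{est:Rzeta}, and control the remaining sources by Lemmas \ref{lem:Fest}, \ref{lem:dotThetatildeTheta}, \ref{lem:partialtzN} and \eqref{est:dotTheta}; that central cancellation you identify correctly. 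However, two of your steps would fail as written. First, $\boldsymbol{Z}_\pm$ are \emph{not} in $\Sigma$, nor even in $L^2$: by (H3), $N\lambda(\omega_*)>\omega_*$, so $\pm N\lambda(\omega_*)$ lie in the essential spectrum of $\mathcal{H}_{\omega_*}$ and $\boldsymbol{Z}_\pm$ are boundary values of the resolvent (outgoing distorted waves), belonging only to $L^{2,-\sigma}$-type spaces by the limiting absorption principle. Hence you cannot bound the initial datum through ``$|z(0)|^N$ times the $\Sigma$-norm of $\boldsymbol{Z}_\pm$'', nor feed $\dot\theta\boldsymbol{Z}$ and $\bigl(\im\partial_t(z^N)-N\lambda(\omega_*)z^N\bigr)\boldsymbol{Z}_\pm$ into the positive-weight inhomogeneous smoothing estimate via ``$\boldsymbol{Z}_\pm\in\Sigma$''. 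This is precisely why the paper's displayed bound carries weighted norms of $\mathfrak{G}$ itself (e.g.\ $\|\dot\theta(z^N\sigma_3\mathfrak{G}+\bar z^N\sigma_3\sigma_1\mathfrak{G})\|_{L^2L^{2,\sigma}}$ and $\|z^N(0)\sigma_3\mathfrak{G}+\bar z^N(0)\sigma_3\sigma_1\mathfrak{G}\|_{L^{2,\sigma}}$) rather than of $\boldsymbol{Z}_\pm$: the needed ingredient is the dedicated estimate of \cite{CM22JEE} for the evolution of data of the form $(\mathcal{H}_{\omega_*}\mp N\lambda(\omega_*)-\im 0)^{-1}$ applied to a localized function, which is not covered by the three generic smoothing estimates you record.

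Second, your absorption of $P_*(\sigma_3\dot\theta\boldsymbol{g})$ has a weight mismatch: the inhomogeneous estimate you state requires the source in $L^2L^{2,\sigma}$, while $\dot\theta\boldsymbol{g}$ is only controlled through $\|\boldsymbol{g}\|_{L^2L^{2,-\sigma}}$ (and $\boldsymbol{g}$ has no spatial decay, so $L^2W^{1,6/5}$ is unavailable as well), so ``$C_0\epsilon\|\boldsymbol{g}\|_{L^2L^{2,-\sigma}}$, moved to the left'' is not justified by the estimates you have set up. In the paper this term is not treated as a perturbation: the Strichartz/smoothing estimates invoked there are for the modified flow that already contains $P_*\sigma_3\dot\theta$ (which is also why no $\dot\theta\boldsymbol{\zeta}$ term appears in the proof of Lemma \ref{lem:esteta}). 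A perturbative treatment can be rescued, but only after splitting $P_*(\sigma_3\dot\theta\boldsymbol{g})=\sigma_3\dot\theta\boldsymbol{g}-P_*^\perp(\sigma_3\dot\theta\boldsymbol{g})$, gauging away the scalar phase in the first piece (producing further localized commutator terms), and absorbing only the localized pieces, which do map $L^{2,-\sigma}$ into $L^{2,\sigma}$. With these two repairs — both amounting to invoking the correct statements from \cite{CM22JEE} rather than the generic ones — your argument coincides with the paper's proof.
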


\begin{proof}
The proof is parallel to Lemma 4.9 of \cite{CM22JEE}.
Using \eqref{eq:g}, Lemma 4.8 of \cite{CM22JEE} and Strichartz estimate (Proposition 4.5 of \cite{CM22JEE}) we have
\begin{align*}
&\|\boldsymbol{g}\|_{L^2L^{2,-\sigma}}\lesssim  \|\boldsymbol{\zeta}(0)\|_{H^1}+\|z^N(0)\sigma_3\mathfrak{G}+\bar{z}^N(0) \sigma_3\sigma_1\mathfrak{G}\|_{L^{2,\sigma}} + \|\dot{\theta}\(z^N\sigma_3\mathfrak{G}+\bar{z}^N \sigma_3\sigma_1\mathfrak{G}\)\|_{L^2L^{2,\sigma}}\\&+ \|R_{\boldsymbol{g}}\|_{L^2 W^{1,6/5}}+\|\(\im \partial_t z^N-\lambda(\omega_*)z^N\)\sigma_3\mathfrak{G}\|_{L^2 L^{2,\sigma}}+\|\(\im \partial_t \bar{z}^N-\lambda(\omega_*)\bar{z}^N\)\sigma_3\sigma_1\mathfrak{G}\|_{L^2 L^{2,\sigma}}.
\end{align*}
Therefore, from \eqref{est:dotTheta}, Lemma \ref{lem:partialtzN}, we have
\begin{align}
\|\boldsymbol{g}\|_{L^2L^{2,\sigma}}\lesssim \epsilon+C_0C_1\epsilon^2+\|R_{\boldsymbol{g}}\|_{L^2 W^{1,6/5}}.
\end{align}
The estimate of $R_{\boldsymbol{g}}$ is similar to the estimate of $R_{\boldsymbol{\zeta}}$ given in \eqref{est:Rzeta}.
However, $R_{\boldsymbol{g}}$ is given by \eqref{def:Rzeta} with $\boldsymbol{R}[\Theta]$ replaced by ${}^t(R_1[\Theta]+R_2[\Theta]\ \overline{R_1[\Theta]+R_2[\Theta]})$.
Thus, we have a better estimate (compared to \eqref{est:Rzeta}),
\begin{align}
\|R_{\boldsymbol{g}}\|_{L^2 W^{1,6/5}}\lesssim C_0C_1\epsilon^2,
\end{align}
because the last term of \eqref{est:Rzeta} comes from the term which we have erased in $R_{\boldsymbol{g}}$.
Therefore, we have the conclusion.
\end{proof}

We are now in the position for the estimate of $\|z^N\|_{L^2}$.
\begin{lemma}\label{lem:FGRterm}
We have
\begin{align}
\|z^N\|_{L^2}\lesssim (C_1^{-1}+C_0^2C_1^{-2}+C_0\epsilon)^{1/2}C_1\epsilon.
\end{align}

\end{lemma}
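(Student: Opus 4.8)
The plan is to run a Fermi-Golden-Rule argument, parallel to the analogous lemma of \cite{CM22JEE}: produce an integral inequality of the form
\begin{align*}
\Gamma\int_0^T |z^N(t)|^2\,dt \leq \mathcal{E}(0) + |\mathcal{E}(T)| + \int_0^T |\mathcal{R}_{\mathrm{FGR}}(t)|\,dt,
\end{align*}
where $\mathcal{E}(t)$ is a modified energy with $\mathcal{E}(t) = |z(t)|^2 + O(\|\eta(t)\|_{H^1}^2)$, $\Gamma>0$ is the FGR constant, and $\mathcal{R}_{\mathrm{FGR}}$ collects error terms controlled by the preceding lemmas and the bootstrap hypothesis \eqref{bootass}. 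To obtain such an inequality I would differentiate $|z|^2$ using the $z$-component of the modulation equation \eqref{eq:dotThetatildeTheta} (taking $\Xi$ in the $z$-direction), substitute for the continuous component of $\eta$ the decomposition $\boldsymbol{\zeta} = \boldsymbol{g} + \boldsymbol{Z}$ with $\boldsymbol{Z} = z^N\boldsymbol{Z}_+ + \bar z^N\boldsymbol{Z}_-$ from \eqref{def:g}, and add a correction term, quadratic in $\boldsymbol{\zeta}$, needed to close the identity. The refined-profile structure of Proposition~\ref{prop:rpKAI} (the precise choice of $\widetilde\Theta$ and of $\Theta_\pm$) is exactly what guarantees that all the ``conservative'' resonant self-interactions of the internal mode cancel, leaving only the genuinely dissipative contribution, and Proposition~\ref{prop:rp} plays the same role at the level of $\varphi[\Theta]$.

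The dissipative term arises as follows. The leading resonant feedback of the radiation into the $z$-equation is carried by the $\boldsymbol{Z}$-part, and $\boldsymbol{Z}_\pm$ are defined through the limiting-absorption resolvents $(\mathcal{H}_{\omega_*} \mp N\lambda(\omega_*) - \im 0)^{-1}$. By (H3) we have $N\lambda(\omega_*)>\omega_*$, so $\pm N\lambda(\omega_*)$ lie in $\sigma_{\mathrm{ess}}(\mathcal{H}_{\omega_*})$; by (H2) there is no embedded eigenvalue or resonance there, so the limiting absorption principle applies and the Sokhotski--Plemelj formula extracts
\begin{align*}
\Im\,\langle \sigma_3\mathfrak{G}, (\mathcal{H}_{\omega_*}-N\lambda(\omega_*)-\im 0)^{-1}\sigma_3\mathfrak{G}\rangle = c\,\big|(\mathcal{F}W^*\mathfrak{G})_{\uparrow}(\sqrt{N\lambda(\omega_*)-\omega_*})\big|^2 =: \Gamma
\end{align*}
for a positive constant $c$, with $W$ the wave operator from (H4). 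By (H4), $\Gamma>0$, and a short computation checking the sign of the outgoing resolvent shows this term enters $\frac{d}{dt}\mathcal{E}$ as $-\Gamma|z^N|^2$ plus lower-order contributions.

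It then remains to integrate over $[0,T]$ and estimate the errors. For the boundary terms, $\mathcal{E}(0) \lesssim \|\eta(0)\|_{H^1}^2 \lesssim \epsilon^2$ by Lemma~\ref{lem:3mod}, while $|\mathcal{E}(T)| \lesssim \sup_{t}\|\eta(t)\|_{H^1}^2 \lesssim (C_0\epsilon)^2$ by \eqref{ass:orbbound}. The bulk error $\int_0^T |\mathcal{R}_{\mathrm{FGR}}|$ is handled term by term using Lemma~\ref{lem:Fest} (nonlinear term $F$), Lemma~\ref{lem:dotThetatildeTheta} (modulation defect $\dot\Theta-\widetilde\Theta$), Lemma~\ref{lem:wplusminusest} ($w_\pm$), Lemma~\ref{lem:esteta} ($\|\eta\|_{\mathrm{Stz}}$), Lemma~\ref{lem:partialtzN} ($\partial_t(z^N)$), and, crucially, Lemma~\ref{lem:g} which gives $\|\boldsymbol{g}\|_{L^2 L^{2,-\sigma}} \lesssim (1+C_0C_1\epsilon)\epsilon$, combined with the bootstrap bounds $\|z^N\|_{L^2},\|w_\pm\|_{L^2},\|\eta\|_{\mathrm{Stz}} \leq C_1\epsilon$. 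The dominant contribution is the cross term between $z^N$ and $\boldsymbol{g}$, namely $\lesssim \|z^N\|_{L^2}\|\boldsymbol{g}\|_{L^2 L^{2,-\sigma}} \lesssim C_1\epsilon(1+C_0C_1\epsilon)\epsilon$, which produces the $C_1\epsilon^2$ and $C_0C_1^2\epsilon^3$ contributions; all the remaining terms are of the same size or smaller (e.g.\ the modulation terms are $O(C_0C_1^2\epsilon^3)$ via Lemma~\ref{lem:dotThetatildeTheta}). Collecting everything,
\begin{align*}
\int_0^T |z^N|^2\,dt \lesssim \epsilon^2 + (C_0\epsilon)^2 + C_1\epsilon^2 + C_0 C_1^2\epsilon^3 \lesssim (C_1 + C_0^2 + C_0C_1^2\epsilon)\,\epsilon^2,
\end{align*}
which after dividing by $C_1^2\epsilon^2$ and taking the square root is precisely the asserted bound.

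The main obstacle is the step in the second paragraph: isolating, among all the resonant quadratic and higher-order interactions of the internal mode, the single surviving dissipative one and pinning down its coefficient $\Gamma$ with the correct sign. This is exactly where both the refined-profile bookkeeping (Propositions~\ref{prop:rp} and \ref{prop:rpKAI}, arranged precisely so that the non-dissipative resonant terms have already been absorbed into $\varphi[\Theta]$ and $\widetilde\Theta$) and the spectral hypotheses (H2) (validity of the limiting absorption principle and boundedness of $W$) and (H4) (positivity of $\Gamma$) are used. Once this structural point is in place, the remainder is a lengthy but routine error estimation following the scheme of \cite{CM22JEE}.
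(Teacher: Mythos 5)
Your overall architecture --- an FGR-type integral identity, Plemelj/limiting absorption justified by (H2), positivity of $\Gamma$ from (H4), and error bookkeeping via Lemmas \ref{lem:Fest}--\ref{lem:g} leading to $\|z^N\|_{L^2}^2\lesssim (C_1+C_0^2+C_0C_1^2\epsilon)\epsilon^2$ --- matches the paper, and your final arithmetic reproduces the stated bound. However, the quantity you differentiate is not the one the paper uses, and the step you defer as ``a short computation checking the sign of the outgoing resolvent'' is precisely the nontrivial content of the lemma, which your sketch does not supply. The paper does not monitor $|z|^2$: it differentiates $E(\varphi[\Theta])+\omega_*Q(\varphi[\Theta])$ and, using \eqref{rpKAI}, \eqref{eq:modeta} and \eqref{rpDiff}, arrives at \eqref{FGR1}, whose main term $-\langle\eta,D\mathcal{R}[\Theta]\widetilde\Theta\rangle$ reduces to $N\lambda(\omega_*)\langle\boldsymbol{\eta},\im e^{\im\sigma_3\theta}z^N\mathfrak{G}\rangle$. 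After inserting $\boldsymbol{\zeta}=\boldsymbol{g}+\boldsymbol{Z}$, the resonant contribution is $N\lambda(\omega_*)\langle\boldsymbol{Z},\im z^N\mathfrak{G}\rangle$: the same vector $\mathfrak{G}$ occupies both slots, so the Plemelj formula produces the manifestly nonnegative term $N\lambda(\omega_*)\Gamma|z|^{2N}$ with $\Gamma$ a positive multiple of $|(\mathcal{F}W^*\mathfrak{G})_{\uparrow}(\sqrt{N\lambda(\omega_*)-\omega_*})|^2$, while the remaining oscillatory piece (proportional to $z^{2N}$, not $|z|^{2N}$) is removed as the time derivative of the scalar $A$ --- a correction in $z^{2N}$, not ``quadratic in $\boldsymbol{\zeta}$'' as you propose.

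In your $|z|^2$-based version this perfect-square structure is not automatic, and that is the gap. Differentiating $|z|^2$ requires first solving the modulation system \eqref{eq:dotThetatildeTheta} for $\dot z-\widetilde z$, i.e.\ inverting the symplectic Gram matrix $\langle\im D\varphi[\Theta]\,\cdot\,,D\varphi[\Theta]\,\cdot\,\rangle$, and then verifying that the coefficient of the surviving coupling $\langle\eta,z^{N-1}(\cdots)\rangle$ in $\Re\bigl(\bar z(\dot z-\widetilde z)\bigr)$ is, up to a positive constant, the same $\mathfrak{G}$ that sources the resonant part $\boldsymbol{Z}$ of the radiation; only then does substituting $\boldsymbol{\zeta}=\boldsymbol{g}+\boldsymbol{Z}$ yield a sign-definite $-\Gamma|z^N|^2$ rather than an indefinite bilinear expression. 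That compatibility is a Hamiltonian-structure statement which must be checked against the normalization $(\sigma_3\zeta[\omega],\zeta[\omega])=1$ and the construction of $\Theta_\pm$ in Proposition \ref{prop:rpKAI}; it is exactly what the paper's choice of differentiating $E+\omega_*Q$ at the refined profile renders automatic, and it is asserted rather than proved in your argument. The surrounding estimates in your proposal (boundary terms of size $(C_0\epsilon)^2$, the dominant cross term $\|z^N\|_{L^2}\|\boldsymbol{g}\|_{L^2L^{2,-\sigma}}$ as in \eqref{FGR13}, all remaining errors $O(C_0C_1^2\epsilon^3)$) are correct and coincide with the paper's.
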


\begin{proof}
As Lemma 4.10 of \cite{CM22JEE}, we compute the time derivative of $E(\varphi[\Theta])+\omega_* Q(\varphi[\Theta])$.
In the following, we write $\varphi=\varphi[\Theta]$.
\begin{align}
&\frac{d}{dt}\(E(\varphi)+\omega_*Q(\varphi)\)=\<(-\Delta+\omega_*)\varphi+f(\varphi),D\varphi[\Theta]\dot{\Theta}\>\nonumber\\&
=\<\im D\varphi\widetilde{\Theta},D\varphi \dot{\Theta}\>+\<\mathcal{R}[\Theta],D\varphi \dot{\Theta}\>\nonumber\\&
=-\<\im D\varphi (\dot{\Theta}-\widetilde{\Theta}),D\varphi \widetilde{\Theta}\> +\<\mathcal{R}[\Theta],D\varphi \dot{\Theta}\>\nonumber\\&
=-\<H[\Theta]\eta +F[\Theta,\eta]+\mathcal{R}[\Theta]-\im \partial_t \eta,D\varphi\widetilde{\Theta}\>+\<\mathcal{R}[\Theta],D\varphi \dot{\Theta}\>\nonumber\\&
=-\<\eta,H[\Theta]D\varphi\widetilde{\Theta}\>+\<F[\Theta,\eta],D\varphi\widetilde{\Theta}\>+\<\mathcal{R}[\Theta],D\varphi (\dot{\Theta}-\widetilde{\Theta})\>-\<\im \eta,D^2\varphi (\dot{\Theta},\widetilde{\Theta})\>\nonumber\\&
=-\<\eta,D\mathcal{R}[\Theta]\widetilde{\Theta}\>-\<\im \eta,D^2\varphi(\dot{\Theta}-\widetilde{\Theta},\widetilde{\Theta})\>+\<F[\Theta,\eta],D\varphi\widetilde{\Theta}\>+\<\mathcal{R}[\Theta],D\varphi (\dot{\Theta}-\widetilde{\Theta})\>.\label{FGR1}
\end{align}
where in we have used \eqref{rpKAI} in the 2nd line, \eqref{eq:modeta} in the 4th line and \eqref{rpDiff} in the 6th line.
The main term in the r.h.s.\ of \eqref{FGR1} is the 1st term so we estimate the other terms.
From \eqref{bootass}, \eqref{est:tildeTheta} and Lemmas \ref{lem:dotThetatildeTheta},
\begin{align}
\|\<\im \eta,D^2\varphi(\dot{\Theta}-\widetilde{\Theta},\widetilde{\Theta})\>\|_{L^1}\lesssim \|\eta\|_{\mathrm{Stz}}  \|\dot{\Theta}-\widetilde{\Theta}\|_{L^2}\|\widetilde{\Theta}\|_{L^\infty}\lesssim (C_0\epsilon)^{2}(C_1\epsilon)^2.\label{FGR2}
\end{align}
From Lemma \ref{lem:Fest} and \eqref{est:tildeTheta},
\begin{align}
\|\<F[\Theta,\eta],D\varphi\widetilde{\Theta}\>\|_{L^1}\lesssim \|F[\Theta,\eta]\|_{L^1L^{6/5}}\|\widetilde{\Theta}\|_{L^\infty}\lesssim (C_0\epsilon)(C_1\epsilon)^2.\label{FGR3}
\end{align}
From \eqref{bootass}, \eqref{ass:orbbound} and Lemma \ref{lem:dotThetatildeTheta},
\begin{align}
\|\<\mathcal{R}[\Theta],D\varphi (\dot{\Theta}-\widetilde{\Theta})\>\|_{L^1}&\lesssim \(\|z^N\|_{L^2}+C_0\epsilon \(\|w_+\|_{L^2}+\|w_-\|_{L^2} \)\)\|\dot{\Theta}-\widetilde{\Theta}\|_{L^2}\nonumber\\&\lesssim \(C_0\epsilon\)(C_1\epsilon)^2.\label{FGR4}
\end{align}
From \eqref{FGR1}, \eqref{FGR2}, \eqref{FGR3} and \eqref{FGR4}, we have
\begin{align}\label{FGR7}
\frac{d}{dt}\(E(\varphi)+\omega_*Q(\varphi)\)=-\<\eta,D\mathcal{R}[\Theta]\widetilde{\Theta}\>+R_{\mathrm{FGR},1},
\end{align}
with
\begin{align}\label{FGR8}
\|R_{\mathrm{FGR},1}\|_{L^1}\lesssim \(C_0\epsilon\) (C_1\epsilon)^2.
\end{align}
We now decompose the main term $-\<\eta,D\mathcal{R}[\Theta]\widetilde{\Theta}\>$ in the r.h.s.\ of \eqref{FGR1}.
By \eqref{Rdecomp},
\begin{align}
-\<\eta,D\mathcal{R}[\Theta]\widetilde{\Theta}\>=-\<\eta,D\mathcal{R}[\Theta](0,0,-\im \lambda(\omega_*)z,0,0)\>-\<\eta,D\mathcal{R}[\Theta]\(\widetilde{\Theta}-(0,0,-\im \lambda(\omega_*)z,0,0)\)\>.\label{FGR5}
\end{align}
The 2nd term of r.h.s.\ of \eqref{FGR5} can be bounded as
\begin{align}\label{FGR6}
\|\<\eta,D\mathcal{R}[\Theta]\(\widetilde{\Theta}-(0,0,-\im \lambda(\omega_*)z,0,0)\)\>\|_{L^1}\lesssim C_0\epsilon(C_1\epsilon)^2.
\end{align}
For the 1st term of the r.h.s.\ of \eqref{FGR5},
\begin{align}
-\<\eta,D\mathcal{R}[\Theta](0,0,-\im \lambda(\omega_*)z,0,0)\>=
&-\<\eta,D_z(e^{\im \theta}(z^N G_++\bar{z}^N G_-))(-\im \lambda 
z)\>\nonumber\\
&-\<\eta,D_z(\mathcal{R}_1[\Theta]+\mathcal{R}_2[\Theta])(-\im \lambda z)\>.\label{FGR9}
\end{align}
The 2nd term of r.h.s.\ of \eqref{FGR9} can be bounded as
\begin{align}
\|\<\eta,D_z(\mathcal{R}_1[\Theta]+\mathcal{R}_2[\Theta])(-\im \lambda z)\>\|_{L^1}\lesssim C_0\epsilon (C_1\epsilon)^2.
\end{align}
Thus, up to here, we have
\begin{align}\label{FGR10}
\frac{d}{dt}\(E(\varphi)+\omega_*Q(\varphi)\)=-\<\eta,D_z(e^{\im \theta}(z^N G_++\bar{z}^N G_-))(-\im \lambda 
z)\>+R_{\mathrm{FGR},2},
\end{align}
with
\begin{align}\label{FGR11}
\|R_{\mathrm{FGR},2}\|_{L^1}\lesssim C_0\epsilon  (C_1\epsilon)^2.
\end{align}
The 1st term of r.h.s.\ of \eqref{FGR10} can be written as
\begin{align*}
-\<\eta,D_z(e^{\im \theta}(z^N G_++\bar{z}^N G_-))(-\im \lambda 
z)\>=N\lambda(\omega_*)\<\boldsymbol{\eta},\im e^{\im \sigma_3\theta}z^N \mathfrak{G}\>.
\end{align*}
Thus, by \eqref{eq:zetaeta} and \eqref{def:g},
\begin{align}
&-\<\eta,D_z(e^{\im \theta}(z^N G_++\bar{z}^N G_-))(-\im \lambda 
z)\>=N\lambda(\omega_*)\(\<\boldsymbol{\zeta},\im z^N \mathfrak{G}\>+\<\(R[\omega,z,w_+,w_-]-1\)\boldsymbol{\zeta},\im z^N \mathfrak{G}\>\)\nonumber\\&
=N\lambda(\omega_*)\(\<\boldsymbol{Z},\im z^N \mathfrak{G}\>+\<\boldsymbol{g},\im z^N \mathfrak{G}\>+\<\(R[\omega,z,w_+,w_-]-1\)\boldsymbol{\zeta},\im z^N \mathfrak{G}\>\).\label{FGR12}
\end{align}
The 2nd and 3rd term in the r.h.s.\ of \eqref{FGR12} can be bounded as
\begin{align}
\|\<\boldsymbol{g},\im z^N \mathfrak{G}\>\|_{L^1}\lesssim \|\boldsymbol{g}\|_{L^2L^{2,\sigma}} \|z^N\|_{L^2}&\lesssim (C_1^{-1}+C_0\epsilon )(C_1\epsilon)^2,\label{FGR13}\\
\|\<\(R[\omega,z,w_+,w_-]-1\)\boldsymbol{\zeta},\im z^N \mathfrak{G}\>\|_{L^1}&\lesssim  C_0\epsilon \(C_1\epsilon\)^2,\label{FGR14}
\end{align}
where we have used Lemma \ref{lem:g} and \eqref{bootass} for \eqref{FGR13} and \eqref{bootass}, \eqref{eq:invPest} and \eqref{etazetaequiv} for \eqref{FGR14}.

The main term $N\lambda(\omega_*)\<\boldsymbol{Z},\im z^N \mathfrak{G}\>$ can be further decomposed as
\begin{align}
N\lambda(\omega_*)\<\boldsymbol{Z},\im z^N \mathfrak{G}\>=&-N\lambda(\omega_*)|z|^{2N}\<(\mathcal{H}_{\omega_*}-N\lambda(\omega_*)-\im 0)^{-1}\sigma_3\mathfrak{G},\im \mathfrak{G}\>\nonumber\\&
-N\lambda(\omega_*)\<(\mathcal{H}_{\omega_*}-N\lambda(\omega_*)-\im 0)^{-1}\sigma_3\sigma_1\mathfrak{G},\im z^{2N}\mathfrak{G}\>.\label{FGR15}
\end{align}
By Plemelj formula, the 1st term of r.h.s.\ of \eqref{FGR15} can be written as
\begin{align}\label{FGR17}
&-N\lambda(\omega_*)|z|^{2N}\<(\mathcal{H}_{\omega_*}-N\lambda(\omega_*)-\im 0)^{-1}\sigma_3\mathfrak{G},\im \mathfrak{G}\>=N\lambda(\omega_*)\Gamma |z|^{2N},
\end{align}
where
\begin{align}
\Gamma=\frac{\pi}{2\sqrt{N\lambda(\omega_*)-\omega_*}}\int_{|\xi|^2=N\lambda(\omega_*)-\omega_*}|\mathcal{F}(W^* \mathfrak{G})_{\uparrow}|^2\,d\sigma(\xi)>0.
\end{align}
Note that $\Gamma\geq 0$ and the positivity comes from the Fermi Golden Rule assumption (H4).
For the 2nd term in the r.h.s.\ of \eqref{FGR15}, by
\begin{align}
\im z^{2N}=-\frac{1 }{2N\lambda(\omega_*)}\frac{d}{dt}(z^{2N})+\frac{z^{2N-1}}{\lambda(\omega_*) }(\dot{z}+\im\lambda(\omega_*)z),
\end{align}
we have
\begin{align}
&-N\lambda(\omega_*)\<(\mathcal{H}_{\omega_*}-N\lambda(\omega_*)-\im 0)^{-1}\sigma_3\sigma_1\mathfrak{G},\im z^{2N}\mathfrak{G}\>=\frac{d}{dt}A\nonumber\\&
-N \<(\mathcal{H}_{\omega_*}-N\lambda(\omega_*)-\im 0)^{-1}\sigma_3\sigma_1\mathfrak{G}, z^{2N-1}(\dot{z}+\im\lambda(\omega_*)z)\mathfrak{G}\>,\label{FGR16}
\end{align}
where
\begin{align}
A=\frac{1}{2}\<(\mathcal{H}_{\omega_*}-N\lambda(\omega_*)-\im 0)^{-1}\sigma_3\sigma_1\mathfrak{G},z^{2N}\mathfrak{G}\>.
\end{align}
We can bound $A$ as
\begin{align}\label{FGR21}
\|A\|_{L^\infty} &\lesssim (C_0\epsilon)^{2N}.
\end{align}
For  the 2nd term of r.h.s.\ of \eqref{FGR16}, recalling \eqref{tildeTheta} and \eqref{tildeThetaelement},
since
\begin{align*}
z^{N-1}(\dot{z}+\im\lambda(\omega_*)z)=z^{N-1}(\dot{z}-\widetilde{z})+z^{N-1}\(\im (\lambda(\omega)-\lambda(\omega_*))z+\im \sum_{j=1}^M|z|^{2j}z \lambda_j(\omega)-z^N\widetilde{z}_+-\bar{z}^N \widetilde{z}_-\),
\end{align*}
we have
\begin{align}
&\|N \<(\mathcal{H}_{\omega_*}-N\lambda(\omega_*)-\im 0)^{-1}\sigma_3\sigma_1\mathfrak{G}, z^{2N-1}(\dot{z}+\im\lambda(\omega_*)z)\mathfrak{G}\>\|_{L^1}\lesssim (C_0\epsilon)^{N-1}\|z^N\|_{L^2} \|\dot{\Theta}-\widetilde{\Theta}\|_{L^2} \nonumber\\&
+(C_0\epsilon)\|z^N\|_{L^2}^2\lesssim (C_0\epsilon)(C_1\epsilon)^2.\label{FGR18}
\end{align}

Therefore, from \eqref{FGR10}, \eqref{FGR13}, \eqref{FGR14}, \eqref{FGR17}, \eqref{FGR16} and \eqref{FGR18}, we have
\begin{align}\label{FGR19}
\frac{d}{dt}\(E(\varphi)+\omega_*Q(\varphi)-A\)=N\lambda(\omega_* )\Gamma |z|^{2N}+R_{\mathrm{FGR},3},
\end{align}
with 
\begin{align}\label{FGR20}
\|R_{\mathrm{FGR},3}\|_{L^1}\lesssim (C_1^{-1}+C_0\epsilon )(C_1\epsilon)^2.
\end{align}
Finally, setting $E_*=E+\omega_*Q$, since $\nabla E_*(\varphi[(\theta,\omega_*,0,0,0)])=0$,
\begin{align}
&|E_*(\varphi[\Theta(t)])-E_*(\varphi[\Theta(t)])|\nonumber\\&=|\int_0^1 \<\nabla E_*(\Theta(0)+s(\Theta(t)-\Theta(0))),D\varphi[\Theta(0)+s(\Theta(t)-\Theta(0))](\Theta(t)-\Theta(0))\>\,ds|\nonumber\\&
\lesssim  (C_0\epsilon)^2.\label{FGR22}
\end{align}
Thus, integrating \eqref{FGR19} on the time interval $[0,T]$, from \eqref{FGR20}, \eqref{FGR21} and \eqref{FGR22}, we have
\begin{align}
\|z^N\|_{L^2}^2\lesssim (C_1^{-1}+C_0^2C_1^{-2}+C_0\epsilon )(C_1\epsilon)^2.
\end{align}
Therefore, we have the conclusion.
\end{proof}

We are now in the position to prove Proposition \ref{prop:boot}.
\begin{proof}[Proposition \ref{prop:boot}]
From Lemmas \ref{lem:wplusminusest}, \ref{lem:esteta} and \ref{lem:FGRterm}, we have
\begin{align}
\|w_+\|_{L^2}+\|w_-\|_{L^2}&\leq C (C_0\epsilon)^{1/2}C_1\epsilon+C\|z^N\|_{L^2}.\\
\|\eta\|_{\mathrm{Stz}}&\leq  C\(C_1^{-1}+C_0\epsilon \)C_1\epsilon+C\|z^N\|_{L^2}.\\
\|z^N\|_{L^2}^2&\leq C (C_1^{-1}+C_0^2C_1^{-2}+ C_0\epsilon )^{1/2}C_1\epsilon ,
\end{align}
for some constant $C\geq 1$ independent of $C_0,C_1,u$ (by taking larger if necessary, we can take the same $C$ for all three inequality).
We first take $C_1$ sufficiently large and $\epsilon_0$ sufficiently small so that
\begin{align}
C (C_1^{-1}+C_0^2C_1^{-2}+ C_0\epsilon_0)^{1/2}&\leq (4C)^{-1},\\
C (C_0\epsilon_0)^{ 1/2}&\leq 1/4,\\
C\(C_1^{-1}+ C_0\epsilon_0 \)&\leq 1/4.
\end{align}
Then, we have \eqref{bootass} with $C_1$ replaced by $C_1/2$.
\end{proof}

Theorem \ref{thm:cond:asymp} is now an easy corollary of Proposition \ref{prop:boot}.

\begin{proof}[Proof of Theorem \ref{thm:cond:asymp}]
From Proposition \ref{prop:boot}, we have \eqref{bootass} with $T=\infty$.
Combined this with \eqref{est:dotTheta}, we have
\begin{align}\label{Mainproof1}
\lim_{t\to\infty}\(|z(t)|+|w_+(t)|+|w_-(t)|\)=0.
\end{align}
Also, by the bound of Strichartz norm, it is standard that there exists $\eta_+\in H^1$ s.t.\ $\|\eta(t)-e^{\im t\Delta}\eta_+\|_{H^1}\to 0$.
This implies
\begin{align}
Q(u(t))-Q(\varphi[\Theta])-Q(\eta_+)\to 0,\ t\to \infty.
\end{align}
Thus, $Q(\varphi[\Theta])$ must converge and we see $\omega$ must converge to some $\omega_+$ from \eqref{Mainproof1} and the assumption $\frac{d}{d\omega}Q(\varphi_{\omega})\neq 0$.
Therefore, we have
\begin{align}
\|u(t)-e^{\im \theta(t)}\varphi_{\omega_+}-e^{\im t\Delta}\eta_+\|_{H^1}\leq \|\varphi[\Theta]-e^{\im \theta(t)}\varphi_{\omega_+}\|_{H^1}+\|\eta(t)-e^{\im t\Delta}\eta_+\|_{H^1}\to 0,\ t\to\infty.
\end{align}
Finally, the bound $|\omega_+-\omega_*|+\|\eta_+\|_{H^1}\lesssim \epsilon$ follows from \eqref{ass:orbbound}.
\end{proof}

\section*{Acknowledgments}
Funding:
M. was supported by the JSPS KAKENHI Grant Number 19K03579 and G19KK0066A.
Y. was supported by JSPS KAKENHI Grant Number JP21H00993 and JP21K03328.


\end{document}